\documentclass[10pt]{amsart}
\usepackage{amsthm,amsfonts,amssymb,amsmath,amsxtra, bm}
\usepackage{comment}
\includecomment{comment}
\usepackage[all]{xy}
\SelectTips{cm}{}
\usepackage{xr-hyper}
\usepackage[colorlinks=
   citecolor=Black,
   linkcolor=Red,
   urlcolor=Blue,
   backref=page]{hyperref}
\usepackage{verbatim}
\usepackage[top=1.15in, bottom=1.15in, left=1.4in, right=1.4in]{geometry}
\usepackage{mathrsfs}
\usepackage{tikz-cd}

\RequirePackage{xspace}
\RequirePackage{etoolbox}
\RequirePackage{varwidth}
\RequirePackage{enumitem}
\RequirePackage{tensor}
\RequirePackage{mathtools}
\RequirePackage{longtable}
\RequirePackage{multirow}

\usepackage{etoolbox}

\newcommand{\fka}{\ensuremath{\mathfrak{a}}\xspace}

\def\i{^{-1}}

\newcommand{\BA}{\ensuremath{\mathbb {A}}\xspace}

\newcommand{\BC}{\ensuremath{\mathbb {C}}\xspace}

\newcommand{\BF}{\ensuremath{\mathbb {F}}\xspace}
\newcommand{\BG}{\ensuremath{\mathbb {G}}\xspace}

\newcommand{\BP}{\ensuremath{\mathbb {P}}\xspace}
\newcommand{\BQ}{\ensuremath{\mathbb {Q}}\xspace}
\newcommand{\BR}{\ensuremath{\mathbb {R}}\xspace}

\newcommand{\BX}{\ensuremath{\mathbb {X}}\xspace}

\newcommand{\BZ}{\ensuremath{\mathbb {Z}}\xspace}

\newcommand{\bfK}{\ensuremath{\mathbf {K}}\xspace}

\newcommand{\CA}{\ensuremath{\mathcal {A}}\xspace}

\newcommand{\CG}{\ensuremath{\mathcal {G}}\xspace}

\newcommand{\CL}{\ensuremath{\mathcal {L}}\xspace}
\newcommand{\CM}{\ensuremath{\mathcal {M}}\xspace}

\newcommand{\CO}{\ensuremath{\mathcal {O}}\xspace}

\newcommand{\RH}{\ensuremath{\mathrm {H}}\xspace}

\newcommand{\RT}{\ensuremath{\mathrm {T}}\xspace}

\newcommand{\RV}{\ensuremath{\mathrm {V}}\xspace}

\newcommand{\bK}{\ensuremath{\mathbf K}\xspace}

\newcommand{\ab}{{\mathrm{ab}}}

\newcommand{\ad}{{\mathrm{ad}}}

\DeclareMathOperator{\Aut}{Aut}

\DeclareMathOperator{\charac}{char}

\DeclareMathOperator{\diag}{diag}

\DeclareMathOperator{\End}{End}

\DeclareMathOperator{\Gal}{Gal}
\newcommand{\GL}{\mathrm{GL}}

\newcommand{\smplus}{\scalebox{0.6}{+}}
\newcommand{\Phiplus}{\Phi^{\scalebox{0.6}{+}}}

\DeclareMathOperator{\Hom}{Hom}

\newcommand{\inv}{{\mathrm{inv}}}

\DeclareMathOperator{\Lie}{Lie}

\DeclareMathOperator{\Nm}{Nm}

\DeclareMathOperator{\rank}{rank}

\newcommand{\PGL}{{\mathrm{PGL}}}

\DeclareMathOperator{\Res}{Res}

\DeclareMathOperator{\Spec}{Spec}
\DeclareMathOperator{\Spf}{Spf}

\newcommand{\Sp}{{\mathrm{Sp}}}

\newcommand{\U}{\mathrm{U}}
\newcommand{\T}{\mathrm{T}}

\DeclareMathOperator{\val}{val}

\newcommand{\wt}{\widetilde}
\newcommand{\wh}{\widehat}

\newcommand{\ov}{\overline}

\newcommand{\lra}{\longrightarrow}

\newcommand{\bs}{\backslash}

\newtheorem{theorem}{Theorem}
\newtheorem{proposition}[theorem]{Proposition}
\newtheorem{lemma}[theorem]{Lemma}
\newtheorem {conjecture}[theorem]{Conjecture}
\newtheorem{corollary}[theorem]{Corollary}

\theoremstyle{definition}
\newtheorem{definition}[theorem]{Definition}

\newtheorem{remark}[theorem]{Remark}

\newtheorem{construction}[theorem]{Construction}
\newenvironment{altenumerate}
   {\begin{list}
      {\textup{(\theenumi)} }
      {\usecounter{enumi}
       \setlength{\labelwidth}{0pt}
       \setlength{\labelsep}{0pt}
       \setlength{\leftmargin}{0pt}
       \setlength{\itemsep}{\the\smallskipamount}
       \renewcommand{\theenumi}{\roman{enumi}}
      }}
   {\end{list}}
\newenvironment{altitemize}
   {\begin{list}
      {$\bullet$}
      {\setlength{\labelwidth}{0pt}
	   \setlength{\itemindent}{5pt}
       \setlength{\labelsep}{5pt}
       \setlength{\leftmargin}{0pt}
       \setlength{\itemsep}{\the\smallskipamount}
      }}
   {\end{list}}

\numberwithin{equation}{subsection}
\numberwithin{theorem}{subsection}

\newcommand{\aform}{\ensuremath{\langle\text{~,~}\rangle}\xspace}

\setitemize[0]{leftmargin=*,itemsep=\the\smallskipamount}
\setenumerate[0]{leftmargin=*,itemsep=\the\smallskipamount}

\renewcommand{\to}{%
   \ifbool{@display}{\longrightarrow}{\longrightarrow}%
   }
\let\shortmapsto\mapsto
\renewcommand{\mapsto}{%
   \ifbool{@display}{\longmapsto}{\shortmapsto}%
   }
\newlength{\olen}
\newlength{\ulen}
\newlength{\xlen}
\newcommand{\xra}[2][]{%
   \ifbool{@display}%
      {\settowidth{\olen}{$\overset{#2}{\longrightarrow}$}%
       \settowidth{\ulen}{$\underset{#1}{\longrightarrow}$}%
       \settowidth{\xlen}{$\xrightarrow[#1]{#2}$}%
       \ifdimgreater{\olen}{\xlen}%
          {\underset{#1}{\overset{#2}{\longrightarrow}}}%
          {\ifdimgreater{\ulen}{\xlen}%
             {\underset{#1}{\overset{#2}{\longrightarrow}}}
             {\xrightarrow[#1]{#2}}}}%
      {\xrightarrow[#1]{#2}}
   }
\makeatother
\newcommand{\xyra}[2][]{%
   \settowidth{\xlen}{$\xrightarrow[#1]{#2}$}%
   \ifbool{@display}%
      {\settowidth{\olen}{$\overset{#2}{\longrightarrow}$}%
       \settowidth{\ulen}{$\underset{#1}{\longrightarrow}$}%
       \ifdimgreater{\olen}{\xlen}%
          {\mathrel{\xymatrix@M=.12ex@C=3.2ex{\ar[r]^-{#2}_-{#1} &}}}%
          {\ifdimgreater{\ulen}{\xlen}%
             {\mathrel{\xymatrix@M=.12ex@C=3.2ex{\ar[r]^-{#2}_-{#1} &}}}
             {\mathrel{\xymatrix@M=.12ex@C=\the\xlen{\ar[r]^-{#2}_-{#1} &}}}}}%
      {\mathrel{\xymatrix@M=.12ex@C=\the\xlen{\ar[r]^-{#2}_-{#1} &}}}%
   }
\makeatletter
\newcommand{\xla}[2][]{%
   \ifbool{@display}%
      {\settowidth{\olen}{$\overset{#2}{\longleftarrow}$}%
       \settowidth{\ulen}{$\underset{#1}{\longleftarrow}$}%
       \settowidth{\xlen}{$\xleftarrow[#1]{#2}$}%
       \ifdimgreater{\olen}{\xlen}%
          {\underset{#1}{\overset{#2}{\longleftarrow}}}%
          {\ifdimgreater{\ulen}{\xlen}%
             {\underset{#1}{\overset{#2}{\longleftarrow}}}
             {\xleftarrow[#1]{#2}}}}%
      {\xleftarrow[#1]{#2}}
   }
\newcommand{\isoarrow}{%
   \ifbool{@display}{\overset{\sim}{\longrightarrow}}{\xrightarrow\sim}%
   }
   
\DeclareMathOperator{\Trace}{Tr}
\DeclareMathOperator{\Nilp}{Nilp}

\begin{document}

\thanks{Research of Haining Wang is partially supported by the National Natural Science Foundation of China (Grant number 12371009).}

\title[Optimal $p$-adic uniformization of unitary Shimura curves]{On optimal $p$-adic uniformization of unitary\\ Shimura curves}
\author{Michael Rapoport}
\address{Mathematisches Institut der Universit\"at Bonn, Endenicher Allee 60, 53115 Bonn, Germany}
\email{rapoport@math.uni-bonn.de}
\author{Haining Wang }
\address{Shanghai Center for Mathematical Sciences, Fudan University, No.2005 Songhu Road, Shanghai, 200438, China}
\email{wanghaining@fudan.edu.cn}

\date{\today}
\maketitle
\begin{abstract}
We prove  variants of Cherednik's theorem on $p$-adic uniformization, for two classes of Shimura curves attached to unitary groups. The first class  are PEL-type variants,  defined in \cite{RSZ} and \cite{unitshim}, of the PEL-type Shimura curves of  \cite{KRZI}. The advantage of these variants is that we can prove $p$-adic uniformization in optimal form for these variants, for any level structure prime to the distinguished $p$-adic place $v_{0}$ of the totally real field $F$ (in \cite{KRZI} this optimal form was only proved for level structures which are prime to \emph{all} $p$-adic places). Here optimality refers to the fact that we define \emph{$p$-integral models} by extending the moduli problem over $\BC$ solved by the Shimura variety, and prove  for them \emph{integral} $p$-adic uniformization  in terms of the formal Drinfeld upper half plane $\widehat\Omega_{F_{v_{0}}}$ for $F_{v_{0}}$. This greater generality compared to \cite{KRZI}  comes at the price of having to replace the reflex field by a bigger extension. 

The second class are  Shimura curves attached to unitary groups. Here the construction of $p$-integral models relies on the construction of \emph{canonical integral models} in the sense of  \cite{PR}. Again, the reflex field has to be replaced by a bigger extension--but this time due to our method of proof. We also identify the \emph{integral local Shimura curve} for an anisotropic unitary group over $\BQ_p$. 

\end{abstract}

\renewcommand{\baselinestretch}{0.4}\normalsize
\tableofcontents
\renewcommand{\baselinestretch}{1.0}\normalsize

\section{Introduction} 
The present paper is a sequel to \cite{KRZI}. Both papers  establish analogues of Cherednik's $p$-adic uniformization theorem for one-dimensional Shimura varieties which are, loosely speaking, attached to unitary groups.

Recall that Cherednik's theorem is concerned with the Shimura curve associated to a quaternion algebra over a totally real field which satisfies specific hypotheses. More precisely, let $F$ be a totally real number field. We fix an archimedean place $w_0$ of $F$ and a non-archimedean place $v_0$ of residue characteristic $p$. Let $D$ be a quaternion algebra over $F$ which is split at $w_0$ and non-split at all other archimedean places $w'\neq w_0$, and which is non-split at $v_0$. We denote by $D^\times$  the multiplicative group of $D$, but also the corresponding algebraic group  over $F$ or (by restriction of scalars) over $\BQ$. There is a natural Shimura datum $X_{D^\times}$ for $D^\times$, and an associated Shimura variety
\begin{equation}
{\rm Sh}_\bK(D^\times, X_{D^\times})=D^\times\backslash\big[ \Omega_\BR\times D^\times(\BA_f)/\bK\big] .
\end{equation}
Here $\Omega_\BR=\BP^1(\BC)\setminus \BP^1(\BR)$ denotes the union of the upper and the lower halfplane, and the action of $D^\times$ on $\Omega_\BR$ is via $D^\times\to D_{w_0}^\times$ and an identification of $D_{w_0}^\times$ with $\GL_2(\BR)$, which acts naturally on $\Omega_\BR$.  

The Shimura variety ${\rm Sh}_\bK(D^\times, X_{D^\times})$ has a canonical model ${\rm Sh}_\bK(D^\times, X_{D^\times})_F$ over the reflex field of $(D^\times, X_{D^\times})$ which can be identified with $F$ and its complex embedding given by $w_0$. For Cherednik's theorem we fix an open compact subgroup $\bK$ of the form
\begin{equation}\label{Cherprod}
\bK=\bK^{v_0}\cdot\bK_{v_0}\subset D^\times({\BA_{F, f}^{v_0}})\cdot D^\times(F_{v_0}) ,
\end{equation}
where $\bK_{v_0}$ is the unique maximal compact subgroup of $D^\times_{v_0}$. Then Cherednik's theorem asserts the existence of isomorphisms compatible with changes in $\bK^{v_0}$,
\begin{equation}\label{Cheredn}
{\rm Sh}_\bK(D^\times, X_{D^\times})_F\times_{\Spec F}\Spec \breve F_{v_0}\simeq {\bar D}^\times\backslash\big[ (\Omega_{F_{v_0}}\times_{\Sp\, F_{v_0}}\Sp\, \breve F_{v_0})\times D^\times(\BA_f)/\bK\big]. 
\end{equation}

Here $\Omega_{F_{v_0}}=\BP^1_{F_{v_0}}\setminus \BP^1(F_{v_0})$ denotes the Drinfeld upper halfplane for the local field $F_{v_0}$,  a rigid-analytic space over $F_{v_0}$. Also, $\bar D$ denotes the quaternion algebra over $F$ which is non-split at $w_0$, is split at ${v_0}$ and locally coincides with $D$ at all other places (the \emph{castling} of $D$). The action of $\bar{D}^\times$ on $\Omega_{F_{v_0}}$ is via  $\bar{D}^\times\to {\bar D}_{v_0}^\times$ and an identification of ${\bar D}_{v_0}^\times$ with $\GL_2(F_{v_0})$, which acts naturally on $\Omega_{F_{v_0}}$.  
The isomorphism \eqref{Cheredn} is to be interpreted as follows: the rigid-analytic space on the RHS  is (uniquely) algebraizable by the   projective algebraic curve over $\breve F_{v_0}$ on the left-hand side. Cherednik's theorem thus asserts that one may  pass from the tautological complex uniformization  of the Shimura variety ${\rm Sh}_\bK(D^\times, X_{D^\times})$ to $p$-adic uniformization, provided that the level structure is prime to $v_0$.

We refer to the introduction of \cite{KRZI} for a discussion of the proof of Cherednik's theorem and its generalizations. Here we only emphasize that when $F=\BQ$, the Shimura curves ${\rm Sh}_\bK(D^\times, X_{D^\times})$ are moduli spaces of abelian varieties with additional structure.  Using this,  Drinfeld \cite{Dr} gave a moduli-theoretic proof of Cherednik's theorem in this special case.  Furthermore, he proved an `integral version' of this theorem (which has the original version as a corollary).  This integral version  relies on a theorem on formal moduli spaces of $p$-divisible groups. When $F\neq \BQ$, Cherednik's Shimura curves do not represent a moduli problem of abelian varieties. In fact, ${\rm Sh}_\bK(D^\times, X_{D^\times})$ is a Shimura variety of abelian type which is not of Hodge type. Furthermore, this Shimura variety has other bad properties: the weight cocharacter $w_{D^\times}$ associated to $X_{D^\times}$ is not defined over $\BQ$ and the split rank of the connected center $Z^\circ$ over $\BQ$ and over $\BR$ differ, i.e. $\rank_\BQ(Z^{\circ})\neq\rank_\BR(Z^{\circ})$. Nonetheless, Boutot and Zink \cite{BZ} have given a proof of Cherednik's theorem by reduction to the case of PEL-Shimura varieties.  They also compare the descent data down to $F_{v_0}$ of both sides of \eqref{Cheredn}. More precisely, \cite{BZ} relates Cherednik's Shimura varieties to Shimura varieties associated to certain \emph{ fake unitary groups} corresponding to central division algebras over a CM-field extension of $F$ equipped with an involution of the second kind, for which $p$-adic uniformization is proved by Rapoport and Zink \cite{RZ}. In \cite{RZ}, these  uniformization theorems appear as a special instance of a general \emph{non-archimedean uniformization theorem}, which describes the formal completion of PEL-type Shimura varieties along a fixed isogeny class. In the case of $p$-adic uniformization, the whole special fiber forms a single isogeny class.

 The paper \cite{KRZI} proves $p$-adic uniformization for Shimura curves attached to unitary similitude groups associated to (anti-)hermitian vector spaces $V$ of dimension  $2$ over a CM-extension  $K$ of  $F$, the KRZ Shimura curves.  Analogously to  Cherednik's assumption on the quaternion algebra $D$, it is assumed in \cite{KRZI} that  $V$ is split at the archimedean
place $w_0$ of $F$ and ramified at all other archimedean places, and that  $V$ is ramified at the $p$-adic non-archimedean place $v_{0}$ (in particular, it is  assumed  that $v_0$ does \emph{not} split in $K$).  Of course, these Shimura curves are closely related to the Shimura curves considered by Cherednik (we refer to \cite{KRnew} for a general discussion of the relation between quaternion algebras and two-dimensional hermitian vector spaces). However, they are different. In particular, they  represent a moduli problem of abelian varieties, their weight cocharacter is defined over $\BQ$ and $\rank_\BQ(Z^{\circ})=\rank_\BR(Z^{\circ})$. The downside of these Shimura varieties compared to those of Cherednik is that the reflex field $F$ of Cherednik has to be replaced by a bigger reflex field $E$. The uniformization theorem of \cite{KRZI} is optimal when the level structure imposed is prime to $p$,  in the sense that it extends to an integral uniformization that allows an explicit interpretation of the points in the reduction modulo $p$.  In particular, the uniformization theorem of \cite{KRZI}  is optimal when $v_{0}$ is the only place of $F$ over $p$ because then the conditions that the level structure be prime to $p$ coincides with Cherednik's condition that the level structure be prime to $v_{0}$. 

The first aim of the present paper is to prove an \emph{optimal uniformization theorem} when there are more places over $p$, i.e., when there are \emph{banal} $p$-adic places, in the terminology of \cite[\S 7]{KRZI}. An approximation to such an optimal theorem is proved in \cite[\S 7]{KRZI}, but this is not completely satisfactory (as pointed out in the introduction of \cite{KRZI}). Here we succeed in proving such an optimal theorem by replacing the Shimura variety of \cite{KRZI} by the variant introduced in \cite{RSZ}, \cite{unitshim}, the RSZ Shimura curves. The downside of this variant is that its reflex field is (even) bigger than $E$.  

The second aim of the present paper is to  establish $p$-adic uniformization theorems for Shimura curves attached to unitary groups, under the same local assumptions.  These Shimura curves are not of PEL type but, assuming that  canonical $p$-integral models in the sense of \cite{PR} exist, we  construct $p$-integral models over $O_K$ of these Shimura curves. The optimal version of  $p$-adic uniformization for them should hold over $K$. Unfortunately, we cannot prove this optimal version when $F\neq\BQ$ but rather have to make a base change to a finite extension of $K$.  It seems a worthwhile goal to remove this extension of the reflex field. As a reward, we  expect interesting arithmetic applications (level raising, level lowering, bounding Selmer groups), cf. \cite[Introduction]{KRnew}. We also give an explicit model of the \emph{integral local Shimura variety},  in the sense of Scholze-Weinstein \cite{Sch} (comp. also \cite{PRloc}) attached to an anisotropic unitary group over a finite extension of $\BQ_p$, again at least after an extension of the reflex field.

We now summarize the layout of this article. The paper consists of four parts. In the first part, we state our main results and give some background. More precisely, in \S \ref{s:state},  we recall the definition of the Shimura curve in \cite{RSZ}, \cite{unitshim} which we will refer to as the RSZ Shimura curve. We formulate the associated moduli problem and introduce the terminology used throughout the paper. We also give the precise formulation of our main results. In section \ref{s:CM}, we review the notions of CM-types and CM-triples from earlier papers \cite{KRnew}, \cite{KRZI}.  In section \ref{s:Cmod}, we review the main theorem of complex multiplication for CM abelian varieties and its local analogue, the precise description of the Rapoport-Zink space associated to a torus with its  Weil descent datum. The proofs in the latter theory are based on the \emph{contraction functor} of \cite{KRZI} (in the case of rank one, which is  more elementary than the case of rank two treated in \cite{KRZI}). The notion of the \emph{special element} relative to $(\Phi_0^+, E)$ (Construction \ref{sp-elt-loc})  is used throughout the paper.  

The second part is devoted to the proof of our first main theorem which concerns the RSZ Shimura curves. In section \ref{s:formRSZ}, we treat the local theory. Here the case of special CM-triples is handled by citing rather directly from \cite{KRZI}. However, the case of banal CM-triples requires going back to the proofs in \cite{KRZI}. Indeed, in \cite{KRZI}, the descent datum in the banal case is not explicit. It is here that the advantage of the RSZ Shimura curves over the KRZ Shimura curves becomes apparent since in the RSZ case we are able to give the descent datum explicitly. In section \ref{s:globRSZ}, we prove the first main theorem, as a rather direct consequence of the local case. 

In the third part we go back to \cite{KRZI} and explain that the deficiencies of KRZ Shimura curves go away when extending slightly the reflex field, replacing it by an extension of degree $\leq 2$. After making this extension, we can make the descent datum completely explicit (but still the level of the Shimura curve has to be prime to $p$, like in \cite{KRZI}). This is the subject matter of \S \ref{s:revi}. In \S \ref{s:versus}, we compare KRZ Shimura curves with RSZ Shimura curves. 

The fourth part is devoted to the proof of the second main theorem which  concerns the Shimura curves attached to unitary groups. In \S \ref{s:ILSV}, we treat the local theory, by reduction to the case of the Rapoport-Zink space of RSZ CM-triples considered in \S \ref{s:formRSZ}   and the case of the  Rapoport-Zink space of a torus considered in \S \ref{s:Cmod}. Here we use the theory of integral local Shimura varieties from \cite{PRloc}. In \S \ref{s:unigl}, we treat the global case which works in complete parallel to the local case, except that we are basing ourselves on the conjectural theory of global canonical integral models of \cite{PR}. In our context, this theory is established when $p\neq 2$ and the special place $v_0$ is unramified \cite{DY}; the case of a ramified special place seems within reach. Since our approach is axiomatic, this possible deficiency of the literature plays no role in our exposition.

\subsection{Notation}
We use the following notation and conventions without comment in this paper.
\begin{itemize}
\item If $M$ is a number field, we write $\BA_{M}$ for its ad\`ele ring and denote by $\BA_{M, f}$ its finite part and by $\BA^{S}_{M,f}$ its prime to $S$ finite part for a finite collection of finite places $S$ of $M$. Let $p$ be a prime, and $v_{0}$ be a place of $M$ above $p$. We use the notation  $M_{p}=\prod_{v\mid p}M_{v}$ and  $M^{v_{0}}_{p}=\prod_{v\mid p, v\neq v_{0}}M_{v}$.

\item Let $F$ be a finite extension of $\BQ_p$ and let $K/F$ be an \'etale algebra of rank $2$, with corresponding involution $a\mapsto \bar a$.  Let  $V$ be a  free $K$-module, equipped with an alternating $\BQ_p$-bilinear form $\psi\colon V\times V\to \BQ_p$ such that $\psi(a v, v')=\psi(v, \bar a v')$ for $a\in K$ and $v, v'\in V$. Let $\Lambda$ be a $O_K$-lattice in $V$. Then the dual $O_K$-lattice is $\Lambda^\vee=\{x\in V\mid \psi(x, y)\in \BZ_p \text{ for all } y\in \Lambda\}$. The lattice $\Lambda$ is called \emph{almost self-dual} if $\Lambda=\Lambda^\vee$ if $K/F$ is a ramified field extension or an unramified field extension and the anti-hermitian form  associated to $\psi$ is split or if $K/F$ is split; if $K/F$ is an unramified field extension and the anti-hermitian form  associated to $\psi$ is non-split we ask that $\Lambda$ is contained in $\Lambda^\vee$ with colength one. 

\item  If $O$ is a discrete valuation ring, we write ${\rm Nilp}_{O}$  for the category of $O$-algebras $R$ such that  the image  of a uniformizer $\pi$ in $R$ is nilpotent. Similarly, we denote by $({\rm Sch}/\Spf O)$ the category of $O$-schemes such that $\pi\CO_S$ is a locally nilpotent ideal sheaf. Let $k$ be the residue field of $O$. If  $S$ is a scheme over $O$, we denote by $\bar{S}$ the special fiber of $S$, i.e.,  $\bar{S}=S\times_{\Spec O}\Spec k$.

\item Let $O$ be a ring, then ${\rm (LNSch)}/O$ denotes the category of locally noetherian schemes over $O$.

\end{itemize}

\part{Main results and preliminaries}
\section{Statement of the main results}\label{s:state}
\subsection{The RSZ Shimura variety}\label{ss:shimvar} Let us first recall  the Shimura curve in \cite{KRZI}. Let $K$ be a CM-field,  with  totally real subfield $F$. We denote the non-trivial $F$-automorphism of $K$ by $a\mapsto \bar a$. Let  $V$ be a two-dimensional $K$-vector space, equipped with an alternating $\BQ$-bilinear form 
\begin{equation}\label{varsigma}
\varsigma\colon V\times V\to \BQ 
\end{equation}
such that
\begin{equation}\label{introalt}
\varsigma(a x, y)=\varsigma(x, \bar a y), \quad x, y\in V, \, a\in K . 
\end{equation}
We call $\varsigma$ a \emph{anti-hermitian alternating form} (w.r.t. $K/F$).  There is a unique anti-hermitian form $\varkappa$ on $V$ such that 
 \begin{equation}\label{introback}
  \Trace_{K/\mathbb{Q}} a \varkappa(x, y) = \varsigma(ax, y), \quad x, y\in V, \, a\in K. 
\end{equation}
Conversely, the anti-hermitian form $\varkappa$ determines the alternating bilinear form $\varsigma$ with \eqref{introalt}. We  say that $\varkappa$ arises from $\varsigma$ by contraction. 

Recall that anti-hermitian spaces $V$ are determined up to isomorphism by their signature at the archimedean places of $F$ and their local invariants $\inv_v(V)\in \{\pm 1\}$ at the non-archimedean places $v$ of $F$.  Let $w_{0}$ be  an archimedean place such that $V_{w_{0}}$ has signature $(1, 1)$ and such that $V_{w}$ is definite for all archimedean places $w\neq w_{0}$. Let us be more precise. Let $\Phi = \Hom_{\text{$\mathbb{Q}$-Alg}}(K, \bar{\mathbb{Q}})$.
Let $r$ be a \emph{generalized CM-type of rank 2, special with respect to $w_{0}$}, i.e., a function 
\begin{equation}\label{GCM}
r: \Phi\lra \BZ_{\geqslant 0}, \qquad \,\, \varphi \mapsto r_\varphi,
\end{equation}
such that $r_{\varphi}+r_{\bar{\varphi}} =2$ for all $\varphi\in \Phi$, and such that  for  the extensions $\{\varphi_0, \bar{\varphi}_0\}$ of $w_{0}$ we have  $r_{\varphi_0} = r_{\bar{\varphi}_0} = 1$ and  with  $r_\varphi\in \{0, 2\}$ for $\varphi\notin \{\varphi_0, \bar\varphi_0\}$, comp. \cite{KRnew}. Then we demand that the signature of $V_\varphi=V\otimes_{K, \varphi} \BC$ be equal to  $(r_\varphi, 2-r_\varphi)$.  
\begin{remark}
Note that, because we are in very small rank, the generalized CM-type $\Phi$ is of \emph{fake Drinfeld type} relative to $\varphi_0$ and relative to $\bar{\varphi}_0$, in the sense of \cite[Ex. 2.3, (i)]{unitshim}.
\end{remark}
We denote the reflex field of $r$  by $E_r$. It is a subfield of $\bar\BQ$, the algebraic closure of $\BQ$ in $\BC$. Note  that $F$ embeds via $\varphi_0$ (or $\bar\varphi_0$) into $E_{r}$, and that the  archimedean place of $F$ induced by
\begin{equation}\label{introFintoE}
  F \overset{\varphi_0}{\longrightarrow} E_{r} \longrightarrow \BC
  \end{equation}
is equal to $w_0$.  If $F=\BQ$, then $E_{r}=F$.

Associated to these data, there is a {Shimura pair} $(G, X_G)$. Here    $G $ denotes the group of unitary 
similitudes of $V$, with similitude factor in $\BG_m$, an algebraic subgroup of ${\rm GSp}(V, \psi)$ over $\BQ$. The corresponding Shimura variety ${\rm Sh}_\bK(G, X_G)$ is the one appearing in \cite{KRZI}. We also refer to this Shimura variety as the KRZ Shimura curve.

Now we are going to define the Shimura variety of \cite{unitshim}, the RSZ\emph{ Shimura curve}. We fix a complex embedding  $\varphi_0\in \{\varphi_0, \bar\varphi_0\}$. We also fix  a \emph{classical} CM-type, i.e., a half-system of complex embeddings  $\Phiplus\subset \Phi$. We make the assumption  
$\varphi_0\in \Phi^{\smplus}$. 

An example of $\Phiplus$ is given by 
\begin{equation}\label{defPhi}
\quad \Phiplus\setminus \{\varphi_0\}=\{\varphi\in\Phi\mid r_\varphi=2\}.
\end{equation} 
This classical CM-type $\Phiplus=\Phi^{+}_{\varphi_0, r}$ is called the \emph{CM-type canonically associated to $r$ and $\varphi_0$}. Then $r$ is the \emph{strict fake Drinfeld generalized CM-type associated to} $(\Phiplus, \varphi_0)$, in the terminology of \cite[Ex. 2.3, (ii)]{unitshim}.

Let
$$
Z^\BQ=\{ z\in \Res_{K/\BQ}(\BG_m)\mid \Nm_{K/F}(z)\in \BG_m \} .
$$
The choice of $\Phiplus$ defines a Shimura datum $(Z^\BQ, X_{Z^\BQ})$ with reflex field $E_{\Phi^+}$, where $X_{Z^\BQ}$ is   the diagonal homomorphism
$$
h_{Z^\BQ}\colon \Res_{\BC/\BR}(\BG_m)\to Z^\BQ\otimes_\BQ \BR=\prod\nolimits_{\Phiplus} \Res_{\BC/\BR}(\BG_m)..
$$
We therefore obtain a Shimura variety ${\rm Sh}_{\bK_{Z^\BQ}}(Z^\BQ, X_{Z^\BQ})$. This Shimura variety will only play an auxiliary role, and we will take for $\bK_{Z^\BQ}$ the unique maximal compact subgroup $\bK_{Z^\BQ}^{\circ}$ of $Z^\BQ(\BA_f)$,
\begin{equation}
\bK_{Z^\BQ}^{\circ}=\{ z\in (O_K\otimes\wh{\BZ})^\times\mid \Nm_{K/F}(z)\in \wh{\BZ}^\times\} .
\end{equation}
The Shimura variety in which we are really interested is associated to the fiber product group
\begin{equation}\label{tildeG-1}
\wt G=G\times_{\BG_m}Z^{\BQ},
\end{equation}
where the maps to $\BG_m$ are given by $\Nm_{K/F}$, resp. the similitude character of $G$. The second part $X_{\wt G}$ of the Shimura datum is given by the conjugacy class of homomorphisms, 
$$
h_{\wt G}\colon \Res_{\BC/\BR}(\BG_m)\xrightarrow{(h_G, h_{Z^\BQ})} \wt G_\BR . 
$$
For an open compact subgroup $\bK_{\wt G} \subset \wt G(\BA_f)$, there is a Shimura variety ${\rm Sh}_{\bK_{\wt G}}(\wt G, X_{\wt G})$, 
whose complex points are given by 
$${\rm Sh}_{\bK_{\wt G}}(\wt G, X_{\wt G})(\BC) \simeq \wt G(\BQ) \bs [\Omega_\BR \times \wt G(\BA_f)/{\bK_{\wt G}}] .$$
Here  $\Omega_\BR$ is acted on by
$\wt G(\BR)$ via the projection to ${\rm GU}(V_{w_{0}})_{\rm ad}$ and a fixed
isomorphism  
${\rm GU}(V_{w_{0}})_{\rm ad}(\BR) \simeq \PGL_2(\BR)$.  

The reflex field $E$ of $(\wt G, X_{\wt G})$ is the composite $E=E_{r, \Phi^+}=E_r E_{\Phiplus}$, cf. \cite[(3.4)]{unitshim}.  In the case when $\Phiplus=\Phi_{\varphi_0, r}$ is the canonical  CM-type associated to  $\varphi_0, r$,   it follows from \eqref{defPhi} that $E=\varphi_0(K)E_r$, and is either $E_r$ or  a quadratic extension of $E_r$. The Shimura variety ${\rm Sh}_{\bK_{\wt G}}(\wt G, X_{\wt G})$ has a canonical model ${\rm Sh}_{\bK_{\wt G}}(\wt G, X_{\wt G})_{E}$ over $E$. 

The torus $Z^\BQ$ embeds naturally as a central subgroup of $G$, which gives rise to a product decomposition
\begin{equation}\label{tildeG}
\wt G\simeq \U\times Z^\BQ ,
\end{equation}
where $\U\subset G$ is the kernel of the multiplier map, cf. \cite[(3.5)]{unitshim}. Here $\U$ is considered as algebraic group over $\BQ$. We will only consider compact open subgroups $\bK_{\wt G}\subset \wt G(\BA_f)$ which, in terms of the product decomposition \eqref{tildeG} are of the form
\begin{equation}\label{decprod-intro}
\bK_{\wt G}= \bK_\U\times \bK^{\circ}_{Z^\BQ} ,
\end{equation} 
where $\bK_\U\subset \U(\BA_f)$.

\subsection{The RSZ moduli problem over $E$}\label{ss:modoverE}

The Shimura variety ${\rm Sh}_{\bK_{\wt G}}(\wt G, X_{\wt G})$ is of PEL-type. We will now formulate the corresponding moduli problem. We will first define a moduli problem for ${\rm Sh}_{\bK^{\circ}_{Z^\BQ}}(Z^\BQ, X_{Z^\BQ})$, cf. \cite[\S 3]{unitshim}. For  economy of exposition,  we will actually define this moduli problem over $\Spec O_{E_{\Phiplus}}$. 

 We fix an auxiliary non-zero ideal $\frak a$ of $O_K$. Let $\CA^{\frak a}_0$ be the category fibered in groupoids over ${\rm (LNSch)}/O_{E_{\Phiplus}}$ which associates to each $O_{E_{\Phiplus}}$-scheme $S$ the groupoid of triples $(A_0, \iota_0, \lambda_0)$, where 
\begin{altitemize}
\item[(1)] $A_0$ is an abelian scheme over $S$;
\item[(2)] $\iota_0\colon O_K\to\End(A_0)$ is an action of $O_K$ on $A_0$;
\item[(3)] $\lambda_0$ is a polarization on $A_0$ .
\end{altitemize}
We impose the conditions that $\ker \lambda_0=A_0[\frak a]$ and that for the Rosati involution of $\lambda_0$, 
\begin{equation}\label{Ros}
   {\rm Ros}_{\lambda_0} \bigl(\iota_0(a)\bigr)=\iota_0(\bar a)
   \quad\text{for all}\quad
   a\in O_K,
\end{equation}
and that $A_0$ is of CM-type $\Phiplus$, i.e.
\begin{equation}\label{kottcond}
    \charac\bigl(\iota_0(a) \mid \Lie A_0\bigr) = \prod\nolimits_{\varphi \in \Phiplus} (T-\varphi(a))
    \quad\text{for all}\quad
    a\in O_K.
\end{equation}
Here the left-hand side in \eqref{kottcond} denotes the characteristic polynomial of the action of $\iota_0(a)$ on the locally free $\CO_S$-module $\Lie A_0$;  the right-hand side, which is a priori a polynomial with coefficients in $O_{E_{\Phiplus}}$, is regarded as an element of $\CO_S[T]$ via the structure morphism. 
The morphisms $(A_0, \iota_0, \lambda_0) \to (A_0', \iota_0', \lambda'_0)$ in this groupoid are the  $O_K$-linear isomorphisms $\phi_{0}\colon A_0\isoarrow A_0'$ such that the pullback of $\lambda'_0$ is  $\lambda_0$.

This moduli problem is representable by a Deligne-Mumford stack $\CA^{\frak a} _{0}$ which is finite and \'etale over $\Spec O_{E_{\Phiplus}}$, cf. \cite[\S 3.4]{unitshim}. For suitable choice of $\frak a$, the stack $\CA^{\frak a}_0$ is non-empty. We fix such a choice of $\frak a$ in the sequel, which we assume to be prime to the prime number $p$ to be fixed later, and write $\CA_0$ for $\CA^{\frak a}_0$. 

The moduli stack $\CA_0\otimes_{O_{E_{\Phiplus}}}\BC$ is a disjoint sum of copies of the Shimura variety ${\rm Sh}_{\bK^{\circ}_{Z^\BQ}}(Z^\BQ, X_{Z^\BQ})$. In order to single out one copy, we fix a locally free $O_K$-module $\Lambda_0$ of rank one equipped with a nondegenerate alternating form $\aform_0\colon \Lambda_0 \times \Lambda_0 \to \BZ$ such that $\langle ax, y \rangle_0 = \langle x, \bar a y\rangle_0$ for all $x,y \in \Lambda_0$ and $a \in O_K$, such that the dual lattice $\Lambda_0^\vee$ of $\Lambda_0$ inside $\Lambda_0 \otimes_\BZ \BQ$ equals $\fka\i \Lambda_0$, and such that the anti-hermitian form on $\Lambda_0\otimes_{O_K, \varphi}\BC$ obtained  by contraction from $\langle \, , \, \rangle_0$  is  of signature $(1, 0)$ for every $\varphi\in\Phiplus$.   We denote by $\CA_0^{[\Lambda_0]}$ the open and closed substack of $\CA_0$ such that
$$
\CA_0^{[\Lambda_0]}(\BC)=\{(A_0, \iota_0, \lambda_0)\in \CA_0(\BC)\mid \RH_1(A_0, \BZ)\sim \Lambda_0 \} .
$$
Here the $O_K$-module $\RH_1(A_0, \BZ)$ is equipped with the natural $\BZ$-valued Riemann form induced by the polarization, and the $\sim$-sign means that $\RH_1(A_0, \BZ) \otimes_\BZ \wh\BZ$ and $\Lambda_0 \otimes_\BZ \wh\BZ$ are $\wh O_K$-linearly similar up to a factor in $\wh\BZ^\times$ and that $\RH_1(A_0, \BZ) \otimes_\BZ \BQ$ and $\Lambda_0 \otimes_\BZ \BQ$ are $K$-linearly similar up to a (necessarily positive) factor in $\BQ^\times$. Then there is an identification
\begin{equation}
\CA_0^{[\Lambda_0]}(\BC)={\rm Sh}_{\bK^{\circ}_{Z^\BQ}}(Z^\BQ, X_{Z^\BQ})(\BC),
\end{equation}
cf. \cite[(3.15)]{unitshim}.

Now we can formulate a moduli problem for the Shimura variety ${\rm Sh}_{\bK_{\wt G}}(\wt G, X_{\wt G})$, i.e., a functor over ${\rm (LNSch)}/E$.   We introduce the \emph{$K/F$-hermitian} space of dimension $2$,
\begin{equation}
\wt V=\Hom_K(V_0, V) ,
\end{equation}
where $V_0=\Lambda_0\otimes_{\BZ}\BQ$, with its anti-hermitian form obtained by contraction from $\langle \, , \, \rangle_0$. 
\begin{definition}\label{def:overE}Let $\wt\CA_{\bK_{\wt G}, E}$ be the category fibered in groupoids over ${\rm (LNSch)}/E$  which associates to each $E$-scheme $S$ the groupoid of tuples $(A_0,\iota_0,\lambda_0,A,\iota,\lambda,\bar\eta)$, where
\begin{altitemize}
\item[(1)] $(A_0, \iota_0, \lambda_0)$ is an object of $\CA^{[\Lambda_0]}_0(S)$; 
\item[(2)] $A$ is an abelian scheme over $S$;
\item[(3)] $\iota\colon K\to\End^0(A)$ is an action of $K$ on $A$ up to isogeny satisfying the Kottwitz condition $(\mathrm{KR}_{r})$:
\begin{equation}\label{kottwitzcondi}
\charac\bigl(\iota(a) \mid \Lie A\bigr) = \prod_{\varphi \in \Phi} (T-\varphi(a))^{r_\varphi}
\quad\text{for all}\quad a\in K;
\end{equation} 
\item[(4)] $\lambda$ is a quasi-polarization on $A$ whose Rosati involution satisfies 
  ${\rm Ros}_{\lambda} \bigl(\iota(a)\bigr)=\iota(\bar a)$ for $a\in K$;
\item[(5)] $\bar\eta$ is a $\bK_{\wt G}$-orbit (equivalently, a ${\bK_{\U}}$-orbit, where $\bK_{\wt G}$ acts through its projection $\bK_{\wt G} \to \bK_\U$) of isometries of $\BA_{K, f}/\BA_{F,f}$-hermitian modules 
\begin{equation}\label{eta}
   \eta\colon \wh \RV(A_0, A) \isoarrow \wt V\otimes_K\BA_{K, f}.
\end{equation}
\end{altitemize}
Here we define
\begin{equation}\label{V(A_0,A)}
   \wh \RV(A_0,A) := \Hom_{\BA_{K,f}}\bigl(\wh \RV(A_0), \wh \RV(A)\bigr) ,
\end{equation}
where $\wh \RV(A_0)$ and $\wh \RV(A)$ are the product of rational Tate modules of $A_0$ and $A$. Here $ \wh \RV(A_0,A)$ is endowed with its natural $\BA_{K,f}$-valued hermitian form $h$,
\begin{equation}\label{h_A def}
h(x, y) := \lambda_0\i \circ y^\vee\circ\lambda\circ x\in\End_{\BA_{K, f}}\bigl(\wh \RV(A_0)\bigr)=\BA_{K, f}, \quad x,y \in \wh \RV(A_0,A) ,
\end{equation}
cf. \cite[\S 3]{unitshim}, where $y^\vee \colon \wh\RV(A^\vee) \to \wh\RV(A_0^\vee)$ denotes the adjoint of $y$ with respect to the Weil pairings on $\wh\RV(A) \times \wh\RV(A^\vee)$ and $\wh\RV(A_0) \times \wh\RV(A_0^\vee)$. Furthermore, for any geometric point $\bar s \to S$, the orbit $\bar\eta$ is required to be $\pi_1(S,\bar s)$-stable with respect to the $\pi_1(S,\bar s)$-action on the fiber 
\begin{equation*}
\wh \RV(A_0,A)(\bar s)=\Hom_{\BA_{K,f}}(\wh \RV(A_{0, \bar s}), \wh \RV(A_{\bar s})).
\end{equation*} 

A morphism $(A_0,\iota_0,\lambda_0,A,\iota,\lambda,\bar\eta) \to (A'_0,\iota'_0,\lambda'_0,A',\iota',\lambda',\bar\eta')$ in this groupoid is  given by an  isomorphism $\mu_0\colon (A_0, \iota_0, \lambda_0) \overset{\sim}\longrightarrow (A'_0, \iota'_0, \lambda'_0)$ in $\CA_0^{[\Lambda_0]}(S)$  and a $K$-linear quasi-isogeny $\phi\colon A\to A'$ pulling $\lambda'$ back to $\lambda$ and $\bar\eta'$ back to $\bar\eta$. 
\end{definition}
\begin{proposition}
The moduli problem $\wt\CA_{\bK_{\wt G}, E}$ is representable by a Deligne-Mumford stack $\wt \CA_{\bK_{\wt G}, E}$ over $\Spec E$, and
\begin{equation*}
\wt \CA_{\bK_{\wt G}, E}(\BC)={\rm Sh}_{\bK_{\wt G}}(\wt G, X_{\wt G}) ,
\end{equation*}
compatible with changing $\bK_{\wt G}$ of the form \eqref{decprod-intro}. If $\bK_{\U}$ is small enough, then $\wt\CA_{\bK_{\wt G}, E}$ is a  scheme. 
 
\end{proposition}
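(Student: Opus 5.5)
The plan is to reduce representability to standard PEL results, and to obtain the complex points by the usual Hodge-theoretic dictionary, as in \cite[\S 3]{unitshim}.

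\textbf{Representability.} First, $\CA_0^{[\Lambda_0]}$ is an open and closed substack of $\CA_0$, which is finite étale over $\Spec O_{E_{\Phiplus}}$ \cite[\S 3.4]{unitshim}. So it suffices to show that the forgetful morphism
\[
\wt\CA_{\bK_{\wt G},E}\to \CA_{0,E}^{[\Lambda_0]}
\]
is relatively representable by a Deligne-Mumford stack.

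To do this I would rigidify in the usual way. Choose an integer $N\geq 3$ and replace the data $(A,\iota,\lambda)$ up to isogeny by an actual abelian scheme with $O_K$-action in its isogeny class. Then the data $(A,\iota,\lambda,\bar\eta)$ up to quasi-isogeny become a finite disjoint union of PEL moduli problems of Kottwitz type, attached to the group $G$ with polarization and level structures. These are representable by quasi-projective schemes over $E$ once the level is small; this is Kottwitz's representability theorem.

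The level $\bar\eta$ is only a $\bK_\U$-orbit of hermitian isometries on $\wh\RV(A_0,A)$, rather than the usual symplectic level on $\wh\RV(A)$. I will handle this by the equivalence
\[
\wh\RV(A)\simeq \wh\RV(A_0)\otimes_{\BA_{K,f}}\wh\RV(A_0,A).
\]
Under this equivalence, and given $A_0$, an orbit of hermitian isometries corresponds to a $\bK_{\wt G}$-orbit of symplectic similitudes $\wh\RV(A)\isoarrow V\otimes\BA_f$. The similitude factor is matched with the Weil pairing of $A_0$ via $\Nm_{K/F}$, exactly as in the fiber product \eqref{tildeG-1}.

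Over a field of characteristic zero, the Kottwitz condition \eqref{kottwitzcondi} is a closed and open condition. Moreover, $\pi_1$-stability of the orbit is étale-local. Together these give a Deligne-Mumford stack.

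When $\bK_\U$ is small enough, I would show that objects have no nontrivial automorphisms. An automorphism $(\mu_0,\phi)$ fixing $\bar\eta$ has $\phi$ of finite order: it preserves the polarization, and it fixes the level structure modulo a small open subgroup. Serre's lemma then forces $\phi=\mathrm{id}$.

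The automorphism $\mu_0$ is a unit of $O_K$ preserving $\lambda_0$, hence a root of unity in $K$. It acts on $\wh\RV(A_0,A)$ through $\bK^\circ_{Z^\BQ}$. The point is that the moduli problem in Definition~\ref{def:overE} sees $\mu_0$ only through its action on $\Hom(\wh\RV(A_0),\wh\RV(A))$, so $\mu_0$ can be absorbed by the compensating automorphism $\phi=\iota(\mu_0)$. I expect this bookkeeping to be exactly the point of the decomposition \eqref{decprod-intro}. It should show that the stack is an algebraic space with trivial inertia, and then a quasi-projective scheme.

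\textbf{Complex points.} Over $\BC$, take a point $(A_0,A,\dots)$. The groups $\RH_1(A_0,\BQ)$ and $\RH_1(A,\BQ)$ give a $K$-hermitian space
\[
\Hom_K\bigl(\RH_1(A_0,\BQ),\RH_1(A,\BQ)\bigr)
\]
with a Hodge structure. By the signature condition and \eqref{kottwitzcondi}, it has the same archimedean signatures as $\wt V$. The level structure $\eta$ gives the same local invariants at all finite places. By the Hasse principle for hermitian spaces, it is therefore globally isometric to $\wt V$.

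Choosing such an isometry, and an element of $\CA_0^{[\Lambda_0]}(\BC)={\rm Sh}_{\bK^\circ_{Z^\BQ}}(Z^\BQ,X_{Z^\BQ})(\BC)$, produces a point of $\wt G(\BQ)\bs[\Omega_\BR\times\wt G(\BA_f)/\bK_{\wt G}]$. The Hodge filtration is recorded by $\Omega_\BR$, via the $w_0$-component of signature $(1,1)$. Conversely, Riemann's theorem constructs $A$ from such a point.

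The main obstacle is checking that the ambiguities on both sides match. On the moduli side these are the choice of isometry and the automorphisms of $A_0$. On the group side they are $\wt G(\BQ)=\U(\BQ)\times Z^\BQ(\BQ)$ acting, together with the choice of $\bK_{\wt G}$ in the form \eqref{decprod-intro}. For this I would follow \cite[(3.15) and Prop.~3.5]{unitshim} verbatim. Compatibility with change of $\bK_\U$ is clear from the construction.
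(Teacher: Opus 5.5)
Your representability sketch and your computation of the complex points are in line with the paper, which calls representability standard and cites \cite[Theorem 3.7]{unitshim}, with the Hasse principle for hermitian spaces as the key input. The gap is in the last part, the claim that the stack is a scheme when $\bK_\U$ is small.

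\textbf{The Serre's lemma step.} The level structure does not constrain $\phi$ by itself. An automorphism $(\mu_0,\phi)$ acts on $\wh\RV(A_0,A)$ by $x\mapsto \phi\circ x\circ\mu_0^{-1}$. It is this element of the compact-at-infinity unitary group that Serre's lemma forces to be trivial once $\bK_\U$ is small. The paper's eigenvalue argument does the same. The conclusion is $\phi=\iota(\mu_0)$, not $\phi=\mathrm{id}$.

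\textbf{Persistent automorphisms.} Take any $\zeta\in\mu(K)$, for instance $\zeta=-1$. The pair $(\iota_0(\zeta),\iota(\zeta))$ is an automorphism of \emph{every} object:
\begin{itemize}
\item it preserves $\lambda_0$, because $\zeta\bar\zeta=1$;
\item it preserves $\lambda$, by the Rosati condition;
\item it acts trivially on $\wh\RV(A_0,A)$, because elements of that space are $\BA_{K,f}$-linear, so it fixes $\bar\eta$ for every $\bK_\U$.
\end{itemize}
Under $\wt G\simeq \U\times Z^\BQ$, this automorphism has trivial $\U$-component. Its $Z^\BQ$-component is $\zeta\in Z^\BQ(\BQ)\cap\bK^\circ_{Z^\BQ}=\mu(K)$, and shrinking $\bK_\U$ never touches it. So absorbing $\mu_0$ into $\phi=\iota(\mu_0)$ does not remove the inertia. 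It exhibits a nontrivial inertia group $\mu(K)$ at every point. Your conclusion ``algebraic space with trivial inertia'' is therefore false, and the stack is never a scheme.

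\textbf{What the argument proves.} For $\bK_\U$ small, the map $\Aut(x)\to \Aut(A_0,\iota_0,\lambda_0)=\mu(K)$ is an isomorphism. Equivalently:
\begin{itemize}
\item the forgetful morphism $\wt\CA_{\bK_{\wt G},E}\to \CA^{[\Lambda_0]}_{0,E}$ is representable by schemes;
\item the rigidification of $\wt\CA_{\bK_{\wt G},E}$ along the central $\mu(K)$ (equivalently, its coarse moduli space) is a scheme, and the stack is a $\mu(K)$-gerbe over it.
\end{itemize}
The paper's one-line proof only controls the unitary component, so it has the same limitation. The ``scheme'' assertion can only be established in this rigidified or coarse sense. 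The identification with ${\rm Sh}_{\bK_{\wt G}}(\wt G,X_{\wt G})$ should likewise be read on isomorphism classes, and this is the sense in which the paper later uses it, e.g.\ the index set $A_0^{[\Lambda_0]}$ in Proposition~\ref{iso_E}.
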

\begin{proof}
The representability statement is standard, and the identification of the $\BC$-points is  \cite[Theorem 3.7]{unitshim} (the main point being the Hasse principle for hermitian spaces). For the last statement, we have to show that for a point $x=(A_0,\iota_0,\lambda_0,A,\iota,\lambda,\bar\eta)$ of $\wt\CA_{\bK_{\wt G}, E}$, the automorphism group of $x$ is trivial.
However, an element of this automorphism group defines an element  in an arithmetic subgroup of a unitary group which is compact at all archimedean places of $F$. It is therefore of finite order. By making  $\bK_{\U}$ small enough, its eigenvalues are forced to be congruent to $1$ modulo some big enough integer, hence are equal to $1$. The assertion follows. 
\end{proof}

\subsection{The $p$-integral  RSZ moduli problem}
Let $p$ be a prime number and fix an embedding $\bar\BQ\to \bar\BQ_p$. Let $v_{0}$ be  a $p$-adic place of $F$ which is non-split in $K$ and such that $V_{v_{0}}$ is a non-split $K_{v_{0}}/F_{v_{0}}$-anti-hermitian space, i.e., $\inv_{v_{0}}(V)=-1$. Throughout the paper, we always assume\footnote{In  light of the results of Kirch \cite{Ki}, it should be possible to remove this blanket assumption.} $p\neq 2$ if $v_{0}$ is ramified in $K$. Let $\nu$ be the place of $E$ induced by the embedding $\nu:\bar{\BQ}\rightarrow \bar{\BQ}_{p}$. We assume that the place $v_{0}$ is induced by $\nu$ under the inclusion $F\hookrightarrow E$ given by $\varphi_0$. In the sequel, we consider $F_{v_0}$ and $K_{v_0}$ as subfields of $\bar\BQ_p$ (recall that $\varphi_0:K\to\bar\BQ$ is fixed).

Let us write $\U(\BQ_{p})=\prod_{v\mid p}\U_{v}(\BQ_{p})$, where we denote by $\U_v={\rm Res}_{F_v/\BQ_p}(\U(V_v))$ the corresponding unitary group over $\BQ_{p}$ for $V_v$. We define $\U^{v_{0}}(\BA_{f}):=\U^{v_{0}}(\BQ_{p})\times \U(\BA^p_{f})$ where $\U^{v_{0}}(\BQ_{p})=\prod_{v\neq v_{0}}\U_{v}(\BQ_{p})$. We define $\wt G^{v_{0}}(\BA_{f})=\U^{v_{0}}(\BA_{f})\times Z^\BQ(\BA_{f})$.

In the product decomposition
\begin{equation*}
\bK_{\wt G}= \bK_\U\times \bK^{\circ}_{Z^\BQ},
\end{equation*} 
we fix $\bK^{\circ}_{Z^\BQ}$ and vary the open compact subgroup $\bK_\U$ of $\U(\BA_{f})$ which we write in the form  
\begin{equation*}
{\bK}_\U = {\bK}_{\U, p} \cdot {\bK}^{p}_{\U},
\end{equation*}
where $\bK_{\U}^{p}$ is an open compact subgroup of $\U(\BA^p_{f})$ and $\bK_{\U, p}$ is an open compact subgroup of $\U(\BQ_{p})$.  We impose that $\bK_{\U, p}$ further decomposes as $\bK_{\U, p}=\bK_{\U, v_{0}}\cdot\bK^{v_{0}}_{\U, p}$, where ${\bK}_{\U, p}^{v_{0}}$ is an  open compact subgroup of $\U^{v_{0}}(\BQ_{p})=\prod_{v\neq v_{0}}\U_{v}(\BQ_{p})$. We assume that  ${\bK}_{\U, v_{0}}$ is a maximal open compact and therefore ${\bK}_{\U, v_{0}}= \U_{v_{0}}(\BQ_{p})$, as follows from our assumptions that $ \U_{v_{0}}(\BQ_{p})$ is a  compact group.   To simplify the formulation of the moduli problem, we assume that 
\begin{equation*}
\bK_{\U, p}^{v_{0}}\subset \prod\nolimits_{v\mid p, v\neq v_{0}}\bK_{\U, v}^{\circ}, 
\end{equation*}
where $\bK_{\U, v}^{\circ}$ is the stabilizer of a  fixed  lattice $\Lambda_{v}$ in $V_{v}$ which is almost selfdual (see the Notation section at the end of the Introduction for the meaning of this terminology).  Let ${\bK}^{v_{0}}_\U = {\bK}^{v_{0}}_{\U, p} \cdot {\bK}^{p}_{\U}$ and ${\bK}^{v_{0}}_{\wt{G}} = {\bK}^{v_{0}}_{\U} \times \bK^{\circ}_{Z^{\BQ}}$.

\begin{remark}\label{parornot}
 The subgroup ${\bK}_{\U, v_{0}}$ is a parahoric in $\U_{v_0}(\BQ_p)$ when $K_{v_0}/F_{v_0}$ is unramified; it contains a parahoric with index $2$ when $K_{v_0}/F_{v_0}$ is ramified.
 \end{remark}
 
We are now going to describe a model $\wt\CA_{\bK_{\wt G}}$ of $\wt\CA_{\bK_{\wt G}, E}$ over $\Spec O_{E, (\nu)}$ by formulating a moduli problem over $({\rm LNSch})/O_{E, (\nu)}$.

\begin{definition}\label{def:overOE}
Let $\wt\CA_{\bK_{\wt G}}$ be the category fibered in groupoids over ${\rm (LNSch)}/O_{E, (\nu)}$ which associates to each $O_{E, (\nu)}$-scheme $S$ the groupoid of tuples $(A_0,\iota_0,\lambda_0,A,\iota,\lambda,\bar\eta^p, \bar\eta_p^{v_{0}})$, where
\begin{altitemize}
\item[(1)] $(A_0, \iota_0, \lambda_0)$ is an object of $\CA^{[\Lambda_0]}_0(S)$; 
\item[(2)] $A$ is an abelian scheme over $S$;
\item[(3)] $\iota\colon O_{K, (p)}\to\End_{(p)}(A)=\End(A)\otimes \BZ_{(p)}$ is an action up to prime-to-$p$ isogeny on $A$  satisfying the Kottwitz condition \eqref{kottwitzcondi} on $O_{K, (p)}$; 
\item[(4)] $\lambda\in \Hom_{(p)}(A, A^\vee)$ is a quasi-polarization on $A$ whose Rosati involution satisfies 
  ${\rm Ros}_{\lambda} \bigl(\iota(a)\bigr)=\iota(\bar a)$ for $a\in O_{K, (p)}$;
\item[(5)] $\bar\eta^p$ is a $\bK_{\U}^p$-orbit of isometries of $\BA_{K, f}^p/\BA_{F,f}^p$-hermitian modules 
\begin{equation}\label{eta}
   \eta^p\colon \wh \RV^p(A_0, A) \isoarrow \wt V\otimes_K\BA_{K, f}^p,
\end{equation}
where $\wh \RV^p(A_0, A)=\Hom_{\BA_{K, f}^p}(\wh \RV^p(A_0), \wh \RV^p(A))$ (as usual, a Galois-invariance property is imposed, comp. Definition \ref{def:overE});
\item[(6)] $\bar\eta_p^{v_{0}}$ is a $\bK_{\U, p}^{v_{0}}$-orbit of isometries of $K_{p}^{v_{0}}/F_{p}^{v_{0}}$-hermitian modules 
\begin{equation}\label{eta}
\eta_p^{v_{0}}\colon \wh \RV_p^{v_{0}}(A_0, A) \isoarrow \wt V\otimes_K K_p^{v_{0}}.
\end{equation}
\end{altitemize}
Here  the notation is as follows. Under the decomposition of $p$-divisible groups induced by the action of $O_{F}\otimes\BZ_p \cong \prod_{v\mid p} O_{F,v}$, we have 
\begin{equation*}\label{decofpdivgp}
   A_0[p^\infty] =  \prod\nolimits_{v \mid p} A_0[v^\infty],\quad \text{resp.}\quad  A[p^\infty]=\prod\nolimits_{v\mid p} A[v^\infty] .
\end{equation*}
 We set 
\begin{equation}\label{primetov}
A_0[p^\infty]^{v_{0}}=\prod\nolimits_{v\mid p, v\neq v_{0}}A_0[v^\infty],\quad \text{resp.}\quad A[p^\infty]^{v_{0}}=\prod\nolimits_{v\mid p, v\neq v_{0}}A[v^\infty] .
\end{equation}
Furthermore, 
\begin{equation*}
\wh \RV_p^{v_{0}}(A_0, A):=\underline{\Hom}_{O_{K_{p}^{v_{0}}}}(A_0[p^\infty]^{v_{0}}, A[p^\infty]^{v_{0}})\otimes _{O_{K_{p}^{v_{0}}}} K_p^{v_{0}}
\end{equation*}
is the $p$-adic \'etale sheaf of \cite[Definition 7.4.1]{KRZI}.  More precisely, \eqref{eta}
 is an equivalence class of
isogenies of (pro-)\'etale sheaves 
\[\underline{\Hom}_{O_{K_p^{v_0}}}(A_0[p^\infty]^{v_{0}}, A[p^\infty]^{v_{0}})\to\CL,
\]
where $\CL\subset \wt V\otimes_K K_p^{v_0}$ is a variable
lattice. We refer to the datum \eqref{eta} as a \it{CL}-level structure on $A$.

We impose the following further conditions on the above tuples:
\begin{altenumerate}
\item The action of $O_{K, (p)}$ on $\Lie A$ satisfies the Eisenstein condition $({\rm EC}_r)$ relative to $r$, cf. \cite[\S 2.2]{KRZI};

\item\label{sglob cond i}  For every $v\mid p$, the  polarization  of $p$-divisible groups induced by $\lambda$ 
\begin{equation*}
\lambda_{v} \colon A[v^\infty] \to A^\vee[v^\infty] \cong A[v^\infty]^\vee 
\end{equation*}
is principal, except when $v$ is unramified and $\inv(V_v)=-1$, in which case  $\ker\lambda_{v}$  is contained in $A[\pi_{v}^\infty][\pi_v]$ of rank $\#(\Lambda_{v}^\vee/ \Lambda_{v})$; 

\item\label{sglob cond ii} We require that at every geometric point $\bar s$ of $S$ the following \emph{sign condition} holds for every non-split place $v\mid p$,
\begin{equation}\label{condsign}
   \inv^r_{v}(A_{0, \bar s},\iota_{0, \bar s},\lambda_{0, \bar s},A_{\bar s},\iota_{\bar s},\lambda_{\bar s})=\inv_{v}(\wt V),
\end{equation}
where the right-hand denotes the Hasse invariant of the hermitian space $\wt V$ at $v$. Here  the left-hand side is the sign from \cite[App. A]{RSZ}.  To make the connection with the definition of \cite[\S 3]{RSZ}, we point out that $\inv_{v}(V)=\inv_{v}(\wt V)$.
\end{altenumerate}
A morphism  $(A_0, \iota_0, \lambda_0, A, \iota, \lambda, \bar\eta^p, \bar\eta_p^{v_{0}}) \to (A_0', \iota_0', \lambda_0', A', \iota', \lambda', \bar\eta'^p, \bar\eta_p^{\prime v_{0}})$ in this groupoid is given by an isomorphism $\phi_0\colon (A_0,\iota_0,\lambda_0) \isoarrow (A_0',\iota_0',\lambda_0')$ in $\CA^{[\Lambda_0]}_{0}(S)$ and an $O_{K,(p)}$-linear quasi-isogeny $\phi\colon A \to A'$ inducing an isomorphism $A[p^\infty] \isoarrow A'[p^\infty]$, pulling $\lambda'$ back to $\lambda$, and pulling $\bar\eta^{\prime p}$ back to $\bar\eta^p$ and pulling $\bar\eta_p^{\prime v_{0}}$ back to $\bar\eta_p^{v_{0}}$. 
\end{definition}

Let $(\bar{V}, \bar{\varkappa})$ be the nearby anti-hermitian space relative to $w_{0}$ and $v_{0}$. This means $\bar{V}$ is isomorphic to $V$ at all places except for $w_{0}$ and $v_{0}$ and such that it has signature $(2,0)$ at $w_{0}$ and $\inv_{v_{0}}(\bar{V})=+1$. Let $J_{\U}=\mathrm{Res}_{F/\BQ}\U(\bar{V})$ and let $J={G}(\bar{V}, \bar{\varkappa})$, the  unitary similitude group  of $(\bar{V}, \bar{\varkappa})$ with rational similitude factor. Then we define 
\begin{equation}\label{wtJ-group}
\wt{J}=J_{\U}\times Z^{\BQ}= J\times_{\BG_{m}} Z^{\BQ}.
\end{equation}
Then $\U(\bar{V})$ is anisotropic at the archimedean place $w_{0}$ and quasi-split  at $v_{0}$, and  locally coincides 
with $\U(V)$ at all places $\neq v_{0}, w_{0}$ of $F$. Then  $\wt{J}$ is an inner form of $\wt{G}$. We write 
\[J_{\U}(\BQ_{p})=\prod\nolimits_{v\mid p}J_{\U,v}(\BQ_{p})=\prod\nolimits_{v\mid p}\U(\bar{V}_{v})(F_{v}).
\]
 We fix an identification of $J_{\U,v_{0}, \mathrm{ad}}(\BQ_{p})\simeq \PGL_2(F_{v_{0}})$.

Now we can formulate the first main theorem of this paper. 

\begin{theorem}\label{maintilde}
Assume that $\bK_\U^p$ is small enough. 
\begin{altenumerate}
\item The functor $\wt\CA_{\bK_{\wt G}}$ is representable by a projective flat $O_{E, (\nu)}$-scheme $\wt\CA_{\bK_{\wt G}}$ over $\Spec O_{E, (\nu)}$. Its generic fiber is identified with $\wt\CA_{\bK_{\wt G}, E}$, which is the canonical model of the Shimura variety ${\rm Sh}_{\bK_{\wt G}}(\wt G, X_{\wt G})$. 
\item Let $(\wt\CA_{\bK_{\wt G}})^\wedge$ be the formal completion of $\wt\CA_{\bK_{\wt G}}$ along its special fiber, which is a formal scheme over $\Spf O_{E_\nu}$. Then there exists an isomorphism of formal schemes over $\Spf O_{\breve {E}_\nu}$,
\begin{equation*}
(\wt\CA_{\bK_{\wt G}})^\wedge\times_{{\rm Spf}\,O_{E_{\nu}}}{\rm Spf}\, O_{\breve {E}_\nu}  \simeq   \wt{J}(\BQ)\bs\big[\big(\widehat{\Omega}_{F_{v_{0}}} \times_{{\rm Spf}\,O_{F_{v_{0}}}}{\rm Spf}\, O_{\breve {E}_\nu}\big) \times
 \wt G(\BA_f)/{\bK}_{\wt G}\big]  \, .
\end{equation*}
The quotient on the right-hand side is defined using the composition of maps $$\wt J(\BQ)\to J_\U(\BQ)\to J_{\U, v_0}(\BQ_p)\to  J_{\U, v_0, {\rm ad}}(\BQ_p)$$ and an identification of the adjoint group $J_{ \U,  v_{0},{\rm ad}}(\BQ_{p})$  with $\PGL_2(F_{v_{0}})$ and through an action of  $\wt{J}(\BQ)$ on $\wt G(\BA_f)/{\bK}_{\wt G}$. For varying $\bK^{v_{0}}_{\wt G}$, this isomorphism is compatible with the action of $\wt{G}^{v_{0}}(\BA_{f})$  through Hecke correspondences on both sides. 
\end{altenumerate}
Furthermore, the natural descent datum down to $O_{E_{\nu}}$ on the left-hand side is given on the right-hand side by 
\begin{equation*}
(\xi, h)\rightarrow (\omega_{\Omega, {E_{\nu}}}(\xi), \wt{w}h), \quad  h\in \wt G(\BA_f).
\end{equation*}
 Here $\omega_{\Omega, {E_{\nu}}}$  is the natural descent datum down to $O_{E_\nu}$ on the first factor and  $\wt{w}={\frak r}_{\Phi^+, E_\nu}(\varpi_\nu)$ is the \emph{special element} in the center of $\wt{G}(\BQ_{p})$,  which is considered as an element in $\wt G(\BA_f)$ ({special} relative to $(\Phi^{+}, E_{\nu})$ for the uniformizer $\varpi_{\nu}$, as in Construction \ref{consemil}).
\end{theorem}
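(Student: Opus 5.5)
We follow the Rapoport–Zink strategy: the whole special fiber is one isogeny class, whose formal completion is uniformized by a Rapoport–Zink space of framing objects; that RZ space is then identified with $\widehat\Omega_{F_{v_0}}$ times a discrete set.

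For (i), representability by a quasi-projective scheme over $O_{E,(\nu)}$ (for $\bK^p_\U$ small) is standard. The base $\CA_0^{[\Lambda_0]}$ is finite étale. The data $(A,\iota,\lambda,\bar\eta^p,\bar\eta_p^{v_0})$ form a PEL moduli problem with prime-to-$p$ and CL-level structure. Rigidity follows as in the proposition above, since automorphisms lie in an arithmetic subgroup of a unitary group that is compact at infinity. The identification of the generic fiber with $\wt\CA_{\bK_{\wt G},E}$ goes as follows. Over $E$, the conditions (i)–(iii) are automatic or equivalent to the existence of a $p$-adic lattice level structure. The sign condition \eqref{condsign} and the Hasse principle single out the hermitian space $\wt V$, exactly as in \cite[Theorem 3.7]{unitshim}. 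The $\bK_{\U,v_0}$-level at $v_0$ is maximal and imposes no extra data, since $\U_{v_0}(\BQ_p)$ is compact. Flatness and projectivity will come out of the uniformization in (ii). The completion is a quotient of a flat formal scheme, proper over $\Spf$, so its algebraization is proper and flat; alternatively, use the valuative criterion, via potentially good reduction of the CM-like abelian varieties, because $V$ is anisotropic at $v_0$.

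For (ii), first show that all points of the special fiber over $\bar k$ are isogenous. The $v_0$-part is the basic (supersingular-type) isocrystal forced by the Eisenstein and sign conditions at $v_0$. At the banal places $v\neq v_0$, the Kottwitz condition $r_\varphi\in\{0,2\}$ forces $A[v^\infty]$ to be determined up to isomorphism by $A_0[v^\infty]$. This is the rank-one contraction statement of \S\ref{s:Cmod}: $A[v^\infty]\simeq A_0[v^\infty]\otimes_{O_{K_v}}\Lambda_v$. Together with the étale datum $\eta_p^{v_0}$, this makes the banal part rigid. Then fix a base point $(\BA_0,\BA)$ and set up the RZ space $\CN$ of triples $(X_0,X,\rho)$ at $v_0$, together with the rigid banal data. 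The standard argument of \cite[Ch. 6]{RZ} then gives
\[
(\wt\CA_{\bK_{\wt G}})^\wedge_{O_{\breve E_\nu}}\simeq I(\BQ)\backslash\bigl[\CN\times\wt G^{v_0}(\BA_f)/\bK^{v_0}_{\wt G}\bigr].
\]
The group $I$ of self-quasi-isogenies is identified with $\wt J$, since its unitary part is $\U(\bar V)$ by the Hasse principle. Next, use the local result of \S\ref{s:formRSZ}: $\CN\simeq(\widehat\Omega_{F_{v_0}}\hat\otimes O_{\breve E_\nu})\times \wt J_{v_0}(\BQ_p)/\bK_{\wt G,v_0}$, equivariantly. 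In the special case this is \cite{KRZI} (Drinfeld's theorem after contraction), and the torus factor $Z^\BQ$ is handled by \S\ref{s:Cmod}. Substituting this and absorbing the discrete factor gives the stated form with $\wt G(\BA_f)/\bK_{\wt G}$. Hecke compatibility is formal.

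The descent datum is the main obstacle. On $\CN$ the Weil descent datum for the $v_0$-part is $\omega_{\Omega,E_\nu}$ up to a central twist. For $A_0$ and the banal parts, \S\ref{s:Cmod} (local CM theory and Construction \ref{sp-elt-loc}) shows that Frobenius descent acts as translation by ${\frak r}_{\Phi^+,E_\nu}(\varpi_\nu)$. The key point, where the RSZ variant helps, is that the banal $A[v^\infty]$ is tied to $A_0[v^\infty]$. Its descent datum is therefore explicitly the reflex-norm element, not an unknown twist as in \cite{KRZI}. The plan is to check, component by component in $\wt G=\U\times Z^\BQ$, that these twists combine to the single central element $\wt w$.
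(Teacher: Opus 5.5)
Your outline is essentially the paper's proof. The paper likewise uses a single isogeny class in the special fiber, a global input it takes from \cite[Proposition 7.1.11]{KRZI} rather than deducing it from the local $p$-divisible groups. It then applies Rapoport--Zink uniformization \cite[Theorem 6.2.3]{RZ} to a semi-local RZ space that splits into the special factor $\wt\CM_{\bK^{\circ}_{\wt G}}(i)\simeq\CM_r(i)\times\CM_0(i)$ and discrete banal factors $\wt G_v(\BQ_p)/\bK_{\wt G,v}$, and assembles the descent datum from Propositions \ref{LS-weil-special}, \ref{LS-weil-banal} and the localization compatibility of $\wt w$ in Construction \ref{consemil}.

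Two small corrections. First, the $v_0$-factor should be $\wt G_{v_0}(\BQ_p)/\bK_{\wt G,v_0}\simeq Z^\BQ_{v_0}(\BQ_p)/Z^\BQ_{v_0}(\BZ_p)$, not a quotient of $\wt J_{v_0}(\BQ_p)$, with one similitude exponent shared by all $p$-adic places. Second, your banal identification $A[v^\infty]\simeq A_0[v^\infty]\otimes_{O_{K_v}}\Lambda_v$ needs $\Phi^+_v=\{\varphi\in\Phi_v\mid r_\varphi=2\}$, which holds for the canonical CM-type; this is the same compatibility that makes $\underline{\Hom}_{O_{K_v}}(A_0[v^\infty],A[v^\infty])$ \'etale in the paper's CL-level structure.
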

\begin{remark}
As in \cite[Theorem  7.3.3]{KRZI}, one can characterize the relative curve $\wt\CA_{\bK_{\wt G}}$ over  $O_{E, (\nu)}$  as  the unique  stable relative curve in the sense of Deligne-Mumford with generic fiber $\wt\CA_{\bK_{\wt G}, E}$. 
\end{remark}

From Theorem \ref{maintilde}  we obtain the analogue of Cherednik's $p$-adic uniformization theorem in the generic fiber.
\begin{corollary}
There is an $\wt{G}^{v_{0}}(\BA_{f})$-equivariant isomorphism of rigid analytic spaces
\begin{equation*}
(\wt \CA_{\bK_{\wt G}, E}\times_{\Spec {E}}\Spec {\breve{E}}_\nu)^{\rm rig}  \simeq   \wt{J}(\BQ)\bs\big[\big({\Omega}_{F_{v_{0}}} \times_{{\rm Sp}\, {F_{v_{0}}}} {{\rm Sp}\, {\breve{E}}_\nu}\big) \times
 \wt G(\BA_f)/{\bK}_{\wt G}\big]  \, 
\end{equation*}
over ${\rm Sp}\, {\breve{E}}_\nu$ with compatible descent data. Here   the left-hand side  is equipped with the natural descent datum  down to $E_\nu$ and on the right-hand side the descent datum is given by 
\begin{equation*}
(\xi, h)\rightarrow (\omega_{\Omega, {E_{\nu}}}(\xi), \wt{w}h),  \quad  h\in \wt G(\BA_f),
\end{equation*}
where $\wt{w}$ is the special element  in the center of $\wt{G}(\BQ_{p})$,  which is considered as an element in $\wt G(\BA_f)$ (special relative to $(\Phi^{+}, E_{\nu})$ for the uniformizer $\varpi_{\nu}$,as in Construction \ref{consemil}).
\end{corollary}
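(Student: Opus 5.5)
The corollary will be deduced from Theorem \ref{maintilde} by passing to generic fibers; no new moduli-theoretic input is needed.

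First, by part (i) of Theorem \ref{maintilde}, $\wt\CA_{\bK_{\wt G}}$ is projective and flat over $O_{E,(\nu)}$. Its generic fiber is $\wt\CA_{\bK_{\wt G}, E}$. Base change to $O_{\breve E_\nu}$ and form the formal completion along the special fiber. Berthelot's rigid generic fiber of this formal completion is then the analytification
\[
(\wt\CA_{\bK_{\wt G},E}\times_{\Spec E}\Spec\breve E_\nu)^{\rm rig}.
\]
This uses that the scheme is proper, so the formal completion recovers the whole rigid space and not just a tube. Hence applying the rigid generic fiber functor to the left-hand side of the isomorphism in part (ii) gives exactly the left-hand side of the corollary.

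Next, apply the same functor to the right-hand side of part (ii). By Theorem \ref{maintilde}, the quotient $\wt J(\BQ)\backslash[\cdots]$ is a formal scheme. For $\bK^p_\U$ small enough, it is a finite disjoint union of quotients $\Gamma\backslash(\widehat\Omega_{F_{v_0}}\widehat\otimes O_{\breve E_\nu})$ by discrete cocompact subgroups $\Gamma\subset\PGL_2(F_{v_0})$ that act freely. The rigid generic fiber of $\widehat\Omega_{F_{v_0}}$ is Drinfeld's $\Omega_{F_{v_0}}$, equivariantly for $\PGL_2(F_{v_0})$. Formation of the generic fiber commutes with the free, properly discontinuous quotient, since it can be computed locally on a $\Gamma$-stable affine covering. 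It also commutes with the extension of scalars to $O_{\breve E_\nu}$. This identifies the generic fiber of the right-hand side with $\wt J(\BQ)\backslash[(\Omega_{F_{v_0}}\times_{{\rm Sp} F_{v_0}}{\rm Sp}\,\breve E_\nu)\times\wt G(\BA_f)/\bK_{\wt G}]$.

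Finally, I check that the extra structure carries over by functoriality of the generic fiber. The Hecke correspondences for $\wt G^{v_0}(\BA_f)$ on both sides are morphisms of formal schemes (finite étale), and they are compatible by (ii); so their generic fibers are compatible. The descent data are semilinear isomorphisms, so they pass to generic fibers as well. The natural descent datum on $\widehat\Omega_{F_{v_0}}\times\Spf O_{\breve E_\nu}$ relative to $O_{E_\nu}$ induces $\omega_{\Omega,E_\nu}$ on $\Omega_{F_{v_0}}\times{\rm Sp}\,\breve E_\nu$. Multiplication by $\wt w$ is unchanged. On the left, the natural descent datum of the formal completion induces the one coming from the $E_\nu$-structure of $\wt\CA_{\bK_{\wt G},E}\otimes E_\nu$.

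The only step requiring care is the compatibility of the rigid generic fiber with the quotient by $\wt J(\BQ)$. I would handle it by reducing, as above, to finitely many free quotients by torsion-free cocompact discrete subgroups. This is where the smallness of $\bK^p_\U$ is used.
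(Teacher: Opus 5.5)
Your proposal is correct and is exactly the deduction the paper has in mind. The paper states the corollary as an immediate consequence of Theorem \ref{maintilde}, obtained by taking rigid generic fibers of the integral isomorphism: projectivity of $\wt\CA_{\bK_{\wt G}}$ identifies the left side with the analytification, and the right side reduces to Mumford-type quotients $\Gamma\backslash\widehat\Omega_{F_{v_0}}$ by images $\Gamma\subset\PGL_2(F_{v_0})$, with the Hecke and descent compatibilities following by functoriality of the generic fiber as you describe.
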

\begin{remark}
We therefore see that $\wt{\CA}_{\bK_{\wt G}}$ has $p$-adic uniformization at all places $\nu$ of $E$ over $v_{0}$. This is quite remarkable, when compared to the $p$-adic uniformization theorem  for  fake unitary groups of \cite{RZ}. Indeed, to simplify let us take $F=\BQ$  in \emph{loc. cit.}. Then for $n\geq 3$, the reflex field $E$ is a quadratic extension of $\BQ$ in which $p$ is split. At one of the two places of $E$ above $p$, the Shimura variety has $p$-adic uniformization; at the other place the Shimura variety does not have $p$-adic uniformization (the special fiber contains more than one isogeny class).  
\end{remark}
As usual, this global uniformization theorem is a consequence of a corresponding local result, which is the conjunction of Proposition \ref{LS-weil-special} and \ref{LS-weil-banal}.

\subsection{The Shimura variety for the unitary group}\label{ss:staU} Our second main result concerns  the Shimura curve associated to the unitary group $\U$ and the Shimura datum ${X}_{\U}$ given by  the  conjugacy class of homomorphisms
\begin{equation}\label{shi-da}
h_{\U}\colon \Res_{\BC/\BR}(\BG_m)\longrightarrow\U_{\BR} 
\end{equation}
which we now define. Fix a CM type $\Phi^+$ for $K/F$ with $\varphi_0\in\Phi^+$. We choose for each $\varphi\in \Phi^+$ a $\BC$-basis of $V\otimes_{K, \varphi}\BC$  such that the anti-hermitian form is given by $\diag(i\cdot 1_{r_\varphi}, (-i)\cdot 1_{r_{\bar\varphi}})$. Then  we can write 
\begin{equation*}\label{shi-daprod}
\U\otimes_\BQ\BC=\prod\nolimits_{\Phi^+} \GL_2/\BC . 
\end{equation*}
 Correspondingly $h_{\U}=(h_\varphi)_{\varphi\in\Phi^+}$. We set
\begin{equation}\label{shi-daprod}
h_\varphi(z)=
\begin{cases}
\begin{aligned}
\diag(1, z/\bar z),  &\quad \varphi=\varphi_0 \\
1  , &\quad \varphi\neq\varphi_0.\\
\end{aligned}
\end{cases}
\end{equation}
The associated reflex field is $E(\U, X_{\U})=K$ (embedded via $\varphi_0$ in $\bar\BQ$). 
For an open compact subgroup $\bK_{\U} \subset \U(\BA_f)$, there is a Shimura variety ${\rm Sh}_{\bK_{\U}}(\U, X_{\U})$ over $K$, 
whose complex points are given by 
\begin{equation*}
{\rm Sh}_{\bK_{\U}}(\U, X_{\U})(\BC) \simeq \U(\BQ) \bs [\Omega_\BR \times \U(\BA_f)/{\bK_{\U}}].
\end{equation*}
Note that this Shimura variety is of abelian type but not of PEL type.   

We continue to assume that the place $v_0$ of $F$ remains prime in $K$ and that the anti-hermitian space $V_{v_0}$ is non-split.  Assume that $\bK_{\U}$ is of the form $\bK_{\U}=\bK_{\U}^p\cdot\bK_{\U, p}^{v_0}\cdot\bK_{\U, v_0}$, where $\bK_{\U, v_0}=\U_{v_0}(\BQ_p)$  (recall that, by our assumption, this is a compact group), where $\bK_{\U, p}^{v_0}=\prod_{v\neq v_{0}}\bK_{\U, v}\subset\prod_{v\neq v_{0}}\U_{v}(\BQ_{p})$ is a product of parahoric stabilizers of the fixed almost selfdual lattice $\Lambda_v$, and where $\bK_{\U}^{p}\subset \U(\BA_{f}^p)$ is a sufficiently small open compact subgroup. We consider the \emph{canonical integral model} $\CA_{\bK_{\U}}$ of this Shimura variety of abelian type over $O_{K_{v_{0}}}$, in the sense of   \cite{PR}.

 \begin{remark}
 The canonical integral model is known to exist when $v_0$ is unramified and $p\neq 2$, cf. \cite{DY}. When $v_0$ is ramified or $p=2$, we will assume the existence of the canonical integral model.  We refer to Remark \ref{cani} for a more thorough discussion.  
 \end{remark}

We introduce the following finite extension of $K$ contained in $\bar\BQ$,
\begin{equation}\label{defE}
E(\varphi_0)=\bigcap\nolimits_{\varphi_0\in \Phi^{\prime, +}}E_{\Phi^{\prime,+}}K,
\end{equation}
(intersection of the join with $K$ of the reflex fields of all CM-types $ \Phi^{\prime, +}$ of $K$ containing $\varphi_0$). 
\begin{remark}\label{newfield}
If $F=\BQ$, then $E(\varphi_0)=K$. Indeed, then $ \Phi^{\prime, +}= \Phi^{ +}$ is unique and $E_{\Phi^{ +}}K=K$. If $F$ is a quadratic extension, then there is another possibility for $ \Phi^{\prime, +}$ besides  $\Phi^{ +}$; then $ E_{\Phi^{\prime, +}}K= E_{\Phi^{ +}}K$ is a quadratic extension of $K$ and hence $E(\varphi_0)$ is a quadratic extension of $K$. 
\end{remark}
Here now is the second main result of this paper. 
\begin{theorem}\label{unifor-ab-intro}
Let $(\CA_{\bK_{\U}})^\wedge$ be the formal completion of $\CA_{\bK_{\U}}$ along its special fiber, which is a formal scheme over $\Spf O_{K_{v_{0}}}$. Then there exists an isomorphism of formal schemes over $\Spf O_{E(\varphi_0)_{\nu}}$,
\begin{equation*}
(\CA_{\bK_{\U}})^\wedge\times_{\Spf O_{K_{v_{0}}}} \Spf O_{E(\varphi_0)_{\nu}} \simeq   {J}_{\U}(\BQ)\bs\big[\big(\widehat{\Omega}_{F_{v_{0}}} \times_{\Spf O_{F_{v_{0}}}} \Spf O_{E(\varphi_0)_{\nu}}\big) \times
 \U(\BA_f)/\bK_{\U}\big] .
\end{equation*}
The quotient on the right hand side is defined using an identification of the adjoint group ${J}_{\U, {v_{0}}, {\rm ad}}(\BQ_{p})$ with $\mathrm{PGL}_2(F_{v_{0}})$ and through an action of ${J}_{\U}(\BQ)$ on $\U(\BA_f)/{\bK}_{\U}$.  For varying $\bK_{\U}^{v_{0}}$, this isomorphism is compatible with the action of $\U^{v_{0}}(\BA_{f})$ through Hecke correspondences on both sides.
\end{theorem}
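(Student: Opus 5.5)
The plan is to deduce Theorem \ref{unifor-ab-intro} from Theorem \ref{maintilde} via the product decomposition $\wt G\simeq \U\times Z^\BQ$ of \eqref{tildeG}. At the level of Shimura data, $(\wt G, X_{\wt G})=(\U,X_\U)\times(Z^\BQ,X_{Z^\BQ})$ once $\Phi^+$ is chosen compatibly with $\varphi_0$ and $r$, and hence
\[
{\rm Sh}_{\bK_{\wt G}}(\wt G,X_{\wt G})\simeq {\rm Sh}_{\bK_\U}(\U,X_\U)\times {\rm Sh}_{\bK^\circ_{Z^\BQ}}(Z^\BQ,X_{Z^\BQ})
\]
over $E$. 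The second factor is a finite set of points: its integral model is the finite étale stack $\CA_0^{[\Lambda_0]}$ over $O_{E_{\Phi^+}}$. The uniqueness (canonicity) of integral models in the sense of \cite{PR} should give, after base change to $O_{E,(\nu)}$, an isomorphism $\wt\CA_{\bK_{\wt G}}\simeq \CA_{\bK_\U}\times \CA_0^{[\Lambda_0]}$. Here one checks that $\wt\CA_{\bK_{\wt G}}$ is itself canonical, since it is a normal flat model with the extension property for the relevant local Shimura datum, using Theorem \ref{maintilde}(i). I would first establish this locally, as is done in \S\ref{s:ILSV}. The integral local Shimura variety for $(\wt G_{v_0},\mu)$ is the Rapoport--Zink space of RSZ CM-triples (\S\ref{s:formRSZ}). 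The Rapoport--Zink space of the torus $Z^\BQ$ with its Weil descent datum (\S\ref{s:Cmod}) is a discrete set over $O_{\breve E_{\Phi^+,\nu}}$ on which Frobenius acts via the special element ${\frak r}_{\Phi^+,E_\nu}(\varpi_\nu)$. Dividing the former by the latter identifies the integral local Shimura variety of $\U_{v_0}$ with $\widehat\Omega_{F_{v_0}}\times \U_{v_0}(\BQ_p)/\bK_{\U,v_0}$, with a descent datum twisted by the $\U$-component of the special element.

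For the global statement I would take Theorem \ref{maintilde}(ii), split $\wt J(\BQ)=J_\U(\BQ)\times Z^\BQ(\BQ)$ and $\wt G(\BA_f)/\bK_{\wt G}=\U(\BA_f)/\bK_\U\times Z^\BQ(\BA_f)/\bK^\circ_{Z^\BQ}$, and match the $Z^\BQ$-parts with the points of $\CA_0^{[\Lambda_0]}$ over $O_{\breve E_\nu}$. Then $(\CA_{\bK_\U})^\wedge\times O_{\breve E_\nu}$ is isolated as a union of connected components, namely those lying over a fixed point of the zero-dimensional factor. This gives the desired uniformization over $O_{\breve E_\nu}$, compatibly with the action of $\U^{v_0}(\BA_f)$ through Hecke correspondences, since these act only on the first factor.

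The remaining point, which I expect to be the main obstacle, is descending from $O_{\breve E_\nu}$ to $O_{E(\varphi_0)_\nu}$. The descent datum of Theorem \ref{maintilde} is $(\xi,h)\mapsto(\omega_\Omega(\xi),\wt w h)$, and $\wt w$ has a nontrivial $Z^\BQ$-component that depends on $\Phi^+$. To handle this, I would vary $\Phi^{\prime,+}\ni\varphi_0$ and apply the above for each such choice, obtaining isomorphisms defined over $O_{(E_{\Phi',+}K)_\nu}$ whose $\U$-parts do not depend on the auxiliary torus. I would then show that the resulting isomorphisms for $\U$ agree. They are determined by the $\U$-component of the special element, which should be independent of $\Phi'^{+}$; I expect this can be checked using the explicit formula of Construction \ref{consemil}. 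They would therefore be Galois-equivariant for the group generated by all ${\rm Gal}(\bar\BQ_p/(E_{\Phi',+}K)_\nu)$, and so descend to the intersection $E(\varphi_0)_\nu$. The delicate part is the independence of the $\U$-part of the identification from the choice of $\Phi'^{+}$, and the use of canonicity of $\CA_{\bK_\U}$ to identify it with a union of components of the RSZ model in each case.
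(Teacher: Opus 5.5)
Your overall route is the paper's (proof of Theorem \ref{unifor-para}). You split the canonical RSZ model as $\CA_{\bK_\U}\times\CA_0^{[\Lambda_0]}$, uniformize it by Theorem \ref{main} in the form of Corollary \ref{main-cor}, and read off the $\U$-factor from the fibres over $\CA_0^{[\Lambda_0]}\simeq Z^\BQ(\BQ)\bs Z^\BQ(\BA_f)/\bK^\circ_{Z^\BQ}$. You then vary $\Phi'^{+}\ni\varphi_0$ to reach $E(\varphi_0)_\nu$. However, you have misplaced the special element, and this leaves the crucial descent step unjustified. The descent datum composes both framings $\rho_0$ and $\rho$ with Frobenius, so $\wt w$ sits in $\wt G=G\times_{\BG_m}Z^\BQ$ as $(\wt w,\wt w)$. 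Its image under $\wt G\simeq \U\times Z^\BQ$ is $(1,\wt w)$, so its $\U$-component is trivial, and the descent datum reads $(\xi,h,z)\mapsto(\omega_{\Omega,E_\nu}(\xi),h,\wt w z)$. There is therefore no ``twist by the $\U$-component'' whose independence of $\Phi'^{+}$ needs checking. What you must show instead is that the datum on the $\U$-side is untwisted: a nontrivial twist on $\U(\BA_f)/\bK_\U$ would give a twisted form of the right-hand side, not the one in the statement.

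The mechanism, already for a single $\Phi'^{+}$, has two parts.
\begin{enumerate}
\item Under \eqref{isCM}, multiplication by $\wt w$ is exactly the natural descent datum of $\CA_0^{[\Lambda_0]}$ down to $O_{E_\nu}$, because the second projections are compatible (the local analogue is Proposition \ref{LS-tori-weil}). So all of $\wt w$ is absorbed by the torus factor.
\item This datum carries the fibre over $z$ to the fibre over $\wt w z$, so looking at one fibre is not enough. You need the fibrewise isomorphism \eqref{AU-unifor} to be independent of the fibre, which follows from $Z^\BQ(\BA_f)$-equivariance and transitivity on both sides of \eqref{isCM}.
\end{enumerate}
Together these give the isomorphism over $O_{E_\nu}$ with the plain datum $\omega_{\Omega,E_\nu}\times\mathrm{id}$.

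Varying $\Phi'^{+}$ plays no role in removing the $Z^\BQ$-component. It only serves the second descent, from the fields $(E_{\Phi'^{+}}K)_\nu$ to their intersection. There the comparison the paper makes (\S\ref{proofILSV}) is on the torus side: over the composite $L$ of two such fields, $\wt w_E^{f_L/f_E}$ and $\wt w_{E'}^{f_L/f_{E'}}$ differ by a unit, since both have $K/F$-norm $p^{f_L}$ up to units.

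Finally, Hecke equivariance for $\U^{v_0}(\BQ_p)$ requires integral models at non-parahoric banal level. The paper constructs these by normalization and deduces finite \'etale transition maps from the cartesian square \eqref{Cart-diagmo}; your plan takes them for granted.
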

We note that the extension of scalars from $K$ to $E(\varphi_0)$ is a result of our proof, in the optimal result it should not appear.
\begin{conjecture}
The isomorphism in Theorem \ref{unifor-ab-intro} is the base change by $\Spf O_{E(\varphi_0)_\nu}\to \Spf O_{K_{v_{0}}}$ of an isomorphism
\begin{equation*}
(\CA_{\bK_{\U}})^\wedge \simeq   {J}_{\U}(\BQ)\bs\big[\big(\widehat{\Omega}_{F_{v_{0}}} \times_{\Spf O_{F_{v_{0}}}} \Spf O_{{K_{v_0}}}\big) \times
 \U(\BA_f)/\bK_{\U}\big] .
\end{equation*}
\end{conjecture}
We also extend this result to the case when $\bK_{\U, p}^{v_0}$ is not necessarily parahoric by constructing an integral model $\CA_{\bK_{\U},O_{E(\varphi_0)_{\nu}}}$ over $O_{E(\varphi_0)_{\nu}}$ and proving a corresponding $p$-adic uniformization theorem in this context, cf. Theorem \ref{unifor-para}. 

  As usual, the global results about integral models of Shimura curves follow from corresponding local results. We formulate as follows the main local result behind the previous global result, which we find quite striking. 
\begin{theorem}\label{Thm;locU}
Let $F/\BQ_p$ be a finite extension. Let $K/F$ be a quadratic extension, and let $\U'$ be an anisotropic $K/F$-hermitian group of size $2$ over $F$. Fix an embedding $\varphi_0: K\to\bar\BQ_p$. Let $\U=\Res_{F/\BQ_p}(\U')$ and let $\mu$ be defined as in the global case using a fixed local CM type $\Phi^+$ with $\varphi_0\in\Phi^+$, with associated reflex field $K\subset\bar\BQ_p$. Let $E(\varphi_0)$ be the extension of $K$ in $\bar\BQ_p$, defined as in the global case. Also, let $\bfK_{\U}=\U(\BQ_p)$ (a quasi-parahoric subgroup). Let $b$ be a representative of the unique element $[b]\in B(\U, \mu^{-1})$. Let $\mathcal{M}^{\mathrm{int}}_{\mu, b, \bfK_{\U}}$ be the associated integral local Shimura variety, cf. \cite{PRloc}. Then $\mathcal{M}^{\mathrm{int}}_{\mu, b, \bfK_{\U}}$ is a formal scheme over $O_K$ and there is an isomorphism 
\[
\mathcal{M}^{\mathrm{int}}_{\mu, b, \bfK_{\U}}\times_{\Spf O_{{K}}} \Spf O_{E(\varphi_0)} \simeq \wh\Omega_F\times_{\Spf O_{F}} \Spf O_{E(\varphi_0)}. 
\]
\end{theorem}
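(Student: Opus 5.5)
The idea is to reduce Theorem \ref{Thm;locU} to the two local moduli problems already treated in the paper: the Rapoport--Zink space of RSZ CM-triples, and the Rapoport--Zink space of the torus $Z^{\BQ}$ with its Weil descent datum. We work with the local analogue of the product decomposition \eqref{tildeG}, namely $\wt G=\U\times Z^{\BQ}$ over $\BQ_p$, with cocharacter $\tilde\mu=(\mu,\mu_{Z})$, where $\mu_Z$ comes from a local CM type $\Phi^{\prime,+}\ni\varphi_0$. We also take $\tilde b=(b,b_Z)$ and $\bK_{\wt G}=\bK_\U\times\bK^\circ_{Z}$. For each such choice of $\Phi^{\prime,+}$, the RSZ local datum is of PEL type. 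Its integral local Shimura variety is therefore the Rapoport--Zink space of RSZ CM-triples, over $O_{E_{\Phi^{\prime,+}}K}$. By the local results of \S\ref{s:formRSZ} (Proposition \ref{LS-weil-special}), this space is isomorphic, after base change, to $\wh\Omega_F\times \mathcal M_{Z}$. Here $\mathcal M_Z$ is the torus RZ space of \S\ref{s:Cmod}, which is a discrete set $Z^{\BQ}(\BQ_p)/\bK^\circ_Z$ of copies of $\Spf O_{\breve E}$, with Weil descent datum given by the special element.

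The key step is a product formula for integral local Shimura varieties in the sense of \cite{PRloc}, available since these are characterized uniquely as v-sheaves:
\[
\mathcal M^{\rm int}_{\tilde\mu,\tilde b,\bK_{\wt G}}\simeq \mathcal M^{\rm int}_{\mu,b,\bK_\U}\times_{\Spf O_{\breve K}}\mathcal M^{\rm int}_{\mu_Z,b_Z,\bK^\circ_Z}
\]
over the compositum of reflex fields. Since $\mathcal M^{\rm int}$ for the torus is a disjoint union of copies of the base, we can extract $\mathcal M^{\rm int}_{\mu,b,\bK_\U}$ as a single connected component. Comparing with the RSZ description then gives
\[
\mathcal M^{\rm int}_{\mu,b,\bK_\U}\times\Spf O_{E_{\Phi^{\prime,+}}K}\simeq\wh\Omega_F\times\Spf O_{E_{\Phi^{\prime,+}}K}.
\]
This holds for every CM type $\Phi^{\prime,+}$ containing $\varphi_0$, and the isomorphism is compatible with the natural Weil descent data. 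That $\mathcal M^{\rm int}_{\mu,b,\bK_\U}$ is representable by a formal scheme over $O_K$ follows by descent from one such base change.

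To get down to $O_{E(\varphi_0)}$, we compare the Weil descent data on the two sides over each field $E_{\Phi^{\prime,+}}K$. On the RSZ side, the descent datum is $(\omega_\Omega,\wt w)$ with $\wt w$ the special element. Its $\U$-component must act trivially on the chosen component, since $\bK_\U=\U(\BQ_p)$ is the whole group and absorbs it. The $Z$-part only permutes torus components. Hence the isomorphism is equivariant for the descent datum down to $E_{\Phi^{\prime,+}}K$. We then use Galois descent along the various $E_{\Phi^{\prime,+}}K$: the isomorphisms are unique (rigidity, e.g. via automorphisms of $\wh\Omega$ being $\PGL_2(F)$ and $J$-equivariance), so they agree on overlaps. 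They therefore descend to the intersection $E(\varphi_0)$, the field fixed by the group generated by the Galois groups involved.

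I expect the main obstacle to be this last step. It requires showing that the isomorphisms for different $\Phi^{\prime,+}$ coincide after common base change, which amounts to pinning them down canonically, for instance through the $J_\U(\BQ_p)$-equivariance and the identification of $\mathcal M^{\rm int}$ on generic fibers with the period-domain description. A secondary subtlety is the product formula in the quasi-parahoric case, when $K/F$ is ramified and $\bK_\U$ contains a parahoric with index $2$ (Remark \ref{parornot}). There I would first pass to the parahoric cover and then take the quotient by the finite group.
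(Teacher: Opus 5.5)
Your reduction is the paper's own: $\wt G=\U\times\T$ with $\wt\mu=\mu\times\mu_0$, the product formula for integral local Shimura varieties from the functoriality of Pappas--Rapoport, the identification of the two PEL factors with $\wt\CM_{\mathbf K^\circ_{\wt G}}$ and $\CM_{\mathbf K^\circ_{\T}}$, Propositions \ref{LS-weil-special} and \ref{LS-tori-weil}, and extraction of the $\U$-factor as a fiber over the discrete set $\T(\BQ_p)/\mathbf K^\circ_{\T}$.

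Your Weil-descent step is too quick, for two reasons. The descent datum carries the fiber over $t$ to the fiber over $\wt w t$. So you need the fiberwise isomorphism $\mathcal M^{\mathrm{int}}_{\mu,b,\mathbf K_\U,O_{\breve E}}\simeq\hat\Omega_F\times_{\Spf O_F}\Spf O_{\breve E}$ to be independent of $t$; the paper gets this from the $\T(\BQ_p)$-equivariance of the projections and transitivity. You also need the descent datum that the product formula puts on the torus factor, namely translation by $w_{r_0}^{f_E/f_{E_0}}$, to agree with translation by $\wt w$ on $\T(\BQ_p)/\mathbf K^\circ_{\T}$. This holds because both elements have $K/F$-norm $p^{f_E}$ up to a unit.

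The genuine gaps are in the last two steps.

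First, representability over $O_K$ does not ``follow by descent from one such base change''. $\mathcal M^{\mathrm{int}}_{\mu,b,\mathbf K_\U}$ is not quasi-compact, and $O_{E(\varphi_0)}/O_K$ is a finite, in general ramified, extension. A descent datum on a formal scheme along such an extension need not be effective. The paper supplies the missing input: the canonical bundle of the special fiber is ample on every closed subscheme of finite type. For $\hat\Omega_F$ this holds because its quotients by sufficiently small discrete cocompact subgroups of $\PGL_2(F)$ are Deligne--Mumford stable curves over $O_F$. This ampleness makes the finite descent datum from $O_{E(\varphi_0)}$ to $O_K$ effective.

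Second, for the passage to $E(\varphi_0)$ you invoke a rigidity statement (uniqueness of the isomorphism via $\Aut(\hat\Omega_F)=\PGL_2(F)$ and $J$-equivariance) that you neither prove nor make precise. In particular, you would have to show that the RSZ constructions for $\Phi^{+}_0$ and $\Phi^{+,\prime}_0$ induce the same action of $J_b(\BQ_p)$ on $\hat\Omega_F$. If completed, that would be an alternative route, but the paper avoids rigidity entirely. It reduces agreement over $O_L$, $L=EE'$, to agreement of the Weil descent data induced over $O_{\breve L}$. These are $(\xi,g)\mapsto(\omega_{\Omega,L}(\xi),\wt w_E^{f_L/f_E}g)$ and the same formula with $\wt w_{E'}^{f_L/f_{E'}}$. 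Both elements have $\Nm_{K/F}$ equal to $p^{f_L}$ up to a unit, so they have the same class in $\T(\BQ_p)/\mathbf K^\circ_{\T}$ and the two data coincide. This explicit comparison of special elements is the concrete idea your sketch lacks.
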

As in the global case, we conjecture that this isomorphism comes by base change from an isomorphism 
\[
\mathcal{M}^{\mathrm{int}}_{\mu, b, \bfK_{\U}} \simeq \wh\Omega_F\times_{\Spf O_{F}} \Spf O_{K}. 
\]
\begin{remark}
We therefore obtain an explicit identification of the  integral local Shimura variety $\mathcal{M}^{\mathrm{int}}_{\mu, b, \bfK}$ for the case of the unitary group of an anisotropic hermitian space of dimension two over $F=\BQ_p$:
\begin{equation}
\mathcal{M}^{\mathrm{int}}_{\mu, b, \bfK}=\wh\Omega_{\BQ_p}\times_{\Spf \BZ_p}\Spf O_K .
\end{equation}
Previously, there were only two (indecomposable) cases, when an integral local Shimura variety $\mathcal{M}^{\mathrm{int}}_{\mu, b, \bfK}$ was explicitly known:   the Lubin-Tate case corresponding to $G=\GL_n$ and $\mu=(1, 0,\ldots, 0)$ and the unique basic element $[b]\in B(G, \mu^{-1})$ and the maximal compact subgroup $\bfK=\GL_n(\BZ_p)$, and the Drinfeld case  corresponding to $G=D^\times_{1/n}$ and $ \mu=(1, 0,\ldots, 0)$ and the unique basic element $[b]\in B(G, \mu^{-1})$ and the maximal compact subgroup $\bfK=O^\times_{D_{1/n}}$. In these cases, the integral local Shimura varieties are of the form
\begin{equation*}
\mathcal{M}^{\mathrm{LT},\mathrm{int}}_{\mu, b, \bfK}\times_{\Spf \BZ_p} \Spf \breve\BZ_p =\Spf \breve \BZ_p[[T_1,\ldots,T_{n-1}]]\times\BZ
\end{equation*}
and 
\begin{equation*}
\mathcal{M}^{\mathrm{Dr},\mathrm{int}}_{\mu, b, \bfK}\times_{\Spf \BZ_p} \Spf \breve \BZ_p =(\wh\Omega^n_{\BQ_p}\times_{\Spf \BZ_p}\Spf \breve \BZ_p)\times\BZ 
\end{equation*}
with explicit Weil descent data down to $\BZ_p$. Note that in these cases, the  integral local Shimura varieties have infinitely many connected components as formal schemes over  ${\breve \BZ_p}$.

Note that, by definition, integral local Shimura varieties are moduli spaces of shtukas, cf. \cite{Sch}. However, in order to prove the explicit identifications of the above integral local Shimura varieties, we have to pass through moduli spaces of $p$-divisible groups. This is especially true of the Drinfeld case, in which case we have to appeal to \cite{Dr} (see \cite{BC}, \cite{RZ} for expositions of Drinfeld's paper).  It would be interesting to prove the explicit identifications directly from the definitions. Possibly  the papers by S.~Bartling \cite{Ba} and A.~Vanhaecke \cite{Van} are a step in this direction.

\end{remark}

\section{CM-types and CM-triples}\label{s:CM}

In this section we recall some  notions that we will use throughout the paper. 

\subsection{Special and banal local CM-types}\label{ss:specialbanal} 

We fix a prime number $p$ and an algebraic closure $\bar \BQ_p$  
of $\BQ_p$. Let $F$ be a finite field extension of $\BQ_p$,  with residue class field $\kappa_F$ of cardinality $q$. 
We set $d=[F:\BQ_p]$, 
 $f = [\kappa_F : \mathbb{F}_p]$ and define $e$ through $d = ef$.

We  let $K/F$ be  an \'etale algebra of degree $2$. We
denote the non-trivial automorphism of $\Gal(K/F)$ by $a \mapsto
\bar{a}$.  In the case where $K/F$ is a ramified extension of local fields
(ramified case) we choose a prime element $\Pi \in O_K$ 
such that
$\bar{\Pi} = - \Pi$. Then $\pi = -\Pi^2$ is a prime element 
of $F$. In the case where $K/F$ is an unramified extension of local
fields (unramified case) or where $K \cong F \times F$ (split case) we
choose a prime element $\pi \in F$ and we set $\Pi = \pi$. 

Let $m\in\{1,2\}$. Let $r$ be a generalized local CM-type of rank $m$ relative to
$K/F$ in the sense of \cite[Definition 2.1]{KRnew}. Let $\Phi=\Hom_{\text{$\BQ_p$-Alg}}(K, \bar\BQ_p)$, then $r$ is a function 
\begin{equation}\label{loc-CM-type}
r: \Phi\lra \BZ_{\geqslant 0}, \qquad \,\, \varphi \mapsto r_\varphi,
\end{equation}
such that $r_{\varphi}+r_{\bar\varphi} =m$ for all $\varphi\in \Phi$. Here $\bar\varphi(a) = \varphi(\bar a)$ (we recall that
 $a\mapsto \bar a$ is the non-trivial automorphism of $K$ over $F$). In this paper, we refer to a  generalized local CM-type of rank $m$ relative to
$K/F$ simply as a local CM-type of rank $m$. 
The reflex field $E_r$ of $r$ is the subfield of $\bar\BQ_{p}$ fixed by 
$$\Gal(\bar\BQ_{p}/E_r):=\{ \tau\in \Gal(\bar\BQ_{p}/\BQ_{p})\mid r_{\tau\varphi} = r_\varphi, \ \forall \varphi\}.$$
Let  $O_{E_r}$ be the ring of integers of $E_{r}$.

When we fix an embedding $\varphi_0:F\to \bar \BQ_p$, we denote by $\varphi_0, \bar\varphi_0$ the two extensions of $\varphi_0$ to $K$ by abuse of notations.  

\begin{definition}\label{loc-special-type} 
A local CM-type\index{special local CM-type} $r$ of rank $2$  is called {\it special} relative to  $\varphi_0: F\to \bar\BQ_p$ if $K/F$ is a field extension and 
$$
r_{\varphi_0}=r_{\bar\varphi_0}=1, \text{ and } r_\varphi\in \{ 0, 2 \}, \quad \text { for all } \varphi\in \Phi\setminus \{\varphi_0, \bar\varphi_0 \} . 
$$
\end{definition}
 If $ r_\varphi\in \{ 0, 2 \}$,  for all $\varphi\in \Phi$, then we call $r$ a {\it banal} CM-type. A local CM-type\index{special local CM-type} of rank $1$ will always be regarded as a banal CM-type. There is a bijection between local CM-types of rank $1$ and classical CM-types, via $\Phi^+(r)=\{ \varphi\in\Phi\mid r_\varphi=1\}$.

\subsection{The Kottwitz and the Eisenstein conditions}\label{ss:kotteis}
Let $S$ be an $O_{E_{r}}$-scheme, and $\CL$ be a locally free
$\CO_S$-module, equipped with an action
\begin{equation*}
\iota: O_K \longrightarrow \End_{\mathcal{O}_S} \mathcal{L}
  \end{equation*}
of $O_K$.

We say that $(\mathcal{L}, \iota)$ satisfies the Kottwitz condition $({\rm KC}_r)$ relative to $r$ if the identity of polynomials with
coefficients in $\CO_S$ holds,    
\begin{equation}\label{signature.condition}
  {\rm char} (T, \iota (a) \vert \CL) = i\big(\underset{\varphi \in
    \Phi} \prod (T-\varphi(a))^{r_\varphi}\big),\quad \text{ for all
    $a\in O_K$ }, 
\end{equation}
where $i:O_{E_r}\longrightarrow \mathcal O_S$ is the structure homomorphism.  We say $(\mathcal{L}, \iota)$ satisfies the rank condition $({\rm RC}_r)$ if the condition in \cite[(2.2.5)]{KRZI} is satisfied  and say $(\mathcal{L}, \iota)$ satisfies the Eisenstein condition $({\rm EC}_r)$ if $(\mathcal{L}, \iota)$ satisfies $({\rm RC}_r)$ and the conditions in \cite[(2.2.12)]{KRZI} are fulfilled. Since we will not use the explicit form of the definition, we refer  to \cite{KRZI} for a thorough discussion.

\subsection{Local  CM-pairs and CM-triples}\label{ss:loctriples}
 Let $r$ be a local CM-type of rank $m\in\{1,2\}$, with reflex field $E_{r}$. Let $S$ be an $O_{E_{r}}$-scheme such that $p$ is locally nilpotent i.e. a scheme over $\Spf O_{E_{r}}$.
A {\it local CM-pair of type} $r$ with respect to $K/F$ over $S$ is a pair $(X, \iota)$ where
$X$ is a $p$-divisible group over $S$ of height $2md$ and dimension $md$ and  
$\iota$ is an $\mathbb{Z}_p$-algebra homomorphism 
\begin{equation*}
  \iota: O_K \longrightarrow \End X 
\end{equation*}
such that the rank condition $({\rm RC}_r)$ is satisfied for the
induced action of $O_K$ on $\Lie X$. In the split case
$O_K = O_F \times O_F$ we require moreover that in the induced
decomposition $X = X_1 \times X_2$ each factor is a $p$-divisible
group of height $md$. We say the pair $(X, \iota)$ satisfies the Kottwitz condition $({\rm KC}_{r})$ resp. the Eisenstein condition $({\rm EC}_{r})$ if the 
induced action of $O_K$ on $\Lie X$ satisfies the Kottwitz condition $({\rm KC}_{r})$ resp. the Eisenstein condition $({\rm EC}_{r})$.

To each local CM-pair $(X, \iota)$ we define the
{\it conjugate dual} $(X^{\vee}, \iota^{\wedge})$. Here $X^{\vee}$ is
the dual $p$-divisible group of $X$ but we change the action dual to $\iota$
by the conjugation of $K/F$, i.e.,  $\iota^{\wedge}(a)=\iota^\vee(\bar a)$. We
will denote the conjugate dual simply by $X^{\wedge}$. The  conjugate dual of a CM-pair $(X, \iota)$ of type $r$ is again a local CM-pair of type $r$. If $(X, \iota)$ satisfies the Kottwitz condition $({\rm KC}_r)$, resp., the Eisenstein conditions $({\rm EC}_r)$, then so does its conjugate dual. These assertions follow from \cite[Lemma 2.3.2]{KRZI}.

The notion of a   {\it local CM-triple of type} $r$ with respect to $K/F$ over $S$ was
introduced in \cite{KRnew}.  This is a triple $(X, \iota, \lambda)$,
where $(X, \iota)$ is a local CM-pair of type $r$ over $S$ and
$\lambda: X\longrightarrow X^{\wedge}$ is an anti-symmetric isogeny (also
called a {\it polarization})  such that the corresponding Rosati involution
induces the non-trivial automorphism on $K/F$. In this paper, we will refer to a local CM-triple of type $r$ with respect to $K/F$ as a {\it local CM-triple of type $(K/F, r)$}. We say the local CM-triple $(X, \iota, \lambda)$ satisfies the Kottwitz condition $({\rm KC}_{r})$ resp. the Eisenstein condition $({\rm RC}_{r})$, if the local CM-pair $(X, \iota)$ satisfies the Kottwitz condition $({\rm KC}_{r})$ resp. the Eisenstein condition $({\rm RC}_{r})$. 
\begin{definition}\label{a-p-p}
We say that  the local CM-triple $(X, \iota, \lambda)$ is \emph{principally polarized} if the homomorphism $\lambda \colon X\to X^{\wedge}$ is an isomorphism. We say that $(X, \iota, \lambda)$ is \emph{almost principally polarized} if $K/F$ is unramified and the kernel of $\lambda_{X}$ is contained in $X[\pi]$ and has order $q^2$.
\end{definition}
\subsection{Global and semi-local CM-triples}\label{ss:semiglob}
 Let $K/F$ now be a CM-field. Let $m\in\{1,2\}$. Let $r$ be a {\it generalized CM-type of rank} $m$, i.e., a function 
\begin{equation}\label{GCM}
r: \Phi\lra \BZ_{\geqslant 0}, \qquad \,\, \varphi \mapsto r_\varphi,
\end{equation}
such that $r_{\varphi}+r_{\bar{\varphi}} =m$ for all $\varphi\in \Phi$.  As in the local case, we will simply refer to a generalized CM-type of rank of $m$ as a CM-type of rank $m$. 

The corresponding reflex field $E_r$ is the subfield of $\bar{\BQ}$ fixed by 
$$\Gal(\bar{\BQ} /E_r):=\{ \tau\in \Gal(\bar{\BQ}/\BQ)\mid r_{\tau\varphi} = r_\varphi, \ \forall \varphi\}.$$
Let  $O_{E_r}$ be the ring of integers of $E_r$. 
\begin{definition}\label{glob-special-type}
Let $w$ be an archimedian place of $F$. We say that   a CM-type of rank $2$ is {\it special} with respect to $w$ if  for the extensions $\{\varphi_0, \bar{\varphi}_0\}$ of $w$ we have  $r_{\varphi_0} = r_{\bar{\varphi}_0} = 1$ and  such that  $r_\varphi\in \{0, 2\}$ for $\varphi\notin \{\varphi_0, \bar\varphi_0\}$.
\end{definition}
As in the local case, we can speak of \emph{banal} CM-types. We always regard a CM-type of rank $1$ as a banal CM-type. There is a bijection between  CM-types of rank $1$ and classical CM-types, via $\Phi^+(r)=\{ \varphi\in\Phi\mid r_\varphi=1\}$.

Let $p$ be a prime and let $v\mid p$ be the $p$-adic places of $F$.  We fix an embedding $\nu: 
\bar{\BQ}\rightarrow \bar{\BQ}_{p}$.  Then we obtain a decomposition 
\begin{equation*}\label{Phi-decom}
\Phi=\Hom_{\text{$\mathbb{Q}$-Alg}}(K, \bar{\mathbb{Q}})= \bigsqcup\nolimits_{v\mid p}\Hom_{\text{$\mathbb{Q}_{p}$-Alg}}(K_{v}, \bar{\mathbb{Q}}_{p})= \bigsqcup\nolimits_{v\mid p}\Phi_{v} .
\end{equation*}
 We denote by $r_{v}$ the restriction of $r$ to $\Phi_{v}$ which is a local CM-type of rank $m $ for $\Phi_{v}$. If $r$ is a CM type of rank $1$ which gives rise to a classical CM type $\Phi^{+}=\Phi^{+}(r)$, then $r_{v}$ defines a classical local CM type 
\begin{equation}\label{Phiplusv}
\Phi^{+}_{v}=\Phi^{+}_{v}(r_{v})\subset \Phi_{v}
\end{equation}
such that $\Phi^{+}=\bigsqcup\nolimits_{v\mid p}\Phi^{+}_{v}$.
Let $\nu$ also denote the place of $E_r$ over $p$ induced by the embedding $\nu$. For each $v$, the reflex field  of $r_v$ is a subfield $E_r(v)$ of $E_{r, \nu}$. 

There is the obvious  notion of a \emph{semi-local CM-triple} $(X, \iota, \lambda)$ of type $r$ with respect to
$K \otimes \mathbb{Q}_p/ F \otimes \mathbb{Q}_p$.
Let $S$ be an $O_{E_{r, \nu}}$-scheme such that $p$ is locally nilpotent i.e.
a scheme over $\Spf O_{E_{r,\nu}}$. 
Let  $(X, \iota)$ be a $p$-divisible
group over $S$ with an action
\begin{equation*}
  \iota: O_K \otimes \mathbb{Z}_p \longrightarrow \End X.  
\end{equation*}
The decomposition
\begin{equation*}
  O_F \otimes \mathbb{Z}_p = \prod\nolimits_{v\mid p} O_{F_{v}}
\end{equation*}
induces the decomposition $X = \prod_{v\mid p} X_{v}$ of the $p$-divisible group $X$. Let $\lambda$ be a polarization of $X$ which induces the conjugation
on $K/F$. Then the decomposition extends to 
\begin{equation}\label{Unif1e}
  (X, \iota, \lambda) =
  \prod\nolimits_{v\mid p} (X_{v}, \iota_{v}, \lambda_{v}). 
\end{equation}
We call $(X, \iota, \lambda)$ a semi-local CM-triple of type $r$ with respect to $K \otimes \mathbb{Q}_p/ F \otimes \mathbb{Q}_p$ if each 
$(X_{v}, \iota_{v}, \lambda_{v})$ is a local
CM-triple of type $r_{v}$ with respect to $K_{v}/F_{v}$.  In the following, we will refer to a semi-local CM-triple $(X, \iota, \lambda)$ of type $r$ with respect to $K \otimes \mathbb{Q}_p/ F \otimes \mathbb{Q}_p$ simply as a {\it CM-triple of type $(K \otimes \mathbb{Q}_p / F \otimes \mathbb{Q}_p, r)$} over $S$. A CM-triple of type $(K\otimes \BQ_{p}/F\otimes\BQ_{p}, r)$ is said to satisfy the conditions $(\mathrm{KR}_{r})$ and $(\mathrm{EC}_{r})$ if each local CM-triple $(X_{v},\iota_{v},\lambda_{v})$ satisfies the conditions $(\mathrm{KR}_{r_{v}})$ and $(\mathrm{EC}_{r_{v}})$.

\section{Formal moduli space of toric CM-triples}\label{s:Cmod}

\subsection{Complex multiplication} Let $K/F$ be a CM-field. Let $\Phi^{+} \subset \Phi=\Hom_{\text{$\mathbb{Q}$-Alg}}(K, \mathbb{C})$
be a classical CM-type. We denote the reflex field of $\Phi^{+}$ by $E$. We define an algebraic
torus $\T$ over $\mathbb{Q}$, with $\BQ$-valued points given by
\begin{equation*}
\T(\mathbb{Q}) = \{a \in K^{\times} \; | \; a \bar{a} \in \mathbb{Q}^{\times} \}. 
\end{equation*}

We recall the reciprocity law. We define the homomorphism
\begin{equation*}
\mu: \mathbb{C}^{\times} \rightarrow (K \otimes_{\mathbb{Q}} \mathbb{C})^{\times} \cong
  \prod\nolimits_{\varphi\in\Phi} \mathbb{C}^{\times}. 
\end{equation*}
The element $\mu(z)$, for $z \in \mathbb{C}$, has component $z$ for
$\varphi \in \Phi^+$ and has component $1$ for $\varphi \notin \Phi^+$.  We find
$\mu \bar{\mu}(z) = 1 \otimes z \in (K \otimes_{\mathbb{Q}} \mathbb{C})^{\times}$.
We obtain a homomorphism of algebraic tori
\begin{equation*}\label{mu}
\mu: \mathbb{G}_{m, \mathbb{C}} \rightarrow \T_{\mathbb{C}}.  
  \end{equation*}
This homomorphism is defined over $E$, and hence we obtain 
\begin{equation*}
\mu: \mathbb{G}_{m,E} \rightarrow \T_E. 
\end{equation*}
From this we define the \emph{reflex norm map} by
\begin{equation}
\mathfrak{r}_{\Phi^{+}, E}: \mathrm{Res}_{E/\mathbb{Q}}( \mathbb{G}_{m,E}) 
\overset{\mu}{\longrightarrow} \mathrm{Res}_{E/\mathbb{Q}} (\T_{E} )
\overset{\Nm_{E/\mathbb{Q}}}{\longrightarrow} \T.  
\end{equation}

We consider over the algebraic closure $\bar{E} = \bar{\mathbb{Q}}$ the set of
tuples $(A, \iota, \bar{\lambda}, \kappa)$, where $(A, \iota)$ is an abelian
variety over $\bar{E}$ of CM-type $\Phi^{+}$, endowed with a
$\mathbb{Q}$-homogeneous polarization $\bar{\lambda}$ such that its Rosati involution induces on $K$
the conjugation over $F$ and an isomorphism
$\kappa: \hat{V}(A) \rightarrow  V_{0}\otimes \mathbb{A}_f$ of
$K \otimes \mathbb{A}_f$-modules. We call a second tuple
$(A', \iota', \bar{\lambda}', \kappa')$ equivalent to
$(A, \iota, \bar{\lambda}, \kappa)$ if there is  a quasi-isogeny
\begin{equation}\label{KM3e}
    \alpha: (A, \iota, \bar{\lambda}) \rightarrow
    (A', \iota', \bar{\lambda}')
    \end{equation}
such that the pull-back of $\bar{\lambda}'$ is $\bar{\lambda}$ and the pull-back of $\bar{\kappa}'$ is $\bar{\kappa}$.  In this case, we also say that $(A, \iota, \bar{\lambda}, \kappa)$ is quasi-isogenous to $(A', \iota', \bar{\lambda'}, \kappa')$.

Let $\mathcal{A}_{\Phi^{+}}$ be the set of tuples $(A, \iota,\bar{\lambda},\kappa)$
up to equivalence.
Let $\sigma \in \Gal(\bar{E}/E)$. Taking the inverse image of
$(A, \iota, \bar{\lambda}, \kappa)$ by
$\hat{\sigma} := \Spec \sigma: \Spec \bar{E} \rightarrow  \Spec \bar{E}$ 
gives a left action $(A, \iota, \bar{\lambda}, \kappa)\mapsto\sigma (A, \iota, \bar{\lambda}, \kappa)$ of $\Gal(\bar{E}/E)$ on $\mathcal{A}_{\Phi^{+}}$. 

We formulate the main theorem of complex multiplication of Shimura and
Taniyama \cite[Theorem  4.19]{De}. 
\begin{theorem}\label{ShimTan} 
  The Galois group $\Gal(\bar{E}/E)$ acts on $\mathcal{A}_{\Phi^{+}}$ via its maximal
  abelian quotient $\Gal({E}^{\rm ab}/E)$. Let
  $e \in (E \otimes \mathbb{A})^{\times}$ and let
  $\mathrm{rec}(e) \in \Gal({E}^{\rm ab}/E)$ be the automorphism given by the
  reciprocity law of class field theory. The following tuples are equivalent:
  \begin{equation*}
    \mathrm{rec}(e) (A, \iota, \bar{\lambda}, \kappa) \equiv 
    (A, \iota, \bar{\lambda}, \mathfrak{r}_{\Phi^+, E}(e_f) \kappa)  ,
    \end{equation*}
    where $e_f$ is the finite part of the id\`ele $e$.  
\end{theorem}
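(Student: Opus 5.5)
This result is the main theorem of complex multiplication of Shimura and Taniyama, in Deligne's adelic formulation. In the paper we would simply quote it from \cite[Theorem 4.19]{De}. If we had to prove it, the plan would go in two stages: first a structural statement that the Galois action factors through an abelian quotient, then an identification of the resulting character by computing it on Frobenius elements.

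\emph{Stage 1 (torsor structure).}
\begin{enumerate}
\item Any two abelian varieties over $\bar E$ of CM-type $\Phi^+$ are $K$-linearly isogenous. The induced $\mathbb{Q}$-homogeneous polarizations differ by an element of $F^{\times}$, which can be absorbed into the isogeny up to the choice of $\kappa$.
\item The group $\T(\mathbb{A}_f)$ acts on $\mathcal{A}_{\Phi^+}$ by changing $\kappa$. We would show this makes $\mathcal{A}_{\Phi^+}$ a principal homogeneous space under $\T(\mathbb{A}_f)/\overline{\T(\mathbb{Q})}$. The stabilizers come from $K$-linear self-quasi-isogenies preserving $\bar\lambda$, which are exactly the elements of $\T(\mathbb{Q})$.
\item The Galois action of $\Gal(\bar E/E)$ commutes with this $\T(\mathbb{A}_f)$-action. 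Since the target group is abelian, the Galois action is given by a continuous homomorphism $\Gal(\bar E/E)\to \T(\mathbb{A}_f)/\overline{\T(\mathbb{Q})}$, and in particular factors through $\Gal(E^{\rm ab}/E)$.
\end{enumerate}

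\emph{Stage 2 (identifying the character).} We must show the homomorphism equals $e\mapsto \mathfrak{r}_{\Phi^+,E}(e_f)$ composed with $\mathrm{rec}$. Both sides are continuous, so by Chebotarev it suffices to check equality on Frobenius elements at a density-one set of finite places $w$ of $E$.
\begin{enumerate}
\item Descend a given tuple to a number field $L\supset E$ over which $A$ has good reduction at a prime $\mathfrak{P}\mid w$, with $\mathfrak{P}$ unramified over $w$.
\item Let $\pi\in K$ be the $K$-linear endomorphism lifting the relative Frobenius of $A \bmod \mathfrak{P}$. Its action on $\wh\RV^{\ell}(A)$ for $\ell\neq p$ computes the image of $\mathrm{Frob}_{\mathfrak P}$.
\item The Shimura--Taniyama formula gives the ideal $(\pi)=\mathfrak{r}_{\Phi^+,E}(\Nm_{L/E}\mathfrak{P})$. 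We would prove it by computing the kernel of Frobenius on $\Lie A$ using \eqref{kottcond}, i.e.\ the action of $O_K$ on the Dieudonn\'e module at $p$.
\item The identity $\pi\bar\pi=\mathrm{N}\mathfrak{P}$ shows that $\pi\in\T(\mathbb{Q})$.
\end{enumerate}

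Finally, take for $e$ an idèle that is a uniformizer at $w$ and $1$ elsewhere. Compare $\mathfrak{r}_{\Phi^+,E}(e_f)$ with $\pi^{-1}$ modulo $\T(\mathbb{Q})$ componentwise:
\begin{itemize}
\item away from $p$, by the Tate-module description of Frobenius;
\item at $p$, by the ideal computation above.
\end{itemize}
The archimedean component of $e$ plays no role because we quotient by the closure of $\T(\mathbb{Q})$.

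\emph{Main obstacle.} The hardest step is the $p$-adic one. The Shimura--Taniyama formula only determines $\pi$ up to a unit of $O_K$, and one must check that this unit ambiguity is exactly absorbed by $\overline{\T(\mathbb{Q})}$ together with the choice of idèle. This is where Deligne's adelic reformulation, as opposed to Shimura's ideal-theoretic one, does the real work, and we would follow \cite{De} closely there.
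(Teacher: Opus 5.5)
Quoting \cite[Theorem 4.19]{De} is exactly what the paper does; it gives no argument of its own. So as a proof in the paper's sense your proposal is fine. Your self-contained sketch, however, would not go through, for the following reasons.

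\textbf{Stage~2 only sees elements fixing $L$.} The Frobenius endomorphism $\pi$ of the reduction of $A$ at a prime $\mathfrak P$ of a field of definition $L$ computes only the action of $\mathrm{Frob}_{\mathfrak P}\in\Gal(\bar E/L)$. Its image in $\Gal(E^{\rm ab}/E)$ is $\mathrm{rec}(\Nm_{L/E}\varpi_{\mathfrak P})$, a Frobenius at $w$ raised to the power $f(\mathfrak P/w)$. Consistently, $(\pi)=\mathfrak r_{\Phi^+,E}(w)^{f(\mathfrak P/w)}$, so your comparison of $\pi^{-1}$ with $\mathfrak r_{\Phi^+,E}(\varpi_w)$ is off.

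All such elements lie in the finite-index subgroup $\Gal(E^{\rm ab}/E^{\rm ab}\cap L)$, and in general $E^{\rm ab}\cap L\neq E$: for $F=\BQ$ and $K$ of class number $>1$, every such $L$ contains the Hilbert class field. Agreement on this subgroup determines your homomorphism only up to a homomorphism from the finite group $\Gal(E^{\rm ab}\cap L/E)$ into a target with torsion. That finite piece is the heart of the theorem: for CM elliptic curves it is the action of the class group on $j$-invariants.

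The missing idea is the actual Shimura--Taniyama step. Take $\sigma$ a Frobenius element at a prime $w$ of $E$ itself. Then $A^\sigma$ reduces to $\tilde A^{(Nw)}$, and the relative $Nw$-Frobenius $\tilde A\to\tilde A^{(Nw)}$ lifts to a $K$-linear isogeny $\alpha\colon A\to A^{\sigma}$ with kernel $A[\mathfrak r_{\Phi^+,E}(w)]$ (if $O_K\subseteq\End A$). This kernel identification is where the Lie algebra computation enters.

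Even this pins down $\sigma x$ exactly only away from $p$; at $p$ it is determined only up to $(O_K\otimes\BZ_p)^\times$. Contrary to your last paragraph, this ambiguity is \emph{not} absorbed by $\overline{\T(\BQ)}$: an element of $\T(\BQ)$ that is trivial at a single $\ell$ equals $1$. Changing the id\`ele by $E^\times$ only changes $\mathfrak r_{\Phi^+,E}(e_f)$ by $\T(\BQ)$, so it does not help either.

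A further argument is needed. Let $\rho$ be your homomorphism from Stage~1 and set $\delta(\sigma)=\rho(\sigma)\,\mathfrak r_{\Phi^+,E}(e_{\sigma,f})^{-1}$, where $\mathrm{rec}(e_\sigma)=\sigma$. First use the Serre--Tate $\ell$-adic argument over $L$ (weak approximation, plus the fact that units $u$ of $K$ with $u\bar u=1$ are roots of unity) to show that $\delta$ kills $\Gal(\bar E/L)$. Then apply Chebotarev: choose two primes of $E$ with the same Frobenius in $\Gal(E^{\rm ab}\cap L/E)$ but different residue characteristics $p\neq p'$. The value of $\delta$ at that element is then supported both at $p$ and at $p'$, hence trivial.

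\textbf{Stage~1(1) is false.} On a fixed $(A,\iota)$ the admissible polarizations are $\lambda\circ\iota(f)$ with $f\in F^\times$ totally positive. Such an $f$ can be absorbed by a $K$-linear quasi-isogeny only if $f\in\BQ^\times\Nm_{K/F}(K^\times)$, and changing $\kappa$ does not affect $\bar\lambda$ at all. For $F\neq\BQ$ this fails in general. For example, take $F=\BQ(\sqrt2)$, $K=\BQ(\zeta_8)$ and $f=3+\sqrt2$: $f$ has odd valuation at exactly one of the two primes of $F$ above $7$, and both are inert in $K$.

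So $\mathcal A_{\Phi^+}$ splits into several polarization classes, each a union of equivalence classes. Moreover, since $\kappa$ is only $K\otimes\BA_f$-linear, the group acting simply transitively on one class is $(K\otimes\BA_f)^\times/\T(\BQ)$, not $\T(\BA_f)/\overline{\T(\BQ)}$.

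Hence Stage~1(3) does not follow as stated. You must first prove that $\Gal(\bar E/E)$ preserves each polarization class, which is part of the content of the theorem. It does follow from the Frobenius lift above: $\alpha^*\lambda^\sigma=Nw\cdot\lambda$ by injectivity of reduction on homomorphisms, and Chebotarev applied to the open stabilizer of the class then gives everything. Only after that does commutation with an abelian group yield a homomorphism, and its values lying in $\T(\BA_f)/\T(\BQ)$ is an output of Stage~2, not of Stage~1.
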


Let $(E^{\times})^{\wedge} \subset (E \otimes \mathbb{A}_f)^{\times}$ be the closure
of $E^{\times}$. We deduce a homomorphism 
\begin{equation}\label{reci-global}
  \Gal(\bar{E}/E) \rightarrow (E\otimes \mathbb{A}_f)^{\times}/(E^{\times})^{\wedge}
  \overset{\mathfrak{r}_{\Phi^{+}, E}}{\longrightarrow}
  \T(\mathbb{A}_f)/\T(\mathbb{Q}) ,
\end{equation}
where the first arrow is deduced from class field reciprocity normalized such that the arithmetic Frobenius is sent to a uniformizer and the second
arrow exists because $\T(\mathbb{Q})=\T(\mathbb{Q})^{\wedge}$. To see this last fact, we note that 
the group of units in $\T(\BQ)$  is finite. Indeed, the units are elements of
$K^{\times}$ with all absolute values equal to $1$ at all places including the
infinite ones. Therefore
$\T(\mathbb{Q}) = \T(\mathbb{Q})^{\wedge}$ by Chevalley's theorem. Theorem \ref{ShimTan} says
that the action of $\Gal(\bar{E}/E)$ on $\mathcal{A}_{\Phi^{+}}$ is via (\ref{reci-global}).
  In the rest of this section, we discuss a local analog of this theorem.
 
\subsection{Formal moduli space for toric CM-triples}
We fix a prime number $p$ and an algebraic closure $\bar \BQ_p$  of $\BQ_p$. Let $F$ be a finite field extension of $\BQ_p$,  with residue class field $\kappa_F$. 
Let $d=[F:\BQ_{p}]=ef$ where $e$ is the ramification index and $f$ is the inertia degree of $F/\BQ_{p}$. We  let $K/F$ be  an \'etale algebra of degree $2$. We denote the non-trivial element in $\Gal(K/F)$ by $a \mapsto
\bar{a}$.  

Let $\Phi=\Hom_{\text{$\BQ_p$-Alg}}(K, \bar\BQ_p)$. Let $\Phi^{+}_{0}$ be a classical local CM-type with respect to $K/F$, with corresponding local CM-type $r_{0}$ of rank $1$. The reflex field of $r_{0}$ will be denoted by $E_{0}=E(r_{0})$ which is also the reflex field $E_{\Phi^{+}_{0}}$ of the local CM-type $\Phi^{+}_{0}$. Let $\kappa_{E_{0}}$ be the residue field of $E_{0}$.

We define $\T$ to be the algebraic group torus over $\BQ_{p}$ with $\T(\BQ_{p})=\{a\in K^{\times}: a\bar{a}\in\BQ^{\times}_{p}\}$. Then the maximal open compact subgroup of $\T(\BQ_p)$ is $\T(\BZ_{p})=\{a\in O^{\times}_{K}: a\bar{a}\in \BZ^{\times}_{p}\}$.

\begin{construction}\label{sp-elt-loc}
Let $E$ be a finite extension of $E_{0}$ and $\Phi^{+}_{0}$ be a classical local CM-type with respect to $K/F$. We define the \emph{local reflex norm}
\begin{equation}\label{reflex-norm-0}
\mathfrak{r}_{\Phi^{+}_{0}, E}: \mathrm{Res}_{E/\BQ_{p}}\mathbb{G}_{m, E}\longrightarrow \T
\end{equation}
with respect to the data $(\Phi^{+}_{0}, E)$ similarly as in the global case. More precisely, we consider the homomorphism
\begin{equation*}
\mu: E^{\times}\rightarrow (K\otimes E)^{\times}\xrightarrow{\sim}\prod\nolimits_{\Phi}E^{\times}
\end{equation*}
given by sending $a\in E^{\times}$ to $a$ for $\varphi\in \Phi^{+}_{0}$ and to $1$ for $\varphi\not\in \Phi^{+}_{0}$. This defines a morphism of tori
\begin{equation*}
\mu: \mathbb{G}_{m, E}\longrightarrow \T_{E}.
\end{equation*}
The map $\mathfrak{r}_{\Phi^{+}_{0}, E}$ is the composite
\begin{equation*}
\mathfrak{r}_{\Phi^{+}_{0}, E}:\mathrm{Res}_{E/\BQ_{p}}\mathbb{G}_{m, E}\overset{\mu}\longrightarrow \mathrm{Res}_{{E}/\BQ_{p}}\T_{E} \xrightarrow{\Nm_{E/\BQ_{p}}}  \T.
\end{equation*}

Let $\varpi$ be a uniformizer of $E$. We  define the element  
\begin{equation}\label{w-r-0}
w_{\Phi^{+}_{0}, E}:=\mathfrak{r}_{\Phi^{+}_{0}, E}(\varpi)\in \T(\BQ_{p}).
\end{equation} 
We refer to $w_{\Phi^{+}_{0}, E}$ as the \emph{special element in $\T(\BQ_{p})$ relative to $(\Phi^{+}_{0}, E)$ for the uniformizer $\varpi$ of $O_E$} or simply as the special element in $\T(\BQ_{p})$ if the data $(\Phi^{+}_{0}, E)$ and $\varpi$ are clear from the context. 

If $E=E_{0}$ and $r_{0}$ is the local CM-type of rank $1$ associated to a classical local CM-type $\Phi^{+}_{0}$, then we write $w_{\Phi^{+}_{0}, E}$ simply as $w_{r_{0}}$.
\end{construction}
\begin{remark}\label{sp-elt-rmk}
Note that the composite of 
\begin{equation*}
\mathfrak{r}_{\Phi^{+}_{0}, E}: E^{\times}\longrightarrow \T(\BQ_{p})\subset K^{\times}
\end{equation*}
with the norm map $\Nm_{K/F}$ is given by $\Nm_{E/\BQ_{p}}$. In particular, we see that $\Nm_{K/F}(w_{\Phi^{+}_{0}, E})$ agrees with $p^{f_{E}}$ up to unit in $\BZ_{p}$.  Here $f_E$ denotes the inertia index of $E$.

Let $E^{\prime}/E$ be a finite extension with ring of integers $O_{E^{\prime}}$ and inertia degree $f_{E^{\prime}}$. Let $w_{\Phi^{+}_{0}, E^{\prime}}$ be the special element relative to $(\Phi^{+}_{0}, E^{\prime})$ for a uniformizer $\varpi^{\prime}$ of $E'$.  Then $\Nm_{K/F}(w_{\Phi^{+}_{0}, E^{\prime}})=p^{f_{E^{\prime}}}$ up to a unit in $\BZ_{p}$ and hence $\Nm_{K/F}(w_{\Phi^{+}_{0}, E^{\prime}})$ agrees with $\Nm_{K/F}(w_{\Phi^{+}_{0}, E})^{f_{E^{\prime}}/f_{E}}$ up to a unit in $\BZ_{p}$. It follows that $w_{\Phi^{+}_{0}, E^{\prime}}$ differs from $(w_{\Phi^{+}_{0}, E})^{f_{E^{\prime}}/f_{E}}$ by an element in $\T(\BZ_{p})$.
\end{remark}

\begin{definition}\label{RSZ-LS-Tori}
We define for each $i\in\BZ$ the functor $\CM_{0}(i)$ on $\Nilp_{O_{\breve{E}_{0}}}$. Fix a local CM-triple $(\BX_{0}, \iota_{\BX_0}, \lambda_{\BX_0})$ of type $(K/F, r_{0})$  over $\bar{\kappa}_{E_{0}}$, satisfying the condition $(\mathrm{KC}_{r_{0}})$. For a scheme $S$ over $\Spf O_{\breve{E}_{0}}$,  a point of $\CM_{0}(i)(S)$ is given by the following data:
\begin{enumerate}
\item[(1)] A local CM-triple $(X_{0}, \iota_{0}, \lambda_{0})$ of type $(K/F, r_{0})$ over $S$ satisfying the condition $(\mathrm{KC}_{r_{0}})$;
\item[(2)] An $O_{K}$-linear quasi-isogeny
\begin{equation*}    
\rho_{0}: \bar{X}_{0} := X_{0} \times_{S} \bar{S} \longrightarrow \mathbb{X}_{0} \times_{\Spec k} \bar{S}
\end{equation*}      
such that, locally on $S$, there exists $u\in\BZ_p^\times$ such that $\rho_{0}^*({\lambda}_{\mathbb{X}_{0}})=up^{i}\cdot{\lambda}_{0}$ on $\bar{X}_{0}$.
\end{enumerate}
Two tuples $(X_{0}, \iota_{0}, \lambda_{0}, \rho_{0})$ and $(X'_{0}, \iota'_{0}, \lambda'_{0}, \rho'_{0})$ define the same point in $\CM_{0}(i)$ if and only if there is an isomorphism $\alpha_{0}: (X_{0}, \iota_{0})\rightarrow (X'_{0}, \iota'_{0})$ such that $\rho'_{0}\circ \alpha_{0}=\rho_{0}$. Then  $\alpha^{\ast}_{0}(\lambda_0')=u\lambda_0$ for some $u\in\BZ_p^\times$. 
\end{definition}

The formal scheme representing this functor is also denoted by $\CM_{0}(i)$. We define the formal scheme $\CM_{0}$ over $\Spf O_{\breve{E}_{0}}$ by
\begin{equation}\label{M0}
{\CM}_{0}:=\bigsqcup\nolimits_{i\in\BZ}\CM_{0}(i).
\end{equation}
Then $\CM_0$ parametrizes tuples $(X_{0}, \iota_{0}, \lambda_{0}, \rho_0)$ such that $\rho_0^*({\lambda}_{\mathbb{X}_{0}})=\nu\lambda_0$ with $\nu\in\BQ_p^\times$.  Let 
\begin{equation}\label{def:JT}
J_{\T}(\BQ_{p})=\{\alpha_0\in \mathrm{Aut}^{\circ}_{K}(\mathbb{X}_{0})\mid \alpha_0^{\ast}(\lambda_{\mathbb{X}_{0}})= \mu_{0}(\alpha_0)\lambda_{\mathbb{X}_{0}}, \,\,\mu_{0}(\alpha_0)\in \BQ^{\times}_{p}\}.
\end{equation} Then $J_{\T}(\BQ_{p})$ acts naturally on $\CM_{0}$ by $(X_{0}, \iota_{0}, \lambda_{0}, \rho_{0})\mapsto (X_{0}, \iota_{0}, \lambda_{0}, \alpha_{0}\circ\rho_{0})$.

We have the polarized contraction functor $\mathfrak{C}_{r_{0}}^{\rm pol}$ in \cite[Theorem  4.5.11]{KRZI},  trivially modified to the case when $r_{0}$ is of rank $1$. It associates to a local CM triple $(X_{0}, \iota_{0}, \lambda_{0})$ of type $(K/F, r_{0})$ over a scheme $S\in \Nilp_{O_{\breve{E}_{0}}}$ a triple $(C, \iota, \phi)$ where $C$ is a $p$-adic \'etale sheaf in $\BZ_p$-modules over $S$ with $\rank_{\mathbb{Z}_p} C = 2d$ (where we recall $d=[F: \BQ_{p}]$) with an $O_{K}$-action $\iota: O_{K}\rightarrow \End_{\BZ_{p}}(C)$, and where $\phi: C\times C\rightarrow O_{F}$ is an $O_F$-bilinear pairing on $C$ such that $\phi(\iota(a)c_{1}, c_{2})=\phi(c_{1}, \iota(\bar{a})c_{2})$ for $c_{1},c_{2}\in C$ and $a\in O_{K}$. Let $(\underline{C}_{\mathbb{X}_{0}}, \iota_{0}, \lambda_{0})$ be the image of $(\mathbb{X}_{0}, \iota_{\mathbb{X}_{0}}, \lambda_{\mathbb{X}_{0}})$ under this functor. Here $\underline{C}_{\mathbb{X}_{0}}$ is regarded as the constant
sheaf on $S$ associated to an $O_{K}$-module $C_{\BX_{0}}\simeq O_{K}$ described similarly as in \cite[Remark 4.5.12]{KRZI} and \cite[(7.4.14)]{KRZI} (trivially modified to the case when $r_{0}$ is a local CM-type of rank $1$).

\begin{definition}\label{C-moduli}
We consider the following functor $\mathcal{G}_{0}(i)$ on the category of schemes $S$ over $\Spf O_{\breve{E}_{0}}$. A point of $\mathcal{G}_{0}(i)(S)$ is given
by the following data:
\begin{enumerate}
\item[(1)]A locally
  constant $p$-adic \'etale sheaf $C$ on $S$ which is
  $\mathbb{Z}_p$-flat with $\rank_{\mathbb{Z}_p} C = 2d$ and with an action
\begin{equation*}
\iota: O_{K} \longrightarrow \End_{\mathbb{Z}_p} C;
\end{equation*}

\item[(2)]A perfect alternating $O_F$-bilinear pairing
  \begin{equation*}
  \phi: C \times C \longrightarrow O_F, 
  \end{equation*}
  such that $\phi(\iota(a) c_1, c_2) = \phi (c_1, \iota(\bar{a})c_2)$ for
  $c_1, c_2 \in C$ and $a \in O_{K}$;

\item[(3)]A quasi-isogeny of $O_K$-module sheaves on $S$ 
\begin{equation*}\label{KneunBa8e}
\rho: (C, \iota) \longrightarrow (\underline{C}_{\mathbb{X}_{0}}, \iota_{0}). 
\end{equation*}
\end{enumerate}

Another set of data $(C', \iota',\phi', \rho')$ defines the same point
iff there is an isomorphism $\alpha: C \isoarrow C'$ such that
$\rho' \circ \alpha = \rho$. Then $\alpha$ respects $\phi$ and $\phi'$ up
to a factor in $O_{F}^{\times}$.
\end{definition}

The existence of the quasi-isogeny implies that $C$ is locally
constant for the Zariski topology. Therefore locally on $S$ the sheaf
$C$ is the constant sheaf associated to an $O_K$-submodule
$C \subset C_{\mathbb{X}_{0}} \otimes_{\mathbb{Z}_p} \mathbb{Q}_p$ and $\rho$ is
given by the last inclusion. 

The polarized contraction functor $\mathfrak{C}_{r_{0}}^{\rm pol}$ in \cite[Theorem  4.5.11]{KRZI} defines a isomorphism of functors
\begin{equation}\label{KneunBa13e} 
\mathcal{M}_{0}(i) \isoarrow \mathcal{G}_{0}(i).
\end{equation}

To describe the functor $\mathcal{G}_{0}(i)$, we may restrict to the case
where the sheaf $C$ is given by an $O_K$-submodule of
$C_{\mathbb{X}_{0}} \otimes_{\mathbb{Z}_p} \mathbb{Q}_p$. 
Then $C$ defines a point of $\mathcal{G}_{0}(i)(S)$ iff
$(1/p^i ) \phi_{\mathbb{X}_{0}}$ is a perfect alternating pairing on $C$.
We define an algebraic group over $\mathbb{Z}_p$ whose $\BZ_p$-rational points are given by
\begin{equation*}
\T'(\mathbb{Z}_p) = \{g \in \GL_{O_{K}}( C_{\mathbb{X}_{0}} )\; | \;
  \phi_{\mathbb{X}_{0}}(gc_1, gc_2) = u\cdot \phi_{\mathbb{X}_{0}}(c_1, c_2) \;
  \text{for some} \; u \in O^{\times}_{F}\}. 
  \end{equation*}
Since all such $C$ are isomorphic to $O_{K}$, cf. \cite[7.4.14]{KRZI}, there is an isomorphism
\begin{equation}\label{g-T-1}
g: (C_{\mathbb{X}_{0}}, \phi_{\mathbb{X}_{0}}) \longrightarrow (C, \frac{1}{p^i } \phi_{\mathbb{X}_{0}}).
\end{equation}
This means that $gC_{\mathbb{X}_{0}} = C$ and
\begin{equation}\label{g-T-2}
\phi_{\mathbb{X}_{0}}(gc_1, gc_2) = p^i \cdot\phi_{\mathbb{X}_{0}}(c_1, c_2) , \;
\quad  c_1, c_2 \in C_{\mathbb{X}_{0}} \otimes_{\mathbb{Z}_p} \mathbb{Q}_p .
\end{equation}
We define 
\begin{equation*}
\T'(i) = \{ g \in \GL_{K}(C_{\mathbb{X}_{0}} \otimes_{\mathbb{Z}_p} \mathbb{Q}_p)\; | \;
\phi_{\mathbb{X}_{0}}(gc_1, gc_2) = p^i u\cdot \phi_{\mathbb{X}_{0}}(c_1, c_2), \;
 \; u \in O^{\times}_{F}\}.
\end{equation*}
This construction gives us a functor isomorphism
\begin{equation*}
\mathcal{G}_{0}(i) \isoarrow \T^{\prime}(i)/\T^{\prime}(\mathbb{Z}_p),
\end{equation*}
where the right hand side is considered as the restriction of the constant
sheaf to $\Nilp_{O_{\breve{E}_{0}}}$. Hence we obtain  an isomorphism
\begin{equation}\label{RZ-tori-prime-i}
\mathcal{M}_{0}(i)\overset{\sim}\longrightarrow \T^{\prime}(i)/\T^{\prime}(\mathbb{Z}_p).
\end{equation}

Let $\T^{\prime}(\BQ_{p})\subset \GL_{K}(C_{\mathbb{X}_{0}} \otimes_{\mathbb{Z}_p} \mathbb{Q}_p)$ be the union of $\T^{\prime}(i)$. Using the
functor $\mathfrak{C}_{r_{0}}^{\rm pol}$, we may write
\begin{equation}\label{T-prime}
\T^{\prime}(\BQ_{p})= \{\alpha \in \Aut_{K} \mathbb{X}_{0} \; |\;
\alpha^{\ast} (\lambda_{\mathbb{X}_{0}}) = \mu(\alpha) \lambda_{\mathbb{X}_{0}} \;
\text{for some} \; \mu(\alpha) \in p^{\mathbb{Z}} O^{\times}_{F}\}. 
\end{equation}
Therefore $\T^{\prime}(\BQ_{p})$ acts via $\rho$ on the formal scheme ${\mathcal{M}}_{0}$.
The  isomorphism of functors on $\Nilp_{O_{\breve{E}_{0}}}$ obtained from \eqref{KneunBa13e} and \eqref{RZ-tori-prime-i} is then a $\T^{\prime}(\BQ_{p})$-equivariant isomorphism,
\begin{equation}\label{T-prime-coset-i}
{\mathcal{M}}_{0} \isoarrow \T^{\prime}(\BQ_{p})/\T^{\prime}(\mathbb{Z}_p). 
\end{equation}

Now we define
\begin{equation}\label{Ti-group}
\T(i) = \{ g \in \GL_{K}(C_{\mathbb{X}_{0}} \otimes_{\mathbb{Z}_p} \mathbb{Q}_p)\; | \;
\phi_{\mathbb{X}_{0}}(gc_1, gc_2) = p^if\cdot \phi_{\mathbb{X}_{0}}(c_1, c_2), \;
\text{for some} \; f \in \BZ^{\times}_{p}\}.
\end{equation}
Using an identification $C_{\BX_{0}}\simeq O_{K}$, we have
\begin{equation*}
\T(\BQ_{p})=\bigsqcup\nolimits_{i\in\BZ} \T(i)
\end{equation*} 
and, similarly,
\begin{equation*}
\T(\mathbb{Z}_p) = \{g \in \GL_{O_{K}}( C_{\mathbb{X}_{0}} )\; | \;
\phi_{\mathbb{X}_{0}}(gc_1, gc_2) = f\cdot \phi_{\mathbb{X}_{0}}(c_1, c_2) \;
\text{for some} \; f \in \BZ^{\times}_{p}\}. 
\end{equation*}
From this description, it is clear that $J_{\T}(\BQ_{p})=\T(\BQ_{p})$. 

Using \eqref{g-T-1} and \eqref{g-T-2}, it is easy to see that  $\T^{\prime}(i)/\T^{\prime}(\mathbb{Z}_p)$ is in bijection with $\T(i)/\T(\mathbb{Z}_p)$. Thus we have by \eqref{RZ-tori-prime-i}
\begin{equation}\label{T-coset-space-i}
{\mathcal{M}}_{0}(i)\isoarrow \T(i)/\T(\BZ_{p}),
\end{equation}
 and hence an isomorphism 
\begin{equation}\label{T-coset-space}
{\mathcal{M}}_{0}\isoarrow \T(\BQ_{p})/\T(\BZ_{p})
\end{equation}
which is equivariant for the action of $J_{\T}(\BQ_{p})=\T(\BQ_{p})$ on both sides. 

Next we discuss the Weil descent datum on ${\mathcal{M}}_{0}$ from $O_{\breve{E}_{0}}$ down to $O_{E_{0}}$.   Let $\tau_{E_0}\in \Gal(\breve E_0/E_0)$ be the Frobenius. It is enough to restrict the functor $\CM_{0}$
to affine schemes $S = \Spec R$ over $\Spf O_{\breve{E}_{0}}$ . We write
$\varepsilon: O_{\breve{E}_{0}} \longrightarrow R$ for the given algebra structure.
We write  $R_{[\tau_{E_{0}}]}$  for the ring $R$ with the new algebra structure
$\varepsilon \circ\tau_{E_{0}}$.  
By base change to $\bar{\kappa}_{E_{0}}$, we obtain
\begin{equation*}
\bar{\varepsilon}: \bar{\kappa}_{E_{0}} \longrightarrow \bar{R} :=
R \otimes_{O_{\breve{E}_{0}}} \bar{\kappa}_{E_{0}}. 
\end{equation*}
    
Let $(X_{0}, \iota_{0}, \lambda_{0}, \rho_{0})$ be a point of $\CM_{0}(\bar{R})$  where $\rho_{0}$ is a quasi-isogeny
\begin{equation*}
\rho_{0}: X_{0, \bar{R}} \longrightarrow \bar{\varepsilon}_{\ast} \mathbb{X}_{0}= \BX_{0, \bar{R}}. 
\end{equation*}
Since the notion of a CM-triple depends only on the induced $O_{E}$-algebra  structure on $R$, we may regard $(X_{0}, \iota_{0}, \lambda_{0}, \rho_{0})$
as a CM-triple on $R_{[\tau_{E_{0}}]}$. We set 

\begin{equation}\label{frame-Fr}
\tilde{\rho}_{0}: X_{0}\xrightarrow{\rho_{0}} \BX_{0, \bar{R}}\xrightarrow{F_{\BX_{0}, \tau_{E_{0}}}} \tau_{E_{0}\ast}\BX_{0, \bar{R}}.
\end{equation}
 
The assignment 
\begin{equation*}
(X_{0}, \iota_{0}, \lambda_{0}, \rho_{0}) \mapsto (X_{0}, \iota_{0}, \lambda_{0}, \tilde{\rho}_{0})
\end{equation*}
defines a morphism
\begin{equation}\label{WD-M-0}
\omega_{{\mathcal{M}}_{0}}: \CM_{0}(i)(\bar{R})\rightarrow \CM_{0}(i+f_{E_{0}})(\bar{R}_{[\tau_{E_{0}}]}).
\end{equation}
Here we note that the inverse image
of the polarization $(\tau_{E_{0}})_{\ast} \lambda_{\mathbb{X}_{0}}$ on
$(\tau_{E_{0}})_{\ast} \mathbb{X}_{0}$ by the Frobenius isogeny
\begin{equation*}
F_{\mathbb{X},\tau_{E_{0}}}: \mathbb{X}_{0} \longrightarrow (\tau_{E_{0}})_{\ast} \mathbb{X}_{0}
\end{equation*}
is given by $p^{f_{E_{0}}} \lambda_{\mathbb{X}_{0}}$. Then  \eqref{WD-M-0} defines the Weil descent datum
\begin{equation*}\label{WD-Mr}
\omega_{\CM_{0}}: \CM_{0}\rightarrow \CM_{0}^{(\tau_{E_{0}})}.
\end{equation*}

\begin{proposition}\label{LS-tori-weil}
There is an isomorphism of formal schemes over $\Spf O_{\breve{E}_{0}}$,
\begin{equation*}
{\mathcal{M}}_{0} \overset{\sim}{\longrightarrow} \T(\mathbb{Q}_p)/\T(\BZ_{p}) 
\end{equation*}
which is equivariant for the action of $J_{\T}(\BQ_{p})=\T(\BQ_{p})$ on both sides.

The above Weil descent datum on ${\mathcal{M}}_{0}$ corresponds on the right-hand side to the Weil descent datum given by multiplication by the special element $w_{r_{0}}$ in $\T(\BQ_{p})$ relative to $(\Phi^{+}_{0}, E_{0})$ for a uniformizer $\varpi_{0}$.
\end{proposition}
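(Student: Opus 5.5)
The first assertion is already contained in \eqref{T-coset-space}: the polarized contraction functor $\mathfrak{C}^{\rm pol}_{r_0}$ gives $\CM_0(i)\simeq\mathcal{G}_0(i)$ by \eqref{KneunBa13e}. We then identify $\mathcal{G}_0(i)$ with $\T(i)/\T(\BZ_p)$ via the lattices $C=gC_{\BX_0}$ of \eqref{g-T-1}, \eqref{g-T-2}. Equivariance for $J_\T(\BQ_p)=\T(\BQ_p)$ holds because $\alpha_0\in J_\T(\BQ_p)$ acts on $\rho_0$ by composition. Under contraction this is left multiplication on $g$. So the real content is the statement about the Weil descent datum, and the plan is to compute it on the contraction side.

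\textbf{Step 1.} Transport $\omega_{\CM_0}$ through $\mathfrak{C}^{\rm pol}_{r_0}$. A point $(X_0,\iota_0,\lambda_0,\rho_0)$ with lattice $C=gC_{\BX_0}$ is sent to $(X_0,\iota_0,\lambda_0,F_{\BX_0,\tau_{E_0}}\circ\rho_0)$. By functoriality of the contraction functor, the new point has lattice $\mathfrak{C}(F_{\BX_0,\tau_{E_0}})(C)$, viewed inside $C_{(\tau_{E_0})_*\BX_0}\otimes\BQ_p$. Here one must also identify the constant sheaf $\underline{C}_{(\tau_{E_0})_*\BX_0}$ with $\underline{C}_{\BX_0}$, as is implicit in the definition of the descent datum. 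Hence the descent datum is $g\mapsto \gamma g$, where $\gamma\in\GL_K(C_{\BX_0}\otimes\BQ_p)=K^\times$ is the element induced by the Frobenius quasi-isogeny. Since $K^\times$ is commutative, left and right multiplication agree. Moreover $\gamma\in\T(\BQ_p)$, with similitude factor $p^{f_{E_0}}$ up to a unit, because $F^*_{\BX_0,\tau_{E_0}}$ pulls back the polarization to $p^{f_{E_0}}\lambda_{\BX_0}$. This is consistent with the shift $i\mapsto i+f_{E_0}$ in \eqref{WD-M-0}.

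\textbf{Step 2.} Show that $\gamma\equiv w_{r_0}=\mathfrak r_{\Phi_0^+,E_0}(\varpi_0)$ modulo $\T(\BZ_p)$. This is the crux, and the step I expect to be the main obstacle. Only the class modulo $\T(\BZ_p)$ matters for the action on $\T(\BQ_p)/\T(\BZ_p)$, and $K^\times/O_K^\times$ is detected by valuations at the places of $K$. So it suffices to compare the valuation of $\gamma$ and of $w_{r_0}$ in $K$.

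For $\gamma$, I would use the explicit description of $C_{\BX_0}$ from \cite[Remark 4.5.12, (7.4.14)]{KRZI}, rank one version. In it the relative Frobenius $F_{\BX_0,\tau_{E_0}}$ is a power of Frobenius, $F^{f_{E_0}}$ relative to $\kappa_{E_0}$. Its effect on the Dieudonné module decomposes along the embeddings $\varphi\in\Phi$, and the Kottwitz condition $(\mathrm{KC}_{r_0})$ determines which embeddings carry the Hodge filtration. So the valuation of $\gamma$ at each place of $K$ counts, over the $\tau_{E_0}$-orbit, the embeddings in $\Phi_0^+$ lying over that place.

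For $w_{r_0}$, by definition $\Nm_{E_0/\BQ_p}\mu(\varpi_0)$ has $\varphi$-components $\prod_{\sigma}\sigma(\varpi_0)$ over embeddings $\sigma$ of $E_0$ with $\sigma^{-1}\varphi\in\Phi_0^+$. Its valuation at a place of $K$ is the same count, in the spirit of the Shimura--Taniyama formula (the local analogue of Theorem \ref{ShimTan}). Matching the two counts requires careful bookkeeping of the ramification of $E_0$ and $K$. A useful consistency check is Remark \ref{sp-elt-rmk}: both $\Nm_{K/F}(\gamma)$ and $\Nm_{K/F}(w_{r_0})$ equal $p^{f_{E_0}}$ up to units.

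\textbf{Step 3.} In the non-split case $K$ has a single place, so the norm check already forces $\gamma\in w_{r_0}\T(\BZ_p)$, and Step 2 is immediate. In the split case $K=F\times F$ there are two places. There the individual valuations must be computed, which is the genuine Shimura--Taniyama computation described in Step 2. Having established $\gamma\in w_{r_0}\T(\BZ_p)$, the descent datum on $\T(\BQ_p)/\T(\BZ_p)$ is multiplication by $w_{r_0}$, which completes the proof.
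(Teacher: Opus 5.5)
Your plan is the paper's proof. There too the Frobenius is transported through $\mathfrak{C}^{\rm pol}_{r_0}$ and computed on the Dieudonn\'e module as an explicit element $w'_{r_0}$, namely $\Pi^{ef_{E_0}}$, $\pi^{ef_{E_0}/2}$, resp.\ $(\pi^{a_{1,E_0}},\pi^{a_{2,E_0}})$; this element a priori lies only in $\T'(\BQ_p)$, not $\T(\BQ_p)$, and its class is matched with that of $w_{r_0}$ through the bijection $\T(\BQ_p)/\T(\BZ_p)\simeq\T'(\BQ_p)/\T'(\BZ_p)$.

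You are right that in the split case the $K/F$-norm alone does not determine the class; the paper's final line passes over this. The missing check is short: the $i$-th component of $w_{r_0}$ has $\Nm_{F/\BQ_p}$ equal to $\Nm_{E_0/\BQ_p}(\varpi_0)^{a_i}$, hence $F$-valuation $a_if_{E_0}/f=a_{i,E_0}$, which matches $w'_{r_0}$.
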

\begin{proof}
The isomorphism 
\begin{equation*}
{\mathcal{M}}_{0} \overset{\sim}{\longrightarrow} \T(\mathbb{Q}_p)/\T(\BZ_{p})
\end{equation*}
is already proved in \eqref{T-coset-space}. Recall from the beginning of \S \ref{ss:specialbanal} the meaning of $\Pi$ and $\pi$. We define the element $w'_{r_{0}}\in \T^{\prime}(\BQ_{p})$ by the following recipe:
\begin{enumerate}
\item[(1)] If $K/F$ is ramified,  we define $w'_{r_{0}}$ as the multiplication
\begin{equation*}
\Pi^{ef_{E_{0}}}: C_{\mathbb{X}_{0}} \otimes_{\mathbb{Z}_p} \mathbb{Q}_p
\longrightarrow  C_{\mathbb{X}_{0}} \otimes_{\mathbb{Z}_p} \mathbb{Q}_p;
\end{equation*}
    
\item[(3)] If $K/F$ is unramified, we define $w'_{r_{0}}$ as the multiplication
\begin{equation*}
\pi^{ef_{E_{0}}/2}: C_{\mathbb{X}_{0}} \otimes_{\mathbb{Z}_p} \mathbb{Q}_p \longrightarrow C_{\mathbb{X}_{0}} \otimes_{\mathbb{Z}_p} \mathbb{Q}_p,
\end{equation*}
where we note that $ef_{E_0}$ is even by the same computation as in \cite[Lemma 6.4.4]{KRZI}.

\item[(4)] If $K = F \times F$ is split, we have the decomposition 
\begin{equation*}
C_{\mathbb{X}_{0}}\otimes_{\BZ_{p}}\BQ_{p}=C_{\mathbb{X}_{0}, 1}\otimes_{\BZ_{p}}\BQ_{p}\oplus C_{\mathbb{X}_{0}, 2}\otimes_{\BZ_{p}}\BQ_{p}.
\end{equation*}
We consider the numbers
$a_{1}$ and $a_{2}$ defined by $\dim \mathbb{X}_{0,1}=a_{1}$ and  $\dim \mathbb{X}_{0,2}=a_{2}$ and set
\begin{equation*}
a_{1,E_{0}} = a_{1}\frac{f_{E_{0}}}{f}, \quad \text{ resp. }\quad a_{2,E_{0}} = a_{2}\frac{f_{E_{0}}}{f} .
\end{equation*}
Then we have $a_{1,E_{0}} + a_{2,E_{0}} = ef_{E_{0}}$. We define $w'_{r_{0}}$ to be the multiplication by $\pi^{a_{1,E_{0}}}$ on $C_{\mathbb{X}_{0}, 1}\otimes_{\BZ_{p}}\BQ_{p}$ and the multiplication by $\pi^{a_{2,E_{0}}}$ on $C_{\mathbb{X}_{0}, 2}\otimes_{\BZ_{p}}\BQ_{p}$. 
\end{enumerate}
Then by the same calculation on Dieudonn\'e modules as in \cite[Propositions 6.3.2, 6.4.3, 6.5.1]{KRZI} adapted to the case of rank $1$ local CM-types, the natural descent datum on the left-hand side of 
\begin{equation*}
{\mathcal{M}}_{0} \isoarrow \T^{\prime}(\BQ_{p})/\T^{\prime}(\mathbb{Z}_p)
\end{equation*}
corresponds on the right-hand side to the Weil descent datum given by multiplication by $w^{\prime}_{r_{0}}$. Indeed, for example, suppose $K/F$ is ramified. Let $\tau$ be the Frobenius automorphism of $\bar{\kappa}_{E_{0}}$ over $\BF_{p}$, then there is an identification 
$C_{\mathbb{X}_{0}} = C_{\tau_{\ast} \mathbb{X}_{0}}$. We apply the functor $\mathfrak{C}_{r_{0}}^{\rm pol}$ to the Frobenius isogeny 
$F_{\mathbb{X}_{0}}: \mathbb{X}_{0} \longrightarrow \tau_{\ast} \mathbb{X}_{0}$. Consider the Dieudonn\'e module $P_{\mathbb{X}_{0}}$ of $\mathbb{X}_{0}$, we have
$C_{\mathbb{X}_{0}} = \{c \in P_{\mathbb{X}_{0}} \; | \; Vc = \Pi^e c \}$.
The map
\begin{equation*}
P_{\mathbb{X}_{0}} \longrightarrow W(\bar{\kappa}_{E_{0}})\otimes_{\tau, W(\bar{\kappa}_{E_{0}})} P_{\mathbb{X}_{0}}, \quad c \mapsto 1 \otimes c 
\end{equation*}
defines the identification $C_{\mathbb{X}_{0}} = C_{\tau_{\ast} \mathbb{X}_{0}}$. The Frobenius $F_{\mathbb{X}_{0}}$ induces on the Dieudonn\'e modules
\begin{displaymath}
P_{\mathbb{X}_{0}} \overset{V^{\sharp}}{\longrightarrow} W(\bar{\kappa}_{E_{0}}) \otimes_{\tau, W(\bar{\kappa}_{E_{0}})} P_{\mathbb{X}_{0}}, \quad x \mapsto 1 \otimes Vx. 
\end{displaymath}
For $c \in C_{\mathbb{X}_{0}}$ we obtain $V^{\sharp}c = 1 \otimes Vc = 1 \otimes \Pi^ec$. Therefore $F_{\mathbb{X},\tau_{E_{0}}}: \mathbb{X}_{0} \longrightarrow (\tau_{E_{0}})_{\ast} \mathbb{X}_{0}$ corresponds to multiplication by $\Pi^{ef_{E_{0}}}$ on $C_{\mathbb{X}_{0}}$. The other cases follow similarly.

Since the element $w_{r_{0}}$ has norm $\Nm_{K/F}(w_{r_{0}})=p^{f_{E_{0}}}u$ for some unit $u\in \BZ_{p}$ and  $w'_{r_{0}}$ has norm $\Nm_{K/F}(w'_{r_{0}})=\pi ^{ef_{E_{0}}}$ when we view them as elements in $K^{\times}$, they differ by a unit in $O^{\times}_{K}$.
Therefore, in the bijection
\begin{equation*}
\T(\BQ_{p})/\T(\mathbb{Z}_p)\xrightarrow{\sim} \T^{\prime}(\BQ_{p})/\T^{\prime}(\mathbb{Z}_p),
\end{equation*} 
the class of $w_{r_{0}}$ is sent to the class of $w'_{r_{0}}$ and hence the descent datum is given as in the statement of the proposition.
\end{proof}

\begin{remark}
Let $E$ be a finite extension of $E_{0}$ and $O_{E}$ be its ring of integers. Let $f_{E}$ be the inertia degree of $E$ and $\varpi$ be a uniformizer of $O_{E}$, then it is clear from the proof of the above proposition and Remark \ref{sp-elt-rmk} that we have an isomorphism $\CM_{0, O_{\breve{E}}}\overset{\sim}{\longrightarrow} \T(\mathbb{Q}_p)/\T(\BZ_{p})$ and the Weil descent datum on the left hand side relative to $O_{\breve{E}}/O_{E}$ is given on the right-hand side by multiplication by the special element $w_{\Phi^{+}_{0}, E}\in \T(\BQ_{p})$ for the uniformizer $\varpi$.
\end{remark}

\part{The RSZ unitary Shimura curves}
\section{Formal moduli space of RSZ CM-triples}\label{s:formRSZ}
 Let $F$ be a finite field extension of $\BQ_p$,  with residue class field $\kappa_F$. 
We set $d=[F:\BQ_p]$, 
 $f = [\kappa_F : \mathbb{F}_p]$ and define $e$ through $d = ef$.
We  let $K/F$ be  an \'etale algebra of degree $2$. 

Let $r$ be a local CM-type of rank $2$ relative to $K/F$ whose reflex field is denoted by $E_{r}$.  Let $\Phi^{+}_{0}$ be a classical local CM-type relative to $K/F$ whose reflex field is denoted by $E_{0}$ and let $r_{0}$ be the local CM-type of rank $1$ associated to $\Phi^{+}_{0}$. Let  $E=E_{0}E_{r}$ be the composite field of $E_{0}$ and $E_{r}$. We denote by $O_{E}$ the valuation ring of $E$ and by $f_{E}$  the inertia degree of $E$ over $\BQ_{p}$. Let $\kappa=\kappa_{E}$ be residue class field of $E$. We fix a uniformizer $\varpi_{E}$ of $E$.

We define $\T$ to be the torus over $\BQ_{p}$ with $\T(\BQ_{p})=\{a\in K^{\times}\colon a\bar{a}\in\BQ^{\times}_{p}\}$. 
Let  $V$ be a free $K$-module of rank $2$ equipped with an alternating $\BQ$-bilinear form 
\begin{equation}\label{varsigma-loc}
\varsigma\colon V\times V\to \BQ_{p} 
\end{equation}
such that
\begin{equation}\label{local-alt}
\varsigma(a x, y)=\varsigma(x, \bar a y), \quad x, y\in V, \, a\in K . 
\end{equation}
 There is a unique anti-hermitian form $\varkappa$ on $V$ such that 
 \begin{equation}\label{introback}
  \Trace_{K/\mathbb{Q}_{p}} a \varkappa(x, y) = \varsigma(ax, y), \quad x, y\in V, \, a\in K. 
\end{equation}
Conversely, the anti-hermitian form $\varkappa$ determines the alternating bilinear form $\varsigma$ with \eqref{local-alt}. We  say that $\varkappa$ arises from $\varsigma$ by contraction.
Let $\Lambda$ be an almost selfdual $O_{K}$-lattice in $V$ (see the Notation section for this term). Let $\mathrm{U}(V)$ be the unitary group over $F$ for the anti-hermitian space $(V, \varkappa)$ and set $\U=\Res_{F/\BQ_p}(\U(V))$. We define the algebraic group $\wt{G}$ over $\BQ_{p}$ by $\wt{G}=\U\times \T$. Let $G=\mathrm{GU}(V, \varsigma)$ be the unitary similitude group of $V$ with similitude factor in $\BG_{m}$. Then $\wt{G}$ is also isomorphic to the fiber product group $G\times_{\BG_{m}}\T$, similarly as in the global case, cf. \eqref{tildeG}.

Let $(X, \iota, \lambda)$ be a local CM triple of type $r$, cf. \S \ref{ss:loctriples}. We say that $(X, \iota, \lambda)$ is $(V, \varsigma)$-principally polarized if $(X, \iota, \lambda)$ is principally polarized unless $K/F$ is unramified and $\inv(V)=-1$, in which case it is almost principally polarized, cf. Definition \ref{a-p-p}.

\subsection{Formal moduli space of RSZ CM-triples: the special case} 
In this subsection, we will study a formal moduli space of CM-triples associated to the group $\wt{G}$ and describe it in terms of the Drinfeld upper-half plane with its descent datum.  We assume that $K/F$ is a field extension. 

We fix an embedding $\varphi_{0}: K\rightarrow \bar{\BQ}_{p}$. We suppose in this subsection that the local CM-type $r$ is special relative to ${\varphi_{0}}_{| F}$, cf. Definition \ref{loc-special-type}. We also let $r_{0}$ be a local CM-type of rank $1$ whose associated local CM-type is denoted by $\Phi^{+}_{0}$. We assume that $\varphi_0\in\Phi^+_0$.

In this subsection, we will  assume that $V$  is anisotropic, i.e., $\inv(V)=-1$. Therefore  $\bfK^{\circ}_{\U}:=\U(\BQ_{p})$ is a maximal quasi-parahoric. Let $\T(\BZ_{p})=\{a\in O^{\times}_{K}: a\bar{a}\in \BZ^{\times}_{p}\}$  be the maximal open compact subgroup of $\T(\BQ_{p})$ which we will also denote by $\bfK^{\circ}_{\T}$. Let $\bfK^{\circ}_{\wt{G}}=\bfK^{\circ}_{\T}\times \bfK^{\circ}_{\U}$, it is a maximal open compact subgroup of $\wt{G}(\BQ_{p})$. Note that  we have 
\begin{equation}\label{wtverT}
\wt{G}(\BQ_{p})/\bfK^{\circ}_{\wt{G}}\simeq \U(\BQ_{p})/\bfK^{\circ}_{\U}\times \T(\BQ_{p})/\bfK^{\circ}_{\T}\simeq \T(\BQ_{p})/\bfK^{\circ}_{\T}.
\end{equation}

We fix a local CM-triple $(\mathbb{X}_{0}, \iota_{\mathbb{X}_{0}}, \lambda_{\mathbb{X}_{0}})$ of type $(K/F, r_{0})$ over $\bar{\kappa}$ satisfying the condition $({\rm KC}_{r_{0}})$ and a local CM triple $(\mathbb{X}, \iota_{\mathbb{X}}, \lambda_{\mathbb{X}})$ of type $(K/F, r)$ over $\bar{\kappa}$ which satisfies the conditions $({\rm KC}_{r})$ and $({\rm EC}_{r})$ and is $(V,\varsigma)$-principally polarized. We also assume that $\inv^r(\mathbb{X}, \iota_{\mathbb{X}}, \lambda_{\mathbb{X}})=-1=\inv(V)$.

\begin{definition}\label{RSZ-LS-special}
We define for each $i\in \BZ$ the functor $\wt{\CM}_{\bK^{\circ}_{\wt G}}(i)$ on $\Nilp_{O_{\breve{E}}}$. For a scheme $S$ over $\Spf O_{\breve{E}}$, a point of $\wt{\CM}_{\bK^{\circ}_{\wt G}}(i)(S)$ consists of the following data:

\begin{enumerate}
\item[(1)] A local CM-triple $(X_{0}, \iota_{0}, \lambda_{0})$ of type $(K/F, r_{0})$ satisfying the condition $({\rm KC}_{r_{0}})$;
      
\item[(2)] An $O_{K}$-linear quasi-isogeny
\begin{equation*}    
\rho_{0}: \bar{X}_{0} := X_{0} \times_{S} \bar{S} \longrightarrow \mathbb{X}_{0} \times_{\Spec \bar{\kappa}} \bar{S}
\end{equation*}
such that locally on $\bar S$, there exists $u\in\BZ_p^\times$ such that $\rho_{0}^{\ast}(\lambda_{\BX_{0}})=up^{i}\lambda_{0}$;

\item[(3)] A local CM-triple $(X, \iota, \lambda_{X})$ of type
$(K/F ,r)$ over $S$
which satisfies the conditions $({\rm KC}_{r})$ and $({\rm EC}_{r})$ and is $(V, \varsigma)$-principally polarized;

\item[(4)] An $O_{K}$-linear quasi-isogeny 
\begin{equation*}
\rho: \bar{X} := X\times_{S} \bar{S} \longrightarrow \mathbb{X}\times_{\Spec \bar {\kappa}} \bar{S} 
\end{equation*}
such that $\rho^{\ast}(\lambda_{\BX})=up^{i}{\lambda_{\bar{X}}}$, locally on $\bar S$,  with the same  $u\in\BZ_p^\times$ as in datum $(2)$.
\end{enumerate}
\end{definition}

We denote these data by a tuple $(X_{0}, \iota_{0}, \lambda_{0},\rho_{0}, X, \iota, \lambda_{X}, \rho)$. Two tuples $(X_{0}, \iota_{0}, \lambda_{0}, \rho_{0}, X, \iota, \lambda_{X}, \rho)$ and $(X^{\prime}_{0}, \iota^{\prime}_{0}, \lambda^{\prime}_{0}, \rho^{\prime}_{0}, X^{\prime}, \iota^{\prime}, \lambda_{X^{\prime}}, \rho^{\prime})$ define the same point of $\wt{\CM}_{\bK^{\circ}_{\wt G}}(i)$ iff there exist isomorphisms of $O_{K}$-modules $\alpha_{0}: (X_{0}, \iota_{0})\isoarrow (X^{\prime}_{0}, \iota^{\prime}_{0})$ and $\alpha: (X, \iota)\isoarrow (X^{\prime}, \iota^{\prime})$  such that $\rho^{\prime}_{0}\circ\alpha_{0, \bar{S}}=\rho_{0}$ and $\rho^{\prime}\circ\alpha_{\bar{S}}=\rho$. 

\begin{remark}\label{simi-factor-rmk}
We could replace the condition in $(2)$ by $\rho_{0}^{\ast}(\lambda_{\BX_{0}})=p^{i}\lambda_{0}$ and the condition in $(4)$ by $\rho^{\ast}(\lambda_{\BX})=p^{i}\lambda_{X}$ without changing the formal moduli space $\wt{\CM}_{\bK^{\circ}_{\wt G}}(i)$. We could also replace the condition in $(2)$ by the condition that $\rho_{0}^{\ast}(\lambda_{\BX_{0}})$ differs from $p^{i}{\lambda}_{0}$ on $\bar{X}_{0}$ by a unit $u$ in $O_{F}$ and the condition in $(4)$ by the condition that $\rho^{\ast}(\lambda_{\BX})$ differs from $p^{i}{\lambda}_{\bar{X}}$ on $\bar{X}$ by the same unit. This follows from the same reasoning as in \cite[Remark 7.2.3]{KRZI}. 
\end{remark}

The formal scheme over $\Spf O_{\breve{E}}$ representing this functor is also denoted by $\wt{\CM}_{\bK^{\circ}_{\wt G}}(i)$.   We define the formal scheme
\begin{equation}\label{wtCM-special}
\wt{\CM}_{\bK^{\circ}_{\wt G}}=\bigsqcup\nolimits_{i\in\BZ}\wt{\CM}_{\bK^{\circ}_{\wt G}}(i). 
\end{equation}
Then $\wt{\CM}_{\bK^{\circ}_{\wt G}}$ parametrizes tuples $(X_{0}, \iota_{0}, \lambda_{0}, \rho_{0}, X, \iota, \lambda, \rho)$ such that $\rho_{0}$ respects the polarization $\lambda_0$ on $\bar{X}_{0}$ and ${\lambda}_{\mathbb{X}_{0}}$ on $\BX_{0,\bar{S}}$ up to a factor in $\mathbb{Q}_p^{\times}$ and $\rho$ respects the polarization $\lambda_{\bar{X}}$ on $\bar{X}$ and ${\lambda}_{\mathbb{X}}$ on $\BX_{\bar{S}}$ up to the same factor in $\mathbb{Q}_p^{\times}$.

We define 
\begin{equation}\label{def:Js}
\begin{aligned}
J_{\T}(\BQ_{p})&=\{\alpha_0\in \mathrm{Aut}^{\circ}_{K}(\mathbb{X}_{0})\mid \alpha_0^{\ast}\lambda_{\mathbb{X}_{0}}= \mu_{0}(\alpha_0)\lambda_{\mathbb{X}_{0}}, \,\,\mu_{0}(\alpha_0)\in \BQ^{\times}_{p}\};\\
J(\BQ_{p})&=\{\alpha\in \mathrm{Aut}^{\circ}_{K}(\mathbb{X})\mid  \alpha^{\ast}\lambda_{\mathbb{X}}=\mu(\alpha)\lambda_{\mathbb{X}},\,\,  \mu(\alpha)\in \BQ^{\times}_{p}\};\\
\wt{J}(\BQ_{p})&=J(\BQ_{p})\times_{\BQ^{\times}_{p}} J_{\T}(\BQ_{p}).
\end{aligned}
\end{equation}
  Note that $\wt{\CM}_{\bK^{\circ}_{\wt G}}$ affords an action of the group $\wt{J}(\BQ_{p})$ via the framing data $\rho$ and $\rho_{0}$ in $(4)$ and $(2)$ by 
  \[
 \wt\alpha=(\alpha_0, \alpha): (X_{0}, \iota_{0}, \lambda_{0},\rho_{0}, X, \iota, \lambda_{X}, \rho)\mapsto (X_{0}, \iota_{0}, \lambda_{0},\alpha_{0}\circ\rho_{0}, X, \iota, \lambda_{X}, \alpha\circ\rho) .
  \]

Note that by  \cite[Lemma 5.2.2, Lemma 5.3.2]{KRZI}, $J(\BQ_{p})$ is the group of $\BQ_p$-points of a unitary similitude group $J$ with rational similitude factor of a anti-hermitian space  whose invariant is $+1$, cf. \cite[(7.2.6)]{KRZI}. Also, $J_{\T}(\BQ_{p})$ is isomorphic to $\T(\BQ_{p})$, see \eqref{Ti-group}. Therefore $\wt{J}=J\times_{\BG_{m}}\T$ is an inner form of $\wt{G}$ such that $\wt J_\ad$ is isomorphic to $\Res_{F/\BQ_p}(\PGL_2)$.

Next, we consider the Weil descent datum on $\wt\CM_{\bK^{\circ}_{\wt G}}$. It is enough to restrict the functor $\wt\CM_{\bfK^{\circ}_{\wt G}}$ to  affine schemes  $S = \Spec R$ over $\Spf O_{\breve{E}}$. We write
$\varepsilon: O_{\breve{E}} \longrightarrow R$ for the given algebra structure. Let $\tau_{E}$ be the Frobenius automorphism of $\bar{\kappa}$ over $\kappa$. We write  $R_{[\tau_{E}]}$ for the ring $R$ with the new algebra structure
$\varepsilon \circ\tau_{E}$.  
By base change to $\bar{\kappa}$, we obtain $\bar{\varepsilon}: \bar{\kappa} \longrightarrow \bar{R}:= R \otimes_{O_{\breve{E}}} \bar{\kappa}$. 

Let $(X_{0}, \iota_{0}, \lambda_{0}, \rho_{0}, X, \iota, \lambda_{X}, \rho)$ be a point of $\wt\CM_{\bK^{\circ}_{\wt G}}(\bar{R})$  where $\rho_{0}$ and $\rho$ are  quasi-isogenies
\begin{equation*}
\rho_{0}: X_{0, \bar{R}} \longrightarrow \bar{\varepsilon}_{\ast} \mathbb{X}_{0}= \BX_{0, \bar{R}}, \quad \text{ resp.}\quad \rho: X_{\bar{R}} \longrightarrow \bar{\varepsilon}_{\ast} \mathbb{X}=\BX_{\bar{R}}. 
\end{equation*}
We set
\begin{equation*}
\tilde{\rho}_{0}: X_{0,\bar{R}}\xrightarrow{\rho_{0}} \BX_{0, \bar{R}}\xrightarrow{F_{\BX_{0}, \tau_{E}}} \tau_{E \ast}\BX_{0, \bar{R}}, \quad \text{ resp.}\quad \tilde{\rho}: X_{\bar{R}}\xrightarrow{\rho} \BX_{\bar{R}}\xrightarrow{F_{\BX, \tau_{E}}} \tau_{E \ast}\BX_{\bar{R}}.
\end{equation*}
 
The assignment 
\begin{equation}\label{Weil-map-special}
(X_{0}, \iota_{0}, \lambda_{0}, \rho_{0}, X, \iota, \lambda_{X}, \rho) \mapsto (X_{0}, \iota_{0}, \lambda_{0}, \tilde{\rho}_{0}, X, \iota, \lambda_{X}, \tilde{\rho})
\end{equation}
defines a map
\begin{equation}\label{WD-diamond-special}
\omega_{\wt{\CM}_{\bK^{\circ}_{\wt G}}}: \wt{\CM}_{\bK^{\circ}_{\wt G}}(i)(\bar{R})\rightarrow \wt{\CM}_{\bK^{\circ}_{\wt G}}(i+f_{E})(\bar{R}_{[\tau_{E}]}).
\end{equation}
Here we note that the inverse image
of the polarization $(\tau_{E})_{\ast} \lambda_{\mathbb{X}}$ on
$(\tau_{E})_{\ast} \mathbb{X}$ by the Frobenius isogeny 
$$F_{\mathbb{X},\tau_E}: \mathbb{X} \longrightarrow (\tau_{E})_{\ast} \mathbb{X}$$
is $p^{f_{E}}\lambda_{\mathbb{X}}$ and similarly for $\BX_{0}$ and $\lambda_{\BX_{0}}$. 

From \eqref{WD-diamond-special} we obtain the Weil  descent datum
\begin{equation}\label{WD7e}
\omega_{\widetilde{\CM}_{\bK^{\circ}_{\wt G}}}: \widetilde{\CM}_{\bK^{\circ}_{\wt G}} \longrightarrow
{\widetilde{\CM}^{(\tau_{E})}_{\bK^{\circ}_{\wt G}}} ,
\end{equation}
where the upper index $(\tau_{E})$ denotes the base change via
$\Spec \tau_{E} : \Spf O_{\breve{E}} \longrightarrow \Spf O_{\breve{E}}$.

We recall  from \cite{KRZI} the definition of the formal moduli space ${\CM}_{r}$ of local CM-triples associated to the group $G$, cf. \cite[Definition 2.6.1]{KRZI}.  We denote by $O_{E_{r}}$ the valuation ring of $E_{r}$ and by $f_{E_{r}}$  the inertia degree of $E_{r}$ over $\BQ_{p}$. Let $\kappa_{r}$ be residue class field of $E_{r}$.
 
\begin{definition}\label{recall-M-r}
We define for each $i\in \BZ$ the functor $\CM_{r}(i)$ on $\Nilp_{O_{\breve{E}_{r}}}$. For a scheme $S$ over $\Spf O_{\breve{E}_{r}}$, a point of $\CM_{r}(i)(S)$ consists of the following data:
\begin{enumerate}
\item[(1)] A local CM-triple $(X, \iota, \lambda_{X})$ of type
$(K/F ,r)$ over $S$ which satisfies the conditions $({\rm KC}_{r})$ and $({\rm EC}_{r})$ and is $(V, \varsigma)$-principally polarized;

\item[(2)] An $O_{K}$-linear quasi-isogeny 
\begin{equation*}
\rho: \bar{X} := X\times_{S} \bar{S} \longrightarrow \mathbb{X}\times_{\Spec \bar{\kappa}_{r}} \bar{S} 
\end{equation*}
such that, locally on $\bar S$,  there exists $u\in\BZ_p^\times$ with  $\rho^{\ast}(\lambda_{\BX})=up^{i}{\lambda}_{\bar{X}}$.
\end{enumerate}
We denote these data by a tuple $(X, \iota, \lambda_{X}, \rho)$. Two tuples $(X, \iota, \lambda_{X}, \rho)$ and $(X^{\prime}, \iota^{\prime}, \lambda_{X^{\prime}}, \rho^{\prime})$ define the same point of $\CM_{r}(i)$ iff there exists an isomorphism  of $O_{K}$-modules $\alpha: (X, \iota)\isoarrow (X^{\prime}, \iota^{\prime})$ such that $\rho^{\prime}\circ\alpha_{\bar{S}}=\rho$.  Note that the requirement on $\rho$ in $(2)$ above implies that 
\begin{equation*}
2\mathrm{height}(\rho)=\mathrm{height}(p^{i}\mid X)=4di.
\end{equation*}

The formal scheme over $\Spf O_{\breve{E}_{r}}$ representing this functor is also denoted by $\CM_{r}(i)$.  
We define the formal scheme $\CM_{r}$ over $\Spf O_{\breve{E}_{r}}$ by
\begin{equation}
\CM_{r}=\bigsqcup\nolimits_{i\in\BZ}\CM_{r}(i). 
\end{equation}
\end{definition}
Then $\CM_r$ parametrizes tuples $(X, \iota, \lambda_{X}, \rho)$ such that $\rho$ respects the polarization $\lambda_{\bar{X}}$ on $\bar{X}$ and
${\lambda}_{\mathbb{X}}$ on $\BX$ up to a factor in $\mathbb{Q}_p^{\times}$. We define a map 
\begin{equation}\label{frakvr}
\mathfrak{v}_{r}: {\CM}_{r}\rightarrow \BZ
\end{equation}
sending the tuple $(X, \iota, \lambda_{X}, \rho)$ to $(2d)^{-1}\mathrm{height}(\rho)$ and hence $\mathfrak{v}_{r}({\CM}_{r}(i))=i$. The group $J(\BQ_{p})$ acts naturally on $\CM_{r}$ via the framing $\rho$ by $(X, \iota, \lambda_{X}, \rho)\mapsto (X, \iota, \lambda_{X}, \alpha\circ\rho)$.

Next, we consider the Weil descent datum on $\CM_{r}$. It is enough to restrict the functor $\CM_{r}$ to an affine scheme $S = \Spec R$ over $\Spf O_{\breve{E}_{r}}$. We write
$\varepsilon: O_{\breve{E}_{r}} \longrightarrow R$ for the given algebra structure. Let $\tau_{E_{r}}$ be the Frobenius automorphism of $\bar{\kappa}_{r}$ over $\kappa_{r}$. We write  $R_{[\tau_{E_{r}}]}$ for the ring $R$ with the new algebra structure
$\varepsilon \circ\tau_{E_{r}}$.  
By base change to $\bar{\kappa}_{r}$, we obtain
\begin{equation*}
\bar{\varepsilon}: \bar{\kappa}_{r} \longrightarrow \bar{R}:= R \otimes_{O_{\breve{E}_{r}}} \bar{\kappa_{r}}. 
\end{equation*}
    
Let $(X, \iota, \lambda_{X}, \rho)$ be a point of $\CM_{r}(\bar{R})$  and we set
\begin{equation*}
\tilde{\rho}: X_{\bar{R}}\overset{\rho}\longrightarrow \BX_{\bar{R}}\xrightarrow{F_{\BX, \tau_{E_{r}}}} \tau_{E_{r} \ast}\BX_{\bar{R}}.
\end{equation*}
 
The assignment 
\begin{equation}\label{Weil-map-special}
(X, \iota, \lambda_{X}, \rho) \mapsto (X, \iota, \lambda_{X}, \tilde{\rho})
\end{equation}
defines a map
\begin{equation}\label{WD-Mr-i}
\omega_{\CM_{r}}: \CM_{r}(i)(\bar{R})\rightarrow \CM_{r}(i+f_{E_r})(\bar{R}_{[\tau_{E_{r}}]}).
\end{equation}
Here we note that the inverse image
of the polarization $(\tau_{E_{r}})_{\ast} \lambda_{\mathbb{X}}$ on
$(\tau_{E_{r}})_{\ast} \mathbb{X}$ by the Frobenius isogeny 
\begin{equation*}
F_{\mathbb{X},\tau_{E_{r}}}: \mathbb{X} \longrightarrow (\tau_{E_{r}})_{\ast} \mathbb{X}
\end{equation*}
is $p^{f_{E_{r}}}\lambda_{\mathbb{X}}$. Then  \eqref{WD-Mr-i} defines the Weil descent datum
\begin{equation*}\label{WD-Mr}
\omega_{\CM_{r}}: \CM_{r}\rightarrow \CM_{r}^{(\tau_{E_{r}})}.
\end{equation*}

We fix an isomorphism $J_{\mathrm{ad}}(\BQ_{p})\simeq \mathrm{PGL}_{2}(F)$. Then ${J}(\BQ_{p})$ acts on $\hat{\Omega}_{F} \times_{\Spf O_{F}} \Spf O_{\breve{E}_{r}}$ via this isomorphism. We extend this to an action of $J(\BQ_p)$ on $(\hat{\Omega}_{F} \times_{\Spf O_{F}} \Spf O_{\breve{E}_{r}})\times \BZ$ by the translation by $\val(\mu(\alpha))$ for $ \alpha\in J(\BQ_p)$ on the second factor. 

\begin{proposition}\label{Mr-thm}
There exists an isomorphism of formal schemes over $\Spf O_{\breve{E}_{r}}$
\begin{equation*}
\CM_{r}\overset{\sim}\longrightarrow (\hat{\Omega}_{F} \times_{\Spf O_{F}} \Spf O_{\breve{E}_{r}})\times \BZ
\end{equation*}
which is equivariant for the action of $J(\BQ_p)$ and such that the Weil descent datum $\omega_{\CM_{r}}$ on the left hand side induces on the right hand side the Weil descent datum 
\begin{equation*}
(\xi, i)\mapsto (\omega_{\Omega, E_{r}}(\xi), i+f_{E_r}).
\end{equation*}
Here $\omega_{\Omega, E_{r}}$ denotes the action of $\tau_{E_{r}}$ on the formal scheme $\hat{\Omega}_{F} \times_{\Spf O_{F}} \Spf O_{\breve{E}_{r}}$ through the second factor. 
\end{proposition}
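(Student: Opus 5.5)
This is essentially the main local theorem of \cite{KRZI} for special CM-triples, so the plan is to deduce it from the results there rather than reprove it. I would argue component by component in $i$, and then assemble the pieces.

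First I show that each $\CM_r(i)$ is isomorphic to $\hat\Omega_F\times_{\Spf O_F}\Spf O_{\breve E_r}$. Fix $i$ and an element $g_i\in J(\BQ_p)$ with $\val(\mu(g_i))=i$. Such an element exists: $J$ is the unitary similitude group of a split anti-hermitian space ($\inv=+1$, cf. \cite[(7.2.6)]{KRZI}), so the similitude factors attain every valuation, e.g. via a scalar $\Pi$ or $\pi$ combined with an element of $\GL_2(F)$. Translation by $g_i$ gives an isomorphism $\CM_r(0)\isoarrow\CM_r(i)$. It then suffices to treat $\CM_r(0)$. In the three cases ($K/F$ ramified, unramified, or with $\inv$ conventions as in the definition of $(V,\varsigma)$-principal polarization), \cite[Theorems 5.2.? / 5.3.?]{KRZI} identify $\CM_r(0)$ with $\hat\Omega_F\times_{\Spf O_F}\Spf O_{\breve E_r}$. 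That identification goes through the polarized contraction functor $\mathfrak C^{\rm pol}_r$, which reduces $\CM_r(0)$ to Drinfeld's moduli problem for special formal $O_D$-modules. It is equivariant for the subgroup $J(\BQ_p)^0=\{\alpha : \val\mu(\alpha)=0\}$, acting through $J_{\rm ad}(\BQ_p)\simeq\PGL_2(F)$.

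Next I assemble these pieces into $\CM_r\simeq(\hat\Omega_F\times\Spf O_{\breve E_r})\times\BZ$ and check equivariance for all of $J(\BQ_p)$. The key is to choose the isomorphisms $\CM_r(0)\to\CM_r(i)$ compatibly. Since $J(\BQ_p)/J(\BQ_p)^0\cong\val\mu(J(\BQ_p))$ may be a proper subgroup of $\BZ$, I define the map on $\CM_r(i)$ by transporting along $g_i$ and composing with the action of $g_i$ on $\hat\Omega_F$. This is independent of the choice of $g_i$ because two choices differ by an element of $J(\BQ_p)^0$, and that element acts compatibly on both sides. If some $\CM_r(i)$ were empty because no such $g_i$ exists, the statement would be false. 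So I must verify, using the height formula $2\,\mathrm{height}(\rho)=4di$ and the existence of quasi-isogenies of every height, that every $i\in\BZ$ occurs. Equivariance under all of $J$ then holds by construction, with translation by $\val\mu(\alpha)$ on the $\BZ$ factor.

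Finally, for the descent datum, the definition shows that $\omega_{\CM_r}$ maps $\CM_r(i)$ to $\CM_r(i+f_{E_r})$. It also commutes with the $J(\BQ_p)$-action, since Frobenius $F_{\BX,\tau}$ commutes with quasi-endomorphisms after identifying $\tau_*\BX$ with $\BX$ by the $\bar\kappa_r$-rationality of $\BX$. On $\CM_r(0)$, followed by translation back, the descent datum is computed in \cite[Propositions 6.3.2, 6.4.3]{KRZI}. There, after contraction, Frobenius becomes multiplication by a central element (a power of $\Pi$, resp. $\pi^{ef_{E_r}/2}$) in the center of $J$. This element acts trivially on $\hat\Omega_F$ through $\PGL_2(F)$ and shifts the $\BZ$ factor by $f_{E_r}$. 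Hence the datum is $(\xi,i)\mapsto(\omega_{\Omega,E_r}(\xi),i+f_{E_r})$.

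I expect this last step to be the main obstacle. It needs a careful check that the Frobenius isogeny really corresponds to a central element, i.e. that no nontrivial $\PGL_2(F)$-twist survives. It also needs the effect of the descent datum on the Drinfeld side to be exactly the standard one, $\omega_{\Omega,E_r}$. I would verify both on Dieudonné modules, exactly as in the proof of Proposition \ref{LS-tori-weil}.
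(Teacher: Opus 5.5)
Your first two steps are sound, though more than is needed. Since $\bar V$ is split, $\val\circ\mu\colon J(\BQ_p)\to\BZ$ is surjective, so your worry about a proper subgroup and an empty $\CM_r(i)$ is moot. Transporting a $J(\BQ_p)^0$-equivariant identification of $\CM_r(0)$ along elements $g_i$ then does give a $J(\BQ_p)$-equivariant isomorphism of the stated shape. The paper reconstructs none of this: it cites \cite[Cor.~6.1.3, Cor.~6.2.5]{KRZI}, where the isomorphism is proved \emph{together with} its descent datum.

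The gap is in your third step. The elements $\Pi^{ef_{E_r}}$, $\pi^{ef_{E_r}/2}$ and the results you invoke belong to the \emph{banal} case (\S6.3--6.4 of \cite{KRZI}; compare the proof of Proposition~\ref{Mr-banal}). There $\mathfrak{C}^{\rm pol}_r$ produces an \'etale $O_K$-lattice $\{c\mid Vc=\Pi^{e}c\}$ on which Frobenius is a scalar, exactly as in Proposition~\ref{LS-tori-weil}. For special $r$ the contracted object is not an \'etale sheaf; otherwise $\CM_r$ would be discrete rather than $\hat{\Omega}_F$. So that Dieudonn\'e computation has no analogue here.

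Worse, the conclusion you aim for is false in general. Take $F=\BQ_p$, $K=\BQ_{p^2}$, $r_{\varphi_0}=r_{\bar\varphi_0}=1$. Then $E_r=\BQ_p$, $f_{E_r}=1$, and $\pi^{ef_{E_r}/2}$ makes no sense.
\begin{itemize}
\item Whatever identification $(\tau_{E_r})_{\ast}\BX\simeq\BX$ you use, the resulting quasi-endomorphism $\Phi$ has similitude of $p$-valuation $f_{E_r}=1$.
\item Every central element $a\in K^\times$ of $J(\BQ_p)$ has similitude $\Nm_{K/F}(a)$ of even valuation, so $\Phi$ is not central.
\item The image of $\Phi$ in $\PGL_2(F)$ has odd determinant valuation, so it acts non-trivially on $\hat{\Omega}_F$.
\item $\Phi$ does not commute with $J(\BQ_p)$. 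The $J$-compatibility of $\omega_{\CM_r}$ comes instead from $F_{\BX,\tau}\circ\alpha=\tau_*\alpha\circ F_{\BX,\tau}$, with no rational structure on $\BX$ needed.
\end{itemize}
So the shift $i\mapsto i+f_{E_r}$ is not induced by any element of $J(\BQ_p)$. The descent datum cannot be obtained as ``standard datum composed with a central element''.

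Even in the ramified case, where a central element with the right similitude does exist, your outline is incomplete. You would still have to show that the descent datum attached to a $\kappa_r$-rational structure on the framing object corresponds to $\omega_{\Omega,E_r}$ under your identification of $\CM_r(0)$ with $\hat{\Omega}_{F} \times_{\Spf O_{F}} \Spf O_{\breve{E}_{r}}$. Nothing in your outline does this, and it is exactly the substance of the cited corollaries. It requires the compatibility of the contraction functor with base change along $\tau_{E_r}$, combined with the known Weil descent datum on the Drinfeld/Kudla--Rapoport model of $\hat{\Omega}_F$. The fix is to take the descent datum from \cite[Cor.~6.1.3, Cor.~6.2.5]{KRZI}, as the paper does, rather than from a Dieudonn\'e-module computation modeled on Proposition~\ref{LS-tori-weil}.
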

\begin{proof}
This is proved in \cite[Corollary 6.1.3]{KRZI} in the case when $K/F$ is ramified and in  \cite[Corollary 6.2.5]{KRZI} when $K/F$ is unramified.
\end{proof}

Recall the space $\mathcal{M}_{0}(i)$ in Definition \ref{RSZ-LS-Tori}  and the space $\mathcal{M}_{0}$ in \eqref{M0}. We have a morphism of formal moduli spaces over $\Spf O_{\breve{E}}$
\begin{equation}\label{vartheta-special}
\vartheta: \widetilde{\mathcal{M}}_{\mathbf{K}^{\circ}_{\wt G}} \rightarrow \mathcal{M}_{r, O_{\breve{E}}}\times {\mathcal{M}}_{0, O_{\breve{E}}} ,
\end{equation}
given by sending $(X_{0}, \iota_{0}, \lambda_{0}, \rho_{0}, X, \iota, \lambda_{X}, \rho)$ to $(X, \iota, \lambda_{X}, \rho)\times (X_{0}, \iota_{0}, \lambda_{0}, \rho_{0})$. Here \[\mathcal{M}_{r, O_{\breve{E}}}=\mathcal{M}_{r}\times_{\Spf  O_{\breve{E}_r}}\Spf  O_{\breve{E}}, \quad  
{\mathcal{M}}_{0, O_{\breve{E}}}={\mathcal{M}}_{0}\times_{\Spf O_{\breve E_0}}\Spf O_{\breve{E}} ,
\]
with their induced Weil descent data $\omega_{\CM_r, E}$ and $\omega_{\CM_0, E}$ down to $O_E$. Also, there is an action 
 of $\wt{J}(\BQ_{p})$ on the right-hand side via  the map $\wt{J}(\BQ_{p})\rightarrow J(\BQ_{p})\times J_{\T}(\BQ_{p})$.

\begin{lemma}\label{wtM-immersion}
The morphism $\vartheta$ induces a closed immersion of formal schemes over $\Spf O_{\breve{E}}$,
\begin{equation*}\label{theta5}
\vartheta: \widetilde{\mathcal{M}}_{\mathbf{K}^{\circ}_{\wt G}} \rightarrow \mathcal{M}_{r, O_{\breve{E}}}\times {\mathcal{M}}_{0, O_{\breve{E}}} .
\end{equation*}
 In fact  
\begin{equation*}
\vartheta: \widetilde{\mathcal{M}}_{\mathbf{K}^{\circ}_{\wt G}}(i) \isoarrow \mathcal{M}_{r, O_{\breve{E}}}(i)\times {\mathcal{M}}_{0, O_{\breve{E}}}(i) , \quad \forall i\in\BZ .
\end{equation*}
Furthermore, $\vartheta$ is equivariant for the action of $\wt{J}(\BQ_{p})$ and  is compatible with the natural Weil descent data down to $O_E$ on both sides.
\end{lemma}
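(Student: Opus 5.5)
The plan is to check the identity $\vartheta\colon \wt\CM_{\bK^\circ_{\wt G}}(i)\isoarrow \CM_{r,O_{\breve E}}(i)\times\CM_{0,O_{\breve E}}(i)$ for each fixed $i$ directly on the level of functors on $\Nilp_{O_{\breve E}}$, and then deduce the closed immersion statement from it. Definition \ref{RSZ-LS-special} makes a point of $\wt\CM_{\bK^\circ_{\wt G}}(i)(S)$ almost literally a pair consisting of a point of $\CM_r(i)(S)$ (data (3), (4)) and a point of $\CM_0(i)(S)$ (data (1), (2)). The only extra requirement is that the units $u\in\BZ_p^\times$ in (2) and (4) coincide. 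By Remark \ref{simi-factor-rmk} this coupling can be dropped: we may normalize both to $\rho_0^*(\lambda_{\BX_0})=p^i\lambda_0$ and $\rho^*(\lambda_\BX)=p^i\lambda_X$. Concretely, given a pair of points with possibly different units $u_0,u$, we rescale $\lambda_0$ by $u_0$ and $\lambda_X$ by $u$; this is allowed locally on $\bar S$, and the units lift along $\bar S\subset S$ since $\BZ_p^\times$ is a constant group. The rescaling changes neither the isomorphism class of the triple nor the properties ``principally polarized'', ``$(V,\varsigma)$-principally polarized'' or $({\rm KC})$, $({\rm EC})$.

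Equivalence of points is defined by $O_K$-isomorphisms $\alpha_0,\alpha$ compatible with the framings. These conditions involve no polarization and decouple between the two factors, so $\vartheta$ is a bijection on $S$-points for each $i$, functorially in $S$. It is therefore an isomorphism of representing formal schemes over $\Spf O_{\breve E}$ on the $i$-th pieces. Taking the disjoint union over $i$, $\vartheta$ identifies $\wt\CM_{\bK^\circ_{\wt G}}$ with $\bigsqcup_i \CM_r(i)\times\CM_0(i)$. This is an open and closed formal subscheme of $\CM_{r,O_{\breve E}}\times\CM_{0,O_{\breve E}}=\bigsqcup_{i,j}\CM_r(i)\times\CM_0(j)$, namely the diagonal locus $i=j$, so $\vartheta$ is a closed immersion.

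For $\wt J(\BQ_p)$-equivariance, $\wt\alpha=(\alpha_0,\alpha)$ acts on both sides by postcomposing the framings, and the fiber product condition $\mu(\alpha)=\mu_0(\alpha_0)$ shifts $i$ equally on both factors. Hence the diagonal locus is preserved and $\vartheta$ commutes with the action tautologically.

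For the Weil descent data, the datum on the source is \eqref{WD-diamond-special}, which composes both framings with the Frobenius $F_{\,\cdot,\tau_E}$ relative to $\kappa_E$. On the target we use the descent data $\omega_{\CM_r,E}$ and $\omega_{\CM_0,E}$ induced from $\omega_{\CM_r}$ and $\omega_{\CM_0}$ by base change. The one point needing care, which I expect to be the main (if mild) obstacle, is that the base-changed descent datum relative to $\breve E/E$ is the $[\kappa_E:\kappa_r]$-fold (resp.\ $[\kappa_E:\kappa_{E_0}]$-fold) iterate of $\omega_{\CM_r}$ (resp.\ $\omega_{\CM_0}$) composed with the base change $\tau_E$. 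Under this iteration, $F_{\BX,\tau_{E_r}}$ composes to $F_{\BX,\tau_E}$, since $\tau_E=\tau_{E_r}^{f_E/f_{E_r}}$ and relative Frobenii compose. Both sides thus send $\rho\mapsto F_{\BX,\tau_E}\circ\rho$ and $\rho_0\mapsto F_{\BX_0,\tau_E}\circ\rho_0$, and both shift $i$ to $i+f_E$ on the two factors simultaneously. This gives the compatibility.
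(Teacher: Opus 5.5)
Your proposal is correct and follows the paper's proof. The paper likewise uses Remark \ref{simi-factor-rmk} to decouple the units in data (2) and (4), identifies each $\wt\CM_{\bK^\circ_{\wt G}}(i)$ with $\CM_{r,O_{\breve E}}(i)\times\CM_{0,O_{\breve E}}(i)$ (which gives the closed immersion onto the diagonal locus), and treats equivariance and compatibility of the Weil descent data as clear by construction; your extra detail on rescaling the polarizations and on the base-changed descent datum being an iterate of $\omega_{\CM_r}$, resp.\ $\omega_{\CM_0}$, fills in steps the paper leaves implicit.
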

\begin{proof}
By Remark \ref{simi-factor-rmk}, the space $\widetilde{\mathcal{M}}_{\mathbf{K}^{\circ}_{\wt G}}(i)$ can be identified with the fiber product 
\begin{equation*}
\mathcal{M}_{r, O_{\breve{E}}}(i)\times {\mathcal{M}}_{0, O_{\breve{E}}}(i) . 
\end{equation*}
Hence $\vartheta$ is a closed embedding of formal schemes. The equivariance claim is obvious. 

Let $\Spec R$ be a scheme over $\Spf O_{\breve{E}}$.  We need to verify the commutativity of the following  diagram,
\begin{equation}\label{weil-M-Ku-diagram}
\begin{tikzcd}
\wt{\CM}_{\bK^{\circ}_{\wt G}}(i)(\bar{R}) \arrow[r, "\vartheta"] \arrow[d, "\omega_{\wt{\CM}_{\bK^{\circ}_{\wt G}}}"] & {\CM}_{r, O_{\breve{E}}}(i)\times\CM_{0, O_{\breve{E}}}(i)(\bar{R})  \arrow[d, "\omega_{{\CM}_{r}, E}\times \omega_{\CM_{0}, E}"] \\
\wt{\CM}_{\bK^{\circ}_{\wt G}}(i+f_{E})(\bar{R}_{[\tau_{E}]}) \arrow[r, "\vartheta"] &   {\CM}_{r, O_{\breve{E}}}(i+f_{E})\times\CM_{0, O_{\breve{E}}}(i+f_{E})(\bar{R}_{[\tau_{E}]}).\\         
\end{tikzcd}
\end{equation}
But this is clear by construction. 
\end{proof}

We consider the local reflex norm \eqref{reflex-norm-0} with respect to the datum $(\Phi^{+}_{0}, E)$:
\begin{equation}\label{reflex-norm-E}
\mathfrak{r}_{\Phi^{+}_{0}, E}: \mathrm{Res}_{E/\BQ_{p}}\mathbb{G}_{m, E}\longrightarrow \T.
\end{equation}
Let 
\begin{equation}\label{wtw}
\wt{w}={\mathfrak{r}}_{\Phi^{+}_{0}, E}(\varpi_{E})\in \T(\BQ_{p}) ,
\end{equation}
be the special element in $\T(\BQ_{p})$ relative to $(\Phi^{+}_{0}, E)$ for the uniformizer $\varpi_{E}$ as in Construction \ref{sp-elt-loc}.

\begin{proposition}\label{LS-weil-special}
There is an isomorphism
\begin{equation*}
\widetilde{\mathcal{M}}_{\mathbf{K}^{\circ}_{\wt G}} \overset{\sim}{\longrightarrow}
(\hat{\Omega}_{F} \times_{\Spf O_{F}} \Spf O_{\breve{E}}) \times
\wt G(\mathbb{Q}_p)/\mathbf{K}^{\circ}_{\wt G}
 \end{equation*}
compatible with natural actions of $\wt{J}(\BQ_{p})$ on both sides. Here $\wt{J}(\BQ_{p})$ acts on $(\hat{\Omega}_{F} \times_{\Spf O_{F}} \Spf O_{\breve{E}}) \times\wt G(\mathbb{Q}_p)/\mathbf{K}^{\circ}_{\wt G}$ via the composition $$\wt J(\BQ_p)\to J(\BQ_p)\to  J_{\mathrm{ad}}(\BQ_{p})\simeq \PGL_{2}(F)$$ on the first factor and via $ \wt J(\BQ_p)\to J_{\T}(\BQ_{p})\simeq \T(\BQ_{p})$ by multiplication on the second factor (which we identify with $\T(\BQ_{p})/\bfK^{\circ}_{\T}$, cf. \eqref{wtverT}).

The Weil descent datum $\omega_{\wt{\mathcal{M}}_{\mathbf{K}^{\circ}_{\wt G}}}$
on the left-hand side corresponds on the
right-hand side to the Weil descent datum given by
\begin{equation*}
(\xi, g) \mapsto (\omega_{\Omega, E}(\xi), \wt{w}g), \quad g \in \wt{G}(\mathbb{Q}_p) ,
\end{equation*} 
where $\wt{w}$ is the special element relative to $(\Phi^{+}_{0}, E)$ for our fixed uniformizer $\varpi_{E}$, viewed as an element in the center of $\wt{G}(\mathbb{Q}_p)$.
\end{proposition}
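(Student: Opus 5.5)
The plan is to combine Lemma \ref{wtM-immersion} with Proposition \ref{Mr-thm} and Proposition \ref{LS-weil-special}'s toric analogue, Proposition \ref{LS-tori-weil}, extended to $E$ via the remark following it. By Lemma \ref{wtM-immersion}, $\widetilde{\mathcal{M}}_{\mathbf{K}^{\circ}_{\wt G}}(i)\simeq \mathcal{M}_{r,O_{\breve E}}(i)\times \mathcal{M}_{0,O_{\breve E}}(i)$ for every $i$. The identification is $\wt J(\BQ_p)$-equivariant and compatible with the Weil descent data down to $O_E$. Therefore $\widetilde{\mathcal{M}}_{\mathbf{K}^{\circ}_{\wt G}}$ is the fiber product of $\mathcal{M}_{r,O_{\breve E}}$ and $\mathcal{M}_{0,O_{\breve E}}$ over $\BZ$, where the maps to $\BZ$ are $\mathfrak v_r$ and the index $i$ on $\mathcal{M}_0$.

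First I would base change Proposition \ref{Mr-thm} to $O_{\breve E}$. This gives $\mathcal{M}_{r,O_{\breve E}}\simeq (\hat\Omega_F\times_{\Spf O_F}\Spf O_{\breve E})\times\BZ$, with the projection to $\BZ$ equal to $\mathfrak v_r$. The descent datum relative to $E$ is the $f_E/f_{E_r}$-fold iterate of $\omega_{\CM_r}$, namely $(\xi,i)\mapsto(\omega_{\Omega,E}(\xi),i+f_E)$. By the remark after Proposition \ref{LS-tori-weil}, $\mathcal{M}_{0,O_{\breve E}}\simeq \T(\BQ_p)/\T(\BZ_p)$ with descent datum given by multiplication by $\wt w=\mathfrak r_{\Phi_0^+,E}(\varpi_E)$. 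Under this isomorphism, $\mathcal{M}_0(i)$ corresponds to the classes $g$ with $\val_p(\Nm_{K/F}g)=i$, by \eqref{T-coset-space-i}.

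Taking the fiber product over $\BZ$, the $\BZ$-factor of $\mathcal M_r$ is absorbed. This gives $\widetilde{\mathcal{M}}_{\mathbf{K}^{\circ}_{\wt G}}\simeq(\hat\Omega_F\times_{\Spf O_F}\Spf O_{\breve E})\times\T(\BQ_p)/\T(\BZ_p)$, and by \eqref{wtverT} the second factor is $\wt G(\BQ_p)/\mathbf K^\circ_{\wt G}$. For equivariance, take $(\alpha_0,\alpha)\in\wt J(\BQ_p)$ with $\mu(\alpha)=\mu_0(\alpha_0)$. It acts on $\mathcal M_r$ by $\PGL_2(F)$ on $\hat\Omega_F$ and by translation by $\val\mu(\alpha)$ on $\BZ$. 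It acts on $\mathcal M_0$ by multiplication through $J_\T(\BQ_p)\simeq\T(\BQ_p)$, and this shifts $i$ by $\val\mu_0(\alpha_0)$, which is consistent with the first shift. For the descent datum, the shift by $f_E$ on $\BZ$ matches $\val_p\Nm_{K/F}(\wt w)=f_E$ from Remark \ref{sp-elt-rmk}. The descent datum therefore becomes $(\xi,g)\mapsto(\omega_{\Omega,E}(\xi),\wt w g)$, and $\wt w\in\T(\BQ_p)$ is central in $\wt G(\BQ_p)=\U(\BQ_p)\times\T(\BQ_p)$.

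I expect the main obstacle to be bookkeeping rather than a new idea. One point is that the isomorphisms of Propositions \ref{Mr-thm} and \ref{LS-tori-weil} must be compatible with the gradings by $i$: the $\BZ$-coordinate on the $\Omega$ side has to equal $\mathfrak v_r$, which uses the normalization $2\,\mathrm{height}(\rho)=4di$. The other point is that the descent datum for $E\supset E_r,E_0$ must be the correct iterate, since $\tau_E=\tau_{E_r}^{f_E/f_{E_r}}$ and the Frobenius isogenies compose accordingly. I would verify both directly from the definitions of $\omega_{\CM_r}$ and $\omega_{\CM_0}$ via \eqref{WD-Mr-i} and \eqref{WD-M-0}.
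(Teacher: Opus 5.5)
Your proposal is correct and follows essentially the same route as the paper: identify $\widetilde{\mathcal{M}}_{\mathbf{K}^{\circ}_{\wt G}}$ with $\mathcal{M}_{r,O_{\breve E}}\times_{\BZ}\mathcal{M}_{0,O_{\breve E}}$ via Lemma \ref{wtM-immersion}, absorb the $\BZ$-factor of Proposition \ref{Mr-thm} (whose projection to $\BZ$ is $\mathfrak v_r$), and read off the descent datum on the toric factor from Proposition \ref{LS-tori-weil} together with the remark following it. Your explicit checks of the $\BZ$-shifts spell out what the paper calls ``obvious'': the shift by $\val\mu(\alpha)=\val\mu_0(\alpha_0)$ for the $\wt J(\BQ_p)$-action, and the shift by $f_E=\val_p\Nm_{K/F}(\wt w)$ for the descent datum.
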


\begin{proof}
We first construct an isomorphism 
\begin{equation*}
\widetilde{\mathcal{M}}_{\mathbf{K}^{\circ}_{\wt G}} \overset{\sim}{\longrightarrow}
(\hat{\Omega}_{F} \times_{\Spf O_{F}} \Spf O_{\breve{E}}) \times
\wt G(\mathbb{Q}_p)/\mathbf{K}^{\circ}_{\wt G}.
 \end{equation*}
 We consider the composite map $\CM_{r}\overset{\sim}\longrightarrow (\hat{\Omega}_{F} \times_{\Spf O_{F}} \Spf O_{\breve{E}_{r}})\times \BZ\rightarrow \BZ$, where the last map is the projection to the second factor. By the construction of Proposition \ref{Mr-thm}, this map maps $\wt\CM_{r}(i)$ to $i$.  Therefore we have isomorphisms
\begin{equation*}
\begin{aligned}
\widetilde{\mathcal{M}}_{\mathbf{K}^{\circ}_{\wt G}} &\overset{\sim}{\longrightarrow} \CM_{r, O_{\breve{E}}}\times_{\BZ} \CM_{0, O_{\breve{E}}}\\
&\overset{\sim}{\longrightarrow}\big(\hat{\Omega}_{F} \times_{\Spf O_{F}} \Spf O_{\breve{E}} \times \BZ\big)\times_{\BZ} \CM_{0, O_{\breve{E}}}\\
&\overset{\sim}{\longrightarrow}\big(\hat{\Omega}_{F} \times_{\Spf O_{F}} \Spf O_{\breve{E}} \big)\times \CM_{0, O_{\breve{E}}}\\
&\overset{\sim}{\longrightarrow}\big(\hat{\Omega}_{F} \times_{\Spf O_{F}} \Spf O_{\breve{E}} \big)\times \T(\BQ_{p})/\bK^{\circ}_{\T}\\
&\overset{\sim}{\longrightarrow}\big(\hat{\Omega}_{F} \times_{\Spf O_{F}} \Spf O_{\breve{E}} \big)\times \wt{G}(\BQ_{p})/\bK^{\circ}_{\wt{G}}.\\
\end{aligned}
\end{equation*}

The equivariance claim is obvious. Let $\Spec R$ be a scheme over $\Spf O_{\breve{E}}$.  From Lemma \ref{wtM-immersion} and Proposition \ref{Mr-thm}, we  obtain the commutativity of the following  diagram 
\begin{equation}\label{weil-M-Ku-diagram}
\begin{tikzcd}
\wt{\CM}_{\bK^{\circ}_{\wt G}}(i)(\bar{R}) \arrow[r, "\sim"] \arrow[d, "\omega_{\wt{\CM}_{\bK^{\circ}_{\wt G}}}"] & \big(\hat{\Omega}_{F} \times_{\Spf O_{F}} \Spf O_{\breve{E}} \times\{i\}\big)\times_{\BZ} \CM_{0, O_{\breve{E}}}(i)(\bar{R})  \arrow[d, "\omega_{\Omega,E}\times \omega_{\CM_{0},E}"] \\
\wt{\CM}_{\bK^{\circ}_{\wt G}}(i+f_{E})(\bar{R}_{[\tau_{E}]}) \arrow[r, "\sim"] &  \big(\hat{\Omega}_{F} \times_{\Spf O_{F}} \Spf O_{\breve{E}} \times\{i+f_{E}\}\big)\times_{\BZ} \CM_{0, O_{\breve{E}}}(i+f_{E})(\bar{R}_{[\tau_{E}]}).         
\end{tikzcd}
\end{equation}

For any $j\in\BZ$, the right vertical map is given under the natural isomorphism 
\begin{equation*}
\big(\hat{\Omega}_{F} \times_{\Spf O_{F}} \Spf O_{\breve{E}} \times\{j\}\big)\times_{\BZ} \CM_{0, O_{\breve{E}}}(j)\simeq \big(\hat{\Omega}_{F} \times_{\Spf O_{F}} \Spf O_{\breve{E}} \big)\times \CM_{0, O_{\breve{E}}}(j) 
\end{equation*}
simply  by $\omega_{\Omega, {E}}\times \omega_{\CM_{0}, E}$.  Finally, by Proposition \ref{LS-tori-weil}, the descent datum $\omega_{\CM_{0}, E}$ corresponds to multiplication by the special element $\wt{w}$ under the isomorphism $\CM_{0, O_{\breve{E}}}\simeq \T(\BQ_{p})/\bK^{\circ}_{\T}$. This finishes the proof.
 \end{proof}

\subsection{Formal moduli space of RSZ CM-triples: the banal case}
Let $r$ be a local CM-type of rank $2$. We now assume that $r$ is banal, i.e., $r_{\varphi}\in\{0,2\}$ for all $\varphi\in \Phi$. Let $r_{0}$ still be a local CM-type of rank $1$ with associated local CM-type $\Phi^{+}_{0}$.

Let $\bfK^{\circ}_{\T}$ still be the maximal open compact subgroup of $\T(\BQ_{p})$ but now let $\bfK_{\U}$ be an arbitrary open compact subgroup of $\U(\BQ_{p})$. Let $\bfK_{\wt{G}}= \bfK_{\U}\times\bfK^{\circ}_{\T}$, it is an open compact subgroup of $\wt{G}(\BQ_{p})$. The open compact subgroup $\bfK_{\wt{G}}$ defines an open compact subgroup $\bK_{G}$ of $G(\BQ_{p})$ as the image of $\bfK_{\wt{G}}$ under the map $\wt{G}(\BQ_{p})\rightarrow G(\BQ_{p})$.

We fix a local CM-triple $(\mathbb{X}_{0}, \iota_{\mathbb{X}_{0}}, \lambda_{\mathbb{X}_{0}})$ of type $(K/F, r_{0})$ over $\bar{\kappa}$ satisfying the condition $({\rm KC}_{r_{0}})$ and a local CM-triple $(\mathbb{X}, \iota_{\mathbb{X}}, \lambda_{\mathbb{X}})$ of type $(K/F, r)$ over $\bar{\kappa}$ which satisfies the conditions $({\rm KC}_{r})$ and $({\rm EC}_{r})$ and is $(V, \varsigma)$-principally polarized.   We make the assumption $\inv^r(\mathbb{X}, \iota_{\mathbb{X}}, \lambda_{\mathbb{X}})=\inv(V)$. 
\begin{definition}\label{RSZ-LS-banal}
We define for each $i\in \BZ$ a functor $\wt{\CM}_{\bK_{\wt G}}(i)$ on $\Nilp_{O_{\breve{E}}}$. For a scheme $S$ over $\Spf O_{\breve{E}}$, a point of $\wt{\CM}_{\bK_{\wt G}}(i)(S)$ consists of the following data:
\begin{enumerate}
\item[(1)] A local CM-triple $(X_{0}, \iota_{0}, \lambda_{0})$ of type $(K/F, r_{0})$ satisfying the condition $({\rm KC}_{r_{0}})$;
      
\item[(2)] An $O_{K}$-linear quasi-isogeny
\begin{equation*}    
\rho_{0}: \bar{X}_{0} := X_{0} \times_{S} \bar{S} \longrightarrow \mathbb{X}_{0} \times_{\Spec \bar{\kappa}} \bar{S}
\end{equation*}
such that, locally on $\bar S$, there exists $u\in\BZ_p^\times$ with  $\rho_{0}^{\ast}(\lambda_{\BX_{0}})=up^{i}{\lambda}_{0}$;
\item[(3)] A local CM-triple $(X, \iota, \lambda_{X})$ of type $(K/F ,r)$ over $S$
which satisfies the conditions $({\rm KC}_{r})$ and $({\rm EC}_{r})$ and is $(V, \varsigma)$-principally polarized;

\item[(4)] An $O_{K}$-linear quasi-isogeny 
\begin{equation*}
\rho: \bar{X} := X \times_{S} \bar{S} \longrightarrow \mathbb{X}\times_{\Spec \bar {\kappa}} \bar{S} 
\end{equation*}
such that $\rho^{\ast}(\lambda_{\BX})=u p^{i}{\lambda_{\bar{X}}}$, locally on $\bar S$,  with the same  $u\in\BZ_p^\times$ as in datum $(2)$;    
\item[(5)] A class
  $\bar{\eta}$ of isomorphisms of \'etale sheaves 
  \begin{equation*}
    \eta: (\underline{\Hom}_{O_{K}}(X_{0}, X), \mathfrak{e})\isoarrow (\Lambda, \varsigma) \;
    \mathrm{mod}\; \bK_{\U}
  \end{equation*}
which respect the bilinear forms  on both sides.  
Here $(\underline{\Hom}_{O_{K}}(X_{0}, X)$ is the $p$-adic \'etale sheaf of \cite[Definition 7.4.1]{KRZI}, and the bilinear form $\mathfrak{e}:\underline{\Hom}_{O_{K}}(X_{0}, X)\times \underline{\Hom}_{O_{K}}(X_{0}, X)\rightarrow \BZ_{p}$ is defined as in \cite[Proposition  7.4.7]{KRZI}. 
\end{enumerate} 
\end{definition}

We denote these data by a tuple $(X_{0}, \iota_{0}, \lambda_{0},\rho_{0}, X, \iota, \lambda_{X}, \rho, \bar{\eta})$. We will call datum $(5)$ a {\it strict CL-level structure} on $(X_{0}, \iota_{0}, \lambda_{0},\rho_{0}, X, \iota, \lambda_{X}, \rho)$. Two tuples $(X_{0}, \iota_{0}, \lambda_{0}, \rho_{0}, X, \iota, \lambda_{X}, \rho)$ and $(X^{\prime}_{0}, \iota^{\prime}_{0}, \lambda^{\prime}_{0}, \rho^{\prime}_{0}, X^{\prime}, \iota^{\prime}, \lambda_{X^{\prime}}, \rho^{\prime}, \bar{\eta}^{\prime})$ define the same point of $\wt{\CM}_{\bK_{\wt G}}(i)$ if there exist isomorphisms  of $O_{K}$-modules  $\alpha_{0}: (X_{0}, \iota_{0})\isoarrow (X^{\prime}_{0}, \iota^{\prime}_{0})$ and $\alpha: (X, \iota)\isoarrow  (X^{\prime}, \iota^{\prime})$ such that $\rho^{\prime}_{0}\circ\alpha_{0, \bar{S}}=\rho_{0}$ and $\rho^{\prime}\circ\alpha_{\bar{S}}=\rho$, and which pulls back  $\bar{\eta'}$ to $\bar{\eta}$.

The formal scheme over $\Spf O_{\breve{E}}$ representing this functor is also denoted by $\wt{\CM}_{\bK_{\wt G}}(i)$. We note that Remark \ref{simi-factor-rmk} applies to this formal moduli space.
We define the formal scheme
\begin{equation}\label{wtCM-banal}
\wt{\CM}_{\bK_{\wt G}}=\bigsqcup\nolimits_{i\in\BZ}\wt{\CM}_{\bK_{\wt G}}(i). 
\end{equation}
Then $\wt{\CM}_{\bK_{\wt G}}$ parametrizes tuples $(X_{0}, \iota_{0}, \lambda_{0}, \rho_{0}, X, \iota, \lambda_{X}, \rho, \bar{\eta})$ such that $\rho_{0}$ respects the polarization $\lambda$ on $\bar{X}_{0}$ and ${\lambda}_{\mathbb{X}_{0}}$ on $\BX_{0,\bar{S}}$ up to a factor in $\mathbb{Q}_p^{\times}$ and $\rho$ respects the polarization $\lambda_{\bar{X}}$ on $\bar{X}$ and ${\lambda}_{\mathbb{X}}$ on $\BX_{\bar{S}}$ up to the same factor in $\mathbb{Q}_p^{\times}$.

The framing objects define as in \eqref{def:Js} the groups $J_{\T}(\BQ_{p})$ and $J(\BQ_{p})$ and $\wt{J}(\BQ_{p})$. Note, as in the special case,  that $\wt{\CM}_{\bK_{\wt G}}$ affords an action of $\wt{J}(\BQ_{p})$. However, in the present banal case  $J(\BQ_{p})$ is isomorphic to the unitary similitude group $G(\BQ_{p})$, cf.  \cite[(7.2.8)]{KRZI}. Also, as before,   $J_{\T}(\BQ_{p})$ is isomorphic to $\T(\BQ_{p})$, see \eqref{Ti-group}. Therefore, in the banal case, we have an isomorphism $\wt{G}(\BQ_{p})\simeq \wt{J}(\BQ_{p})$. For varying $\bK_{\wt{G}}$, the group $\wt{G}(\BQ_{p})$ acts on the tower of formal schemes $\wt{\CM}_{\bK_{\wt G}}$ in the usual way.

Let $S = \Spec R$ be an affine scheme over $\Spf O_{\breve{E}}$. The same formula as in \eqref{Weil-map-special} defines the map 
\begin{equation}\label{WD-diamond-banal}
\omega_{\wt{\CM}_{\bK_{\wt G}}}: \wt{\CM}_{\bK_{\wt G}}(i)(\bar{R})\rightarrow \wt{\CM}_{\bK_{\wt G}}(i+f_{E})(\bar{R}_{[\tau_{E}]})
\end{equation}
and hence defines the Weil descent datum
\begin{equation}\label{WD-banal}
\omega_{\widetilde{\CM}_{\bK_{\wt G}}}: \widetilde{\CM}_{\bK_{\wt G}} \longrightarrow
{\widetilde{\CM}^{(\tau_{E})}_{\bK_{\wt G}}}
\end{equation}
on $\widetilde{\CM}_{\bK_{\wt G}}$.

We recall the definition of the formal moduli space ${\CM}_{r, \bK_{G}}$ of local CM triples of type $(K/F, r)$ when $r$ is banal, cf.  \cite[Definition 7.4.3]{KRZI}.
\begin{definition}\label{recall-M-r-banal}
We define for each $i\in \BZ$ a functor $\CM_{r, \bK_{G}}(i)$ on $\Nilp_{O_{\breve{E}_{r}}}$. For a scheme $S$ over $\Spf O_{\breve{E}_{r}}$, a point of $\CM_{r, \bK_{G}}(i)(S)$ consists of the following data:
\begin{enumerate}
\item[(1)] A local CM-triple $(X, \iota, \lambda_{X})$ of type
$(K/F ,r)$ over $S$ which satisfies the conditions $({\rm KC}_{r})$ and $({\rm EC}_{r})$ and is $(V, \varsigma)$-principally polarized;

\item[(2)] An $O_{K}$-linear quasi-isogeny 
\begin{equation*}
\rho: \bar{X} := X\times_{S} \bar{S} \longrightarrow \mathbb{X}\times_{\Spec \bar {\kappa}} \bar{S} 
\end{equation*}
such that, locally on $\bar S$, there exists $u\in\BZ_p^\times$ with  $\rho_{0}^{\ast}(\lambda_{\BX})=up^{i}{\lambda}_{\bar{X}}$;

\item[(3)] A class
 $\bar{\eta}_{X}$ of isomorphisms of \'etale sheaves
\begin{equation*}
{\eta}_{X}: (\underline{\Hom}_{O_{K}}(\wt\BX_{0}, X), \mathfrak{e}_{X})\isoarrow (\Lambda, \varsigma) \;
\mathrm{mod}\; \mathbf{K}_{G}
\end{equation*}
which respect the bilinear forms on both sides up to a unit in $\BZ_{p}$. Here $\wt\BX_{0}$ is the lift of $\BX_{0}$ over $O_{\breve{E}_{r}}$ as in \cite[(7.4.14)]{KRZI} (where it is denoted by $X_0$) and $(\underline{\Hom}_{O_{K}}(\wt{\BX}_{0}, X)$ is the $p$-adic \'etale sheaf of \cite[Definition 7.4.1]{KRZI} and  we define the alternating form $\mathfrak{e}_{X}$ as in \cite[Proposition 7.4.7]{KRZI}. 
\end{enumerate}
 We denote these data by a tuple $(X, \iota, \lambda_{X}, \rho, \bar{\eta}_{X})$. The datum  $(3)$ is called a {\it (relaxed) CL-level structure} on $(X, \iota, \lambda, \rho)$. Here we sometimes use the qualifier ``relaxed'' to distinguish from a strict CL-level structure. Two tuples $(X, \iota, \lambda_{X}, \rho, \bar{\eta}_{X})$ and $(X^{\prime}, \iota^{\prime}, \lambda_{X^{\prime}}, \rho^{\prime}, \bar{\eta}^{\prime}_{X^{\prime}})$ define the same point of $\CM_{r, \bK_{G}}(i)$ iff there exists an isomorphism  of $O_{K}$-modules  $\alpha: (X, \iota)\isoarrow  (X^{\prime}, \iota^{\prime})$ such that $\rho^{\prime}\circ\alpha_{\bar{S}}=\rho$ and such that the  pullback of $\bar\eta'_{X'}$ under  $\alpha$ is equal to $\bar\eta_{X}$.  Note that the requirement on $\rho$ in $(2)$ above implies that 
\begin{equation*}
2\mathrm{height}(\rho)=\mathrm{height}(p^{i}\mid X)=4di. 
\end{equation*}

The formal scheme over $\Spf O_{\breve{E}_{r}}$ representing this functor is also denoted by $\CM_{r}(i)$.  We define the formal scheme $\CM_{r, \bK_{G}}$ over $\Spf O_{\breve{E}_{r}}$ by
\begin{equation}
\CM_{r, \bK_{G}}=\bigsqcup\nolimits_{i\in\BZ}\CM_{r, \bK_{G}}(i). 
\end{equation}
\end{definition}
Then $\CM_{r, \bK_{G}}$ parametrizes tuples $(X, \iota, \lambda_{X}, \rho, \bar{\eta}_{X})$ such that $\rho$ respects the polarization $\lambda_{\bar{X}}$ on $\bar{X}$ and ${\lambda}_{\mathbb{X}}$ on $\BX$ up to a factor in $\mathbb{Q}_p^{\times}$, and a CL level structure on $X$. 
 We define a map 
\begin{equation}\label{frakvr}
\mathfrak{v}_{r}: {\CM}_{r, \bK_{G}}\rightarrow \BZ
\end{equation}
sending the tuple $(X, \iota, \lambda_{X}, \rho, \bar{\eta}_{X})$ to $(2d)^{-1}\mathrm{height}(\rho)$ and hence $\mathfrak{v}_{r}({\CM}_{r, \bK_{G}}(i))=i$.

The group $J(\BQ_{p})$ acts naturally on $\CM_{r, \bK_{G}}$ via the framing $\rho$ by $(X, \iota,\lambda_{X}, \rho,  \bar{\eta}_{X})\mapsto (X, \iota,\lambda_{X}, \alpha\circ\rho, \bar{\eta}_{X})$. For varying $\bK_{G}$, the group $G(\BQ_{p})$ acts on the tower of formal schemes $\CM_{r, \bK_{G}}$

The same formula as \eqref{Weil-map-special} defines a Weil descent datum $\omega_{\CM_r}$ down to $O_{E_r}$. We denote by $\CM_{r, \bK_{G}, O_{\breve{E}}}$ the formal scheme $\CM_{r, \bK_{G}}\times_{\Spf O_{E_r}}\Spf O_{\breve E}$, with its Weil descent datum $\omega_{\CM_r, E}$ down to $O_{E}$.

\begin{proposition}\label{Mr-banal}
There exists an isomorphism of formal schemes over $O_{\breve{E}}$
\begin{equation*}
\CM_{r, \bK_{G},  O_{\breve{E}}}\overset{\sim}\longrightarrow G(\BQ_{p})/\bK_{G}
\end{equation*}
which is equivariant with respect to the action of $J(\BQ_{p})$ on both sides.  For varying $\bK_{G}$, the isomorphism is equivariant for the natural Hecke actions of $G(\BQ_{p})$ on both sides.

This isomorphism is compatible with the Weil descent datum $\omega_{\CM_{r}, E}$ down to $O_E$ on the left hand side and the Weil descent datum on the right hand side given by multiplication by the special element $\wt{w}$ relative to $(\Phi^{+}_{0}, E)$  for our fixed uniformizer $\varpi_{E}$,  viewed as an element in the center of ${G}(\mathbb{Q}_p)$.
\end{proposition}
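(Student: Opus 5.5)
The plan is to reduce the statement to the description of $\CM_{r,\bK_G}$ in \cite[\S 7.4]{KRZI}, with the descent datum made explicit through the contraction functor. The main new input, compared with \cite{KRZI}, is the use of the RSZ torus data.

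\textbf{Step 1: the isomorphism.} By \cite[Proposition 7.4.7 and its proof]{KRZI}, over $O_{\breve E_r}$ the banal space $\CM_{r,\bK_G}$ is a disjoint union of copies of $\Spf O_{\breve E_r}$. More precisely, it is identified with the set of pairs $(\rho,\bar\eta_X)$ up to equivalence. For banal $r$, the local CM-triples of type $r$ that satisfy $({\rm EC}_r)$ and are $(V,\varsigma)$-principally polarized are rigid. Their deformation theory is étale, because $X$ is isogenous to $\wt\BX_0\otimes_{O_K}L$ for a hermitian $O_K$-module $L$. So a point is determined by the CL-level structure: the functor $X\mapsto \underline{\Hom}_{O_K}(\wt\BX_0,X)$ gives a $J(\BQ_p)$-equivariant bijection onto the set of pairs (lattice in $\Hom^\circ_{O_K}(\wt\BX_0,\BX)\simeq V$ with similitude factor, level $\bK_G$ structure). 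Fixing a base point identifies this set with $G(\BQ_p)/\bK_G$, because $J(\BQ_p)\simeq G(\BQ_p)$ by \cite[(7.2.8)]{KRZI}. This gives the $J(\BQ_p)$- and Hecke-equivariant isomorphism. Base change to $O_{\breve E}$ changes nothing, since the space is a disjoint union of copies of the base.

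\textbf{Step 2: the descent datum.} Base change to $O_{\breve E}$ replaces $\tau_{E_r}$ by $\tau_E=\tau_{E_r}^{f_E/f_{E_r}}$. The descent datum $\omega_{\CM_r,E}$ composes $\rho$ with $F_{\BX,\tau_E}$ and leaves $X$ and $\bar\eta_X$ unchanged. On the group side it is therefore translation by the element of $J(\BQ_p)\simeq G(\BQ_p)$ that corresponds to the relative Frobenius $F_{\BX,\tau_E}$, transported through $\Hom^\circ_{O_K}(\wt\BX_0,-)$. Since the sheaf $\underline{\Hom}_{O_K}(\wt\BX_0,X)$ is computed with $\wt\BX_0$, a lift of the framing object $\BX_0$ of type $r_0$, we have the following. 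Writing $\BX$ (up to isogeny) as $\BX_0\otimes_{O_K}L$, we get $F_{\BX,\tau_E}=F_{\BX_0,\tau_E}\otimes 1$ on $\Hom(\wt\BX_0,\BX)$, but twisted by the inverse of the Frobenius of $\BX_0$ acting on the source. For banal $r$, the Frobenius of $\BX$ acts on $\Hom^\circ(\wt\BX_0,\BX)$ through a central element of $K^\times$. By Dieudonné-module calculations exactly as in the proof of Proposition \ref{LS-tori-weil} (cases ramified, unramified, split), this element has the form $\Pi^{a}$ in the ramified case, and analogous powers of $\pi$ in the other cases. These exponents are determined by $r$ and $r_0$.

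\textbf{Step 3: identification with $\wt w$, and the main obstacle.} One must show that this central element agrees with $\wt w={\frak r}_{\Phi_0^+,E}(\varpi_E)$ modulo $\bK_G\cap Z$, and in fact modulo $\T(\BZ_p)$. Here I expect the main difficulty. In \cite{KRZI} the CL-structure used the lift of $\BX_0$ but the descent was not tracked through the rank-one factor. In the RSZ setting, the relevant Frobenius is the quotient of the Frobenius of $\BX$ (of banal type $r$) by that of $\BX_0$ (type $r_0$).

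To handle this, I would compare norms. An element of $\T(\BQ_p)$ is determined modulo $\T(\BZ_p)$ by its valuations at the places of $K$ (one valuation in the non-split cases, two in the split case). The norm to $F$ of the Frobenius element is $p^{f_E}$ up to a unit, by the polarization computation $F^*\lambda=p^{f_E}\lambda$. This agrees with $\Nm_{K/F}(\wt w)$ by Remark \ref{sp-elt-rmk}, which settles the non-split cases.

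In the split case, one also needs the individual exponents $a_{1,E},a_{2,E}$ of the Frobenius on the two factors. These come from the dimensions of the factors of $\BX$ and $\BX_0$, as in case (4) of the proof of Proposition \ref{LS-tori-weil}. Since $r$ is banal, the dimensions for $\BX$ are exactly twice those determined by an associated rank-one type. One then checks that the quotient matches $\mu$ restricted to $\Phi_0^+$ after applying $\Nm_{E/\BQ_p}$. Finally, $\wt w$ is central in $G(\BQ_p)$, so left and right multiplication agree on $G(\BQ_p)/\bK_G$, and this completes the compatibility of descent data.
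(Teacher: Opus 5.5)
Your Step 1 is the paper's argument: the polarized contraction functor, \cite[Proposition 7.4.7]{KRZI} for the CL-level structures, and the proof of \cite[(7.4.11)]{KRZI}. The first sentence of Step 2 also states the right principle. The gap is in the identification of the central element that implements $\omega_{\CM_r,E}$. You assert that it is the Frobenius of $\BX$ ``twisted by the inverse of the Frobenius of $\BX_0$ acting on the source'', i.e.\ the quotient of the two Frobenii. This cannot be right. $F_{\BX,\tau_E}$ and $F_{\BX_0,\tau_E}$ both pull the polarization back to $p^{f_E}$ times itself, so the quotient has unit similitude factor. In the non-split cases it is therefore trivial in $\T(\BQ_p)/\T(\BZ_p)$, whereas $\omega_{\CM_r,E}$ sends $\CM_{r,\bK_G}(i)$ to $\CM_{r,\bK_G}(i+f_E)$. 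Indeed, with $\BX\sim\BX_0\otimes_{O_K}L$ and $F_{\BX,\tau_E}=F_{\BX_0,\tau_E}\otimes 1$ as you write, the map $\phi\mapsto F_{\BX,\tau_E}\circ\phi\circ F_{\BX_0,\tau_E}^{-1}$ is merely the canonical identification of $\Hom(\BX_0,\BX)$ with $\Hom((\tau_E)_\ast\BX_0,(\tau_E)_\ast\BX)$; it carries no information. In Step 3 you then use the norm $p^{f_E}$, which belongs to a different element. So the argument is inconsistent exactly where the content of the proposition lies. In the split case the exponents of your quotient sum to $0$, so it can never match $\wt w$.

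The element you need is the one by which $F_{\BX,\tau_E}$ acts on the contraction $C_\BX$ under the Dieudonn\'e identification $C_\BX=C_{(\tau_E)_\ast\BX}$. Equivalently, if you work with $\underline{\Hom}_{O_K}(\wt\BX_0,-)$: keeping $\bar\eta_X$ unchanged uses the rational structure of $\wt\BX_0$, and the net effect is precomposition with the Frobenius of $\BX_0$, not with its inverse. The paper takes this element from \cite[Lemmas 6.3.3, 6.4.4, 6.5.2]{KRZI}, raised to the power $f_E/f_{E_r}$. It is $\Pi^{ef_E}$, $\pi^{ef_E/2}$, resp.\ $(\pi^{a_{1,E}},\pi^{a_{2,E}})$ with $2a_i=\dim\BX_i$, and all of these have norm $\pi^{ef_E}$. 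Then \cite[Remark 7.2.6]{KRZI} allows changing it by a unit of $O_K$, and your norm comparison with $\wt w$ settles the non-split cases. In the split case the norm does not determine the class modulo $O_K^\times$, so you must compare exponents. By the split case in the proof of Proposition \ref{LS-tori-weil}, $\wt w$ has exponents $f_E\dim\BX_{0,i}/f$. You therefore need $\dim\BX_i=2\dim\BX_{0,i}$, i.e.\ $r=2r_0$ at this place. Your ``one then checks'' hides exactly this compatibility.
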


This statement is to be compared with \cite[Corollaries 6.3.4, 6.4.5, 6.5.3]{KRZI}. In loc.~cit., the Weil descent data of $\CM_r$ down to $O_{E_r}$ is not explicit. The key point here is that we can make explicit the Weil descent datum down to $O_E$. To prove Proposition \ref{Mr-banal}, we need to go back to \cite[\S 6]{KRZI}.

Let $S$ be a scheme over $\Spf O_{\breve{E}_{r}}$. Given a point $(X, \iota, \lambda_{X}, \rho, \bar{\eta}_{X})\in {\CM}_{r, \bK_{G}}(S)$, we consider the tuple $(C, \iota, \phi,\rho, \bar{\eta}_{X})$ obtained by applying to it the polarized contraction functor $\mathfrak{C}_{r}^{\rm pol}$ \cite[Theorem  4.5.11]{KRZI}. Let $(\underline{C}_{\mathbb{X}}, \iota_{\mathbb{X}}, \phi_{\mathbb{X}})$ be the image of $(\mathbb{X}, \iota_{\mathbb{X}}, \lambda_{\mathbb{X}})$ under $\mathfrak{C}_{r}^{\rm pol}$. Here $\underline{C}_{\BX}$ is a constant $p$-adic \'etale sheaf on $\Spec W(\bar{\kappa})$ associated to a module $C_{\BX}$ of $\BZ_{p}$-rank $4d$ equipped with an action $\iota_\BX: O_{K}\rightarrow \End_{\BZ_{p}}(C_{\BX})$, and $\phi_\BX: C_{\BX}\times C_{\BX}\rightarrow O_{F}$ is an $O_{F}$-linear alternating form such that $\phi_{\BX}(\iota(a) c_1, c_2) = \phi (c_1, \iota(\bar{a})c_2)$ for
  $c_1, c_2 \in C_{\BX}$ and $a \in O_{K}$ and such that $\phi_{\BX}$ is perfect unless $K/F$ is unramified and $\inv(V)=-1$ in which case $\mathrm{ord}_{\pi}(\mathrm{det}(\phi_{\BX}))=2$.

 For $i\in\BZ$,  we consider the functor $\mathcal{G}_{r, \bK_{G}}(i)$ on the category of schemes $S$ over $\Spf O_{\breve{E}_{r}}$. A point of $\mathcal{G}_{r, \bK_{G}}(i)$ is given
by the following data:
\begin{enumerate}
\item[(1)]A locally
  constant $p$-adic \'etale sheaf $C$ on $S$ which is
  $\mathbb{Z}_p$-flat with $\rank_{\mathbb{Z}_p} C = 4d$ and with an action
\begin{equation*}
\iota: O_{K} \longrightarrow \End_{\mathbb{Z}_p} C;
\end{equation*}

\item[(2)]An alternating $O_F$-bilinear pairing
  \begin{equation*}
  \phi: C \times C \longrightarrow O_F, 
  \end{equation*}
  such that $\phi(\iota(a) c_1, c_2) = \phi (c_1, \iota(\bar{a})c_2)$ for
  $c_1, c_2 \in C$ and $a \in O_{K}$ and such that $\phi$ is perfect unless $K/F$ is unramified and $\inv(V)=-1$ in which case $\mathrm{ord}_{\pi}(\mathrm{det}(\phi))=2$;
  
\item[(3)]A quasi-isogeny of $O_K$-module sheaves on $S$ 
\begin{equation*}
\rho: (C, \iota) \longrightarrow (\underline{C}_{\mathbb{X}}, \iota_{\BX})
\end{equation*}
such that $\rho$ respects the bilinear form $p^{i}\phi$ on $C$ and the bilinear form $\phi_{\BX}$ on $\underline{C}_{\mathbb{\BX}}$ up to unit in $\BZ_{p}$;

\item[(4)]A class $\bar{\eta}_{C}$ of isomorphisms of $p$-adic \'etale sheaves on $\bar{S}$ 
\begin{equation*}
{\eta}_{C}: (C, \xi)\isoarrow (\Lambda, \varsigma)\; \mathrm{mod}\; \mathbf{K}_{G}
\end{equation*}
which respect the bilinear forms on both sides up to a unit in $\BZ_{p}$. Here we define the alternating form $\xi: C\times C\rightarrow \BZ_{p}$ by $\xi:=\mathrm{Tr}_{F/\BQ_{p}}\vartheta^{-1}\phi$ where $\vartheta$ is the different of the extension $F/\BQ_{p}$.
\end{enumerate}

We denote these data by a tuple $(C, \iota, \phi,\rho, \bar{\eta}_{C})$ and refer to the datum $(4)$ as a (relaxed) CL-level structure on $(C, \iota, \phi,\rho)$. Another set of data $(C', \iota',\phi', \rho', \bar{\eta}^{\prime}_{C'})$ defines the same point
iff there is an isomorphism $\alpha: (C,\iota) \isoarrow (C^{\prime}, \iota^{\prime})$ such that
$\rho' \circ \alpha = \rho$ and such that the pullback  of $\bar\eta'_{C'} $ under $\alpha$ is equal to $\bar\eta_{C}$. Then $\alpha$ respects $\phi$ and $\phi'$ up
to a factor in $\BZ^{\times}_{p}$. 
We set 
 \begin{equation}
 \mathcal{G}_{r, \bK_{G}}=\coprod_i \mathcal{G}_{r, \bK_{G}}(i) .
 \end{equation}

\begin{proof}[Proof of Proposition \ref{Mr-banal}]
The polarized contraction functor $\mathfrak{C}_{r}^{\rm pol}$ of \cite[Theorem  4.5.11]{KRZI} gives an isomorphism of functors between $\CM_{r, \bK_{G}}$ and $\mathcal{G}_{r, \bK_{G}}$ (here we have used \cite[Proposition 7.4.7]{KRZI} for the equivalence of CL-level structures on both sides). On the other hand, the proof of \cite[(7.4.11)]{KRZI} shows that there is a natural isomorphism of formal schemes over $\Spf O_{\breve{E}_{r}}$
\begin{equation*}
\CG_{r, \bK_{G}}\simeq G(\mathbb{Q}_p)/\mathbf{K}_{G} .
\end{equation*}
 Combining the two isomorphisms, we obtain  an isomorphism of formal schemes over $\Spf O_{\breve{E}_{r}}$ 
\begin{equation*}
\CM_{r, \bK_{G}}\overset{\sim}\longrightarrow \CG_{r, \bK_{G}}\simeq G(\mathbb{Q}_p)/\mathbf{K}_{G} ,
\end{equation*}
 which is equivariant for the action of $J(\BQ_{p})\simeq G(\BQ_{p})$. For varying $\bK_{G}$, this isomorphism is  by construction equivariant for the natural Hecke actions of $G(\BQ_{p})$ on both sides.

Next we explain the compatibility of Weil descent data down to $O_E$.  Let $S = \Spec R$ be an affine scheme over $\Spf O_{\breve{E}}$ and $(X, \iota, \lambda_{X}, \rho, \bar{\eta}_{X})$ be a point on $\CM_{r, \bK_{G}, O_{\breve{E}}}(i)(\bar{R})$ for some integer $i$. Let the quasi-isogeny
\begin{equation}\label{Frob}
X_{\bar{R}}\overset{\rho}\longrightarrow \BX_{\bar{R}}\xrightarrow{F_{\BX, \tau_{E}}} \tau_{E\ast}\BX_{\bar{R}}
\end{equation}
correspond via the contraction functor $\mathfrak{C}_{r}^{\rm pol}$  to the quasi-isogeny 
\begin{equation*}
C\overset{\rho}\longrightarrow \underline{C}_{\BX}\xrightarrow{F_{\BX, \tau_{E}}} \tau_{E\ast}\underline{C}_{\BX}\simeq \underline{C}_{\BX}
\end{equation*}
for a tuple $(C, \iota, \phi,  \rho:C \rightarrow C_{\BX}, \bar{\eta}_{C})$ in $\CG_{r}(\bar{R})$. We want to make the map $\rho\mapsto F_{\BX, \tau_{E}}\circ\rho$ explicit. 

We recall the following notation from the beginning of \S \ref{ss:specialbanal}.  
In the case where $K/F$ is a ramified extension of local fields
(ramified case) we choose a prime element $\Pi \in O_K$ 
such that
$\bar{\Pi} = - \Pi$. Then $\pi = -\Pi^2$ is a prime element 
of $F$. In the case where $K/F$ is an unramified extension of local
fields (unramified case) or where $K \cong F \times F$ (split case) we
choose a prime element $\pi \in F$ and we set $\Pi = \pi$. 

The map sending the quasi-isogeny $C\overset{\rho}\longrightarrow \underline{C}_{\BX}$ to the composite 
$C\overset{\rho}\longrightarrow \underline{C}_{\BX}\xrightarrow{F_{\BX, \tau_{E}}} \underline{C}_{\BX}$
corresponds to the map on $\CG_{r, \bK_{G}, O_{\breve{E}}}$ given  
\begin{enumerate}
\item when $K/F$ is ramified, by
\begin{equation*}
\CG_{r, \bK_{G}, O_{\breve{E}}}(i)(\bar{R})\xrightarrow{\Pi^{ef_{E}}}\CG_{r, \bK_{G}, O_{\breve{E}}}(i+ef_{E})(\bar{R}),
\end{equation*}
cf. \cite[Lemma 6.3.3]{KRZI};
\item
 when $K/F$ is unramified, by
 \begin{equation*}
\CG_{r, \bK_{G}, O_{\breve{E}}}(i)(\bar{R})\xrightarrow{\pi^{ef_{E}/2}}\CG_{r, \bK_{G}, O_{\breve{E}}}(i+ef_{E})(\bar{R}) ,
\end{equation*}
cf.  \cite[Lemma 6.4.4]{KRZI};
\item when $K/F$ is split, by 
 \begin{equation*}
\CG_{r, \bK_{G}, O_{\breve{E}}}(i)(\bar{R})\xrightarrow{(\pi^{a_{1,E}}, \pi^{a_{2,E}})}\CG_{r, \bK_{G}, O_{\breve{E}}}(i+ef_{E})(\bar{R}) ,
\end{equation*}
cf.  \cite[Lemma 6.5.2]{KRZI}. Here $2a_{i}=\dim \BX_{i}$ and $fa_{i, E}=f_{E}a_{i}$ for $i\in\{1,2\}$.
\end{enumerate}
Here these maps over the arrows are the natural translation maps on the moduli space $\CG_{r, O_{\breve{E}}}$ given by multiplication by the corresponding element of $K$. More precisely, in loc.~cit., the Weil descent datum down to $O_{E_r}$ is given. We use that the elements for the  Weil descent datum down to $O_{E}$ are obtained from these elements by raising them to the $f_E/f_{E_r}$-th power (for the split case, use the identity $f a_{i, E_r}=f_{E_r} a_i$, cf. \cite[below  (6.5.1)]{KRZI}). 
We denote these elements by $\wt{w}^{\prime}$.  By \cite[Remark 7.2.6]{KRZI}, we can modify the element $\wt{w}^{\prime}$ by a unit in $O_{K}$ without affecting the conclusion. Therefore, we can replace the element $\wt{w}^{\prime}$ by the element $\wt{w}$ appearing in the statement of the proposition. Indeed, $\wt{w}^{\prime}$ has norm $\Nm_{K/F}(\wt{w}^{\prime})=\pi^{ef_{E}}$ and  $\wt{w}$ has norm $\Nm_{K/F}(\wt{w})=up^{f_{E}}$ for some unit $u\in\BZ^{\times}_{p}$. The proposition follows.
\end{proof}

We have a morphism of formal moduli spaces over $\Spf O_{\breve{E}}$
\begin{equation}\label{thetaba}
\vartheta: \widetilde{\mathcal{M}}_{\mathbf{K}_{\wt G}} \rightarrow \mathcal{M}_{r, \bK_{G}, O_{\breve{E}}}\times {\mathcal{M}}_{0, O_{\breve{E}}} ,
\end{equation}
sending $(X_{0}, \iota_{0}, \lambda_{0}, \rho_{0}, X, \iota, \lambda_{X}, \rho, \bar{\eta})$ to $(X, \iota, \lambda_{X}, \rho, \bar{\eta}_{X})\times (X_{0}, \iota_{0}, \lambda_{0}, \rho_{0})$. 
Here  we define the relaxed CL-level structure $\bar{\eta}_{X}$ as follows. Given $(X_{0}, \iota_{0}, \lambda_{0}, \rho_{0})$, we can associate to it a unique class of isomorphisms of $p$-adic \'etale sheaves $\eta_{0}: C_{\wt{\BX}_{0}}\xrightarrow{\sim} C_{X_{0}}\mod \mathbf{K}^{\circ}_{\T}$ which respect the natural bilinear forms on both sides up to a unit in $\BZ_{p}$ (one should regard this as a CL-level structure on $(X_{0}, \iota_{0}, \lambda_{0}, \rho_{0})$). To see this, recall from \cite[(7.4.14)]{KRZI} that $C_{\wt\BX_0}=O_K$. Consider the moduli problem of ${\mathcal{M}}_{0, O_{\breve{E}}}$ in \S \ref{s:Cmod}. The proof of Proposition \ref{LS-tori-weil} shows that there is a unique $g\in \mathrm{T}(\BQ_{p})/\bK^{\circ}_{\T}$ such that there is an isomorphism $(gC_{\wt{\BX}_{0}}, (p^{i})^{-1}\xi_{\wt{\BX}_{0}})\simeq (C_{X_{0}}, \xi_{X_{0}})$ which respects the bilinear forms up to a unit in $\BZ_{p}$, where $i$ is determined by $\xi_{\wt\BX_{0}}(gc_{1}, gc_{2})=p^{i}\xi_{\BX_{0}}(c_{1}, c_{2})$ for $c_{1}, c_{2}$ in $C_{\wt\BX_{0}}$. Since multiplication by $g$ gives an isomorphism  $(C_{\wt{\BX}_{0}}, \xi_{\wt{\BX}_{0}})\simeq (gC_{\wt{\BX}_{0}}, (p^{i})^{-1}\xi_{\wt{\BX}_{0}})$, the composition of the above two isomorphisms give us the desired isomorphism 
$\eta_{0}: C_{\wt{\BX}_{0}}\xrightarrow{\sim} C_{X_{0}}\mod \mathbf{K}^{\circ}_{\T}$ which respects the natural bilinear forms on both sides up to a unit $u$ in $\BZ_{p}$. We note that this unit is the same as the one appearing in datum $(2)$ of Definition \ref{RSZ-LS-Tori}.
 
Given an isomorphism of $p$-adic \'etale sheaves
\begin{equation*}
\eta: (\underline{\Hom}_{O_{K}}(X_{0}, X), \mathfrak{e})\isoarrow (\Lambda, \varsigma) 
\end{equation*}
which strictly respects the bilinear forms on both sides, we choose an isomorphism $\eta_{0}: C_{\wt{\BX}_{0}}\xrightarrow{\sim} C_{X_{0}}$ in the above class to obtain an isomorphism
\begin{equation*}
\eta_{X}: (\underline{\Hom}_{O_{K}}(\wt\BX_{0}, X), \mathfrak{e}_{X})\isoarrow (\Lambda, \varsigma) 
\end{equation*}
where $\mathfrak{e}_{X}$ is defined similarly as $\mathfrak{e}$. Here we use the fully faithfulness of the polarized contraction functor. This isomorphism respects the bilinear forms on both sides up to a unit in $\BZ_{p}$ and its class under the action of $\bK_{G}$ gives the desired relaxed CL-level structure on $(X, \iota, \lambda_{X}, \rho)$. One easily checks that choosing a different $\eta_{0}$ gives an isomorphism $\eta_{X}$ in the same $\bK_{G}$-orbit as the original one. This defines $\vartheta$.

Conversely, if we are given an isomorphism 
\begin{equation*}
\eta_{X}: (\underline{\Hom}_{O_{K}}(\wt\BX_{0}, X), \mathfrak{e}_{X})\isoarrow (\Lambda, \varsigma) 
\end{equation*}
which respects the bilinear forms on both sides up a unit in $\BZ_{p}$ and an isomorphism $\eta_{0}: C_{\wt{\BX}_{0}}\xrightarrow{\sim} C_{X_{0}}$ which respects the bilinear forms on both sides up to the same unit in $\BZ_{p}$, then we can reverse the above procedure to obtain a strict CL-level structure on $(X_{0}, \iota_{0}, \lambda_{0}, \rho_{0}, X, \iota, \lambda_{X}, \rho, \bar{\eta})$. Indeed, $(\underline{\Hom}_{O_{K}}(\wt\BX_{0}, X), \mathfrak{e}_{X})$ can be identified with $(C_{X}, \xi_{X})$ therefore we obtain an isomorphism $\eta_{X}: (C_{X}, \xi_{X})\simeq (\Lambda, \varsigma)$ which respects the bilinear forms on both sides up to a unit $u_1\in \BZ_{p}$. This unit is the same unit as the one in datum $(2)$ of Definition \ref{recall-M-r-banal}. Now using the isomorphism $\eta_{0}: C_{\wt\BX_{0}}\isoarrow C_{X_{0}}$, we can still identify $\underline{\Hom}_{O_{K}}(X_{0}, X)$ with $C_{X}$ but the bilinear form $\mathfrak{e}$ is identified with $\xi_{X}$ up to the inverse of the unit $u_2$ that comes from comparing the bilinear forms under $\eta_{0}: C_{\wt\BX_{0}}\simeq C_{X_{0}}$. We note that again $u_{2}$ is the same unit as the one appearing in datum $(2)$ of Definition \ref{RSZ-LS-Tori}. Therefore under the isomorphism $(\underline{\Hom}_{O_{K}}(X_{0}, X), \mathfrak{e})\simeq (\Lambda, \varsigma)$ the two bilinear forms differ by  $u^{-1}_{2}u_{1}$. In order to obtain a strict CL-level structure, we need to have $u_{1}=u_{2}$. In this case, we will say $\eta_{X}$ and $\eta_{0}$ are compatible. Suppose now  that $(X_{0}, \iota_{0}, \lambda_{0}, \rho_{0}, X, \iota, \lambda_{X}, \rho, \bar{\eta})$ lies on $\widetilde{\mathcal{M}}_{\mathbf{K}_{\wt G}}(i)$ for some $i$. Then $\eta_{X}$ and $\eta_{0}$ are  compatible by definition of the moduli problem, see datum $(2)$ and $(4)$ of Definition \ref{RSZ-LS-banal}.

The above discussion and Remark \ref{simi-factor-rmk} show that \eqref{thetaba} is a closed immersion of formal schemes, equivariant with respect to the action of $\wt{J}(\BQ_{p})$, 
and that this closed immersion is compatible with the natural Weil descent data down to $O_E$ on both sides. In fact, it shows that \eqref{thetaba} induces an isomorphism \
$$ \vartheta: \widetilde{\mathcal{M}}_{\mathbf{K}_{\wt G}}(i) \isoarrow \mathcal{M}_{r, \bK_{G}, O_{\breve{E}}}(i)\times {\mathcal{M}}_{0, O_{\breve{E}}}(i), \quad i\in\BZ,
$$ 
since the image of $\vartheta$ consists of those tuples $(X, \iota, \lambda_{X}, \rho, \bar{\eta}_{X})\times (X_{0}, \iota_{0}, \lambda_{0}, \rho_{0})$ such that $\eta_{X}$ and $\eta_{0}$ are compatible which is automatically true by the above discussion.

We equip the right-hand side of \eqref{thetaba} with the natural descent datum given by $\omega_{\CM_{r},E}\times\omega_{\CM_{0},E}$ and with an action of $\wt{J}(\BQ_{p})$ via the map $\wt{J}(\BQ_{p})\rightarrow J(\BQ_{p})\times J_{\T}(\BQ_{p})$.

\begin{proposition}\label{LS-weil-banal}
There is an isomorphism of $O_{\breve{E}}$-formal schemes
\begin{equation*}
\widetilde{\mathcal{M}}_{\mathbf{K}_{\wt G}} \overset{\sim}{\longrightarrow}
\wt G(\mathbb{Q}_p)/\mathbf{K}_{\wt G} ,
 \end{equation*}
which is compatible with natural actions of $\wt{J}(\BQ_{p})$ on both sides. For varying $\bK_{\wt{G}}$, the isomorphism is equivariant for the natural Hecke actions of $\wt{G}(\BQ_{p})$ on both sides.

The Weil descent datum $\omega_{\wt{\mathcal{M}}_{\mathbf{K}_{\wt G}}}$
on the left-hand side corresponds on the right-hand side to the Weil descent datum down to $O_E$ given by  multiplication by the special element $\wt{w}$ relative to $(\Phi^{+}_{0}, E)$ for our fixed uniformizer $\varpi_{E}$, viewed as an element in the center of $\wt{G}(\mathbb{Q}_p)$.
\end{proposition}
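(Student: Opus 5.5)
The plan is to combine the decomposition \eqref{thetaba} with Proposition \ref{Mr-banal} and Proposition \ref{LS-tori-weil}, in parallel with the proof of Proposition \ref{LS-weil-special}. By the discussion preceding the statement, $\vartheta$ induces isomorphisms
\[
\widetilde{\mathcal{M}}_{\mathbf{K}_{\wt G}}(i)\isoarrow \mathcal{M}_{r,\bK_G,O_{\breve E}}(i)\times \mathcal{M}_{0,O_{\breve E}}(i)
\]
for all $i\in\BZ$. These isomorphisms are $\wt J(\BQ_p)$-equivariant and compatible with the Weil descent data $\omega_{\CM_r,E}\times\omega_{\CM_0,E}$. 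Hence
\[
\widetilde{\mathcal{M}}_{\mathbf{K}_{\wt G}}\simeq \mathcal{M}_{r,\bK_G,O_{\breve E}}\times_{\BZ}\mathcal{M}_{0,O_{\breve E}},
\]
where the fiber product is taken over the height maps $\mathfrak v_r$ and $i$.

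Next I insert the group-theoretic descriptions. Proposition \ref{Mr-banal} identifies $\mathcal{M}_{r,\bK_G,O_{\breve E}}$ with $G(\BQ_p)/\bK_G$. The fibers of $\mathfrak v_r$ correspond to cosets with $\val(\mu(g))=i$, where $\mu$ is the similitude factor. By \eqref{T-coset-space-i} we have $\mathcal{M}_0(i)\simeq \T(i)/\T(\BZ_p)$, and $\T(i)$ is exactly the set of elements whose similitude factor has valuation $i$. The right-hand side therefore becomes $G(\BQ_p)/\bK_G\times_{\BZ}\T(\BQ_p)/\bK^\circ_\T$. I then need to show that this equals $\wt G(\BQ_p)/\bK_{\wt G}$ with $\wt G=G\times_{\BG_m}\T$ and $\bK_{\wt G}=\bK_\U\times\bK^\circ_\T$.

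This comparison is the step I expect to be the main obstacle. The natural map $\wt G(\BQ_p)/\bK_{\wt G}\to G/\bK_G\times_\BZ \T/\bK^\circ_\T$ is surjective, because fibered pairs $(g,t)$ may differ in unit similitude, and one can correct $t$ by an element of $\bK^\circ_\T$ whose similitude factor is any prescribed unit of $\BZ_p^\times$; the norm map $\T(\BZ_p)\to\BZ_p^\times$ is surjective since the similitude is $a\bar a\in\BZ_p^\times$, and unit norms are attained on $O_K^\times$ only after accounting for the norm index, which I will check case by case. Injectivity requires care, because $\bK_G$ is the image of $\bK_{\wt G}$ and not $\bK_G\times_{\BG_m}\bK^\circ_\T$. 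Here I would not argue purely group-theoretically. Instead I would use the strict versus relaxed CL-level structure analysis from the construction of $\vartheta$: the compatibility condition $u_1=u_2$ on units is exactly what cuts the fiber product down to $\wt G(\BQ_p)/\bK_{\wt G}$. So I would rederive the bijection directly from that discussion, identifying points with pairs $(\eta_X,\eta_0)$ with matching unit modulo $\bK_\U\times\bK^\circ_\T$.

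Finally, I handle equivariance and descent. $\wt J(\BQ_p)\simeq\wt G(\BQ_p)$ acts through $J\times J_\T$, and the identification is equivariant factor by factor. The Hecke equivariance for varying $\bK_{\wt G}$ follows from the same property in Proposition \ref{Mr-banal}. For the descent datum, $\omega_{\CM_r,E}$ is multiplication by $\wt w$ in $G(\BQ_p)$ by Proposition \ref{Mr-banal}. By the remark after Proposition \ref{LS-tori-weil}, $\omega_{\CM_0,E}$ is multiplication by $w_{\Phi_0^+,E}=\wt w$ in $\T(\BQ_p)$. Hence the product datum is multiplication by the central element $(\wt w,\wt w)\in\wt G(\BQ_p)$, which is $\wt w$ viewed as an element of the center.
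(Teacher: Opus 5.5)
Your route is the paper's own. The paper also writes $\wt\CM_{\bK_{\wt G}}=\coprod_i\CM_{r,\bK_G,O_{\breve E}}(i)\times\CM_{0,O_{\breve E}}(i)$ and inserts Propositions \ref{Mr-banal} and \ref{LS-tori-weil}. It then simply asserts $G(\BQ_p)/\bK_G\times_{\BQ_p^\times/\BZ_p^\times}\T(\BQ_p)/\bK^{\circ}_{\T}\simeq\wt G(\BQ_p)/\bK_{\wt G}$ and reads off the descent datum as $(\wt w,\wt w)$. You are right to distrust that identification. However, both of your repairs fail, and the step cannot be carried out as set up.

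\emph{Surjectivity.} One has $\mu(\bK_G)=\Nm_{K/F}(\T(\BZ_p))=\BZ_p^\times\cap\Nm_{K/F}(O_K^\times)$. For ramified $K/F$ this can be a proper subgroup of $\BZ_p^\times$: for $F=\BQ_p$, $K=\BQ_p(\sqrt p)$, $p$ odd, it is $(\BZ_p^\times)^2$. Every $c\in\BQ_p^\times$ is a similitude factor of a $2$-dimensional hermitian space, and relaxed CL-level structures allow any unit similitude. So a class $g\bK_G$ with $\mu(g)\in\BZ_p^\times\setminus\mu(\bK_G)$, paired with $t=1$, lies in your fibre product but not in the image of $\wt G(\BQ_p)$.

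\emph{Injectivity.} Put $Z_1:=\{a\in O_K^\times\mid a\bar a=1\}=\T(\BZ_p)\cap\U(\BQ_p)$. Then $\bK_G\cap\U(\BQ_p)=\bK_\U Z_1$, so the fibres of $\wt G(\BQ_p)/\bK_{\wt G}\to G(\BQ_p)/\bK_G\times_{\BZ}\T(\BQ_p)/\bK^{\circ}_{\T}$ are $Z_1/(Z_1\cap\bK_\U)$. For instance, $(-1,1)$ and $(1,1)$ have the same image but are distinct modulo $\bK_{\wt G}$ when $-1\notin\bK_\U$. The condition $u_1=u_2$ only constrains similitude factors, so it cannot restore this. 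Indeed, $\vartheta$ itself sends the strict structures $\eta$ and $-\eta$ to the same point, since $-1\in\T(\BZ_p)\subset\bK_G$. Hence the claim that $\vartheta$ is an isomorphism on each $\wt\CM_{\bK_{\wt G}}(i)$, which you import from the preceding discussion, is false for such $\bK_\U$. Conditions on the target can at best describe the image of $\vartheta$; they cannot recover its source.

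The statement itself is correct, and the remedy is never to pass to relaxed level structures. If your pairs $(\eta_X,\eta_0)$ are meant as actual isomorphisms modulo $\bK_\U\times\bK^{\circ}_{\T}$, you have left Proposition \ref{Mr-banal} behind, so do the computation directly.

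\emph{The points.} By contraction, a point of $\wt\CM_{\bK_{\wt G}}(i)$ consists of lattices $C_0=tC_{\BX_0}$ and $C$, with $t\in\T(i)/\T(\BZ_p)$ as in \eqref{T-coset-space-i}, together with a strict $\eta$. Since $\lambda_0$ and $\lambda_X$ are normalized by the same factor $up^i$, the form on $\Hom_{O_K}(C_0,C)$ is the restriction of a fixed form on $W=\Hom_K(C_{\BX_0}\otimes\BQ_p,C_{\BX}\otimes\BQ_p)$. So $\eta$ extends to an isometry $W\isoarrow V$, and $C=\{f(t)\mid f\in\eta^{-1}(\Lambda)\}$ is recovered from $(t,\eta)$. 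These isometries form a $\U(\BQ_p)$-torsor, which gives $\wt\CM_{\bK_{\wt G}}\simeq\U(\BQ_p)/\bK_\U\times\T(\BQ_p)/\bK^{\circ}_{\T}=\wt G(\BQ_p)/\bK_{\wt G}$ at once.

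\emph{The descent datum.} Frobenius leaves $\eta$ unchanged and replaces $(C_0,C)$ by $(w'C_0,w'C)$ with one and the same scalar $w'$. Here $\Pi^{ef_E}$, resp.\ $\pi^{ef_E/2}$, occurs in the proofs of both Propositions \ref{LS-tori-weil} and \ref{Mr-banal}; in the split case this needs $r=2r_0$. Consequently $\Hom_{O_K}(C_0,C)\subset W$ is unchanged, and since $w'O_K=\wt wO_K$ the point $(\eta,t)$ goes to $(\eta,\wt wt)$. That is multiplication by $(1,\wt w)\in\U\times\T$, i.e.\ by the central element $\wt w$. Agreement of the two scalars only up to a unit would not suffice at strict level, because their ratio would act on the $\U$-coordinate through $Z_1/(Z_1\cap\bK_\U)$.
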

\begin{proof}
We have
\begin{equation*}
\widetilde{\mathcal{M}}_{\mathbf{K}_{\wt G}}(i)\overset{\sim}{\longrightarrow} \mathcal{M}_{r, \bK_{G}, O_{\breve{E}}}(i)\times{\mathcal{M}}_{0, O_{\breve{E}}}(i),\quad i\in\BZ .
\end{equation*} 
By Proposition \ref{Mr-banal} and Remark \ref{simi-factor-rmk}, we therefore obtain the desired isomorphism.
\begin{equation*}
\begin{aligned}
\widetilde{\mathcal{M}}_{\mathbf{K}_{\wt G}}&=\coprod_i\mathcal{M}_{r, \bK_{G}, O_{\breve{E}}}(i)\times {\mathcal{M}}_{0, O_{\breve{E}}}(i)\\
&\overset{\sim}{\longrightarrow} G(\BQ_{p})/\bK_{G}\times_{\BQ^{\times}_{p}/\BZ^{\times}_{p}} \T(\BQ_{p})/\bK^{\circ}_{\T}\\
&\overset{\sim}{\longrightarrow} \wt{G}(\BQ_{p})/\bK_{\wt{G}}.
\end{aligned}
\end{equation*}
For varying $\bK_{\wt{G}}$, the isomorphism is equivariant for the natural Hecke actions of $\wt{G}(\BQ_{p})$ on both sides by construction and Proposition \ref{Mr-banal}.

Let $\Spec R$ be a scheme over $\Spf O_{\breve{E}}$.  From the compatibility of Weil descent data of the closed immersion $\vartheta$, we obtain the commutativity of the following  diagram
\begin{equation*}
\begin{tikzcd}
\wt{\CM}_{\bK_{\wt G}}(i)(\bar{R}) \arrow[r, "\sim"] \arrow[d, "\omega_{\wt{\CM}_{\bK^{\circ}_{\wt G}}}"] & \mathcal{M}_{r, \bK_{G}, O_{\breve{E}}}(i)\times \CM_{0, O_{\breve{E}}}(i)(\bar{R})  \arrow[d, "\omega_{\CM_{r}, E}\times \omega_{\CM_{0}, E}"] \\
\wt{\CM}_{\bK_{\wt G}}(i+f_{E})(\bar{R}_{[\tau_{E}]}) \arrow[r, "\sim"] &  \mathcal{M}_{r, \bK_{G}, O_{\breve{E}}}(i+f_{E})\times\CM_{0, O_{\breve{E}}}(i+f_{E})(\bar{R}_{[\tau_{E}]}) .        
\end{tikzcd}
\end{equation*}
 Thus we have the commutativity of the following  diagram 
\begin{equation}
\begin{tikzcd}
\wt{\CM}_{\bK_{\wt G}}(\bar{R}) \arrow[r, "\sim"] \arrow[d, "\omega_{\wt{\CM}_{\bK^{\circ}_{\wt G}}}"] & G(\BQ_{p})/\bK_{G}\times_{\BQ^{\times}_{p}} \T(\BQ_{p})/\bK^{\circ}_{\T}  \arrow[d, "\omega_{\CM_{r}, E}\times \omega_{\CM_{0}, E}"] \\
\wt{\CM}_{\bK_{\wt G}}(\bar{R}_{[\tau_{E}]}) \arrow[r, "\sim"] &  G(\BQ_{p})/\bK_{G}\times_{\BQ^{\times}_{p}} \T(\BQ_{p})/\bK^{\circ}_{\T}.\\         
\end{tikzcd}
\end{equation}
But  by Proposition \ref{LS-tori-weil} (via Remark \ref{sp-elt-rmk}) and Proposition \ref{Mr-banal}, $\omega_{\CM_{r},E}\times \omega_{\CM_{0},E}$ is given by multiplication by $(\wt{w}, \wt{w})$ on $G(\BQ_{p})/\bK_{G}\times_{\BQ^{\times}_{p}} \T(\BQ_{p})/\bK^{\circ}_{\T}$. Under the isomorphism $\wt{G}(\BQ_{p})/\bK_{\wt{G}}=G(\BQ_{p})/\bK_{G}\times_{\BQ^{\times}_{p}} \T(\BQ_{p})/\bK^{\circ}_{\T}$, this is given simply by multiplication by $\wt{w}$.
\end{proof}

\section{$p$-adic uniformization of RSZ Shimura curves}\label{s:globRSZ}
In this section, we prove Theorem \ref{maintilde}. We fix the notation as in that theorem: we therefore have the CM-extension $K/F$, the fixed embedding $\varphi_0:K\to\bar\BQ$, the CM-type $r$ of rank $2$ special relative to the restriction $w_0$ of $\varphi_0$ to $F$, with its  reflex field $E_{r}$;  we also have a CM-type $r_{0}$ of rank $1$ with its  associated classical CM-type $\Phiplus_{0}$ for which $\varphi_0\in\Phi_0^+$ and the composite  $E=E_{\Phi^{+}_{0}}E_{r}=E_{0}E_{r}$. We further have the prime number  $p$, a fixed embedding  $\nu:\bar{\BQ}\rightarrow \bar{\BQ}_{p}$, the induced place $\nu$ of $E$ and the induced place $v_0$ of $F$ via the embedding ${\varphi_0}_{|_F}: F\to E$.  We denote by $\kappa_{\nu}:=\kappa_{E_{\nu}}$ the residue field of $E_{\nu}$ whose cardinality is $p^{f_{E_{\nu}}}$, where $f_{E_{\nu}}$ is the inertia index. We fix a uniformizer $\varpi_{\nu}:=\varpi_{E_{\nu}}$ of the ring of integers $O_{E_{\nu}}$ of $E_{\nu}$.

We also have the $K/F$-anti-hermitian vector space $(V, \varkappa)$ of signature $r$; it  is assumed that $v_0$ does not split in $K$ and that $V_{v_0}$ is non-split. We will refer to $v_0$ as the special prime, the other $p$-adic places $v$ of $F$ will be called the banal primes. Finally, we fix an almost selfdual lattice $\Lambda_v\subset V_{v}=V\otimes_F F_v$ for each $p$-adic place $v$ of $F$ in the sense of the Notation section at the end of the Introduction. 

We will only consider compact open subgroups $\bK_{\wt G}\subset \wt G(\BA_f)$ which, in terms of the product decomposition \eqref{tildeG}, are of the form
\begin{equation}\label{decprod1}
\bK_{\wt G}=\bK_\U \times \bK^{\circ}_{Z^\BQ},
\end{equation} 
where $\bK_\U\subset \U(\BA_f)$ is a open compact subgroup of the form  
\begin{equation*}
{\bK}_\U = {\bK}_{\U, p} \cdot {\bK}^{p}_{\U},
\end{equation*}
where $\bK_{\U}^{p}$ is an open compact subgroup of $\U(\BA^p_{f})$ and $\bK_{\U, p}$ is an open compact subgroup of $\U(\BQ_{p})$.  Then we assume that $\bK_{\U, p}$ further decomposes as $\bK_{\U, p}=\bK_{\U, v_{0}}\cdot\bK^{v_{0}}_{\U, p}$, where ${\bK}_{\U, p}^{v_{0}}$ is an  open compact subgroup of  $\U^{v_{0}}(\BQ_{p})=\prod_{v\neq v_{0}}\U_{v}(\BQ_{p})$. We assume that  ${\bK}_{\U, v_{0}}$ is a maximal open compact and therefore ${\bK}_{\U, v_{0}}= \U_{v_{0}}(\BQ_{p})$, as follows from our assumption that $ \U_{v_{0}}(\BQ_{p})$ is a  compact group. Let ${\bK}^{v_{0}}_\U = {\bK}^{v_{0}}_{\U, p} \cdot {\bK}^{p}_{\U}$ and  ${\bK}^{v_{0}}_{\wt{G}} = {\bK}^{v_{0}}_{\U} \times \bK^{\circ}_{Z^{\BQ}}$ and $\bK^{v_{0}}_{\wt G, p}={\bK}_{\U, p}^{v_{0}}\times \bK^{\circ}_{Z^\BQ, p}$.

Let $(X, \iota, \lambda)$ be a semi-local CM-triple, cf. \S \ref{ss:semiglob}. We say $(X, \iota, \lambda)=\prod_{v\mid p}(X_{v}, \iota_{v}, \lambda_{v})$ is $(V, \varsigma)$-principally polarized if each local CM-triple $(X_{v}, \iota_{v}, \lambda_{v})$ is $(V_{v}, \varsigma_{v})$-principally polarized. Sometimes, we also simply say that $\lambda$ is a $(V, \varsigma)$-principal polarization.

\subsection{Semi-local formal moduli space of RSZ CM-triples}\label{ss:semloc}

We will need a semi-local version of the notion of a special element from Construction \ref{sp-elt-loc}. 

\begin{construction}\label{consemil}
Let $\Phi^+$ be a classical CM-type. Let $E\subset \bar\BQ$ be a finite extension of $E_{\Phi^+}$ and $\nu: \bar{\BQ}\rightarrow \bar{\BQ}_{p}$ be a fixed embedding.  Then $\Phi^+$ defines a homomorphism $\mu:\BG_{m, E}\to Z^\BQ_{E}$ and hence by base change 
\[\mu:\BG_{m, E_\nu}\to Z^\BQ_{E_\nu}.
\]
We obtain the semi-local reflex norm map which is the composition 
\[
\frak{r}_{\Phi^+, E_\nu}: \Res_{E_\nu/\BQ_p}(\BG_{m, E_\nu})\overset{\mu}\longrightarrow  \Res_{E_\nu/\BQ_p}(Z^\BQ_{ E_\nu})\overset{\Nm_{E_\nu/\BQ_p}}\longrightarrow Z^\BQ_{\BQ_p} .
\]
Then the special element relative to $(\Phi^{+}, E_{\nu})$ for the uniformizer $\varpi_{\nu}$ is given by
\begin{equation}
\wt w=\frak{r}_{\Phi^+, E_\nu}(\varpi_\nu)\in Z^\BQ(\BQ_p).
\end{equation}
 Its image in $Z^\BQ(\BQ_p)/Z^\BQ(\BZ_p)$ is independent of $\varpi_\nu$. 
 
 Using the embedding $\nu: \bar\BQ\to \bar\BQ_p$, we have the decomposition  $\Phi^+=\coprod\nolimits_{v\mid p} \Phi_v^+$, cf. \eqref{Phiplusv}. The local reflex field $E(v)$ for $\Phi_v^+$ is contained in $E_\nu$. 
Under the map  
\begin{equation}\label{loc-sp}
 Z^{\BQ}(\BQ_{p})\to\prod\nolimits_{v\mid p} \big(Z^{\BQ}_{v}(\BQ_{p})/Z^{\BQ}_{v}(\BZ_{p})\big) ,
\end{equation}
the image of $\wt{w}$ coincides with   $({\wt w}_{v})$, where $\wt{w}_v$ is the special element in  $Z^{\BQ}_{v}(\BQ_{p})$ relative to $(\Phi^+_v, E_\nu)$ for the uniformizer $\varpi_\nu$. 
\end{construction}

Consider the integral Shimura variety $\wt\CA_{\bK_{\wt G}}$ in Definition \ref{def:overOE} and consider it as a scheme over $O_{E_{\nu}}$.  We fix a point 
\begin{equation}\label{fixbas}
(\bm{A}_0, \bm{\iota}_0, \bm{\lambda}_0, \bm{A}, \bm{\iota}, \bm{\lambda}, \bar{\bm{\eta}}^p, \bar{\bm{\eta}}_{p}^{v_{0}}) \in \wt \CA_{\bK_{\wt G}}(\bar{\kappa}_{\nu}).
\end{equation}

We denote by $\BX_0$, resp. $\BX$, the $p$-divisible group of $\bm{A}_0$, resp. $\bm{A}$. These are equipped with an action $\iota_{\BX_0}$, resp. $\iota_\BX$, of $O_{K}\otimes \BZ_{p}$ induced by $\bm{\iota}_{0}$, resp. $\bm{\iota}$. They are also equipped with a polarization $\lambda_{\BX_0}$, resp. $\lambda_\BX$ induced by $\bm{\lambda}_0$ and $\bm{\lambda}$. Here, because the ideal $\frak a$ from \S \ref{ss:modoverE} is prime to $p$,  the  polarization $\lambda_{\BX_0}$ is  principal and $\lambda_{\BX}$ is a $(V, \varsigma)$-principal polarization. 
 
In analogy with \eqref{def:Js}, we set 
 \begin{equation}\label{Jsemi}
\begin{aligned}
J_{Z^{\BQ}}(\BQ_{p})&=\{\alpha\in \mathrm{Aut}^{\circ}_{K\otimes \BQ_{p}}(\BX_{0}): \alpha^{\ast}(\lambda_{\BX_{0}})=\mu_{0}(\alpha){\lambda}_{\BX_{0}}, \; \mu_{0}(\alpha)\in \BQ^{\times}_{p}\};\\
J_{\U}(\BQ_{p})&=\{\alpha\in \mathrm{Aut}^{\circ}_{K\otimes\BQ_{p}}(\BX): \alpha^{\ast}(\lambda_{\BX})={\lambda}_{\BX}\};\\
J(\BQ_{p})&=\{\alpha\in \mathrm{Aut}^{\circ}_{K\otimes\BQ_{p}}(\BX): \alpha^{\ast}(\lambda_{\BX})=\mu(\alpha){\lambda}_{\BX},\;\mu(\alpha)\in \BQ^{\times}_{p}\};\\
\wt{J}(\BQ_{p})&=J(\BQ_{p})\times_{\BQ^{\times}_{p}} J_{\T}(\BQ_{p})\simeq J_{\U}(\BQ_{p})\times J_{\T}(\BQ_{p}).
\end{aligned}
\end{equation}
Then $J_{\U}(\BQ_{p})=\prod_{v\mid p}J_{\U, v}(\BQ_{p})$, where $J_{\U, v}(\BQ_{p})=\{\alpha_{v}\in \mathrm{Aut}^{\circ}_{K_{v}}(\BX_{v}): \alpha^{\ast}_{v}(\lambda_{\BX_{v}})={\lambda}_{\BX_{v}}\}$.

For $v\neq v_{0}$, $J_{\U, v}(\BQ_{p})$ is isomorphic to $\U(V_{v})(F_{v})$ and  $J_{\U, v_{0}}(\BQ_{p})$ is isomorphic to $\U(\bar{V}_{v_{0}})(F_{v_{0}})$, cf. \cite[(7.2.6), (7.2.8)]{KRZI}. We define 
 \begin{equation}
 \begin{aligned}
 &J^{v_{0}}_{\U}(\BQ_{p})=\prod\nolimits_{v\neq v_{0}, v\mid p}J_{\U, v}(\BQ_{p}), \quad &\wt{J}^{v_{0}}(\BQ_{p})=J^{v_{0}}_{\U}(\BQ_{p})\times J_{Z^{\BQ}}(\BQ_{p}),\\
 &\U^{v_{0}}(\BQ_{p})=\prod\nolimits_{v\neq v_{0}, v\mid p}\U_{v}(\BQ_{p}), \quad 
& \wt{G}^{v_{0}}(\BQ_{p})=\U^{v_{0}}(\BQ_{p})\times Z^{\BQ}(\BQ_{p}).
 \end{aligned}
 \end{equation}
 Then we have an isomorphism $\wt{J}^{v_{0}}(\BQ_{p})\simeq\wt{G}^{v_{0}}(\BQ_{p})$. For each $v\mid p$, we set 
 \begin{equation}
 \wt{G}_{v}(\BQ_{p})=\U_{v}(\BQ_{p})\times Z^{\BQ}_{v}(\BQ_{p}), \quad \bK_{\wt G, v}=\bK_{\U, v}\times Z^{\BQ}_{v}(\BZ_{p}).
 \end{equation}
 \begin{definition}\label{RSZ-RZ}
We define for each $i\in \BZ$ a functor $\wt{\CM}_{\bK_{\wt G, p}}(i)$ on $\Nilp_{O_{\breve{E}_\nu}}$. Let $S$ be a scheme over $\Spf O_{\breve{E}_\nu}$. A point of $\wt\CM_{\bK_{\wt G, p}}(i)(S)$ is given by the following data:
\begin{enumerate}
\item[(1)] A CM-triple $(X_0, \iota_0, \lambda_0)$ of type $(K\otimes \BQ_{p}/F\otimes \BQ_{p}, r_{0})$ satisfying the condition $({\rm KC}_{r_{0}})$;
      
\item[(2)] An $O_K\otimes \mathbb{Z}_p$-linear quasi-isogeny
\begin{equation*}    
\rho_0: \bar{X}_0 := X_0 \times_{S} \bar{S} \longrightarrow \mathbb{X}_0 \times_{\Spec k} \bar{S}
\end{equation*}
such that locally on $\bar{S}$, there exists $u\in\BZ^{\times}_{p}$ such that $\rho^{\ast}_{0, v}({\lambda}_{\BX_{0, v}})=up^{i}\lambda_{0,v}$ for each $v\mid p$; 
\item[(3)] A CM-triple $(X, \iota, \lambda)$ of type
$(K \otimes \mathbb{Q}_p / F \otimes \mathbb{Q}_p, r)$ over $S$
which satisfies the conditions $({\rm KC}_r)$ and $({\rm EC}_r)$ and is $(V, \varsigma)$-principally polarized.

\item[(4)] An $O_K\otimes \mathbb{Z}_p$-linear quasi-isogeny 
\begin{equation*}
\rho: \bar{X} := X \times_{S} \bar{S} \longrightarrow \mathbb{X}\times_{\Spec \bar{\kappa}} \bar{S} 
\end{equation*}
such that, locally on $\bar{S}$, $\rho^{\ast}_{v}({\lambda}_{\BX_{v}})=up^{i}\lambda_{\bar{X}_{v}}$  for each $v\mid p$, with the same $u\in\BZ^{\times}_{p}$ as in datum $(2)$;
    
\item[(5)]  A class $\bar{\eta}_p^{v_{0}}$ of isomorphisms of \'etale sheaves
\begin{equation*}
\eta_p^{v_{0}}: \underline{\Hom}_{O_{K^{v_{0}}_{p}}}(X_0^{v_{0}}, X^{v_{0}})\isoarrow \Lambda\otimes_{O_K}O_{K^{v_{0}}_{p}} \;
\mathrm{mod}\; \bK^{v_{0}}_{\U, p}
\end{equation*}
 which respect the bilinear forms on $\underline{\Hom}_{O_{K_{v}}}(X_{0, v}, X_{v})$ and $\Lambda\otimes_{O_K}O_{K_{v}}$ for each $p$-adic place $v\neq v_{0}$.  
\end{enumerate} 
\end{definition}
We denote these data by a tuple $(X_{0}, \iota_{0}, \lambda_{0},\rho_{0}, X, \iota, \lambda_{X}, \rho)$. Two tuples $(X_{0}, \iota_{0}, \lambda_{0}, \rho_{0}, X, \iota, \lambda_{X}, \rho)$ and $(X^{\prime}_{0}, \iota^{\prime}_{0}, \lambda^{\prime}_{0}, \rho^{\prime}_{0}, X^{\prime}, \iota^{\prime}, \lambda_{X^{\prime}}, \rho^{\prime})$ define the same point of $\wt{\CM}_{\bK_{\wt G, p}}(i)$ iff there exist isomorphisms $\alpha_{0}: (X_{0}, \iota_{0})\isoarrow (X^{\prime}_{0}, \iota^{\prime}_{0})$ and $\alpha: (X, \iota)\isoarrow (X^{\prime}, \iota^{\prime})$ of $O_{K}$-modules such that $\rho^{\prime}_{0}\circ\alpha_{0, \bar{S}}=\rho_{0}$ and $\rho^{\prime}\circ\alpha_{\bar{S}}=\rho$ and such that $(\alpha, \alpha_0)$ pulls back $\bar{\eta}_p^{\prime v_{0}}$ to $\bar{\eta}_p^{v_{0}}$.

\begin{remark}\label{simi-fact-rmk-g}
We could replace the condition in $(2)$ by $\rho_{0}^{\ast}(\lambda_{\BX_{0}})=p^{i}{\lambda}_{\bar{X}_{0}}$ and the condition in $(4)$ by $\rho^{\ast}(\lambda_{\BX})=p^{i}\lambda_{\bar{X}}$ without changing the formal moduli space. Similarly, if $p^i$ is replaced by $up^i$ with $u\in O_F^\times$ in both equations, the formal moduli space will not change. This follows from the same reasoning as in \cite[Remark 7.2.3]{KRZI}.
\end{remark}

The formal scheme over $\Spf O_{\breve{E}}$ representing this functor is also denoted by $\wt{\CM}_{\bK_{\wt G, p}}(i)$. We define the formal scheme
\begin{equation}
\wt{\CM}_{\bK_{\wt G, p}}=\bigsqcup\nolimits_{i\in\BZ}\wt{\CM}_{\bK_{\wt G, p}}(i). 
\end{equation}
Thus $\wt{\CM}_{\bK_{\wt G, p}}$ parametrizes $(X_{0}, \iota_{0}, \lambda_{0},\rho_{0}, X, \iota, \lambda_{X}, \rho)$ such that $\rho_{0}$ respects the polarization $\lambda_{0}$ on $\bar{X}_{0}$ and ${\lambda}_{\mathbb{X}_{0}}$ on $\BX_{0}$ up to a factor in $\mathbb{Q}_p^{\times}$ and such that $\rho$ respects the polarization $\lambda_{X}$ on $\bar{X}$ and ${\lambda}_{\mathbb{X}}$ on $\BX$ up to the same factor in $\mathbb{Q}_p^{\times}$. Note that $\wt{\CM}_{\bK_{\wt G, p}}$ affords an action of $\wt{J}(\BQ_{p})=J(\BQ_{p})\times_{\BQ^{\times}_{p}} Z^{\BQ}(\BQ_{p})$. We refer to $\wt{\CM}_{\bK_{\wt G, p}}$ as the semi-local formal moduli space of RSZ CM-triples. Following the same procedure as in \eqref{WD7e}, we define a Weil descent datum 
\begin{equation}\label{WD7esemi}
\omega_{\wt{\CM}_{\bK_{\wt G, p}}}: \wt{\CM}_{\bK_{\wt G, p}} \longrightarrow
\wt{\CM}^{(\tau_{E_{\nu}})}_{\bK_{\wt G, p}}
\end{equation}
where the upper index $(\tau_{E_{\nu}})$ denotes the base change via
$\Spec \tau_{E_{\nu}} : \Spf O_{\breve{E}_{\nu}} \longrightarrow \Spf O_{\breve{E}_{\nu}}$. 
  
We fix an isomorphism $J_{\U, v_{0}, \mathrm{ad}}(\BQ_{p})$ with $\PGL_{2}(\BQ_{p})$. Via this identification, we then obtain an action of $\wt{J}(\BQ_{p})=J_{\U}(\BQ_{p})\times Z^{\BQ}(\BQ_{p})$ on $\hat{\Omega}_{F_{v_{0}}}$ via $\wt J(\BQ_p)\to J_\U(\BQ_p)\to  J_{\U, v_{0}, \mathrm{ad}}(\BQ_{p})$. Note that $\wt G(\mathbb{Q}_p)/\mathbf{K}_{\wt G,p}\simeq \U^{v_{0}}(\BQ_{p})/\bK^{v_{0}}_{\U}\times Z^{\BQ}(\BQ_{p})/\bK^{\circ}_{Z^{\BQ}, p}$ since $\U_{v_{0}}(\BQ_{p})=\bK_{\U, v_{0}}$. Then we obtain an action of $\wt{J}(\BQ_{p})$ on $\wt G(\mathbb{Q}_p)/\mathbf{K}_{\wt G,p}$ via the factor $\wt{J}^{v_{0}}(\BQ_{p})\simeq\wt{G}^{v_{0}}(\BQ_{p})$.
  
\begin{proposition}\label{RZ-p-weil}
There exists an isomorphism of formal schemes over $O_{\breve{E}_\nu}$
\begin{equation*}
\widetilde{\mathcal{M}}_{\mathbf{K}_{\wt G, p}} \overset{\sim}{\longrightarrow}
(\hat{\Omega}_{F_{v_{0}}} \times_{\Spf O_{F_{v_{0}}}} \Spf O_{\breve{E}_{\nu}}) \times
\wt G(\mathbb{Q}_p)/\mathbf{K}_{\wt G,p}
 \end{equation*}
which is equivariant with respect to the action of $\wt J(\mathbb{Q}_p)$ on both sides.  For varying $\bK^{v_{0}}_{\wt G, p}$, this isomorphism is compatible with Hecke action of $\wt{G}^{v_{0}}(\BQ_{p})$ on both sides.

The Weil descent datum $\omega_{\wt{\mathcal{M}}_{\mathbf{K}_{\wt G, p}}}$ down to $O_{E_\nu}$ on the left-hand side corresponds on the right-hand side to the Weil descent datum given by
\begin{equation*}
(\xi, g) \mapsto (\omega_{\Omega, {E_{\nu}}}(\xi), \wt{w}g), \quad g \in \wt{G}(\mathbb{Q}_p) .
\end{equation*} 
Here $\wt{w}$ is the special element relative to $(\Phi^{+}_{0}, E_{\nu})$ for the uniformizer $\varpi_{\nu}$, as in Construction \ref{consemil},  considered as an element in the center of $\wt{G}(\mathbb{Q}_p)$.
\end{proposition}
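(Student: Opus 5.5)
The plan is to reduce the semi-local statement to the local results of \S\ref{s:formRSZ}, place by place, and then reassemble. By the decomposition \eqref{Unif1e} applied to both $(X_0,\iota_0,\lambda_0)$ and $(X,\iota,\lambda)$, a point of $\wt\CM_{\bK_{\wt G,p}}(i)(S)$ splits into a family, indexed by $v\mid p$, of local data $(X_{0,v},\iota_{0,v},\lambda_{0,v},\rho_{0,v},X_v,\iota_v,\lambda_v,\rho_v)$. The level structure $\bar\eta_p^{v_0}$ also splits into strict CL-level structures at the places $v\neq v_0$, because $\bK^{v_0}_{\U,p}$ is a product of local groups. All factors share the same integer $i$ and the same unit $u\in\BZ_p^\times$ in the similitude conditions. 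At $v_0$ the type $r_{v_0}$ is special relative to $\varphi_{0}|_F$, and at $v\neq v_0$ the type $r_v$ is banal. Since each local reflex field $E_{r_v}E(v)$ is contained in $E_\nu$, each factor is the base change to $O_{\breve E_\nu}$ of the local spaces of Definitions \ref{RSZ-LS-special} and \ref{RSZ-LS-banal}, up to the issue of the common similitude factor. To handle that issue I will use Remark \ref{simi-fact-rmk-g} to normalize $u=1$. This should give a product decomposition
\[
\wt\CM_{\bK_{\wt G,p}}(i)\simeq \wt\CM_{\bK^\circ_{\wt G,v_0}}(i)_{O_{\breve E_\nu}}\times\prod\nolimits_{v\neq v_0}\wt\CM_{\bK_{\wt G,v}}(i)_{O_{\breve E_\nu}},
\]
with $\wt J(\BQ_p)=\prod_v \wt J_v$ modulo the identification of similitude factors acting factorwise.

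Next I will insert Proposition \ref{LS-weil-special} for the $v_0$-factor and Proposition \ref{LS-weil-banal} for the banal factors. The $v_0$-factor gives $(\hat\Omega_{F_{v_0}}\times \Spf O_{\breve E_\nu})\times \T_{v_0}(\BQ_p)/\bK^\circ_{\T_{v_0}}$, and each banal factor gives $\wt G_v(\BQ_p)/\bK_{\wt G,v}$. Taking the union over $i$ and the fiber product over the common similitude index (via $\BQ_p^\times/\BZ_p^\times$ and the maps $\T_v\to\BG_m$), the torus pieces $\prod_v Z^\BQ_v$ glued along the common norm in $\BQ_p^\times$ recombine into $Z^\BQ(\BQ_p)/\bK^\circ_{Z^\BQ,p}$. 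This uses $Z^\BQ_{\BQ_p}=\prod_v\T_v\times_{\prod\BG_m}\BG_m$ and the fact that $\U_{v_0}(\BQ_p)=\bK_{\U,v_0}$. The result is $(\hat\Omega_{F_{v_0}}\times\Spf O_{\breve E_\nu})\times\wt G(\BQ_p)/\bK_{\wt G,p}$. Equivariance for $\wt J(\BQ_p)$ and for the Hecke action of $\wt G^{v_0}(\BQ_p)$ then follows from the factorwise statements in the two local propositions.

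For the Weil descent datum, the Frobenius $F_{\BX,\tau_{E_\nu}}$ is compatible with the decomposition over $v$. So $\omega_{\wt\CM_{\bK_{\wt G,p}}}$ is the product of the local descent data down to $O_{E_\nu}$, each relative to the local CM type $\Phi^+_{0,v}$ and the field $E_\nu$, with the uniformizer $\varpi_\nu$. By the local propositions, together with the remark after Proposition \ref{LS-tori-weil} (which allows replacing the local reflex field by the extension $E_\nu$), the descent datum becomes $(\xi,(g_v))\mapsto(\omega_{\Omega,E_\nu}(\xi),(\wt w_v g_v))$. By \eqref{loc-sp} in Construction \ref{consemil}, $(\wt w_v)$ coincides with the image of the semi-local special element $\wt w$ modulo $\prod_v Z^\BQ_v(\BZ_p)$. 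I then need that this ambiguity is absorbed by $\bK^\circ_{Z^\BQ,p}$ inside the glued quotient.

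I expect this last point to be the main obstacle: the ambiguities in $\prod_v Z^\BQ_v(\BZ_p)$ must lie in $Z^\BQ(\BZ_p)$, i.e.\ have norms in a common $\BZ_p^\times$. Each $\wt w_v$ and $\wt w$ has norm $p^{f_{E_\nu}}$ up to units. I plan to exploit Remark \ref{simi-fact-rmk-g}, which allows the units in the similitude conditions to lie in $O_F^\times$ and not just $\BZ_p^\times$, as in \cite[Remark 7.2.6]{KRZI}; this shows that changing the special elements by units at each place does not change the descent datum.
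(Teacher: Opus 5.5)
\textbf{Overall assessment.} Your route is the same as the paper's. You split the semi-local problem over $v\mid p$, feed in Propositions \ref{LS-weil-special} and \ref{LS-weil-banal}, and compare special elements via \eqref{loc-sp} and Remark \ref{sp-elt-rmk}.

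\textbf{The level-wise decomposition.} Your decomposition $\wt\CM_{\bK_{\wt G,p}}(i)\simeq\prod_{v}\wt\CM_{\bK_{\wt G,v}}(i)_{O_{\breve E_\nu}}$ is fine. It does tacitly need $\bK^{v_0}_{\U,p}$ to be a product, which the statement does not assume. It is also more careful than the paper's \eqref{se-loc}, which maps onto the full product over $v$ and so forgets that the levels must agree.

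\textbf{The gap: the recombination step.} The gap is where you say the torus pieces ``recombine'' into $Z^\BQ(\BQ_p)/\bK^\circ_{Z^\BQ,p}$. The identity $Z^\BQ(\BQ_p)=\prod_vZ^\BQ_v(\BQ_p)\times_{\prod_v\BQ_p^\times}\BQ_p^\times$ does not pass to quotients by maximal compact subgroups. The natural map
\[
Z^\BQ(\BQ_p)/Z^\BQ(\BZ_p)\longrightarrow\bigl\{(t_vZ^\BQ_v(\BZ_p))_v \;:\; \val_p\Nm(t_v)\ \text{independent of}\ v\bigr\}
\]
is injective. It is surjective exactly when, writing $\Nm(t_v)=p^iu_v$, the classes of the $u_v$ in $\BZ_p^\times/(\Nm(O_{K_v}^\times)\cap\BZ_p^\times)$ always come from a single $u\in\BZ_p^\times$.

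This holds if at most one $p$-adic place of $F$ ramifies in $K$, since at unramified or split $v$ these quotients are trivial. It fails in general. Take $p$ odd and split in $F$ into $v_0,v_1$, with $K_{v_0}\simeq\BQ_p(\sqrt p)$ and $K_{v_1}\simeq\BQ_p(\sqrt{\epsilon p})$ for a non-square unit $\epsilon$. Then $\Nm(K_{v_0}^\times)\cap\Nm(K_{v_1}^\times)=p^{2\BZ}(\BZ_p^\times)^2$, so $Z^\BQ(\BQ_p)$ has only even norm valuations. On the other hand, each $Z^\BQ_v(\BQ_p)/Z^\BQ_v(\BZ_p)=K_v^\times/O_{K_v}^\times$ has a point in every level.

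On the moduli side the unit is invisible. This is exactly Remark \ref{simi-fact-rmk-g}: you may rescale $\lambda_{0,v},\lambda_v$ by a unit separately at each $v$. So your decomposition gives $\wt\CM_{\bK_{\wt G,p}}(i)\neq\emptyset$ for odd $i$. Now $\wt J(\BQ_p)$ preserves the parity of $i$, but it acts transitively on the connected components of the right-hand side. Hence in this case no $\wt J(\BQ_p)$-equivariant isomorphism of the asserted form exists.

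\textbf{The descent datum is not the obstacle.} Your last paragraph puts the difficulty in the wrong place. By Construction \ref{consemil}, $\wt w=(\mathfrak r_{\Phi^+_v,E_\nu}(\varpi_\nu))_v$ on the nose, and every component has norm $\Nm_{E_\nu/\BQ_p}(\varpi_\nu)$. Changing the local special elements by elements of $Z^\BQ_v(\BZ_p)$ does not change multiplication on $\prod_vZ^\BQ_v(\BQ_p)/Z^\BQ_v(\BZ_p)$. So once a recombination is available, the descent statement is immediate.

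Conversely, your appeal to $O_F^\times$-units and \cite[Remark 7.2.6]{KRZI} cannot close the gap. That insensitivity to units is precisely why the moduli space sees the whole level-wise fibre product rather than $Z^\BQ(\BQ_p)/Z^\BQ(\BZ_p)$.

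\textbf{What is needed.} To finish you need one of the following:
\begin{itemize}
\item an extra hypothesis guaranteeing the surjectivity above, for example that at most one $p$-adic place of $F$ ramifies in $K$; or
\item a semi-local moduli problem that remembers the $\BZ_p^\times$-similitude class of $C_{X_0}$ with its form, mirroring the $[\Lambda_0]$-condition that defines $\CA_0^{[\Lambda_0]}$.
\end{itemize}
The paper's proof passes over this same point by invoking \cite[Remark 7.2.3]{KRZI}, so following it will not fill the gap.
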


\begin{proof} 
Let $(X_{0}, \iota_{0}, \lambda_{0},\rho_{0}, X, \iota, \lambda_{X}, \rho)$ be a tuple parametrized by $\wt{\CM}_{\bK_{\wt G, p}}$. Then the decomposition 
\begin{equation*}
X_{0}=\prod\nolimits_{v\mid p}X_{0, v}, \quad X=\prod\nolimits_{v\mid p}X_{v}
\end{equation*}
extends to the tuple 
\begin{equation*}
(X_{0}, \iota_{0}, \lambda_{0},\rho_{0}, X, \iota, \lambda_{X}, \rho)=\prod\nolimits_{v\mid p}(X_{0, v}, \iota_{0, v}, \lambda_{0, v},\rho_{0, v}, X_{v}, \iota_{v}, \lambda_{X_{v}}, \rho_{v}, \bar\eta_{v}).
\end{equation*}
This in turn gives rise to a map
\begin{equation}\label{se-loc}
\wt{\CM}_{\bK_{\wt G, p}}\longrightarrow \prod\nolimits_{v\mid p} \wt{\CM}_{\bK_{\wt G, v}}\times_{\Spf O_{\breve{E}(v)}} \Spf O_{\breve{E}_\nu}.
\end{equation}
Here $\breve E(v)$ is the completion of the maximal unramified extension of the composite $E(v)$ of the local reflex fields for $r_v$ and $r_{0, v}$ (recall that $\nu$ defines the decomposition $\Phi=\bigsqcup_{v}\Phi_v$ and the restrictions $r_v$ and $r_{0, v}$ of $r$ and $r_0$). Then $\breve E(v)\subset \breve E_\nu$. Furthermore,  $\wt{\CM}_{\bK_{\wt G, v_{0}}}$ is given by the formal scheme $\wt{\CM}_{\bK^{\circ}_{\wt{G}}}$ defined in \eqref{wtCM-special};  for $v\neq v_{0}$,  $\wt{\CM}_{\bK_{\wt G, v}}$ is given by the formal scheme $\wt{\CM}_{\bK_{\wt{G}}}$ defined in Definition \eqref{wtCM-banal}.

 The map \eqref{se-loc} is an isomorphism. Indeed, comparing the moduli descriptions, we see that the RHS is characterized by the fact that the compatibility scalars in 2) and 4)  are the same for any $v\mid p$, whereas the LHS is characterized by the fact that the compatibility scalars in 2) and 4)  are the same for \emph{all} $v\mid p$. But appealing, as in Remark \ref{simi-factor-rmk} to \cite[Remark 7.2.3]{KRZI}, we see that in passing to isomorphism classes, the difference between these conditions is irrelevant.  The Hecke equivariance of \eqref{se-loc} is obvious. 
Now the claimed  isomorphism over $O_{\breve E_{\nu}}$ follows from Proposition \ref{LS-weil-special}  and Proposition \ref{LS-weil-banal}.

The Weil descent datum on $ \wt{\CM}_{\bK_{\wt G, v_0}}$ from $O_{\breve E({v_0})}$ down to $O_{E({v_0})}$ is given under   the isomorphism  with $(\hat{\Omega}_{F_{v_0}} \times_{\Spf O_{F_{v_0}}} \Spf O_{\breve E({v_0})}) \times \wt{G}_{v_{0}}(\mathbb{Q}_p)/\bK_{\wt{G}, v_{0}}$ of Proposition \ref{LS-weil-special} by
\begin{equation*}
(\xi, g) \mapsto (\omega_{\Omega, {E({v_0})}}(\xi), \wt{w}_{v_0}g), \quad g \in \wt{G}_{v_{0}}(\mathbb{Q}_p)/\bK_{\wt{G}, v_{0}},
\end{equation*} 
where $\wt{w}_{v_0}$ is the special element relative to $(\Phi^{+}_{0, v_0}, E({v_{0})})$ for the uniformizer $\varpi_{E({v_0})}$, as in Construction \ref{sp-elt-loc}. A similar formula follows from  Proposition \ref{LS-weil-banal} for $v\neq v_0$. The result follows now from the compatibility \eqref{loc-sp} of $\wt w$ with localization and Remark \ref{sp-elt-rmk} on the behavior of special elements under passage to bigger fields (applied to $E_\nu\supset E(v)$).

\end{proof}

\subsection{$p$-adic uniformization} 
Recall from \eqref{fixbas} the base point $(\bm{A}_0, \bm{\iota}_0, \bm{\lambda}_0, \bm{A}, \bm{\iota}, \bm{\lambda}, \bar{\bm{\eta}}^p, \bar{\bm{\eta}}_{p}^{v_{0}}) \in \wt \CA_{\bK_{\wt G}}(\bar{\kappa}_{\nu})$. We define the algebraic groups over $\BQ$
\begin{equation}\label{Jglob}
\begin{aligned}
J_{Z^{\BQ}}(\BQ)&=\{\alpha\in \mathrm{Aut}^{\circ}_{K}(\mathbf{A}_{0}): \alpha^{\ast}(\bm{\lambda}_{0})=\mu_{0}(\alpha) \bm{\lambda}_{0}, \; \mu_{0}(\alpha)\in \BQ^{\times}\};\\
J_{\U}(\BQ)&=\{\alpha\in \mathrm{Aut}^{\circ}_{K}(\mathbf{A}): \alpha^{\ast}(\bm{\lambda})=\bm{\lambda}\};\\
J(\BQ)&=\{\alpha\in \mathrm{Aut}^{\circ}_{K}(\mathbf{A}): \alpha^{\ast}(\bm{\lambda})=\mu(\alpha)\bm{\lambda},\;\mu(\alpha)\in \BQ^{\times}\};\\
\wt{J}(\BQ)&=J(\BQ)\times_{\BQ^{\times}} J_{Z^{\BQ}}(\BQ)=J_{\U}(\BQ_{p})\times J_{Z^{\BQ}}(\BQ).
\end{aligned}
\end{equation}
The Dieudonn\'e modules of $\mathbf{A}_{0}$ and $\mathbf{A}$ are isoclinic (for $\mathbf{A}_{0}$ this is obvious, for $\mathbf{A}$, this follows from \cite[Proposition 7.1.11]{KRZI}). This implies that the $\BQ_p$-valued points $J_{Z^{\BQ}}(\BQ_p)$, $J_\U(\BQ_p)$, $J(\BQ_p)$   and $\wt J(\BQ_p)$ coincide with the groups \eqref{Jsemi}, cf. \cite[Cor. 6.29]{RZ}.

Now we can prove the main $p$-adic uniformization theorem for the RSZ Shimura curve introduced in Theorem \ref{maintilde}.

\begin{theorem}\label{main}
Let $(\wt\CA_{\bK_{\wt G}})^\wedge$ be the formal completion of $\wt\CA_{\bK_{\wt G}}$ along its special fiber, which is a formal scheme over $\Spf O_{E_\nu}$. Then there exists an isomorphism of formal schemes over $\Spf O_{\breve {E}_\nu}$,
\begin{equation*}
(\wt\CA_{\bK_{\wt G}})^\wedge\times_{\Spf O_{E_{\nu}}} \Spf O_{\breve {E}_\nu}  \simeq   \wt{J}(\BQ)\bs\big[\big(\widehat{\Omega}_{F_{v_{0}}} \times_{\Spf O_{F_{v_{0}}}} \Spf O_{\breve {E}_\nu}\big) \times
 \wt G(\BA_f)/{\bK}_{\wt G}\big].
\end{equation*}
The quotient on the right-hand-side is defined via the composition $$\wt J(\BQ)\to J_\U(\BQ)\to J_{\U, v_0}(\BQ_p)\to J_{\U, v_0, {\rm ad}}(\BQ_p)$$ using an identification of the adjoint group ${J}_{\U, v_{0}, {\rm ad}}(\BQ_{p})$ with $\PGL_2(F_{v_{0}})$ and through an action of 
$\wt{J}(\BQ)$ on $\wt G(\BA_f)/{\bK}_{\wt G}$. For varying $\bK^{v_{0}}_{\wt{G}}$, the isomorphism is equivariant for the action of the group $\wt{G}^{v_{0}}(\BA_{f})$ which acts on both sides by Hecke correspondences.

Furthermore, the natural descent datum down to $O_{E_{\nu}}$ on the left-hand side is given on the right-hand side by 
\begin{equation*}
(\xi, h)\rightarrow (\omega_{\omega_{\Omega, E_{\nu}}}(\xi), \wt{w}h), \quad h\in \wt G(\BA_f) , 
\end{equation*}
where $\wt{w}\in \wt G(\BQ_p)$ is the special element relative to $(\Phi^{+}_{0}, E_{\nu})$ for the uniformizer $\varpi_{\nu}$, as in Construction \ref{consemil},  considered as an element in the center of $\wt G(\BA_f)$.
\end{theorem}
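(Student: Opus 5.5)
We follow the Rapoport--Zink strategy: first construct a uniformization morphism from the semi-local moduli space, then use Proposition \ref{RZ-p-weil} to make it explicit.

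\emph{Step 1: the uniformization morphism.} Take the base point \eqref{fixbas}, with associated framing objects $(\BX_0,\iota_{\BX_0},\lambda_{\BX_0})$ and $(\BX,\iota_\BX,\lambda_\BX)$. We define a morphism
\[
\Theta\colon \wt{\CM}_{\bK_{\wt G, p}}\times \wt G(\BA_f^p)/\bK^p_{\wt G}\longrightarrow (\wt\CA_{\bK_{\wt G}})^\wedge\times_{\Spf O_{E_\nu}}\Spf O_{\breve E_\nu}.
\]
Given a point $(X_0,\dots,\rho,\bar\eta_p^{v_0})$ over $S$ together with $g\in \wt G(\BA_f^p)$, we proceed as follows. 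The quasi-isogenies $\rho_0,\rho$ modify $\bm A_{0,\bar S},\bm A_{\bar S}$ to abelian schemes over $\bar S$, and Serre--Tate theory lifts these to $S$ with $p$-divisible groups $X_0,X$. The prime-to-$p$ level is $g^{-1}\circ\bm\eta^p$, and the $v\neq v_0$ level is transported from $\bar\eta_p^{v_0}$. The map $\Theta$ is $\wt J(\BQ)$-invariant, where $\wt J(\BQ)$ embeds diagonally into $\wt J(\BQ_p)\times \wt G(\BA_f^p)$ via its action on the Tate modules of $\bm A_0,\bm A$. It is also equivariant for the Hecke action of $\wt G^{v_0}(\BA_f)$. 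We check that the conditions of Definition \ref{def:overOE} hold:
\begin{itemize}
\item the Eisenstein and Kottwitz conditions come from Definition \ref{RSZ-RZ};
\item the polarization condition (ii) holds because $\lambda$ is $(V,\varsigma)$-principal and $\frak a$ is prime to $p$;
\item the sign condition (iii) is locally constant and holds at the base point.
\end{itemize}
One subtlety remains. The point must lie in $\CA_0^{[\Lambda_0]}$, and $\lambda_0$ must be a genuine polarization with kernel $A_0[\frak a]$ rather than a $\BQ^\times$-multiple. We handle this by normalizing via $i$ and the $u$-factor, using Remark \ref{simi-fact-rmk-g}, and by passing to the quotient by $\wt J(\BQ)$, which absorbs rational scalars.

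\emph{Step 2: isomorphism on quotients.} We show that $\Theta$ induces an isomorphism
\[
\wt J(\BQ)\bs\big[\wt{\CM}_{\bK_{\wt G,p}}\times \wt G(\BA_f^p)/\bK^p_{\wt G}\big]\isoarrow (\wt\CA_{\bK_{\wt G}})^\wedge_{O_{\breve E_\nu}}.
\]
This has two ingredients.
\begin{itemize}
\item \emph{Single isogeny class.} Every $\bar\kappa_\nu$-point of $\wt\CA_{\bK_{\wt G}}$ is isogenous to the base point compatibly with all structures. At $v_0$ the $p$-divisible group is isoclinic, by \cite[Prop.~7.1.11]{KRZI}. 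The rational Tate modules and the Hasse principle for hermitian spaces then show that the hermitian modules $\Hom(A_0,A)\otimes\BQ$ at two points differ only at $v_0$ and $\infty$ and coincide with $\bar V$. Honda--Tate / CM-lifting over the torus $Z^\BQ$ produces the required quasi-isogeny. This is the argument of \cite[\S7]{KRZI}, transported to the RSZ setting.
\item \emph{Formal structure.} The isomorphism on completions is Serre--Tate: deformations of the point match deformations of the $p$-divisible groups with their extra structure. The quotient is a formal scheme because $\wt J(\BQ)$ acts discontinuously, with $J_\U(\BQ)$ compact at infinity and $\bK^p_\U$ small. Properness and flatness of $\wt\CA_{\bK_{\wt G}}$, i.e.\ part (i) of Theorem \ref{maintilde}, follow by algebraizing the projective stable curve on the right, as in \cite{RZ}, \cite{KRZI}.
\end{itemize}
We expect the isogeny-class statement, together with the bookkeeping of $\CA_0^{[\Lambda_0]}$ and the $Z^\BQ$-component, to be the main obstacle.

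\emph{Step 3: substitution and descent datum.} Insert Proposition \ref{RZ-p-weil}, which identifies $\wt{\CM}_{\bK_{\wt G,p}}$ with $(\hat\Omega_{F_{v_0}}\times_{\Spf O_{F_{v_0}}}\Spf O_{\breve E_\nu})\times\wt G(\BQ_p)/\bK_{\wt G,p}$. Combining it with the prime-to-$p$ factor gives the right-hand side of the theorem. Under this identification the $\wt J(\BQ)$-action becomes the stated action through $J_{\U,v_0,\rm ad}(\BQ_p)\simeq\PGL_2(F_{v_0})$ and on $\wt G(\BA_f)/\bK_{\wt G}$. The global descent datum is defined by composing the framings with Frobenius relative to $E_\nu$, so it acts trivially on the prime-to-$p$ level. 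Hence it is transported from $\omega_{\wt{\CM}_{\bK_{\wt G,p}}}$, which by Proposition \ref{RZ-p-weil} is $(\xi,g)\mapsto(\omega_{\Omega,E_\nu}(\xi),\wt w g)$ with $\wt w$ central, as claimed.
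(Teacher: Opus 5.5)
Your proposal is essentially the paper's proof. It uses the same uniformization morphism $\Theta$ built from the framings and Serre--Tate lifting, the same $\wt J(\BQ)$-equivariance via $\wt J(\BQ)\to\wt J(\BQ_p)\times\wt G(\BA_f^p)$, Proposition~\ref{RZ-p-weil} to rewrite the source, and the same Frobenius-on-framings comparison of descent data. Where you re-derive Step~2, the paper simply invokes \cite[Theorem 6.2.3]{RZ} together with the single-isogeny-class result of \cite{KRZI}.

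One small repair concerns the sign condition (iii). Local constancy plus validity at the base point is not enough on the disconnected space $\wt\CM_{\bK_{\wt G,p}}$. You should add that $\inv^r_v$ depends only on the polarized $p$-divisible groups, so it is invariant under $\wt J(\BQ_p)$, and that $\wt J(\BQ_p)$ acts transitively on the connected components.
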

\begin{proof}
We define as follows the uniformization morphism 
\begin{equation}\label{theta}
  \Theta:  \wt\CM_{\bK_{\wt G, p}} \times \wt{G}(\mathbb{A}_f^{p})/{\bK}^{p}_{\wt{G}} \longrightarrow
 (\wt\CA_{\bK_{\wt G}})^{\wedge} \times_{\Spf O_{E_{\nu}}} \Spf O_{\breve{E}_{\nu}} .
\end{equation}
Let ${S}=\Spec {R}$,   and let $(X_{0}, \iota_{0}, \lambda_{0}, \rho_{0}, X, \iota, \lambda, \rho, \bar{\eta}^{v_{0}}_{p})$ be a point of  $\wt\CM_{\bK_{\wt G, p}}(i)$ over ${R}$ and let
$g \in \wt{G}(\mathbb{A}_f^{p})$. The quasi-isogeny $\rho$ over  $\bar R=R\otimes_{O_{E_\nu}}\bar{\kappa}_{\nu}$
extends uniquely to a quasi-isogeny of abelian varieties
\begin{equation*}\label{rho-A}
\rho: \bar{A} \longrightarrow \bm{A} \times_{\Spec \bar{\kappa}_{{\nu}}} \Spec \bar{R}.
\end{equation*}
Because $O_K$ acts on $\bar{X}=X\otimes_R \bar R$, we obtain a map
$O_K\to \End(\bar A)\otimes\BZ_{(p)}$. Moreover, the polarization
$\bm{\lambda}: \bm{A} \longrightarrow \bm{A}^{\vee}$
induces on $\bar{A}$ a quasi-polarization
$\lambda'_{\bar{A}}: \bar{A} \longrightarrow \bar{A}^{\vee}$.
On the $p$-divisible groups, $\lambda'_{\bar{A}}$ is given by $p^i\lambda$  and therefore
$\lambda_{\bar{A}} := p^{-i} \lambda'_{\bar{A}}$ satisfies the condition  (ii)
in the Definition \ref{def:overOE} of the functor $\wt{\mathcal{A}}_{\bK_{\wt{G}}}$. Similarly, we obtain  $(\bar{A}_{0}, \iota_{\bar{A}_{0}}, \lambda_{\bar{A}_{0}})$ and the quasi-isogeny
\begin{equation*}\label{rho0}
\rho_{0}: \bar{A}_{0} \longrightarrow \bm{A}_{0} \times_{\Spec \bar{\kappa}_{{\nu}}} \Spec \bar{R}.
\end{equation*}
The quasi-isogenies $\rho_{0}$ and $\rho$ induce an isomorphism $\widehat{\RV}^{p}(\bar{A}_{0}, \bar{A})\xrightarrow{\sim} \widehat{\RV}^{p}({\bm A}_{0}, {\bm A})$ given by   $x\mapsto   \widehat{\RV}^p(\rho)^{-1}\circ x\circ\widehat{\RV}^p(\rho_0)$. Hence  $\bm{\bar{\eta}}^{p}$ induces an isomorphism
\begin{equation*}\label{prime-p-level}
{\eta}^p: \widehat{\RV}^{p}(\bar{A}_{0}, \bar{A})\longrightarrow \wt{V} \otimes \mathbb{A}_{K,f}^p \; \mod {\bK}^p_{\U} .
\end{equation*}

The same construction furnishes the isomorphism
\begin{equation*}\label{prime-to-v0-level}
{\eta}^{v_{0}}_{p}: \widehat{\RV}^{v_{0}}_{p}(\bar{A}_{0}, \bar{A})\longrightarrow \wt{V} \otimes_{K} K^{v_{0}}_{p} \; \mod {\bK}^{v_{0}}_{\U, p}. 
\end{equation*}

We associate to the tuple $(X_{0}, \iota_{0}, \lambda_{0}, \rho_{0}, X, \iota, \lambda, \rho, \bar{\eta}^{v}_{p}, g)$ from the left-hand side of (\ref{theta})
the point
\begin{equation}\label{image-of-Theta1}
  (\bar{A}_{0}, \iota_{\bar{A}_{0}}, \lambda_{\bar{A}_{0}}, \bar{A}, \iota_{\bar{A}}, \lambda_{\bar{A}}, \bar{\eta}^{p}g, \bar{\eta}^{v_{0}}_{p})
  \in \wt{\mathcal{A}}_{\bK_{\wt{G}}}(\bar{R}). 
\end{equation}
The semi-local CM-triples $(X_{0}, \iota_{0}, \lambda_{0})$ and $(X, \iota, \lambda)$ over $R$ define by the Serre-Tate theorem
a lifting of (\ref{image-of-Theta1}) to a point of $\wt{\mathcal{A}}_{\bK_{\wt{G}}}(R)$. The map is functorial in $R$ and defines
the uniformization morphism $\Theta$ in
(\ref{theta}). 

Recall $\wt J(\BQ)$ from \eqref{Jglob}.  We define an action of $\wt J(\BQ)$ on the left hand side of \eqref{theta}. 
Let $\wt{\alpha}=({\alpha}_0, \alpha)\in \wt{J}(\BQ)$, then we define $\omega(\wt{\alpha})\in \wt{G}(\BA^{p}_{f})$ by the equation
\begin{equation}\label{omega-map-defn}
\bm{\eta}^{p}\circ{\widehat\RV}^{p}(\alpha^{-1})\circ {\widehat\RV}^{p}(\alpha_{0})=\omega(\wt{\alpha})\circ \bm{\eta}^{p} .
\end{equation}
This defines a homomorphism
\begin{equation*}
\omega: \wt{J}(\BQ)\rightarrow \wt{G}(\BA^{p}_{f}).
\end{equation*}
We already noted that $\wt{J}(\BQ_{p})$ coincides with \eqref{Jsemi}, and hence acts on  $\wt{\CM}_{\bK_{\wt G, p}}$ by
\begin{equation*}
\wt\alpha=(\alpha_0, \alpha): (X_{0}, \iota_{0}, \lambda_{0},\rho_{0}, X, \iota, \lambda_{X}, \rho, \bar{\eta})\mapsto(X_{0}, \iota_{0}, \lambda_{0},\alpha_{0}\circ\rho_{0}, X, \iota, \lambda_{X}, \alpha\circ\rho, \bar{\eta}).
\end{equation*}
Let $((X_{0}, \iota_{0}, \lambda_{0},\rho_{0}, X, \iota, \lambda_{X}, \rho, \bar{\eta}), g)$ be a point of $ \wt\CM_{\bK_{\wt G, p}} \times \wt{G}(\mathbb{A}_f^{p})$ over $\bar{S}$ and let
\begin{equation*}
(\bar{A}_{0}, \iota_{\bar{A}_{0}}, \lambda_{\bar{A}_{0}}, \bar{A}, \iota_{\bar{A}}, \lambda_{\bar{A}}, g\bar{\eta}^{p}, \bar{\eta}^{v_{0}}_{p})
\in \wt{\mathcal{A}}_{\bK_{\wt{G}}}(\bar{R}) 
\end{equation*}  
be its image under $\Theta$. For $\wt{\alpha}=(\alpha_{0}, \alpha)\in \wt{J}(\BQ_{p})$,  the quasi-isogenies $\alpha_{0}\circ\rho_{0}$ and $\alpha\circ\rho$ extend to quasi-isogenies of abelian varieties
\begin{equation*}
\bar{A}_{0}\overset{\rho_{0}} \longrightarrow \bm{A}_{0,\bar{R}}\overset{\alpha_{0}} \longrightarrow \bm{A}_{0,\bar{R}}\quad\text{and}\quad\bar{A}\overset{\rho} \longrightarrow \bm{A}_{\bar{R}}\overset{\alpha} \longrightarrow \bm{A}_{\bar{R}}
\end{equation*}
The image of $((X_{0}, \iota_{0}, \lambda_{0},\alpha_{0}\circ\rho_{0}, X, \iota, \lambda_{X}, \alpha\circ\rho, \bar{\eta}), g)$ under the map $\Theta$ is given by
\begin{equation*}
(\bar{A}_{0}, \iota_{\bar{A}_{0}}, \lambda_{\bar{A}_{0}}, \bar{A}, \iota_{\bar{A}}, \lambda_{\bar{A}}, \omega({\wt\alpha}^{-1})g\bar{\eta}^{p}, \bar{\eta}^{v_{0}}_{p})
\in \wt{\mathcal{A}}_{\bK_{\wt{G}}}(\bar{R}).
\end{equation*}  
Hence, if we define the action of $\wt{J}(\BQ)$ on the source of  $\Theta$ by
\begin{equation*}
((X_{0}, \iota_{0}, \lambda_{0}, \rho_{0}, X, \iota, \lambda_{X}, \rho, \bar{\eta}), g)\mapsto ((X_{0}, \iota_{0}, \lambda_{0},\alpha_{0}\circ\rho_{0}, X, \iota, \lambda_{X}, \alpha\circ\rho, \bar{\eta}), \omega(\wt\alpha)g),
\end{equation*}
then $\Theta$ is equivariant. The uniformization theorem   for PEL-type Shimura varieties \cite[Theorem  6.2.3]{RZ} shows now that  $\Theta$ induces  an isomorphism
\begin{equation*}
\wt{J}(\BQ)\bs\big[\widetilde{\mathcal{M}}_{\mathbf{K}_{\wt G, p}} \times\wt G(\BA^{p}_f)/{\bK}^{p}_{\wt G}\big]\overset{\sim}\longrightarrow (\wt\CA_{\bK_{\wt G}})^\wedge\times_{{\rm Spf}\,O_{E_{\nu}}}{\rm Spf}\, O_{\breve {E}_\nu} .
\end{equation*}
Note here we are using the fact that $\wt\CA_{\bK_{\wt G}}(\bar{\kappa}_{\nu})$ has only one isogeny class, as follows from \cite[Proposition  7.11]{KRZI}.  By Proposition \ref{RZ-p-weil}, we can rewrite the left hand side of this isomorphism as 
\begin{equation*}
\begin{aligned}
&\wt{J}(\BQ)\bs\big[(\hat{\Omega}_{F_{v_{0}}} \times_{\Spf O_{F_{v_{0}}}} \Spf O_{\breve{E}_{\nu}}) \times
\wt G(\mathbb{Q}_p)/\mathbf{K}_{\wt G,p} \times\wt G(\BA^{p}_f)/{\bK}^{p}_{\wt G}\big]\\
=&\wt{J}(\BQ)\bs\big[(\hat{\Omega}_{F_{v_{0}}} \times_{\Spf O_{F_{v_{0}}}} \Spf O_{\breve{E}_{\nu}})\times\wt G(\BA_f)/{\bK}_{\wt G}\big].
\end{aligned}
\end{equation*}
The Hecke equivariance statement follows from that of \cite[Theorem  6.2.3]{RZ} and Proposition \ref{RZ-p-weil}.
  
  Finally we prove that the natural Weil descent data on the two sides of \eqref{theta} are compatible. This will conclude the proof. The Weil descent datum $\omega_{\widetilde{\mathcal{M}}_{\mathbf{K}_{\wt G, p}}}$ is obtained by changing $\rho_{0}$ to $\wt{\rho}_{0}$ and $\rho$ to $\wt{\rho}$ where
\begin{equation*}
\wt{\rho}_{0}: X_{0} \overset{\rho_{0}}{\longrightarrow}\varepsilon_{\ast} \mathbb{X}_{0} \overset{\varepsilon_{\ast}F_{\mathbb{X}_{0},\tau_{E_{\nu}}}}{\longrightarrow} \varepsilon_{\ast}(\tau_{E_{\nu}})_{\ast} \mathbb{X}_{0},\quad\text{ resp.}\quad
\wt{\rho}: X \overset{\rho}{\longrightarrow}\varepsilon_{\ast} \mathbb{X} \overset{\varepsilon_{\ast}F_{\mathbb{X},\tau_{E_{\nu}}}}{\longrightarrow} \varepsilon_{\ast}(\tau_{E_{\nu}})_{\ast} \mathbb{X}. 
\end{equation*}
This gives a point  $(X_{0}, \iota_{0}, \lambda_{0}, \wt{\rho}_{0}, X, \iota, \lambda, \wt{\rho}, \bar{\eta}^{v_{0}}_{p}) \in \widetilde{\mathcal{M}}_{\mathbf{K}_{\wt G, p}}$. The point $(X_{0}, \iota_{0}, \lambda_{0}, \rho_{0}, X, \iota, \lambda, \rho, \bar{\eta}^{v_{0}}_{p})$ defines a quasi-isogeny of abelian varieties
\begin{equation*}
\rho_{0}: A_{0} \longrightarrow \varepsilon_{\ast }\bm{A}_{0}, \quad\text{ resp.}\quad \rho: A \longrightarrow \varepsilon_{\ast }\bm{A} ,
\end{equation*}
as explained in the definition of $\Theta$. The point
$(X_{0}, \iota_{0}, \lambda_{0}, \wt\rho_{0}, X, \iota, \lambda, \wt\rho, \bar{\eta}^{v_{0}}_{p})$ defines in the same way the quasi-isogeny of
abelian varieties over $R_{[\tau_{E_{\nu}}]}$,
\begin{equation}\label{desab}
{A}_{0} \overset{\rho_{0}}{\longrightarrow} \varepsilon_{\ast} \bm{A}_0 \overset{\varepsilon_{\ast} F_{\bm{A}_0, \tau_{E_{\nu}}}}{\longrightarrow} (\varepsilon \tau_{E_{\nu}})_{\ast} \bm{A}_0,\quad\text{ resp.}\quad A \overset{\rho}\longrightarrow \varepsilon_{\ast} \bm{A} \overset{\varepsilon_{\ast} F_{\bm{A}, \tau_{E_{\nu}}}}{\longrightarrow} (\varepsilon \tau_{E_{\nu}})_{\ast} \bm{A}.
\end{equation}
Here $A_{0}$ and $A$  with their additional structure are regarded as  points of
$ (\wt\CA_{\bK_{\wt G}})^{\wedge}(R_{[\tau_{E_{\nu}}]})$. This makes sense because to be a
point of  $(\wt\CA_{\bK_{\wt G}})^{\wedge}(R)$ depends only on the
$\kappa_{{\nu}}$-algebra structure on $R$. In other words 
\begin{equation*}
(\wt\CA_{\bK_{\wt G}})(R) = (\wt\CA_{\bK_{\wt G}})(R_{[\tau_{E_{\nu}}]}).
\end{equation*}
But \eqref{desab}  is exactly how the Weil descent datum on the right hand side of (\ref{theta}) is defined. 
\end{proof}

\begin{corollary}\label{stable}
Assume that $\bK^{p}_{\U}$ is sufficiently small, then the functor $\wt\CA_{\bK_{\wt G}}$ is representable by a projective flat $O_{E, (\nu)}$-scheme $\wt\CA_{\bK_{\wt G}}$ which is a stable relative curve.
\end{corollary}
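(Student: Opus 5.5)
The plan is to follow the standard route from $p$-adic uniformization to the structure of the integral model, as in \cite[Theorem 7.3.3]{KRZI}. Representability of $\wt\CA_{\bK_{\wt G}}$ by a quasi-projective $O_{E,(\nu)}$-scheme when $\bK^p_\U$ is small is standard for PEL moduli problems. The argument is the one already used for the generic fiber: automorphisms of a point lie in an arithmetic subgroup of a unitary group that is compact at all archimedean places, hence are of finite order, and they are killed by a small prime-to-$p$ level. The data $\bar\eta^{v_0}_p$ are étale, so they do not disturb representability. Properness can be checked along the special fiber only, since the generic fiber $\wt\CA_{\bK_{\wt G},E}$ is a projective curve: the group $\U$ is anisotropic at $w'\neq w_0$, so the Shimura variety is compact.

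The key input is Theorem~\ref{main}. By that theorem, the formal completion $(\wt\CA_{\bK_{\wt G}})^\wedge$, after base change to $O_{\breve E_\nu}$, is isomorphic to
\[
\wt J(\BQ)\bs\big[(\widehat\Omega_{F_{v_0}}\times_{\Spf O_{F_{v_0}}}\Spf O_{\breve E_\nu})\times\wt G(\BA_f)/\bK_{\wt G}\big].
\]
The group $\wt J(\BQ)$ is discrete and cocompact modulo center in the relevant adelic group, because $J_\U$ is anisotropic at all archimedean places and $\wt J(\BQ)\cap$ (compact open) is finite. So this quotient is a finite disjoint union of quotients $\Gamma\bs\widehat\Omega_{F_{v_0}}\hat\otimes O_{\breve E_\nu}$, with $\Gamma\subset\PGL_2(F_{v_0})$ discrete and cocompact. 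For $\bK^p_\U$ small enough, $\Gamma$ is torsion-free, so it acts freely on $\widehat\Omega$ and on its Bruhat--Tits tree. Each quotient is then a proper formal scheme whose special fiber is a finite union of $\BP^1$'s meeting transversally at ordinary double points, with dual graph the finite quotient graph $\Gamma\bs\CB$. Combining this with the generic fiber, we get that $\wt\CA_{\bK_{\wt G}}$ is proper over $O_{E,(\nu)}$ (valuative criterion on the special fiber via the formal model). We also get that it is flat: $\widehat\Omega$ is flat over $O_{F_{v_0}}$, so the completed local rings at closed points of the special fiber are flat, and the generic fiber is dense. Finally, it is a relative curve whose geometric fibers are reduced, connected-componentwise of arithmetic genus $\ge2$, with only ordinary double points. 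The completed local rings at the double points are of the form $\breve O[[x,y]]/(xy-\varpi^{k})$.

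Stability means each rational component meets the rest in at least three points. This holds because every vertex of the tree has $q+1\ge3$ neighbours and $\Gamma$ acts freely, so every component of the special fiber has at least three double points. We also need to exclude loops arising from an edge whose two endpoints are identified; choosing $\Gamma$ small enough (torsion-free and without elements inverting or identifying adjacent vertices) handles this. Stability then makes the relative dualizing sheaf $\omega$ relatively ample, and a power of it gives a projective embedding, so $\wt\CA_{\bK_{\wt G}}$ is projective. The descent from $O_{\breve E_\nu}$ to $O_{E_\nu}$ is harmless, since all of these properties are fpqc-local on the base.

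The main obstacle is making precise the passage from the formal uniformization to global properties of the scheme. One must check that the special fiber of $\wt\CA_{\bK_{\wt G}}$ is exactly the reduction of the uniformizing formal scheme, so that there are no extra components and a single isogeny class by \cite[Proposition 7.11]{KRZI}. One must also justify properness, which I plan to do by Grothendieck existence plus algebraization of the generic fiber, following \cite[Theorem 7.3.3]{KRZI}. The rest is the standard Mumford--Kurihara analysis of Schottky-type quotients of $\widehat\Omega$.
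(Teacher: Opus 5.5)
Your proposal follows essentially the same route as the paper, whose proof just invokes the argument of \cite[Theorem 7.3.3]{KRZI}: flatness, properness, stable reduction and projectivity (via relative ampleness of $\omega$) are all read off from the Mumford-type description of the completion given by Theorem \ref{main}. Your extra step of excluding loops in the quotient graph is not needed, since Deligne--Mumford stability only asks for at least three special points (here $q+1$) on the normalization of each component.

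One caveat, which applies equally to the paper's own representability argument: for $\zeta\in\mu(K)$, the pair $(\iota_0(\zeta),\iota(\zeta))$ preserves $\lambda_0$ and $\lambda$ and acts trivially on $\wh\RV(A_0,A)$. It is therefore an automorphism of every object that no shrinking of $\bK^p_\U$ removes, so the claim that one obtains a scheme should be read for the coarse moduli space (equivalently, relatively over $\CA_0^{[\Lambda_0]}$, as in the product decomposition used in \S\ref{s:unigl}).
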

\begin{proof}
This follows from the same argument as in the proof of \cite[Theorem 7.3.3]{KRZI}.
\end{proof}

\begin{corollary}\label{main-cor}
 There exists an isomorphism of formal schemes over $\Spf O_{\breve {E}_\nu}$, 
\begin{equation*}
\scalebox{0.93}{
($\wt\CA_{\bK_{\wt G}})^\wedge\times_{\Spf O_{E_{\nu}}} \Spf O_{\breve {E}_\nu}  \simeq   J_{\U}(\BQ)\bs\big[\big(\widehat{\Omega}_{F_{v_{0}}} \times_{\Spf O_{F_{v_{0}}}} \Spf O_{\breve {E}_\nu}\big) \times  \U(\BA_f)/{\bK}_{\U}\big]\times Z^\BQ(\BQ)\bs Z^\BQ(\BA_f)/\bK^{\circ}_{Z^\BQ}$.}
\end{equation*}
The quotient in the first factor on the right-hand-side is defined using an identification of the adjoint group $J_{\U, v_{0}, {\rm ad}}(\BQ_{p})$ with $\PGL_2(F_{v_{0}})$ and through an action of $J_{\U}(\BQ)$ on $\U(\BA_f)/{\bK}_{\U}$.  For varying $\bK^{v_{0}}_{\wt{G}}$, the isomorphism is equivariant for the action of the group $\wt{G}^{v_{0}}(\BA_{f})$ which acts on both sides by Hecke correspondences.

Furthermore, the natural descent datum down to $O_{E_{\nu}}$ on the left-hand side is given on the right-hand side by 
\begin{equation*}
(\xi, h, z)\rightarrow (\omega_{\omega_{\Omega, E_{\nu}}}(\xi), h, \wt{w}z), \quad h\in \U(\BA_f),\quad z\in Z^\BQ(\BA_f)
\end{equation*}
where $\wt{w}\in Z^{\BQ}(\BQ_p)$ is the special element relative to $(\Phi^{+}_{0}, E_{\nu})$ for the uniformizer $\varpi_{\nu}$, as in Construction \ref{consemil}.  
\end{corollary}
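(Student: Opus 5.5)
The plan is to deduce Corollary \ref{main-cor} directly from Theorem \ref{main}. We use the product decompositions $\wt G\simeq \U\times Z^\BQ$ of \eqref{tildeG} and $\wt J(\BQ)=J_\U(\BQ)\times J_{Z^\BQ}(\BQ)$ of \eqref{Jglob}, together with the form $\bK_{\wt G}=\bK_\U\times\bK^\circ_{Z^\BQ}$ of \eqref{decprod1}. These give
\[
\wt G(\BA_f)/\bK_{\wt G}=\U(\BA_f)/\bK_\U\times Z^\BQ(\BA_f)/\bK^\circ_{Z^\BQ}.
\]
The first step is to check that the action of $\wt J(\BQ)$ on $\widehat\Omega_{F_{v_0}}\times \wt G(\BA_f)/\bK_{\wt G}$ from Theorem \ref{main} is a product action. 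Concretely, we need:
\begin{itemize}
\item $J_\U(\BQ)$ acts on $\widehat\Omega_{F_{v_0}}$, via $J_{\U,v_0,\rm ad}(\BQ_p)\simeq\PGL_2(F_{v_0})$, and on $\U(\BA_f)/\bK_\U$;
\item $J_{Z^\BQ}(\BQ)$ acts only on the $Z^\BQ(\BA_f)/\bK^\circ_{Z^\BQ}$ factor, and trivially on $\widehat\Omega_{F_{v_0}}$.
\end{itemize}
The action on $\widehat\Omega$ factors through $J_\U$ by construction. For the adelic factor, the homomorphism $\omega$ of \eqref{omega-map-defn} away from $p$, and its analogue at $p$ from Proposition \ref{RZ-p-weil}, are built from $\widehat\RV(\alpha^{-1})\circ\widehat\RV(\alpha_0)$. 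The decomposition $\wt G=\U\times Z^\BQ$ is itself induced by the central embedding $Z^\BQ\hookrightarrow G$, so $\omega$ should respect the two factors.

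The second step is to identify $J_{Z^\BQ}(\BQ)$ with $Z^\BQ(\BQ)$ and to check that it acts on $Z^\BQ(\BA_f)$ by the obvious translation. The point is that $\bm A_0$ is a CM abelian variety of type $\Phi^+$ with $\End^\circ_K(\bm A_0)=K$, and the similitude condition cuts out exactly $Z^\BQ(\BQ)$. Once both steps are done, the quotient splits as
\[
J_\U(\BQ)\backslash[\widehat\Omega\times\U(\BA_f)/\bK_\U]\times Z^\BQ(\BQ)\backslash Z^\BQ(\BA_f)/\bK^\circ_{Z^\BQ}.
\]

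The main obstacle is the factor-compatibility in step one. Under $\wt G\simeq \U\times Z^\BQ$, the element $(g,z)\in G\times_{\BG_m}Z^\BQ$ corresponds to $(g z^{-1}, z)$ (up to the chosen normalization). So we must verify that for $\wt\alpha=(\alpha,\alpha_0)$ with $\alpha\in J_\U$, the image $\omega(\wt\alpha)$ has trivial $Z^\BQ$-component. Symmetrically, for $\alpha_0\in J_{Z^\BQ}$ embedded as $(\alpha_0,\alpha_0)$, the image should have trivial $\U$-component and act trivially on $\widehat\Omega$. The latter holds because central elements act trivially on $\PGL_2$. I would check this directly on $\Hom_K(V_0,V)$, where $(\alpha_0,\alpha_0)$ acts trivially on the $\U$-part.

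The descent datum is the last point. It is multiplication by $\wt w$, which lies in the center of $\wt G(\BQ_p)$ and comes from $Z^\BQ(\BQ_p)$ via $\frak r_{\Phi^+,E_\nu}$. Under the decomposition it should be $(1,\wt w)$. This requires one more check: the image of $Z^\BQ$ in $\wt G=G\times_{\BG_m}Z^\BQ$ used to view $\wt w$ as central must be the second-factor embedding $z\mapsto(1,z)$ of \eqref{tildeG}, and not the diagonal $(z,z)$. If it were the diagonal, the $\U$-component would be $\wt w\cdot\wt w^{-1}=1$ anyway, so in either reading the descent datum becomes $(\xi,h,z)\mapsto(\omega_{\Omega,E_\nu}(\xi),h,\wt w z)$. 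Hecke equivariance is inherited directly from Theorem \ref{main}.
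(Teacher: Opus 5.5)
Your proposal is correct and is the paper's argument: the paper deduces the corollary from Theorem \ref{main} in one line via the decompositions $\wt J=J_\U\times Z^\BQ$ and $\wt G=\U\times Z^\BQ$, and your factor-by-factor checks spell out what it leaves implicit. Your two ``readings'' of $\wt w$ are in fact the same element: the diagonal $(\wt w,\wt w)\in G\times_{\BG_m}Z^\BQ$, which the proof of Proposition \ref{LS-weil-banal} uses, is exactly $(1,\wt w)$ under \eqref{tildeG}.
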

\begin{proof}
This follows from the previous theorem by using the decomposition $\wt{J}=J_{\U}\times Z^{\BQ}$ and $\wt{G}=\U\times Z^{\BQ}$.
\end{proof}

\part{KRZ Shimura curves revisited}
\section{$p$-adic uniformization of KRZ Shimura curves revisited}\label{s:revi}
Let $K/F$ be a CM field. Let $r$ be a CM-type of rank $2$ special relative to the archimedean place $w_{0}$ of  $F$ whose extension to $K$ corresponds to two embeddings $\varphi_{0}$ and $\bar{\varphi}_{0}$ of $K$ in $\BC$. The reflex field of $r$ will be denoted by $E_{r}$. 

Let $p$ be a prime number and $\nu:\bar{\BQ}\rightarrow \bar{\BQ}_{p}$ be an embedding. The $p$-adic place which is induced on any subfield of $\bar{\BQ}$ will always be denoted by $\nu$. We fix a uniformizer $\varpi_{r,\nu}:=\varpi_{E_{r,\nu}}$ of $E_{r,\nu}$ and denote by $\kappa_{r, \nu}:=\kappa_{E_{r, \nu}}$ the residual class field of $E_{r,\nu}$, and by $f_{E_{r, \nu}}$ the inertia index.  We denote by $v_{0}$ the $p$-adic place of $F$ induced by 
\begin{equation*}
F\xrightarrow{\varphi_{0}} E_{r}\xrightarrow{\nu}\bar{\BQ}_{p}
\end{equation*}
which we will refer to as the special prime; the other $p$-adic places $v$ of $F$ will be called the banal primes. It is assumed that $v_0$ does not split in $K$. 

\subsection{The KRZ Shimura curve}
Let $(V, \varsigma)$ be the two-dimensional $K$-vector space considered in \eqref{varsigma} and let $G$ be the unitary similitude group of $(V, \varsigma)$ with rational similitude factor. For each place $v \mid p$ in $F$, let $G_{v}$ be the group of unitary similitudes of $(V_{v}, \varsigma_{v})$ with similitude factor in $\BQ_p^\times$. Then $G(\BQ_p)$ is the subgroup of $\prod_{v\mid p}G_{v}(\BQ_{p})$ where the similitudes of all factors are identical.  For each place $v \mid p$ in $F$, we fix a lattice $\Lambda_{v}\subset V_{v}$ which is almost self-dual with respect to $\varsigma_{v}$ (see the Notation section at the end of the Introduction for the meaning of this terminology).

We fix an open compact subgroup $\bK_{G}$ of $G(\BA_f)$ of the form 
\begin{equation*}
\bK_{G}=\bK_{G}^p\cdot \bK_{G, p} 
\end{equation*}
where $\bK_{G}^p\subset G(\mathbb{A}^{p})$ and $\bK_{G, p}\subset G(\BQ_{p})$. We impose the following assumptions on $\bK_{G, p}$, cf.  \cite[(7.1.8)]{KRZI}:
\begin{equation*}\label{boldK1e}
\bK_{G,p}=G(\BQ_p)\cap\big( \prod\nolimits_{v\mid p} \bK_{G, v}\big ), 
\end{equation*}
where $\bK_{G, v}=\{g\in G_{v}(\BQ_{p}): g\Lambda_{v}=\Lambda_{v}\}$ is an open compact subgroup of $G_{v}(\BQ_{p})$.

\begin{definition}\label{def:CAbK}
We define the category ${\CA}_{r,\bK_{G}}$ fibered in groupoids on $({\rm LNSch})/O_{E_{r}, (\nu)}$ which associates to  each  $O_{E_{r}, ({\nu})}$-scheme $S$ the groupoid of tuples $(A, \iota, \bar{\lambda}, \bar{\eta}^{p})$ consisting of the following data: 
\begin{enumerate}
\item[(1)] An abelian scheme $A$ over $S$, up to isogeny of degree
  prime to $p$, with an algebra homomorphism
  \begin{equation*}
    \iota: O_{K, (p)} \longrightarrow \End_{(p)}(A)
  \end{equation*}
  such that $\Lie A\otimes_{O_{E_{r}, (\nu)}}O_{E_{r,\nu}}$
  satisfies the condition
  $({\rm KC}_r)$;
\item[(2)] A $\mathbb{Q}$-homogeneous polarization $\bar{\lambda}$ of $A$
  such that the Rosati involution induces the conjugation of $K/F$;
\item[(3)] A class of $O_K$-linear isomorphisms
  \begin{equation*}
    \bar{\eta}^p: \widehat\RV^{p}(A)\rightarrow V \otimes \mathbb{A}_f^{p}  \;
    \text{mod} \; {{\bK}}^{p}_{G}  
  \end{equation*}
 which respect the forms on both sides up to a constant in
  $\mathbb{A}_f^p(1)^{\times}$ (and satisfies the usual Galois invariance property, comp. Definition \ref{def:overOE}). 
  \end{enumerate}
  We impose the following  conditions.
  \begin{enumerate}
  \item[(i)]  The action of $O_{K, (p)}$ on $\Lie A$ satisfies the Eisenstein condition  $({\rm EC}_r)$
\item[(ii)] There exists a polarization
$\lambda \in \bar{\lambda}$ such that, for the $p$-primary part  of $\ker(\lambda)$ we have $\ker(\lambda)_{p}=\prod_{v\mid p}\ker(\lambda)_{v}$, where $\ker(\lambda)_{v}$ is trivial unless $v$ is unramified in $K$ and $\inv(V_{v})=-1$, in which case ${\pi_v}_{|_{\ker(\lambda)_{v}}}=0$ and $\ker(\lambda)_{v}$ has rank $\#(\Lambda_v^\vee/\Lambda_v)$, comp. Definition \ref{def:overOE}, (ii);
\item[(iii)] There is an identity of invariants, 
\begin{equation*}\label{invcond}
\inv_{v}^r(A, \iota, \lambda)=\inv(V_{v}), \quad \text{ for all }\, v|p ,
\end{equation*}
cf. \cite[Definition  7.1.2]{KRZI}. 
\end{enumerate}
An isomorphism of such data $(A, \iota, \bar{\lambda}, \bar{\eta}^{p})\to (A', \iota', \bar{\lambda}', \bar{\eta}'^{p})$  is given by a $O_K$-linear quasi-isogeny $\phi\colon A\to A'$ of degree prime to $p$ compatible with the $\BQ$-homogeneous polarizations and the level structures.  
\end{definition}

By \cite[Proposition  7.1.5]{KRZI}, the functor ${\CA}_{r,\bK_{G}}$ is representable by  a Deligne-Mumford stack which we denote by the same symbol  ${\CA}_{r,\bK_{G}}$. It is representable by a projective scheme over $\Spec O_{E_{r}, (\nu)}$ if $\bK_G^p$ is small enough.

\subsection{Semi-local formal moduli space of KRZ CM-triples} 
We fix a point $({\bm A}, {\bm \iota}, \bar{\bm \lambda},  \bar{\bm \eta}^{p})$
of $\mathcal{A}_{r,\bK_{G}}(\bar{\kappa}_{{r, \nu}})$.
We denote by $\mathbb{X}$ the $p$-divisible group of ${\bm A}$. The action $\bm \iota$
induces an action $\iota_{\mathbb{X}}$ of $O_K\otimes\BZ_p$ on $\mathbb{X}$ and $\bm \lambda$
induces a polarization $\lambda_{\mathbb{X}}$ on $\mathbb{X}$. This construction gives a CM-triple $(\BX, \iota_{\BX}, \lambda_{\BX})$ of type  $(K\otimes \BQ_p /F\otimes\BQ_p, r)$ over $\bar{\kappa}_{{r, \nu}}$.  We introduce a semi-local version of the space $\CM_{r}$ in Definition \ref{recall-M-r} and \ref{recall-M-r-banal}.

\begin{definition}\label{KRZ-RZ-space}
We define for each $i \in \mathbb{Z}$ the functor $\mathcal{M}_{r}(i)$ on the category $\Nilp_{O_{\breve{E}_{r,\nu}}}$. Let $S$ be a scheme over $\Spf {O_{\breve{E}_{r,\nu}}}$. A point of $\mathcal{M}_{r}(i)(S)$ is given by the following data: 
\begin{enumerate}
\item[(1)] A CM-triple $(X, \iota, \lambda)$ of type
$(K \otimes \mathbb{Q}_p / F \otimes \mathbb{Q}_p, r)$ over $S$
which satisfies the conditions $({\rm KC}_r)$ and $({\rm EC}_r)$ and is $(V, \varsigma)$-principally polarized;
  
\item[(2)] An $O_K\otimes\BZ_p$-linear quasi-isogeny
\begin{equation*}
\rho: \bar{X} := X \times_{S}  \bar{S} \longrightarrow \mathbb{X} \times_{\Spec \bar{\kappa}_{r, {\nu}}}  \bar{S}  
\end{equation*}
such that, locally on $\bar S$,  $\rho$ respects the polarization $p^i \lambda$ on $\bar X$ and $\lambda_{\mathbb{X}}$ up to a factor in $(O_F \otimes \mathbb{Z}_p)^{\times}$. 
\end{enumerate}
The formal scheme over $\Spf O_{\breve{E}_{r,\nu}}$ representing this functor is also denoted by $\CM_{r}(i)$.
We  define the formal scheme
\begin{equation}
\mathcal{M}_r = \coprod\nolimits_{i \in \mathbb{Z}} \mathcal{M}_r(i) .
\end{equation}
We define the algebraic group $J$ over $\BQ_p$ such that
\begin{equation}\label{def:locJ}
J(\mathbb{Q}_p) = \{\alpha \in \Aut^{\circ}_{K \otimes \mathbb{Q}_p}(\mathbb{X})
\; | \; \alpha^{\ast} (\lambda_{\mathbb{X}}) = \mu(\alpha) \lambda_{\mathbb{X}}, \;
\mu(\alpha)\in \mathbb{Q}_p^{\times}  \}.
\end{equation}
This group acts naturally on $\mathcal{M}_r$ via the framing datum $\rho$.
\end{definition}

We consider the decomposition $(\BX, \iota_{\BX}, \lambda_{\BX})=\prod_{v\mid p}(\BX_{v}, \iota_{\BX_{v}}, \lambda_{\BX_{v}})$ and  we set
\begin{equation}\label{Endo9e} 
  J_{v}(\BQ_{p}) = \{\alpha_v \in \Aut^{\circ}_{K_{v}}
 (\mathbb{X}_{v}) \; |
  \; \alpha_v^{\ast} (\lambda_{v}) = \mu(\alpha_v) \lambda_{v},
  \; \mu(\alpha_v) \in \mathbb{Q}_p^{\times} \}.
\end{equation}

 Following \cite[\S 7.2]{KRZI}, we recall an explicit description of these groups. For this, it is
convenient to replace the bilinear form $\varsigma_{v}$ by the
$F_{v}$-bilinear form
\begin{equation*}
  \tilde{\varsigma}_{v}: V_{v} \times V_{v} \rightarrow
  F_{v}, 
\end{equation*}
which is defined by
\begin{equation*}
  \T(a\tilde{\varsigma}_{v}(x_1, x_2)) =
  \varsigma_{v}(a x_1, x_2), \quad a \in F_{v}, 
  \end{equation*}
for $\T(a) = \Trace_{F_{v}/\mathbb{Q}_p} \vartheta^{-1}a$, where as usual $\vartheta \in O_F$ is the different of $F/\mathbb{Q}_p$.  

Then $J_{v}$ is explicitly given as follows:
\begin{enumerate}
\item For the special prime $v_{0}$, we obtain from the local CM triple $(\BX_{v_{0}}, \iota_{\BX_{v_{0}}}, \lambda_{\BX_{v_{0}}})$ a two-dimensional $K_{v_{0}}$-vector space $\bar{V}_{v_{0}}$ with an anti-hermitian alternating form
\begin{equation*}
  \bar{\varsigma}_{v_{0}}: \bar{V}_{v_{0}} \times
  \bar{V}_{v_{0}} \rightarrow F_{v_{0}}  .
\end{equation*}
  This comes from the construction in \cite[\S 3, p. 1209]{KRalt} when $K/F$ is unramified, resp.  the construction in \cite[\S 4, p. 1215]{KRalt} in the ramified case (in  loc.~cit, the hermitian space is denoted by $C$ and its anti-hermitian form is denoted by $h$). In either case $\inv(\bar{V}_{v_{0}})=1$. The contraction functor \cite[\S 5.2, \S 5.3]{KRZI} furnishes an isomorphism between $J_{v_{0}}(\BQ_p)$ and 
\begin{equation*}\label{Endo12e}
  G(\bar{V}_{v_{0}},
  \bar{\varsigma}_{v_{0}})(\BQ_p):= \{g \in \Aut_{K_{v_{0}}} (\bar{V}_{v_{0}}) \; |
  \; \bar{\varsigma}_{v_{0}}(gx, gy) = \mu_{v_{0}}(g) \bar{\varsigma}_{v_{0}}(x,y), \; 
  \; \mu_{v_{0}}(g) \in \mathbb{Q}_p^{\times}\} ,
\end{equation*}
cf. \cite[(7.2.6)]{KRZI}.

\item For a banal prime $v$, we consider the image $(C_{\mathbb{X}_{v}}, \phi_{\mathbb{X}_{v}})$ by the polarized contraction functor 
$\mathfrak{C}_{r_{v}, k}^{\rm pol}$ of $(\BX_{v}, \iota_{\BX_{v}}, \lambda_{\BX_{v}})$
\cite[Theorem 4.5.11]{KRZI}. There is an isomorphism
$ (C_{\mathbb{X}_{v}}, \phi_{\mathbb{X}_{v}}) \cong
(\Lambda_{v}, \tilde{\varsigma}_{v})$
and we obtain an isomorphism between $J_{v}(\BQ_p)$ and
\begin{equation*}
G(V_{v}, \tilde{\varsigma}_{v})(\BQ_p):= \{g \in \Aut_{K_{v}} (\bar{V}_{v}) \; |
\; \tilde{\varsigma}_{v}(gx, gy) = \mu_{v}(g) \tilde{\varsigma}_{v}(x,y), \; 
 \; \mu_{v}(g) \in \mathbb{Q}_p^{\times}  \} ,
\end{equation*}
cf. \cite[(7.2.8)]{KRZI}.
\end{enumerate}

These isomorphisms are induced by isomorphisms of algebraic groups over $\BQ_p$. To use a uniform notation, we set $(\bar{V}_{v}, \bar{\varsigma}_{v}) = (V_{v}, \tilde{\varsigma}_{v})$  for $v \neq v_{0}$ and set
$\bar{G}_{v} =  G(\bar{V}_{v}, \bar{\varsigma}_{v})$ for all $v\mid p$, in particular, $G_{v} = \bar{G}_{v}$ for $v\neq v_{0}$. We thus have fixed an isomorphism $J_{v} \cong \bar{G}_{v}$
for all $v\mid p$.  

We define
\begin{equation*}
\bar{V}_p = \bigoplus\nolimits_{v\mid p} \bar{V}_{v} ,
\end{equation*}
which is an $K \otimes \mathbb{Q}_p$-module. Let 
\begin{equation*}
  \bar{\varsigma}_p: \bar{V}_{p} \times \bar{V}_{p}
  \rightarrow F \otimes \mathbb{Q}_p 
 \end{equation*}
be the orthogonal sum of the forms $\bar{\varsigma}_{v}$.  

We define the algebraic group $\bar{G}$ over $\mathbb{Q}_p$ by
\begin{equation*}
\bar{G}(\mathbb{Q}_p) := G(\bar{V}_p, \bar{\varsigma}_p)= 
\{g \in \Aut_{K \otimes \mathbb{Q}_p} (\bar{V}_p) \; | \;
\bar{\varsigma}_p (gx, gy) = \mu(g) \bar{\varsigma}_p (x, y),
\;  \; \mu(g) \in \mathbb{Q}_p^{\times}\} .
\end{equation*}
The previous discussions imply that  we have $\bar{G} \simeq J$ as algebraic groups over $\BQ_{p}$,  
and 
\begin{equation*}
\bar{G}(\mathbb{Q}_p)\subset\prod\nolimits_{v\mid p} \bar{G}_{v}(\BQ_{p}) 
\end{equation*}
is the subgroup of elements whose components have the same similitude factor. 

In order to discuss the Weil descent datum, we define the group $\bar{G}'(\BQ_{p})$ by
\begin{equation*}
\bar{G}^{\prime}(\BQ_{p}) = \{g \in \Aut_{K \otimes \mathbb{Q}_p} (\bar{V}_p) \; |
  \; \bar{\varsigma}_p(gx, gy) = \mu(g) \bar{\varsigma}_p(x,y), \; 
  \; \mu(g) \in p^{\mathbb{Z}} (O_F \otimes \mathbb{Z}_p)^{\times}  \} .
\end{equation*}
Note that this is a misuse of notation since $\bar{G}^{\prime}(\BQ_{p})$ is not the set of $\BQ_p$-points of an algebraic group. Then  
\begin{equation*}
\bar{G}^{\prime}(\BQ_{p})\subset\prod\nolimits_{v\mid p}\bar{G}'_v(\BQ_{p}) ,
\end{equation*}
where we define $\bar{G}^{\prime}_{v}(\BQ_{p})$ by
\begin{equation*}
\bar{G}^{\prime}_{v}(\BQ_{p}) = \{g \in \Aut_{K_{v}} (\bar{V}_{v}) \; | \; \bar{\varsigma}_{v}(gx, gy) =\mu_{v}(g) \bar{\varsigma}_{v}(x,y), \; \text{for}
\; \mu_{v}(g) \in p^{\mathbb{Z}} O_{F_{v}}^{\times} \} . 
\end{equation*}
Note that $\bar{G}^{\prime}(\BQ_{p})\supset\bar{G}(\mathbb{Q}_p)$.

\subsection{The KRZ descent datum revisited}
Let $\tau_{E_{r, \nu}} \in \Gal(\breve{E}_{r, \nu}/E_{r,\nu})$ be the Frobenius
automorphism. 
The relative Frobenius morphism 
\begin{equation*}
F_{\mathbb{X},\tau_{E_{r, \nu}}}: \BX\rightarrow (\tau_{E_{r,\nu}})_{\ast}\BX
\end{equation*}
defines a Weil descent datum on the functors, 
\begin{equation*}\label{KneunU14e}
\omega_{\mathcal{M}_r}: \mathcal{M}_r(i)(S) \longrightarrow \mathcal{M}_r(i + f_{E_{r, \nu}})(S_{[\tau_{E_{r, \nu}}]}).
\end{equation*}
The goal of this section is to give a more explicit description of the descent datum of \cite[Corollary 7.2.7]{KRZI}.

We define the group $\hat{G}'(\mathbb{Q}_p)$ as the union of the following sets $\hat{G}^{\prime}(i)$ for $i \in \mathbb{Z}$,
\begin{equation*}
  \hat{G}^{\prime}(i) = \{ (c, g_{v}) \in
  p^{i} O_{F_{v_{0}}}^{\times} \times \prod\nolimits_{v\mid p, v\neq v_{0}}
  \bar{G}^{\prime}_{v}(\BQ_{p}) \; | \;
  \mu_{v}(g_{v}) \in p^{i} O_{F_{v}}^{\times} \;
  \text{for all} \; v\} .
\end{equation*}
The similitude map $\mu_{v_{0}}: \bar{G}'_{v_{0}}(\BQ_{p}) \rightarrow p^{\mathbb{Z}} O_{F_{v_{0}}}^{\times}$
induces  homomorphisms
\begin{equation*}
\bar{G}^{\prime}(\BQ_{p}) \rightarrow \hat{G}'(\mathbb{Q}_p)  \quad \text{ and } \quad G(\mathbb{Q}_p) \rightarrow \hat{G}'(\mathbb{Q}_p).
\end{equation*}   
We define $\hat{G}^{\prime}(\mathbb{Z}_p)\subset \hat{G}^{\prime}(\mathbb{Q}_p)$ to be the subgroup of elements
$(c, g_{v})$ such that $c \in O_{F_{v_{0}}}^{\times}$ and
$g_{v}\Lambda_{v}= \Lambda_{v}$. 

For the next definition, we recall from \S \ref{ss:specialbanal}  the definition of $\Pi_v$ (when $K_v/F_v$ is ramified) and of $\pi_v$ (when $K_v/F_v$ is unramified or split). 
\begin{definition}\label{Unif3d}
We consider the following element 
${w}'_r = (w'_{r, v_{0}} ,{w}'_{r, v}) \in \hat{G}'(\mathbb{Q}_p)$ where
\begin{enumerate}
\item[(1)] $w'_{r, v_{0}} = p^{f_{E_{r, \nu}}}$.

\item[(2)] If $K_{v}/F_{v}$ is ramified, $w'_{r,v}$ is the multiplication
\begin{equation*}
\Pi_{v}^{e_{v}f_{E_{r,\nu}}}: \bar V_{v} \longrightarrow  \bar V_{v}.
\end{equation*}
    
\item[(3)] If $K_{v}/F_{v}$ is unramified, we define $w'_{r,v}$ as the multiplication
\begin{equation*}
\pi_{v}^{e_{v}f_{E_{r,\nu}}/2}: \bar V_{v}
\longrightarrow \bar V_{v}.
\end{equation*}

\item[(4)] If $K_{v} = F_{v} \times F_{v}$ is split, we have the decomposition 
$\bar V_{v}=\bar V_{v, 1}\oplus\bar V_{v, 2}$.
We consider the numbers $a_{1, v}$ and $a_{2, v}$  defined by $\dim \BX_{v, 1}=2a_{1, v}$ and  $\dim \BX_{v,2}=2a_{2, v}$. We set
\begin{equation*}
a_{1,E_{r, \nu}} = a_{1, v}\frac{f_{E_{r, \nu}}}{f_{v}}, \quad \text{ resp. }\quad a_{2,E_{r, \nu}} = a_{2, v}\frac{f_{E_{r,\nu}}}{f_{v}} .
\end{equation*}
Then we have $a_{1,E_{r, \nu}} + a_{2,E_{r, \nu}} = e_{v}f_{E_{r, \nu}}$. We define $w'_{v}$ to be the multiplication by $\pi_{v}^{a_{1,E_{r,\nu}}}$ on $\bar V_{v, 1}$ and the
multiplication by $\pi_{v}^{a_{2,E_{r,\nu}}}$ on $\bar V_{v, 1}$. 
\end{enumerate}
\end{definition}
For a banal prime $v$, the element $w'_{r, v}$ is clearly an element of the center of $\bar{G}'_{v}(\mathbb{Q}_p)$ which is considered as a subgroup of $K^{\times}_{v}$. The similitudes of these elements are given by
\begin{equation*}
\mu_{v}(w'_{r, v})=\pi^{e_{v}f_{E_{r,\nu}}}_{v}.
\end{equation*}
Therefore $\mu_{v}(w'_{r, v})=p^{f_{E_{r,\nu}}}u_v$ for some $u_v\in O^{\times}_{F_{v}}$. 

Let $v\neq v_{0}$. We consider the local CM type  $\Phi^{+}_{v} =\Phi^+_{v, r}= \{\varphi \in  \Phi_{v} \mid r_{\varphi} =2 \}$.  
 Let 
\begin{equation*}
\mathfrak{r}_{\Phi^{+}_{v}, E_{r,\nu}}: \mathrm{Res}_{E_{r,\nu}/\BQ_{p}}\mathbb{G}_{m}\rightarrow Z^{\BQ}_{v}
\end{equation*}
be the local reflex norm with respect to the data $(\Phi^{+}_{v}, E_{r,\nu})$. We introduce the special element with respect to $(\Phi^{+}_{v}, E_{r,\nu})$ for the uniformizer $\varpi_{r,\nu}$ in $Z^{\BQ}_{v}(\BQ_{p})\subset K^{\times}_{v}$, 
\begin{equation*}
w_{r, v}:=\mathfrak{r}_{\Phi^{+}_{v}, E_{r,\nu}}(\varpi_{r,\nu}) ,
\end{equation*} 
cf. Construction \ref{sp-elt-loc}.  Note that $Z^{\BQ}_{v}(\BQ_{p})$ is contained in the center of $\bar{G}_{v}(\BQ_{p})$.

\begin{lemma}\label{w-transfer}
Let $v\neq v_{0}$, then the element $w_{r, v}$ agrees with $w'_{r, v}$ up to a unit in $O_{K_{v}}$.
\end{lemma}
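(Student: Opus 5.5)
The proof follows the pattern already used at the end of the proof of Proposition \ref{LS-tori-weil} and of Proposition \ref{Mr-banal}. Both $w_{r,v}$ and $w'_{r,v}$ lie in $K_v^\times$; in the split case we work componentwise in $K_v=F_v\times F_v$. Two elements of $K_v^\times$ differ by a unit of $O_{K_v}$ exactly when they have the same valuation at every place of $K_v$. In the non-split cases $K_v$ is a field with a single valuation, so comparing norms to $F_v$ is enough. We therefore compute valuations of both sides and compare.

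\emph{Non-split cases.} By Remark \ref{sp-elt-rmk}, applied to the local CM-type $\Phi^+_v$ and the field $E_{r,\nu}$ (which contains the reflex field of $\Phi^+_v$), the composite of $\mathfrak{r}_{\Phi^+_v,E_{r,\nu}}$ with $\Nm_{K_v/F_v}$ equals $\Nm_{E_{r,\nu}/\BQ_p}$. Hence
\[
\Nm_{K_v/F_v}(w_{r,v}) = \Nm_{E_{r,\nu}/\BQ_p}(\varpi_{r,\nu}) = p^{f_{E_{r,\nu}}}\cdot\text{unit}.
\]
On the other side, the computation above gives $\Nm_{K_v/F_v}(w'_{r,v}) = \mu_v(w'_{r,v}) = \pi_v^{e_v f_{E_{r,\nu}}}$, which is also $p^{f_{E_{r,\nu}}}$ times a unit. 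In the ramified case $w'_{r,v}=\Pi_v^{e_v f_{E_{r,\nu}}}$ and the norm is $\pi_v^{e_vf_{E_{r,\nu}}}$ up to sign; in the unramified case we use that $e_vf_{E_{r,\nu}}$ is even, as in \cite[Lemma 6.4.4]{KRZI}. Since $K_v$ is a field, equal norm valuations force equal valuations, which proves the claim in these cases.

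\emph{Split case.} This is the step I expect to be the main obstacle, because the norm no longer detects the two components separately. Write $\Phi_v=\Phi_{v,1}\sqcup\Phi_{v,2}$ according to the factors of $K_v=F_v\times F_v$. Then $\Phi^+_v$ meets $\Phi_{v,1}$ in $m_1$ embeddings and $\Phi_{v,2}$ in $m_2=d_v-m_1$ embeddings, where $d_v=[F_v:\BQ_p]$. The plan is to compute $\mathfrak{r}_{\Phi^+_v,E}(\varpi)$ explicitly, componentwise. Its first component is $\prod_{\varphi\in\Phi^+_v\cap\Phi_{v,1}}\varphi^{-1}$ applied to the norm-type expression $\Nm_{E/\varphi(F_v)}(\varpi)$, suitably interpreted; in terms of valuations this has $\pi_v$-adic valuation
\[
m_1\cdot\frac{e_{E}f_{E}}{e_v f_v}\cdot\frac{e_v}{e_E}=m_1\frac{f_E}{f_v},
\]
with $E=E_{r,\nu}$. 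Concretely, this should follow from computing $\val_{\pi_v}\Nm_{E/F_v}$ on each embedding-orbit.

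It then remains to identify $m_1$ with $a_{1,v}$, the dimension count entering Definition \ref{Unif3d}. Since $r_\varphi\in\{0,2\}$ and $\dim\BX_{v,1}=\sum_{\varphi\in\Phi_{v,1}} r_\varphi$ by the Kottwitz condition $({\rm KC}_r)$, we get $\dim\BX_{v,1}=2m_1$, so $m_1=a_{1,v}$, and similarly $m_2=a_{2,v}$. Thus each component of $w_{r,v}$ has valuation $a_{i,E_{r,\nu}}$, which matches $w'_{r,v}$ componentwise. If the direct orbit computation becomes messy, the fallback is to reduce to $E=E_{0,v}$, the reflex field of $\Phi^+_v$, using Remark \ref{sp-elt-rmk} (both sides scale by the exponent $f_E/f_{E_0}$), and then to match the split case of Proposition \ref{LS-tori-weil}, where $w'_{r_0}$ was defined by exactly these exponents.
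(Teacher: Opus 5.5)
Your argument is correct. In the non-split cases it is exactly the paper's proof. The paper compares the similitude factors $\mu_v(w'_{r,v})=\pi_v^{e_vf_{E_{r,\nu}}}$ and $\mu_v(w_{r,v})=\Nm_{E_{r,\nu}/\BQ_p}(\varpi_{r,\nu})$, notes that both equal $p^{f_{E_{r,\nu}}}$ times a unit, and concludes.

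The difference is in the split case. The paper applies the same norm comparison uniformly to every banal $v$. But for $K_v=F_v\times F_v$ one has $\Nm_{K_v/F_v}(x_1,x_2)=x_1x_2$, so equal similitude factors only fix $\val(x_1)+\val(x_2)$. That does not give agreement up to $O_{K_v}^\times$. Your componentwise computation is what that case actually requires: the $i$-th component of $w_{r,v}$ has $\pi_v$-valuation $m_if_{E_{r,\nu}}/f_v$, and $m_i=a_{i,v}$ follows from $({\rm KC}_r)$ together with $r_\varphi\in\{0,2\}$. So your route is the more complete one.

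There are two points of execution to fix.

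\emph{The component valuation.} The expression $\Nm_{E/\varphi(F_v)}(\varpi)$ is not meaningful when $\varphi(F_v)\not\subset E_{r,\nu}$. The clean way to get the valuation avoids it. Apply the $\BQ_p$-rational character $\chi=\Nm_{F_v/\BQ_p}\circ\mathrm{pr}_1$ of $Z^\BQ_v$. By functoriality of the norm,
$\chi\bigl(\mathfrak{r}_{\Phi^+_v,E_{r,\nu}}(\varpi_{r,\nu})\bigr)=\Nm_{E_{r,\nu}/\BQ_p}(\varpi_{r,\nu})^{\langle\chi,\mu\rangle}$, with $\langle\chi,\mu\rangle=|\Phi^+_v\cap\Phi_{v,1}|=m_1$. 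Hence $f_v\val_{\pi_v}(x_1)=m_1f_{E_{r,\nu}}$, with no orbit bookkeeping.

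\emph{The fallback.} Reducing to Proposition~\ref{LS-tori-weil} would not help. There, and in Remark~\ref{sp-elt-rmk}, the paper also identifies the relevant elements only through $\Nm_{K/F}$. Quoting them in the split case would therefore import the same blind spot. The scaling statement of Remark~\ref{sp-elt-rmk} is better justified directly: $\mathfrak{r}_{\Phi,E'}(\varpi_{E'})=\mathfrak{r}_{\Phi,E}(\Nm_{E'/E}\varpi_{E'})$, and $\mathfrak{r}_{\Phi,E}(O_E^\times)\subset \T(\BZ_p)$.
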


\begin{proof}
We first remark that both $w_{r, v}$ and $w'_{r, v}$ are elements of the center of $\bar{G}'_{v}(\BQ_{p})$ which is a subgroup of $K^{\times}_{v}$, therefore the statement of the lemma makes sense. But we have that $\mu_{v}(w'_{r, v})=p^{f_{E_{r,\nu}}}u_v$ for some $u_v\in O^{\times}_{F_{v}}$. On the other hand,
\begin{equation}\label{co:sp}
\mu_{v}(w_{r,v})=\Nm_{E_{r, \nu}/\BQ_p}(\varpi_{r,\nu})=p^{f_{E_{r,\nu}}}u,\quad  u\in\BZ^\times_p.
\end{equation}
The lemma follows.
\end{proof} 

We introduce the group
\begin{equation*}
 \hat{G}(\mathbb{Q}_p) = \{ (c, g_{v}) \in
 \mathbb{Q}_{p}^{\times} \times \prod\nolimits_{v\mid p,\; v\neq v_{0}} G_{v}
 \; | \;
 \mu_{v}(g_{v}) = c, \; \forall v\neq v_{0}\}. 
\end{equation*}
There are natural homomorphisms
\begin{equation*}
G(\mathbb{Q}_p) \rightarrow \hat{G}(\mathbb{Q}_p) \quad\text{ and }\quad \bar{G}(\mathbb{Q}_p) \rightarrow \hat{G}(\mathbb{Q}_p) .
\end{equation*}  
For the second map, we used that  in the definition of $\hat{G}(\mathbb{Q}_p)$ we can replace $G_{v}$ by
$\bar{G}_{v}$. In particular
the groups $G(\mathbb{Q}_p)$ and 
$J(\mathbb{Q}_p)$   act on $\hat{G}(\mathbb{Q}_p)$. 
We denote by
$\hat{G}(\mathbb{Z}_p) \subset \hat{G}(\mathbb{Q}_p)$ the subgroup of all 
$(c, g_{v})$ such that $c \in \mathbb{Z}_p^{\times}$ and 
$g_{v} \Lambda_{v} = \Lambda_{v}$. 

\begin{construction}
We have $w'_{r, v_{0}}=p^{f_{E_{r,\nu}}}$ (cf. Definition \ref{Unif3d}) and, by \eqref{co:sp},  $\mu_{v}(w_{r, v})=p^{f_{E_{r,\nu}}}u$ for $u\in \BZ_p^\times$ independent of $v\neq v_0$. Hence we can define the element
\begin{equation}\label{wr}
w_{r}=(uw'_{r, v_{0}}, w_{r, v})\in \hat{G}(\BQ_{p}) ,
\end{equation}
which we refer to as the \emph{special element of $\hat{G}(\BQ_{p})$} for the uniformizer $\varpi_{r,\nu}$ of $E_{r, \nu}$.
\end{construction}

By \cite[Corollaries 6.3.4, 6.4.5, 6.5.3]{KRZI}, the inclusion  $ \hat{G}(\mathbb{Q}_p)\subset  \hat{G}'(\mathbb{Q}_p)$ induces a bijection 
\begin{equation}\label{hat-to-prime}
  \hat{G}(\mathbb{Q}_p)/\hat{G}(\mathbb{Z}_p) \isoarrow
  \hat{G}'(\mathbb{Q}_p)/\hat{G}'(\mathbb{Z}_p).
\end{equation}
 By Lemma \ref{w-transfer},  the class of the special element $w_{r}$  maps under the bijection \eqref{hat-to-prime} to the class of $w'_{r}$. 

\begin{proposition}\label{compfo}
There exists an isomorphism
\begin{equation*}
{\mathcal{M}}_{r} \isoarrow  (\wh{\Omega}_{F_{v_{0}}} \times_{\Spf O_{F_{v_{0}}}} \Spf O_{\breve{E}_{r, \nu}}) \times \hat{G}(\mathbb{Q}_p)/\hat{G}(\mathbb{Z}_p) 
\end{equation*}
which is equivariant with respect to the natural action of $\bar{G}(\BQ_{p})$
on both sides. 
  
The Weil  descent datum $\omega_{\mathcal{M}_r}$ on the left-hand side corresponds on the right hand side to
\begin{equation*}
(\xi, g) \mapsto (\omega_{\Omega, E_{r,\nu}}(\xi), w_rg), \quad g \in \hat{G}(\mathbb{Q}_p) ,
\end{equation*} 
where $w_{r}$ is the special element  in $\hat{G}(\BQ_{p})$ for the uniformizer $\varpi_{r,\nu}$.
\end{proposition}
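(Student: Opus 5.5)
The plan is to reduce to \cite[Corollary 7.2.7]{KRZI} and only make the descent datum explicit. That corollary already provides an isomorphism
\[
\mathcal{M}_r \isoarrow (\wh\Omega_{F_{v_0}}\times_{\Spf O_{F_{v_0}}}\Spf O_{\breve E_{r,\nu}})\times \hat G'(\BQ_p)/\hat G'(\BZ_p),
\]
equivariant for $\bar G(\BQ_p)\simeq J(\BQ_p)$. It is constructed by decomposing $\mathcal{M}_r$ semi-locally over $v\mid p$. At $v_0$ one uses Proposition \ref{Mr-thm}, which gives $\wh\Omega_{F_{v_0}}\times\BZ$, with the $\BZ$-coordinate recording the similitude exponent $p^i O_{F_{v_0}}^\times$. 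At each banal $v$ one uses the contraction functor, which gives $\bar G'_v(\BQ_p)/\mathrm{Stab}(\Lambda_v)$. The condition that all similitude factors lie in the same $p^iO_{F_v}^\times$ is exactly the fiber product defining $\hat G'(i)$. Composing with the bijection \eqref{hat-to-prime} gives the claimed isomorphism onto $\hat G(\BQ_p)/\hat G(\BZ_p)$. The $\bar G(\BQ_p)$-equivariance holds because \eqref{hat-to-prime} is equivariant for the action through $\bar G(\BQ_p)\to\hat G(\BQ_p)\subset\hat G'(\BQ_p)$.

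The main step is the descent datum. The Frobenius $F_{\BX,\tau_{E_{r,\nu}}}$ decomposes as the product of the local Frobenii $F_{\BX_v,\tau_{E_{r,\nu}}}$, so the Weil descent datum can be computed factor by factor.

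At $v_0$, Proposition \ref{Mr-thm} (after base change from the local reflex field $E_r(v_0)$ to $E_{r,\nu}$, raising its translation to the appropriate power) gives $(\xi,i)\mapsto(\omega_{\Omega,E_{r,\nu}}(\xi),i+f_{E_{r,\nu}})$. In $\hat G'$ this is the $c$-coordinate multiplied by $p^{f_{E_{r,\nu}}}=w'_{r,v_0}$.

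At a banal $v$, the Dieudonn\'e module computation of \cite[Lemmas 6.3.3, 6.4.4, 6.5.2]{KRZI} applies: on $C_{\BX_v}=\{c\mid Vc=\Pi^{e}c\}$ (and its analogues), the Frobenius acts by multiplication by $\Pi_v^{e_vf}$, $\pi_v^{e_vf/2}$, or $(\pi_v^{a_1},\pi_v^{a_2})$. These exponents are initially computed for $f_{E_r(v)}$, and raising to the power $f_{E_{r,\nu}}/f_{E_r(v)}$ gives exactly $w'_{r,v}$ of Definition \ref{Unif3d}. This is the same argument as in the proof of Proposition \ref{Mr-banal}.

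Hence the descent datum on the $\hat G'(\BQ_p)/\hat G'(\BZ_p)$ factor is left multiplication by $w'_r$. Since $w'_r$ is central, this multiplication commutes with the identifications.

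It remains to transport this back along \eqref{hat-to-prime}. By Lemma \ref{w-transfer}, each $w_{r,v}$ differs from $w'_{r,v}$ by a unit of $O_{K_v}$. At $v_0$, $uw'_{r,v_0}$ differs from $w'_{r,v_0}$ by $u\in\BZ_p^\times$. Both $w_r$ and $w'_r$ are central, so left multiplication by either one commutes with the right action of the stabilizers. Therefore $w_r g$ and $w'_r g$ have the same image in $\hat G'(\BQ_p)/\hat G'(\BZ_p)$, provided the unit discrepancy lies in $\hat G'(\BZ_p)$.

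I expect the main obstacle to be this last point. A unit of $O_{K_v}$ stabilizes $\Lambda_v$ and has similitude in $O_{F_v}^\times$, so it does lie in $\hat G'(\BZ_p)$. However, one must check that the bijection \eqref{hat-to-prime} is compatible with multiplication by central elements and not just with the $\bar G(\BQ_p)$-action. I would verify this by writing \eqref{hat-to-prime} explicitly as induced by the inclusion. Then $\hat G(\BQ_p)\ni w_r g\mapsto w_r g\hat G'(\BZ_p)=w'_r(w_r'^{-1}w_r)g\hat G'(\BZ_p)=w'_r g\hat G'(\BZ_p)$, using that $w_r'^{-1}w_r$ is a central element of $\hat G'(\BZ_p)$. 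This gives the asserted formula $(\xi,g)\mapsto(\omega_{\Omega,E_{r,\nu}}(\xi),w_rg)$.
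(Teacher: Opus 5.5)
Your proposal is correct and follows essentially the same route as the paper. Both start from the $\hat{G}'(\BQ_p)/\hat{G}'(\BZ_p)$-description with descent datum given by $w'_r$ from \cite{KRZI}, and then transport it along \eqref{hat-to-prime} using Lemma \ref{w-transfer}. The paper cites \cite[Proposition 7.2.5]{KRZI} for the $w'_r$-descent datum instead of recomputing it factor by factor, and invokes \cite[Remark 7.2.6]{KRZI} at the point where you check directly that $w_r'^{-1}w_r$ is a central element of $\hat{G}'(\BZ_p)$.
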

\begin{proof} 
By \cite[Proposition   7.2.5]{KRZI}, there is an isomorphism
\begin{equation}\label{hat-G'-weil}
{\mathcal{M}}_{r} \isoarrow  (\wh{\Omega}_{F_{v_{0}}} \times_{\Spf O_{F_{v_{0}}}} \Spf O_{\breve{E}_{r, \nu}}) \times \hat{G}'(\mathbb{Q}_p)/\hat{G}'(\mathbb{Z}_p) 
\end{equation}
which is equivariant with respect to the action of $\hat{G}^{\prime}(\BQ_{p})$
on both sides. By loc.~cit., the Weil  descent datum $\omega_{{\mathcal{M}}_r}$ relative to
$O_{E_{r, \nu}}$ on the left-hand side corresponds on the
right-hand side to
\begin{equation*}
 (\xi, g) \mapsto (\omega_{\Omega, {E_{r,\nu}}}(\xi), w'_rg), \quad
 g \in \hat{G}'(\mathbb{Q}_p). 
\end{equation*} 

We may multiply each $w'_{r, v}$ by a unit in $K^{\times}_{v}$ in the $v$-component of $w'_r$, cf. Definition \ref{Unif3d}.  By \cite[Remark  7.2.6]{KRZI}, this does not change the assertion on the descent datum on both sides of \eqref{hat-G'-weil}. By Lemma \ref{hat-to-prime},  we may replace $w'_{r}$ by $w_{r}$ and the proposition follows. 
\end{proof}

Now we can modify the main $p$-adic uniformization theorem  \cite[Theorem 7.3.3]{KRZI} in the following way. Recall $({\bm A}, {\bm \iota}, \bar{\bm \lambda},  \bar{\bm \eta}^{p})\in \mathcal{A}_{r,\bK_{G}}(\bar{\kappa}_{{r, \nu}})$. We define 
\begin{equation}
J(\BQ)= \{\alpha \in \Aut^{\circ}_{K }({\bm A})
\; | \; \alpha^{\ast} ({\bm \lambda}) = \mu(\alpha) {\bm \lambda}, \;
\mu(\alpha)\in \mathbb{Q}^{\times}  \}.
\end{equation}
Then $J(\BQ)$ is the set of $\BQ$-rational points of an algebraic group $J$ over $\BQ$ such that $J(\BR)$ is compact modulo center and the $\BQ_p$-points can be identified with $J(\BQ_p)$ from \eqref{def:locJ}, cf. \cite[proof of Proposition 7.1.11]{KRZI}. Moreover $J(\BQ)$ acts on $G(\BA_{f})$ as in \cite[(7.3.6)]{KRZI} which defines an isomorphism $J(\BA^{p}_{f})\simeq G(\BA^{p}_{f})$.
\begin{theorem}\label{KRZ-improve-1}
There is an isomorphism of formal schemes over $\Spf O_{\breve{E}_{r, \nu}}$,
\begin{equation*}\label{unifor}
\widehat{\CA}_{r,\bK_{G}}\times_{\Spf O_{E_{r,\nu}}} \Spf O_{\breve{E}_{r,\nu}} \simeq J(\BQ) \backslash [(\widehat{\Omega}_{F_{v_{0}}}\times_{\Spf O_{F_{v_{0}}}} \Spf O_{\breve{E}_{r,\nu}})\times \hat{G}(\BQ_{p})/\hat{G}(\BZ_{p})\times {G}(\mathbb{A}^{p}_{f})/\bK^{p}_{G}] .
\end{equation*}
 For varying $\bK^{p}_{G}$, the isomorphism is compatible with the action of $G(\BA_f^p)$ through Hecke correspondences on both sides. 

This isomorphism is compatible with the Weil descent data if we equip the right-hand side with the Weil descent datum
\begin{equation*}
(\xi, h, g)\mapsto (\omega_{\Omega, {E_{r,\nu}}}(\xi), w_{r}h, g)\quad h\in \hat{G}(\BQ_{p}), \quad g\in {G}(\mathbb{A}^{p}_{f})
\end{equation*}
where $w_{r}$ is the special element  \eqref{wr}  in $\hat{G}(\BQ_{p})$, for the uniformizer $\varpi_{r,\nu}$.
\end{theorem}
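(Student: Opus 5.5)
The plan is to follow the structure of the proof of Theorem \ref{main}, with the semi-local input now supplied by Proposition \ref{compfo} in place of Proposition \ref{RZ-p-weil}. The isomorphism itself is \cite[Theorem 7.3.3]{KRZI}; what is new is the explicit form of the descent datum, so most of the work consists of checking that the identifications of loc.~cit.\ are compatible with the new normalisation.

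First, construct the uniformization morphism
\[
\Theta\colon \CM_r\times G(\BA_f^p)/\bK_G^p\longrightarrow \widehat{\CA}_{r,\bK_G}\times_{\Spf O_{E_{r,\nu}}}\Spf O_{\breve E_{r,\nu}} .
\]
\begin{itemize}
\item Given a point $(X,\iota,\lambda,\rho)$ of $\CM_r(i)(R)$ and $g\in G(\BA_f^p)$, extend $\rho$ to a quasi-isogeny $\bar A\to \bm A_{\bar R}$.
\item Transport the $O_{K,(p)}$-action and the $\BQ$-homogeneous polarization along this quasi-isogeny, using $p^{-i}$ times the pulled-back polarization so that condition (ii) of Definition \ref{def:CAbK} holds.
\item Set the level structure to be $\bar{\bm\eta}^p$ transported by $\rho$, then translated by $g$.
\item Lift to $R$ by Serre--Tate, using the $p$-divisible group $X$.
\end{itemize}
Let $J(\BQ)$ act on the source diagonally: on $\CM_r$ via $J(\BQ)\to J(\BQ_p)$ and $\rho\mapsto\alpha\circ\rho$, and on $G(\BA_f^p)$ via the homomorphism defined by $\bar{\bm\eta}^p$, as in \cite[(7.3.6)]{KRZI}. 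With these actions $\Theta$ is $J(\BQ)$-invariant.

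Next, apply the general uniformization theorem \cite[Theorem 6.2.3]{RZ}, together with the fact that the special fiber consists of a single isogeny class \cite[Proposition 7.1.11]{KRZI}. This shows that $\Theta$ induces an isomorphism
\[
J(\BQ)\backslash\big[\CM_r\times G(\BA_f^p)/\bK_G^p\big]\isoarrow \widehat{\CA}_{r,\bK_G}\times_{\Spf O_{E_{r,\nu}}}\Spf O_{\breve E_{r,\nu}} ,
\]
and that this isomorphism is Hecke equivariant in $\bK_G^p$. Substituting the isomorphism of Proposition \ref{compfo}, which is $\bar G(\BQ_p)=J(\BQ_p)$-equivariant, rewrites $\CM_r$ in the claimed form. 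This gives the isomorphism of the theorem.

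Finally, check the descent data, arguing exactly as at the end of the proof of Theorem \ref{main}. The descent datum on $\CM_r$ replaces $\rho$ by $F_{\BX,\tau}\circ\rho$, which extends to $F_{\bm A,\tau}\circ\rho$ on abelian varieties. Since membership in $\widehat\CA_{r,\bK_G}(R)$ depends only on the $\kappa_{r,\nu}$-algebra structure of $R$, this modification is exactly the natural descent datum on $\widehat\CA_{r,\bK_G}$. It acts trivially on the prime-to-$p$ component $g$. Proposition \ref{compfo} then translates it into $(\xi,h)\mapsto(\omega_{\Omega,E_{r,\nu}}(\xi),w_rh)$.

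The main obstacle is compatibility with the $J(\BQ)$-quotient. The left action of $J(\BQ)$ on $\hat G(\BQ_p)/\hat G(\BZ_p)$, via $J(\BQ_p)\to\hat G(\BQ_p)$, has to commute with left multiplication by $w_r$. This holds because $w_r$ lies in the center of $\hat G(\BQ_p)$:
\begin{itemize}
\item at $v_0$ its component is the scalar $up^{f_{E_{r,\nu}}}$;
\item at banal places its components are elements of $Z^{\BQ}_v(\BQ_p)$, which are central in $\bar G_v$.
\end{itemize}
The descent datum therefore descends to the quotient. One must also make sure that Proposition \ref{compfo} is used with the $\hat G(\BQ_p)$-normalization and not the $\hat G'(\BQ_p)$-normalization. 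This rests on the bijection \eqref{hat-to-prime} and Lemma \ref{w-transfer}, so that replacing $w'_r$ by $w_r$ changes nothing up to $\hat G(\BZ_p)$.
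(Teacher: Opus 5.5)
Your argument is correct and is essentially the paper's proof. The paper quotes \cite[Theorem 7.3.3]{KRZI} directly for the global isomorphism in the $\hat G'(\BQ_p)/\hat G'(\BZ_p)$-normalization with descent datum given by $w'_r$, instead of re-running the construction of $\Theta$ and the appeal to \cite[Theorem 6.2.3]{RZ} as you do. The decisive step is the same in both: $w'_r$ is replaced by $w_r$ using unit modifications (\cite[Remark 7.2.6]{KRZI}), Lemma \ref{w-transfer} and the bijection \eqref{hat-to-prime}, which you bring in through Proposition \ref{compfo}.
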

\begin{proof}
By \cite[Theorem  7.3.3]{KRZI}, we have the isomorphism
\begin{equation}\label{unifor}
\widehat{\CA}_{r,\bK_{G}}\times \Spf O_{\breve{E}_{r,\nu}} \simeq J(\BQ) \backslash [(\widehat{\Omega}_{F_{v_{0}}}\times_{\Spf O_{F_{v_{0}}}} \Spf O_{\breve{E}_{r,\nu}})\times \hat{G}^{\prime}(\BQ_{p})/\hat{G}^{\prime}(\BZ_{p})\times G(\mathbb{A}^{p}_{f})/\bK^{p}_{G}].
\end{equation}
whose descent datum is given by 
\begin{equation*}
(\xi, h, g)\mapsto (\omega_{\Omega, {E_{r,\nu}}}(\xi), w^{\prime}_{r}h, g).
\end{equation*}

We can modify the $v$-component $w^{\prime}_{r, v}$ of $w^{\prime}_{r}$ by a unit in $O_{K_{v}}$ for each banal prime $v$, cf.  \cite[Remark 7.2.6]{KRZI};  and we can modify $w^{\prime}_{r, v_0}$ by a unit in $\BZ_p$.  By Proposition \ref{compfo},  we can therefore rewrite the descent datum as 
\begin{equation*}
(\xi, h, g)\mapsto (\omega_{\Omega, {E_{r,\nu}}}(\xi), w_{r}h, g).
\end{equation*}
Now we have an isomorphism 
\begin{equation*}
\hat{G}(\BQ_{p})/\hat{G}(\BZ_{p})\cong \hat{G}^{\prime}(\BQ_{p})/\hat{G}^{\prime}(\BZ_{p})
\end{equation*}
and the class of $w_{r}$ in $\hat{G}(\BQ_{p})/\hat{G}(\BZ_{p})$ is sent to the class of $w^{\prime}_{r}$ in $\hat{G}^{\prime}(\BQ_{p})/\hat{G}^{\prime}(\BZ_{p})$. The assertion follows.
\end{proof}

Using the isomorphism $G(\BQ_{p})/G(\BZ_{p})\simeq \hat{G}(\BQ_{p})/\hat{G}(\BZ_{p})$ as explained in \cite[(7.3.10)]{KRZI}, we have the following isomorphisms
\begin{equation*}
\begin{aligned}
&J(\BQ) \backslash [(\widehat{\Omega}_{F_{v_{0}}}\times_{\Spf O_{F_{v_{0}}}} \Spf O_{\breve{E}_{r,\nu}})\times \hat{G}(\BQ_{p})/\hat{G}(\BZ_{p})\times G(\mathbb{A}^{p}_{f})/\bK^{p}_{G}]\\
\simeq &J(\BQ) \backslash [(\widehat{\Omega}_{F_{v_{0}}}\times_{\Spf O_{F_{v_{0}}}} \Spf O_{\breve{E}_{r,\nu}})\times {G}(\BQ_{p})/{G}(\BZ_{p})\times G(\mathbb{A}^{p}_{f})/\bK^{p}_{G}]\\
\simeq &J(\BQ) \backslash [(\widehat{\Omega}_{F_{v_{0}}}\times_{\Spf O_{F_{v_{0}}}} \Spf O_{\breve{E}_{r,\nu}})\times G(\mathbb{A}_{f})/\bK_{G}].\\
\end{aligned}
\end{equation*}
In the resulting isomorphism 
\begin{equation}\label{beaut}
\widehat{\CA}_{r, \bK_{G}}\times_{\Spf O_{E_{r,\nu}}}\Spf O_{\breve{E}_{r,\nu}}\simeq J(\BQ) \backslash [(\widehat{\Omega}_{F_{v_{0}}}\times_{\Spf O_{F_{v_{0}}}} \Spf O_{\breve{E}_{r,\nu}})\times G(\mathbb{A}_{f})/\bK_{G}]
\end{equation}
 the element $w_{r}$ induces a non-explicit automorphism of ${G}(\BQ_{p})/{G}(\BZ_{p})$ (its $v_0$-component is not well-determined) and thus the descent datum of the isomorphism
is non-explicit in this formulation. 

Choose an extension $\varphi_0: K\to \bar\BQ$ of $\varphi_0: F\to\bar\BQ$. We define a classical CM-type by
\begin{equation}
\Phi^+=\Phi^{+}_{\varphi_0, r}=\{\varphi_0\} \cup \{\varphi\in\Phi\mid r_\varphi=2\} .
\end{equation}
Let $E_{\Phi^+}$ be the reflex field of $\Phi^+$. We define $E=E_{\Phi^{+}}\varphi_0(K)=E_r\varphi_0(K)$. 

We next explain that if we extend the isomorphism in \eqref{beaut} to $\Spf O_{\breve{E}_{\nu}}$, then the descent datum down to $O_{E_\nu}$ becomes explicit.  For this, we recall the  semi-local  special element $\wt w=\frak{r}_{\Phi^+, E_\nu}(\varpi_\nu)\in Z^\BQ(\BQ_p)$, relative to $(\Phi^{+}, E_{\nu})$ for the uniformizer $\varpi_{\nu}$ of $E_\nu$, cf. Construction \ref{consemil}.  For the next statement, we note that $Z^{\BQ}(\BQ_{p})$  lies in the center of $G(\BQ_{p})$. 
\begin{theorem}\label{KRZ-improve-2}
There is an isomorphism of formal schemes over $\Spf O_{\breve{E}_{\nu}}$,
\begin{equation*}\label{unifor}
\begin{aligned}
\widehat{\CA}_{r,\bK_{G}}\times_{\Spf O_{{E}_{r,\nu}}} \Spf O_{\breve{E}_{\nu}} &\simeq J(\BQ) \backslash [(\widehat{\Omega}_{F_{v_{0}}}\times_{\Spf O_{F_{v_{0}}}} \Spf O_{\breve{E}_{\nu}})\times G(\BQ_{p})/\bK_{G, p} \times  G(\mathbb{A}^{p}_{f})/\bK^{p}_{G}] \\
&=J(\BQ) \backslash [(\widehat{\Omega}_{F_{v_{0}}}\times_{\Spf O_{F_{v_{0}}}} \Spf O_{\breve{E}_{\nu}})\times  G(\mathbb{A}_{f})/\bK_{G}] .
\end{aligned}
\end{equation*}
For varying $\bK^{p}_{G}$, this isomorphism is equivariant for the action of $G(\BA^{p}_{f})$ through Hecke correspondences on both sides.

This isomorphism is compatible with the Weil descent data down to $O_{E_\nu}$ if we equip the right-hand side with the Weil descent datum
\begin{equation*}
(\xi, h, g)\mapsto (\omega_{\Omega,{E_{\nu}}}(\xi), \tilde{w}h, g), \quad h\in {G}(\BQ_{p}),\quad g\in {G}(\mathbb{A}^{p}_{f}).
\end{equation*}
\end{theorem}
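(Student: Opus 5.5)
We base change the isomorphism of Theorem \ref{KRZ-improve-1} from $\Spf O_{\breve E_{r,\nu}}$ to $\Spf O_{\breve E_\nu}$; this gives a uniformization whose descent datum down to $O_{E_\nu}$ is given by the $[f_{E_\nu}:f_{E_{r,\nu}}]$-th power of the one in Theorem \ref{KRZ-improve-1}. On the first factor this power is $\omega_{\Omega,E_\nu}$. On $\hat G(\BQ_p)/\hat G(\BZ_p)$ it is multiplication by $w_r^{f_{E_\nu}/f_{E_{r,\nu}}}$. It therefore remains to show two things. First, this power agrees with the class of $\tilde w$ in $\hat G(\BQ_p)/\hat G(\BZ_p)$. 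Second, the identification $G(\BQ_p)/\bK_{G,p}\simeq \hat G(\BQ_p)/\hat G(\BZ_p)$ of \cite[(7.3.10)]{KRZI} is equivariant for multiplication by the central element $\tilde w\in Z^\BQ(\BQ_p)\subset G(\BQ_p)$.

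For the comparison of special elements, work componentwise. At a banal prime $v$, the $v$-component of $\tilde w$ is, modulo $Z^\BQ_v(\BZ_p)$, the special element relative to $(\Phi^+_v,E_\nu)$, by the compatibility \eqref{loc-sp} of Construction \ref{consemil}. Here $\Phi^+_v=\{\varphi\in\Phi_v\mid r_\varphi=2\}$, since $\varphi_0\in\Phi_{v_0}$. By Remark \ref{sp-elt-rmk} applied to $E_\nu\supset E_{r,\nu}$, this component differs from $w_{r,v}^{f_{E_\nu}/f_{E_{r,\nu}}}$ by an element of $Z^\BQ_v(\BZ_p)\subset O_{K_v}^\times$, which stabilizes $\Lambda_v$. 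For the $v_0$-component, only the similitude factor $c$ matters in $\hat G$. For both elements this factor is $p^{f_{E_\nu}}$ up to a unit in $\BZ_p^\times$: for $\tilde w$ by Remark \ref{sp-elt-rmk}, since $\Nm_{K/F}\circ\frak r=\Nm_{E_\nu/\BQ_p}$, and for $w_r$ by \eqref{wr}. Hence the two classes coincide in $\hat G(\BQ_p)/\hat G(\BZ_p)$. As in the proof of Proposition \ref{compfo}, we may also use \cite[Remark 7.2.6]{KRZI} to absorb the units.

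For the equivariance, the map $G(\BQ_p)\to\hat G(\BQ_p)$, $g\mapsto(\mu(g),(g_v)_{v\ne v_0})$, is a group homomorphism. It sends $\bK_{G,p}$ into $\hat G(\BZ_p)$, and it induces the bijection on cosets. Since $\tilde w\in G(\BQ_p)$ maps to an element representing the descent class computed above, left multiplication by $\tilde w$ on $G(\BQ_p)/\bK_{G,p}$ corresponds to left multiplication by its image on $\hat G$. This is exactly the point that fails for $w_r$ itself, whose $v_0$-component has no canonical lift to $G_{v_0}$. Finally, we need compatibility with the $J(\BQ)$-action and with Hecke operators. The Hecke compatibility is inherited from Theorem \ref{KRZ-improve-1}. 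For the $J(\BQ)$-action, $\tilde w$ is central and acts on the $G(\BQ_p)$ factor by left multiplication, so it commutes with $J(\BQ)$ acting through $\bar G(\BQ_p)\to\hat G(\BQ_p)$.

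The main obstacle is the $v_0$-component. We must make sure that the descent class is determined by the similitude factor at $v_0$ together with the banal components in $\hat G(\BQ_p)/\hat G(\BZ_p)$. We must also make sure that the unit ambiguities, namely $u\in\BZ_p^\times$ in \eqref{wr} and the units from Remark \ref{sp-elt-rmk}, are the \emph{same} for all $v$. To settle this, I would use that $\mu(\tilde w)=\Nm_{E_\nu/\BQ_p}(\varpi_\nu)$ is a single element of $\BQ_p^\times$ for all components simultaneously. This is where the enlargement from $E_r$ to $E=E_r\varphi_0(K)$ is used: $\Phi^+$ is a genuine CM-type, so $\tilde w$ is defined in $Z^\BQ(\BQ_p)$.
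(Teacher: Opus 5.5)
Your proposal is correct and follows essentially the same route as the paper: base change Theorem \ref{KRZ-improve-1} to $O_{\breve E_\nu}$, identify the descent datum as multiplication by $w_r^{f_{E_\nu}/f_{E_{r,\nu}}}$, and compare this with $\tilde w$ componentwise. The banal components are handled by Remark \ref{sp-elt-rmk}, and the $v_0$-component by the similitude factor, which is $p^{f_{E_\nu}}$ up to a unit. You also make explicit a point the paper leaves implicit, namely that the identification $G(\BQ_p)/\bK_{G,p}\simeq \hat G(\BQ_p)/\hat G(\BZ_p)$ is equivariant for the central element $\tilde w$; and the worry about unit ambiguities agreeing across $v$ is automatic, since the quotient of the two elements lies in the group $\hat G(\BQ_p)$.
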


\begin{proof}
We extend the isomorphism of Theorem \ref{KRZ-improve-1} to $\Spf O_{\breve{E}_{\nu}}$ and obtain
\begin{equation*}
\widehat{\CA}_{r,\bK_{G}}\times \Spf O_{\breve{E}_{\nu}} \simeq J(\BQ) \backslash [(\widehat{\Omega}_{F_{v_{0}}}\times_{\Spf O_{F_{v_{0}}}} \Spf O_{\breve{E}_{\nu}})\times \hat{G}(\BQ_{p})/\hat{G}(\BZ_{p})\times {G}(\mathbb{A}^{p}_{f})/\bK^{p}_{G}].
\end{equation*}
This isomorphism is compatible with the Weil descent data if we equip the right-hand side with the Weil descent datum
\begin{equation*}
(\xi, h, g)\mapsto (\omega_{\Omega,{E_{\nu}}}(\xi), w_{r}^{f_{E_\nu}/f_{E_{r, \nu}}}h, g)
\end{equation*}
where $w_{r}$ is the special element given by \eqref{wr}. Note that, regarding $\hat{G}(\BQ_{p})$ as a subset of $\prod_{v\mid p}\hat{G}_{v}(\BQ_{p})$, the $v_{0}$-component of $w_{r}$ is given by $up^{f_{E_{r, \nu}}}$, where $u\in\BZ_p^\times$ is given by $\Nm_{K_{v}/F_{v}}({w}_{v})=p^{f_{E_{r,\nu}}}u$, for any banal $v$. By Remark \ref{sp-elt-rmk},  $\wt{w}_v$ differs from $w_{r, v}^{f_{E_\nu}/f_{E_{r, \nu}}}$ by a unit,  for any banal $v$. Also ${\rm Nm}_{K_{v_0}/\BQ_p}(\wt w_{v_0})=p^{f_{E_{ \nu}}}$, up to a unit in $\BZ_p^\times$. It follows that the above Weil descent is given by
\begin{equation*}
(\xi, h, g)\mapsto (\omega_{\Omega,{E_{\nu}}}(\xi), \wt{w}h, g) ,
\end{equation*}
as claimed.

\end{proof}

\begin{remark}
The above theorem is compatible with Theorem \ref{main} in the case of the CM-type $\Phi^+=\Phi^+_{\varphi_0, r}$ used above. More precisely,  there is a commutative diagram compatible with all the Weil descent data down to $O_{E_\nu}$, 
\begin{equation*}
\begin{tikzcd}
(\wt\CA_{\bK_{\wt G}})^\wedge\times_{\Spf O_{E_{\nu}}} \Spf O_{\breve {E}_\nu} \arrow[r, "\sim"] \arrow[d] &   \wt{J}(\BQ)\bs\big[\big(\widehat{\Omega}_{F_{v_{0}}} \times_{\Spf O_{F_{v_{0}}}} \Spf O_{\breve {E}_\nu}\big) \times
 \wt G(\BA_f)/{\bK}_{\wt G}\big]  \arrow[d] \\
\widehat{\CA}_{r, \bK_{G}}\times_{\Spf O_{{E}_{r, \nu}}} \Spf O_{\breve{E}_{\nu}} \arrow[r, "\sim"]           & J(\BQ) \backslash \big[\big(\widehat{\Omega}_{F_{v_{0}}}\times_{\Spf O_{F_{v_{0}}}} \Spf O_{\breve{E}_{\nu}}\big)\times  G(\mathbb{A}_{f})/\bK_{G}\big].    
\end{tikzcd}
\end{equation*}
Here the left-vertical map is the natural map  $\wt{\CA}_{\bK_{\wt G}}\rightarrow \CA_{r, \bK_{G}}$ induced by $(A_0, \iota_0, \lambda_0, A, \iota, \lambda, \bar\eta^p)\mapsto (A, \iota, \lambda, \bar\eta^p)$ and the right vertical map is induced by the natural map from $\wt{G}$ to $G$, resp., from $\wt{J}$ to $J$.

\end{remark}

\section{KRZ Shimura curves vs RSZ Shimura curves}\label{s:versus}

Now we compare the Shimura curves in \cite{KRZI} that are associated to the group $G$ of unitary similitudes with the Shimura curves associated to $\wt G$. 
Recall the exact sequence of algebraic groups over $\BQ$,
$$
1\to \U\to G\overset{\mu}{\longrightarrow} \BG_m\to 1 .
$$
To make the connection between the moduli stacks $ \wt{\CA}_{\bK_{\wt G}}$ and $ {\CA}_{\bK_{G}}$, we make the assumption  
\begin{equation}\label{inthyp}
\bK_\U=\bK_G\cap \U(\BA_f), \quad \mu(\bK_G)=\wh \BZ^\times .
\end{equation}  
Then $\bK_G$ and $\bK_\U$ determine each other uniquely. Indeed, $\bK_G$ is the maximal open compact subgroup of $G(\BA_f)$ whose intersection with $\U(\BA_f)$ is equal to $\bK_\U$. 

We now pass to deeper level structures at places over  $p$. For each place $v\mid p$
of $F$, we consider first the  open compact subgroup $\mathbf{K}_{G, v} \subset G_{v}(\BQ_{p})$ which is defined as the quasi-parahoric given by $\mathrm{Stab}_{G_{v}}(\Lambda_{v})$.
We will assume that there exist places $v$ which are banal
since our deeper level structures exist only in this case. 

For the special prime $v_{0}$ we set
$\mathbf{K}_{G, v_0}^{\star} = \mathbf{K}_{G,v_{0}}$. For each  banal $v$, we choose an open subgroup of  
$\mathbf{K}_{G,v}^{\star} \subset \mathbf{K}_{G, v}$.
We set
\begin{equation*}
  \mathbf{K}_{G,p}^{\star} = \{g = (g_{v}) \in G(\mathbb{Q}_p) \; | \;
  g_{v} \in \mathbf{K}_{G,v}^{\star}, \forall v\mid p \}. 
\end{equation*}
This says that $\mu_{v}(g_{v})$ is independent of
$v$. We also introduce
\begin{equation}\label{banK}
  \mathbf{K}_{G, p}^{\star, {v_{0}}} = \{(g_{v}) \in \prod\nolimits_{v\mid p, v\neq v_{0}} \mathbf{K}^{\star}_{G, v}\; | \;
  \mu_{v}(g_{v}) = c \in \mathbb{Z}_p^{\times}, \;
  \text{independent of} \; v\} .
\end{equation}
This is a subgroup of
\begin{equation*}
  G^{v_{0}}(\mathbb{Q}_p) = \{(g_{v}) \in 
  \prod\nolimits_{v\mid p, v\neq v_{0}} G_{v}(\BQ_{p})\; | \;
  \mu_{v}(g_{v}) = c \in \mathbb{Q}_p^{\times}, \;
  \text{independent of} \; v\}. 
  \end{equation*}

\subsection{At the level of moduli problems over $E_{r}$}
Let $r$ be a special CM type of rank $2$ with reflex field $E_{r}$ and let $\Phiplus$ be a classical CM type with reflex field $E_{\Phiplus}$. Let $E_{\Phiplus}^\ab$ be the maximal abelian extension of $E_{\Phiplus}$, and let $E^\sharp=E_r E_{\Phiplus}^\ab $, an abelian extension of $E=E_r E_{\Phiplus}$. 

Let  ${\bK}^\star_{G} = {\bK}^\star_{G,p} \cdot {\bK}^p_{G}$. Consider the following moduli problem $\mathcal{A}_{r, {\bK}^\star_{G}}$ on $({\rm LNSch}/E_{r})$.  It associates to an $E_{r}$-scheme $S$ the set of isomorphism classes of tuples $(A, \iota, \bar\lambda, \bar\eta)$ where all data are the same as in Definition \ref{def:CAbK}, except that (3) is replaced by 

($3'$) A  $\bK^{\star}_{G}$-class   $\bar\eta$ of $K$-linear similitudes 
\[\eta\colon \wh \RV(A)  \isoarrow V\otimes_{K}\BA_{K, f}.
\]
Here the rational Tate module is equipped with its natural anti-hermitian form arising by contraction from its polarization form. 

When $\bK_G^p$ is sufficiently small, this moduli problem is represented by a projective scheme $\CA_{\bK^{\star}_{G}, E}$ which is the \emph{canonical model} of the Shimura variety ${\rm Sh}_{\bK^{\star}_{G}}(G, X_{G})$ over $E$. Let $A^{[\Lambda_{0}]}_{0}$ be the coarse moduli space associated to the Deligne-Mumford stack $\CA_0^{[\Lambda_0]}$. Therefore $A^{[\Lambda_{0}]}_{0}$ parametrizes the isomorphism classes of objects of $\CA_0^{[\Lambda_0]}$.

\begin{proposition} \label{iso_E}There is an isomorphism 
\begin{equation*}
\wt{\CA}_{\bK_{\wt G}, E}\times_{\Spec E} \Spec E^\sharp \simeq\coprod_{A_0^{[\Lambda_0]}(\bar\BQ)}  {\CA}_{\bK^{\star}_{G},  E_{r}}\times_{\Spec E_{r}} \Spec E^\sharp .
\end{equation*}
In other words, over $E^\sharp$, the scheme $\wt{\CA}_{\bK_{\wt G}, E}$ becomes a disjoint sum of copies of ${\CA}_{\bK^{\star}_{G},  E_{r}}$.
\end{proposition}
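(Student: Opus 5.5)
The plan is to construct the isomorphism moduli-theoretically, splitting off the $A_0$-datum. First, $\CA_0^{[\Lambda_0]}\otimes E_{\Phiplus}$ is finite étale over $E_{\Phiplus}$. By the main theorem of complex multiplication (Theorem \ref{ShimTan}), $\Gal(\bar\BQ/E_{\Phiplus})$ acts on its $\bar\BQ$-points through $\Gal(E^{\ab}_{\Phiplus}/E_{\Phiplus})$. Hence over $E^{\ab}_{\Phiplus}$, and a fortiori over $E^\sharp$, the coarse space $A_0^{[\Lambda_0]}$ becomes a finite disjoint union of copies of $\Spec E^\sharp$, indexed by $A_0^{[\Lambda_0]}(\bar\BQ)$. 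Each such point is represented by an object $(A_0,\iota_0,\lambda_0)$ defined over $E^\sharp$, at least after passing to a further model: the automorphism group $\mu(K)\cap\ker\Nm$ is finite, and I will deal with it via the rigidity step below. So the first step is to decompose
\[
\wt\CA_{\bK_{\wt G},E}\times_E E^\sharp=\coprod_{x\in A_0^{[\Lambda_0]}(\bar\BQ)} \wt\CA^{(x)},
\]
where $\wt\CA^{(x)}$ is the open and closed substack where $(A_0,\iota_0,\lambda_0)$ lies in the class $x$.

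Second, for a fixed $x$ with a chosen representative $A_0$ over $E^\sharp$, I will define a morphism $\wt\CA^{(x)}\to \CA_{\bK^\star_G,E_r}\times_{E_r}E^\sharp$ by forgetting $A_0$ and converting the level structure. Write $\wh\RV(A)=\wh\RV(A_0)\otimes_{\BA_{K,f}}\wh\RV(A_0,A)$. Fix a $K$-linear similitude $\wh\RV(A_0)\cong V_0\otimes\BA_f$. This exists over $E^\sharp$ because the Galois action on $\wh\RV(A_0)$ is through the reflex norm into $Z^\BQ(\BA_f)$, which is abelian; the orbit $\bar\eta$ is $\pi_1$-stable only modulo $\bK^\circ_{Z^\BQ}$, and this is absorbed by the center. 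An isometry $\eta:\wh\RV(A_0,A)\isoarrow\wt V\otimes\BA_{K,f}$ then yields a similitude $\wh\RV(A)\isoarrow V_0\otimes_K\wt V\otimes\BA_{K,f}= V\otimes\BA_{K,f}$, with similitude factor the one of $\lambda_0$. The hypothesis \eqref{inthyp}, $\bK_\U=\bK_G\cap\U(\BA_f)$ with $\mu(\bK_G)=\wh\BZ^\times$, makes $\bK_\U$-orbits of isometries together with the $\bK^\circ_{Z^\BQ}$-ambiguity correspond to $\bK_G$-orbits of similitudes. I will check this on $\BC$-points using the group-theoretic identity $\wt G=G\times_{\BG_m}Z^\BQ$ and the double coset descriptions. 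For the inverse map, given $(A,\iota,\bar\lambda,\bar\eta)$, I pair it with the fixed $A_0$. The polarization $\lambda$ inside $\bar\lambda$ is pinned down by requiring the hermitian form $h$ on $\wh\RV(A_0,A)$ to be of the correct similitude class.

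Third, I will show the morphism is an isomorphism. It is a morphism of finite-type DM stacks, or schemes for $\bK^p$ small, over $E^\sharp$. So it suffices to show bijectivity on $\bar\BQ$-points together with an isomorphism on tangent spaces. The latter is immediate, since $A_0$ is rigid: $\CA_0$ is étale. Bijectivity on $\BC$-points reduces to comparing $\wt G(\BQ)\bs[\Omega_\BR\times\wt G(\BA_f)/\bK_{\wt G}]$, restricted to a fixed $Z^\BQ$-component, with $G(\BQ)\bs[\Omega_\BR\times G(\BA_f)/\bK_G]$. This comparison again uses \eqref{inthyp} and the fact that $Z^\BQ(\BQ)\to\BQ^\times_{>0}$-type issues are controlled by the Hasse principle, as in the proof of \cite[Theorem 3.7]{unitshim}.

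The main obstacle I expect is Galois-descent bookkeeping: checking that the map is defined over $E^\sharp$ and not merely over $\bar\BQ$. This amounts to verifying that the choice of trivialization of $\wh\RV(A_0)$ is $\Gal(\bar\BQ/E^\sharp)$-stable modulo $\bK^\circ_{Z^\BQ}$. I would handle it via Theorem \ref{ShimTan}: the Galois action is $\mathfrak r_{\Phiplus,E}(\mathrm{rec}^{-1})$, which is trivial on $\Gal(\bar\BQ/E^{\ab}_{\Phiplus})$ modulo $Z^\BQ(\BQ)\bK^\circ_{Z^\BQ}$. I would also handle the automorphisms of $A_0$ by noting they act through $Z^\BQ(\BQ)\cap\bK^\circ_{Z^\BQ}$, which acts trivially on the level structure classes.
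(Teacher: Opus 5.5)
Your architecture matches the paper's proof. You fix $(A_0,\iota_0,\lambda_0)$ over $E^\sharp$, trivialize $\wh\RV(A_0)$, transport $\eta$ along $\wh\RV(A)\cong\wh\RV(A_0)\otimes_{\BA_{K,f}}\wh\RV(A_0,A)$, and use \eqref{inthyp} to pass between $\bK_\U$-orbits of isometries and $\bK_G$-orbits of similitudes. Your explicit inverse (normalizing $\lambda\in\bar\lambda$ via $\BA_f^\times=\BQ^\times_{>0}\wh\BZ^\times$) and the check on $\BC$-points go beyond what the paper writes, and they are fine.

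\textbf{The Galois-descent step fails.} This is exactly where $E^\sharp$ must enter, and the criterion you give is too weak. Suppose $\sigma$ acts on $\wh\RV(A_0)$ by $z_\sigma$ and $\eta\circ\sigma\in\bK_\U\eta$. Then the transported similitude satisfies $\eta'\circ\sigma\in z_\sigma\bK_\U\,\eta'$. So $\bar\eta'$ is a $\pi_1$-stable $\bK_G$-orbit only if $z_\sigma\in\bK_G$.
\begin{itemize}
\item Knowing $z_\sigma\in\bK^\circ_{Z^\BQ}$ (``absorbed by the center'') does not give this. In general $\bK^\circ_{Z^\BQ}\not\subset\bK_G$: otherwise $\{z\in(O_K\otimes\wh\BZ)^\times : z\bar z=1\}\subset\bK_G\cap\U(\BA_f)=\bK_\U$, which fails for small $\bK_\U$.
\item Your criterion is in fact automatic. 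On any model of $(A_0,\iota_0,\lambda_0)$ over any field, Galois acts $O_K\otimes\wh\BZ$-linearly on $\wh\RT(A_0)$ and scales the Weil pairing by the cyclotomic character, hence acts through $\bK^\circ_{Z^\BQ}$.
\item Likewise, ``trivial modulo $Z^\BQ(\BQ)\bK^\circ_{Z^\BQ}$'' only says that the point $x$ is Galois-fixed, which already holds over its residue field.
\end{itemize}
So your argument never uses $E^{\ab}_{\Phiplus}$ and cannot be the mechanism by which $E^\sharp$ enters. For the same reason, $\mu(K)=Z^\BQ(\BQ)\cap\bK^\circ_{Z^\BQ}$ does not ``act trivially on level structure classes'' for small level. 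It is true that $(A,\zeta\eta')\simeq(A,\eta')$ via $\iota(\zeta)$, but that does not make the orbit $\bar\eta'$ itself $\pi_1$-stable, which is what the moduli problem requires.

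\textbf{What is needed.} The paper asserts the right statement: a model of $A_0$ over $E^\sharp$ on which $\Gal(\bar\BQ/E^\sharp)$ acts \emph{trivially} on $\wh\RV(A_0)$. This follows from Theorem \ref{ShimTan} with $e=1$.
\begin{itemize}
\item For $\sigma$ fixing $E^{\ab}_{\Phiplus}$, there is $\alpha_\sigma\in Z^\BQ(\BQ)$ with $\wh\RV(\alpha_\sigma)$ equal to the action of $\sigma$.
\item Since $\sigma$ preserves $\wh\RT(A_0)$, $\alpha_\sigma$ lies in the \emph{intersection} $Z^\BQ(\BQ)\cap\bK^\circ_{Z^\BQ}=\mu(K)$, not merely in the product.
\item Hence $\sigma\mapsto\alpha_\sigma$ is a character $\Gal(\bar\BQ/E^\sharp)\to\mu(K)=\Aut(A_0,\iota_0,\lambda_0)$. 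Twisting $A_0$ by its inverse gives the required model.
\end{itemize}
The existence of some model over $E^\sharp$ at all can be obtained from the rigid level-$n$ cover of $\CA_0^{[\Lambda_0]}$, $n\ge 3$, whose points are $E^{\ab}_{\Phiplus}$-rational by the same theorem. This also replaces the vague ``further model'' in your first step.

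Finally, you should check conditions (i)--(iii) of Definition \ref{def:CAbK} for the image point, in particular the sign condition. The paper deduces it from $\inv^r_v(A_0,A)=\inv^r_v(A)$ and $\inv_v(\wt V)=\inv_v(V)$.
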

\begin{proof}
Let $({A_0}, {\iota_0}, {\lambda_0})\in \CA_0^{[\Lambda_{0}]}(O_{E^\sharp})$. We fix an isomorphism $\wh \RT(A_{0, E^\sharp})=\Lambda_0\otimes_{O_K}\wh{O}_K$. Since all elements of $\CA_0^{[\Lambda_{0}]}$ are isogenous, all rational Tate modules of elements of $\CA_0^{[\Lambda_{0}]}$ are identified with $V_0\otimes_K\BA_{K, f}$.  Therefore we obtain a morphism
\begin{equation}\label{morph_E}
\wt{\CA}_{\bK_{\wt G}, E}\times_{\Spec E} \Spec E^\sharp \to {\CA}_{\bK^{\star}_{G},  E_{r}}\times_{\Spec E_{r}} \Spec E^\sharp .
\end{equation}
It sends a tuple $(A_0,\iota_0,\lambda_0,A,\iota,\lambda,\bar\eta^p)$ to $(A,\iota,\lambda,\bar\eta'^p)$, where $\bar\eta'^p\colon \wh \RV^p(A)\isoarrow V\otimes_K \BA^p_{K, f}$ is induced from  
$$  \eta^p\colon \wh \RV^p(A_0, A)=\Hom_K(\wh \RV^p(A_0), \wh V^p(A)) \isoarrow \wt V\otimes_K\BA_{K, f}^p=\Hom_K( V_0,  V)\otimes_K\BA_{K, f}^p ,
$$ 
after identifying the source of the map with $\wh \RV^p(A)$ and the target with $V\otimes_K\BA_{K, f}^p$. Indeed, by hypothesis \eqref{inthyp}, the $\bK_G^{\star, p}$-class of $\eta'^p$ is well-determined by the $\bK_{\wt G}^p$-class of $\eta^p$. Furthermore, by the theory of complex multiplication, the Galois action of $\Gal(\bar \BQ/E_{\Phiplus})$ on $A_0^{[\Lambda_0]}(\bar\BQ)$ factors through its maximal abelian quotient, hence  the action of $\Gal(\bar \BQ/E^\sharp)$ on $\wh \RV^p({A}_{0, E^\sharp})$ is trivial. Hence the Galois invariance of $\bar\eta'^p$ follows from the Galois invariance of $\bar \eta^p$. Finally, the sign condition (iii) from Definition \ref{def:CAbK}  for  $(A, \iota, \bar\lambda, \bar\eta'^p)$ follows from the identities
\[
\inv^r_{v}(A_{0, \bar s},\iota_{0, \bar s},\lambda_{0, \bar s},A_{\bar s},\iota_{\bar s},\lambda_{\bar s})=\inv_{v}^r(A_{\bar s}, \iota_{\bar s}, \lambda_{\bar s}) ,\quad\quad \inv_{v}(\wt V) =\inv_{v}(V) .
\] 
Indeed, for instance, the last identity follows from the identity
\[
\wedge^2(V_0\otimes V)=V_0^{\otimes 2}\otimes \wedge^2(V) .
\] Since this construction is clearly functorial, we obtain the morphism \eqref{morph_E}. 
Letting $({A_0}, {\iota_0}, {\lambda_0})$ vary through  $A_0^{[\Lambda_0]}(\bar \BQ)$, we obtain the desired decomposition in the proposition.
\end{proof}

\subsection{At the level of moduli problems over $O_{\breve{E}_{r, \nu}}$}
Recall from \cite[Definition 7.4.5]{KRZI} the scheme $\CA^\star_{\bK^{\star}_G}$ over $\Spec O_{\breve E_{r,\nu}}$.  To define it, we first define a functor $\widehat{\mathcal{A}}^{\star}_{\mathbf{K}^{\star}_{G}}$ on the
  category of schemes $S$ over $\Spf O_{\breve{E}_{r, \nu}}$.
  A point of $\widehat{\mathcal{A}}^{\star}_{\mathbf{K}^{\star}_{G}}(S)$ consists of
  the following data:
\begin{enumerate}
\item A point $(A, \iota, \bar{\lambda}, \bar{\eta}^p)$ of $\mathcal{A}_{\mathbf{K}_{G}}(S)$;
\item A class $ \ov\eta^{v_{0}}_p$ of isomorphisms of $p$-adic \'etale sheaves  
\begin{equation*}
 \eta^{v_{0}}_p\colon \underline{\Hom}_{O_K}( \BX_0^{v_{0}}, A[p^\infty]^{v_{0}})\isoarrow \Lambda^{v_{0}}:=\prod\nolimits_{v\neq v_{0}, v\mid p}\Lambda_{v} \mod\bK_{G, p}^{\star, {v_{0}}} ,
\end{equation*}
which respect forms on both sides up to a constant in $\BZ^{\times}_{p}$, cf. \cite[just before Proposition  7.4.10]{KRZI}.
\end{enumerate}

The morphism $\widehat{\mathcal{A}}^{\star}_{\mathbf{K}^{\star}_{G}} \rightarrow \widehat{\mathcal{A}}_{\mathbf{K}_{G}}$ forgetting the datum $(2)$ (or its alternative version) 
is a finite \'etale covering of formal schemes. Since we assume that
$\mathbf{K}^p_{G}$ is small enough, $\mathcal{A}_{\mathbf{K}_{G}}$ is a proper scheme
over $\Spec O_{\breve{E}_{r,\nu}}$. By the algebraization theorem, there is a unique
finite \'etale morphism of schemes over $\Spec O_{\breve{E}_{\nu}}$
\begin{equation}\label{balevel2e}
\mathcal{A}^{\star}_{\mathbf{K}^{\star}_{G}} \rightarrow \mathcal{A}_{\mathbf{K}_{G}}\times_{\Spec\, O_{E_{r,\nu}}}\Spec\, O_{\breve E_{r, \nu}} , 
\end{equation}
such that the $p$-adic completion of $\mathcal{A}^{\star}_{\mathbf{K}^{\star}_{G}}$ is
$\hat{\mathcal{A}}^{\star}_{\mathbf{K}^{\star}_{G}}$.

\begin{proposition}\label{iso_OE} 
There is an isomorphism 
\begin{equation*}
\wt{\CA}_{\bK_{\wt G}}\times_{\Spec O_{E}} \Spec O_{\breve{E}_\nu} \simeq\coprod_{A_0^{[\Lambda_0]}(\bar {\kappa}_\nu)}  {\CA}^{\star}_{\bK^{\star}_{G}}\times_{\Spec O_{\breve E_{r, \nu}}} \Spec O_{\breve{E}_{\nu}}.
\end{equation*}
In other words, over $O_{\breve{E}_\nu}$, the scheme $\wt{\CA}_{\bK_{\wt G}}$ becomes a disjoint sum of copies of $\CA^\star_{\bK^{\star}_G}$.
\end{proposition}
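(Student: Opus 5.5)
The plan is to copy the proof of Proposition \ref{iso_E}, now over $O_{\breve E_\nu}$. The finite étale stack $\CA_0^{[\Lambda_0]}$ will play the role of a constant scheme, and the CL-level structure at the banal primes $\eta^{v_0}_p$ will be the integral counterpart of the rational level structure at $p$.

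First, since $\CA_0^{[\Lambda_0]}$ is finite étale over $O_{E_{\Phiplus}}$, its base change to the strictly henselian ring $O_{\breve E_\nu}$ is a disjoint union of copies of $\Spec O_{\breve E_\nu}$. These copies are indexed by $A_0^{[\Lambda_0]}(\bar\kappa_\nu)$. Each point lifts uniquely to an object $(A_0,\iota_0,\lambda_0)$ over $O_{\breve E_\nu}$, and the automorphisms are harmless after passing to the coarse space. Hence $\wt\CA_{\bK_{\wt G}}\times\Spec O_{\breve E_\nu}$ decomposes as the disjoint union, over these points, of the open and closed subschemes where $A_0$ is the fixed lift. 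For a fixed $A_0$, I then construct a morphism to $\CA^\star_{\bK^\star_G}\times\Spec O_{\breve E_\nu}$ sending a tuple to $(A,\iota,\bar\lambda,\bar\eta'^p,\bar\eta'^{v_0}_p)$, as follows.

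The prime-to-$p$ level structure $\eta'^p$ is obtained exactly as in the proof of Proposition \ref{iso_E}, by fixing $\wh\RT^p(A_0)\simeq\Lambda_0\otimes\wh O_K^p$. The Galois invariance is now automatic, because over the strictly henselian base $\wh\RT^p(A_0)$ is a constant sheaf. For the banal level, $A_0[p^\infty]^{v_0}$ is étale-locally identified with the lift $\wt\BX_0^{v_0}$ of the framing object, as in \cite[(7.4.14)]{KRZI}. This identification $\eta_0$ is unique up to $\bK^\circ_{Z^\BQ,p}$, and it transports the strict CL-level structure $\eta^{v_0}_p$ on $\underline{\Hom}(X_0^{v_0},X^{v_0})$ to a relaxed one on $\underline{\Hom}(\wt\BX_0^{v_0},A[p^\infty]^{v_0})$. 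This is the same mechanism as in the construction of $\vartheta$ in \eqref{thetaba}. Using hypothesis \eqref{inthyp} together with the compatibility of the units $u_1,u_2$ discussed there, the $\bK^{\star,v_0}_{G,p}$-class of the result is well defined.

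The polarization conditions and the Eisenstein and Kottwitz conditions coincide on both sides, since $\frak a$ is prime to $p$ and so $\lambda_0$ is principal at $p$. The sign conditions agree by the identity $\inv^r_v(A_0,A)=\inv^r_v(A)$ and $\inv_v(\wt V)=\inv_v(V)$ already used in the proof of Proposition \ref{iso_E}. For the inverse map, take a point $(A,\ldots)$ of $\CA^\star$ and pair it with the fixed $A_0$. One reconstructs $\eta^p$ and a strict $\eta^{v_0}_p$ by choosing, within the class of $\eta'$, a representative whose similitude factor matches that of $A_0$. Such a choice is possible because $\mu(\bK_G)=\wh\BZ^\times$, and it is unique modulo $\bK_\U$. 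I expect the two constructions to be mutually inverse and functorial.

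The main obstacle is that $\CA^\star_{\bK^\star_G}$ is defined only by algebraization from the formal scheme $\wh\CA^\star$. Therefore I will construct the isomorphism on the $p$-adic formal completions, where both sides are moduli of $p$-divisible-group data, and on the generic fibres via Proposition \ref{iso_E}. I will then algebraize using Grothendieck's existence theorem, which applies because both sides are proper: $\wt\CA_{\bK_{\wt G}}$ by Corollary \ref{stable}, and $\CA^\star$ as a finite étale cover of a proper scheme. A simpler alternative is to note that both sides are finite étale over $\CA_{\bK_G}\times O_{\breve E_\nu}$ and that the morphism is compatible with these covers, so an isomorphism of the formal completions algebraizes uniquely. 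The delicate check is the matching of units between $\eta_0$ and $\eta^{v_0}_p$, and I would handle it exactly as in the discussion following \eqref{thetaba}.
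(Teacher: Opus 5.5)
Your proposal is correct and follows the paper's proof. The paper likewise identifies the index sets using the finite étaleness of $\CA_0^{[\Lambda_0]}$, uses properness of both sides to pass to formal completions, and identifies $A_0[p^\infty]^{v_0}$ with $\BX_0^{v_0}$ so that $\eta^{v_0}_p$ becomes the CL-level structure of $\CA^\star_{\bK^\star_G}$, thereby extending Proposition \ref{iso_E}. You spell out checks the paper leaves implicit (prime-to-$p$ level, matching of units, sign conditions, inverse map), but the route is the same.
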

\begin{proof}
Note that the index sets in Propositions \ref{iso_E} and \ref{iso_OE} can be identified, as  follows from the fact that $\CA_0^{[\Lambda_0]}$ is finite and \'etale over $\Spec O_{E_{\Phiplus}}$. Since both sides are proper over the base, we may pass to the formal completions on both sides. Note that $\BX_0^{v_{0}}$ can be identified with $A_0[p^\infty]^{v_{0}}$, cf. \eqref{primetov}. Hence $\eta_p^{v_{0}}$ defines a CL-level structure, and this extends the isomorphism of Proposition \ref{iso_E} to $O_{\breve{E}_\nu}$. 
\end{proof}

\part{Shimura curves for the unitary group}
\section{The integral LSV for the unitary group}\label{s:ILSV}

Let $F/\BQ_p$ be a finite extension. Let $K/F$ be a quadratic extension, and let $\U'$ be a $K/F$-unitary group of size $2$ over $F$ which is anisotropic. Let $\U=\Res_{F/\BQ_p}(\U')$. We also fix an embedding $\varphi_{0}: K\rightarrow \bar{\BQ}_{p}$. 

\subsection{Formulation of the result}\label{ss:formu}
Let $\Phi^{+}$ be a local CM-type for $K/F$ containing $\varphi_{0}$. We define a conjugacy class of cocharacters $\mu: \BG_{m, \bar{\BQ}_{p}}\rightarrow \U_{\bar{\BQ}_{p}}$ from $\Phi^{+}$, as follows. Namely,  we characterize  $\mu$ by the composite 
\begin{equation}\label{mu-special}
\BG_{m,\bar{\BQ}_{p}}\xrightarrow{\mu}\U_{\bar{\BQ}_{p}}\simeq \prod_{\varphi\in \Phi^{+}}\GL_{2, {\bar{\BQ}_{p}}}. 
\end{equation}
We demand that $\{\mu\}$  has component $(1,0)$ at $\varphi_{0}$ and $(0,0)$ at all other components (as conjugacy classes of cocharacters of $\GL_{2}$).  The reflex field $E(\mu)$ of $\mu$ is the subfield of $\bar\BQ_{p}$ fixed by 
$$\Gal(\bar\BQ_{p}/E(\mu)):=\{ \tau\in \Gal(\bar\BQ_{p}/\BQ_{p})\mid \tau\varphi_{0} = \varphi_{0}\}.$$
It  follows that  $E(\mu)$ is equal to  $\varphi_{0}(K)=K$. 

Let $b\in \U(\breve{\BQ}_{p})$ represent the unique element of ${B}(\U, \mu^{-1})$, which is basic. Let $\mathbf{K}^{\circ}_{\U}=\U(\BQ_{p})$ which is a maximal quasi-parahoric subgroup of $\U(\BQ_{p})$.  When $K/F$ is unramified, then $\mathbf{K}^{\circ}_{\U}$ is a parahoric; if $K/F$ is ramified, then $\mathbf{K}^{\circ}_{\U}$ contains a  parahoric with index $2$. By \cite{PRloc} we can associate to the local Shimura datum $(\U, b, \mu, \mathbf{K}^{\circ}_{\U})$, the integral local Shimura variety $\mathcal{M}^{\mathrm{int}}_{\mu, b, \mathbf{K}^{\circ}_{\U}}$. This is a formal scheme over $\Spf O_{\breve{E}(\mu)}$ with natural descent datum to $\Spf O_{E(\mu)}=\Spf O_{K}$.
 
We introduce the following finite extension of $K$ contained in $\bar\BQ_{p}$,
\begin{equation}\label{defE}
E(\varphi_0)=\bigcap\nolimits_{\varphi_0\in \Phi^{\prime, +}}E_{\Phi^{\prime, +}}K ,
\end{equation}
(intersection of the join with $K$ of the reflex fields of all CM-types $ \Phi^{\prime, +}$ of $K$ containing $\varphi_0$). We remark that when $F=\BQ_{p}$, then $E(\varphi_{0})$ agrees with $K$. 

The following is the main result of this section. The proof will be given in the next subsection.

\begin{theorem}\label{local-Shi-Drin}
Let $\mathcal{M}^{\mathrm{int}}_{\mu, b, \mathbf{K}^{\circ}_{\U}}$ be the integral local Shimura variety associated to the datum $(\U, b, \mu, \mathbf{K}^{\circ}_{\U})$ defined above. Then the Weil descent datum on $\mathcal{M}^{\mathrm{int}}_{\mu, b, \mathbf{K}^{\circ}_{\U}}$  is effective and defines a formal scheme $\mathcal{M}^{\mathrm{int}}_{\mu, b, \mathbf{K}^{\circ}_{\U}, O_{K}}$ over $\Spf O_{K}$. There is an isomorphism of  formal schemes over $\Spf O_{E(\varphi_{0})}$,
\begin{equation*}
\mathcal{M}^{\mathrm{int}}_{\mu, b, \mathbf{K}^{\circ}_{\U}, O_{K}}\times_{\Spf O_{K}} \Spf O_{E(\varphi_{0})}\simeq \hat{\Omega}_{F} \times_{\Spf O_{F}} \Spf O_{E(\varphi_{0})}.
\end{equation*}

\end{theorem}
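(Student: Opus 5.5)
We will deduce Theorem \ref{local-Shi-Drin} from the RSZ local result Proposition \ref{LS-weil-special} together with the toric result Proposition \ref{LS-tori-weil}, using the functoriality of integral local Shimura varieties from \cite{PRloc} with respect to the product decomposition $\wt G=\U\times\T$. The plan is as follows.

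First, choose a local CM-type $\Phi^{+}_0$ containing $\varphi_0$ and let $r$ be the special rank-$2$ CM-type associated to $(\Phi^+_0,\varphi_0)$. Set $E=E_0E_r$, which contains $K$ and is contained in $E_{\Phi^+_0}K$. By \cite{PRloc}, the RZ space $\wt\CM_{\bK^{\circ}_{\wt G}}$ of Definition \ref{RSZ-LS-special} is the integral local Shimura variety for the local Shimura datum $(\wt G, \wt b, \wt\mu, \bK^{\circ}_{\wt G})$. Here $\wt\mu=(\mu,\mu_{\T})$ with $\mu_\T$ the cocharacter attached to $\Phi^+_0$, and $\wt b=(b,b_\T)$. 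This identification is the standard comparison of EL/PEL-type RZ spaces with moduli of shtukas, valid in the relevant (possibly ramified, $p\neq2$) situation. Integral local Shimura varieties are compatible with products of groups, so we get
\[
\wt\CM_{\bK^{\circ}_{\wt G}}\simeq \big(\CM^{\rm int}_{\mu,b,\bK^\circ_\U}\times_{\Spf O_{\breve K}}\Spf O_{\breve E}\big)\times_{\Spf O_{\breve E}}\big(\CM^{\rm int}_{\mu_\T,b_\T,\T(\BZ_p)}\times_{\Spf O_{\breve E_0}}\Spf O_{\breve E}\big),
\]
and this isomorphism is compatible with Weil descent data down to $O_E$. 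The toric factor equals $\CM_0\simeq \T(\BQ_p)/\T(\BZ_p)$, with descent datum given by multiplication by $\wt w$ (Proposition \ref{LS-tori-weil} and the remark after it). Comparing with Proposition \ref{LS-weil-special}, we take the fibre over the base point $1\in\T(\BQ_p)/\T(\BZ_p)$. For the $\U$-factor this yields an isomorphism over $O_{\breve E}$
\[
\CM^{\rm int}_{\mu,b,\bK^\circ_\U}\times_{\Spf O_{\breve K}}\Spf O_{\breve E}\simeq \hat\Omega_F\times_{\Spf O_F}\Spf O_{\breve E}.
\]
Under this isomorphism the Weil descent datum to $O_E$ on the left corresponds to $\omega_{\Omega,E}$. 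The reason is that on $\wt\CM$ the descent datum is $(\xi,g)\mapsto(\omega_{\Omega,E}(\xi),\wt w g)$ with $\wt w$ central in $\T$, so it splits as $\omega_{\Omega,E}$ times the toric datum. Since $\hat\Omega_F$ is a formal scheme with effective descent datum, we conclude effectivity, and we obtain an isomorphism over $O_E$.

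Next, eliminate the dependence on $\Phi^+_0$. The above gives, for every $\Phi^{\prime,+}\ni\varphi_0$, an isomorphism over $O_{E'}$ with $E'=E_{\Phi^{\prime,+}}E_{r'}\supset K$. The aim is to descend to $O_{E(\varphi_0)}$, the intersection of the fields $E_{\Phi^{\prime,+}}K$. We show the isomorphisms for two choices agree over the compositum. Both come from the same $\U$-factor and the same identification with $\hat\Omega_F$, namely the one produced by the contraction functor at $v_0$ through $\CM_r$ (Proposition \ref{Mr-thm}), which depends only on $r$ and not on $\Phi^+_0$. The isomorphisms then form Galois-compatible data, so Galois descent along the extensions $E'/E(\varphi_0)$ yields the isomorphism over $O_{E(\varphi_0)}$. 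In passing, the effectivity of the descent datum over $O_K$ follows from effectivity after the finite base change together with the algebraic nature of $\hat\Omega_F$; alternatively it follows directly from \cite{PRloc}.

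The main obstacle is the second step. We must show that the isomorphisms obtained from different auxiliary CM-types $\Phi^{\prime,+}$ are canonically independent of that choice and that they glue, i.e.\ that they are invariant under $\Gal(E'/E(\varphi_0))$. To handle this, we will check independence through the canonical identification of $\CM_r(0)$ with $\hat\Omega_F$, which does not involve $\Phi^+_0$. We will also use that the fibre over $1\in\T(\BQ_p)/\T(\BZ_p)$ is stable under descent only after pairing with $\wt w$. This last point is where the residual field extension $E(\varphi_0)/K$ unavoidably enters.
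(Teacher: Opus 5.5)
\textbf{Main gap: independence of the auxiliary CM-type.} Your first step follows the paper: the product decomposition from \cite{PRloc}, Proposition \ref{LS-weil-special} for the $\wt G$-space, Proposition \ref{LS-tori-weil} for the toric factor, and then fibres over $\T(\BQ_p)/\T(\BZ_p)$. The genuine gap is in the second step, which is exactly what brings the field down from $E$ to $E(\varphi_0)$.

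You claim that the isomorphisms coming from two auxiliary CM-types $\Phi^+_0$ and $\Phi^{+\prime}_0$ agree over the compositum $L=EE'$. Your reason is that both are induced by the identification of $\CM_r$ with $\hat\Omega_F\times\BZ$ in Proposition \ref{Mr-thm}, which ``depends only on $r$ and not on $\Phi^+_0$''. But $r=r_{\Phi^+_0}$ is determined by $\Phi^+_0$. This is forced: the PEL realization of $(\wt G,\wt\mu)$ has $r_\varphi=2$ exactly for $\varphi\in\Phi^+_0\setminus\{\varphi_0\}$. So changing the auxiliary CM-type changes $r$, $E_r$, the moduli problem $\CM_r$ and the contraction functor used.

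Consequently nothing in your argument compares the two isomorphisms over $O_{\breve L}$. The ``Galois-compatible data'' you then descend is the very claim to be proved, and your closing remark about pairing the fibre over $1$ with $\wt w$ does not supply it. (Also, $E'/E(\varphi_0)$ need not be Galois; one needs invariance under the group generated by all the $\Gal(\bar\BQ_p/E')$.)

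The paper closes this gap with a computation of special elements. It reduces the comparison to showing that the two Weil descent data down to $O_L$ induced on $\hat\Omega_F\times\T(\BQ_p)/\T(\BZ_p)$ coincide. These are $(\xi,g)\mapsto(\omega_{\Omega,L}(\xi),\wt w_E^{f_L/f_E}g)$ and $(\xi,g)\mapsto(\omega_{\Omega,L}(\xi),\wt w_{E'}^{f_L/f_{E'}}g)$. They agree because both elements have $K/F$-norm $p^{f_L}$ up to a unit (Remark \ref{sp-elt-rmk}), and an element of $\T(\BQ_p)$ whose norm is a unit lies in $\T(\BZ_p)$.

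\textbf{Effectivity over $O_K$.} This is part of what the theorem asserts, and neither ``the algebraic nature of $\hat\Omega_F$'' nor an appeal to \cite{PRloc} establishes it. After reaching $O_{E(\varphi_0)}$, one still has a descent datum along the finite extension $O_{E(\varphi_0)}/O_K$ on a non-quasi-compact formal scheme. The paper makes it effective using that the canonical bundle of the special fibre is ample on every closed subscheme of finite type. This holds because quotients of $\hat\Omega_F$ by small discrete cocompact subgroups of $\PGL_2(F)$ are stable curves.

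\textbf{Fibre independence in step one.} The descent datum carries the fibre over $1$ to the fibre over $\wt w$. So your sentence that the datum ``splits'' needs the induced identification of the $\U$-factor with $\hat\Omega_F$ to be the same on all fibres. The paper gets this from the $\T(\BQ_p)$-equivariance of the projection and transitivity on $\T(\BQ_p)/\T(\BZ_p)$; you should state it.
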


\begin{remark}
We recall from \S \ref{ss:staU}, lines below Theorem \ref{Thm;locU},  that we conjecture that this isomorphism is induced by an isomorphism of formal schemes over $\Spf O_{K}$. 
When $F=\BQ_{p}$, then $E(\varphi_{0})=K$, and the above isomorphism is simply 
\begin{equation*}
\mathcal{M}^{\mathrm{int}}_{\mu, b, \mathbf{K}^{\circ}_{\U}, O_{K}}\simeq \hat{\Omega}_{F} \times_{\Spf O_{F}} \Spf O_{K}.
\end{equation*}
In this case, the expectation is true. 
\end{remark}

\subsection{Proof of Theorem \ref{local-Shi-Drin}} \label{proofILSV}
To prove the main theorem stated in the last subsection, we introduce two more integral local Shimura varieties.

Let $\Phi_0^+$ be a   local CM-type  with $\varphi_0\in\Phi_0^+$.  We define the torus $\mathrm{T}$ over $\BQ_p$ with 
$$\mathrm{T}(\BQ_p)=\{t\in K^{\times}: \Nm_{K/F}(t)\in\BQ^{\times}_{p}\} .$$ 
We consider the cocharacter $\mu_{0}: \BG_{m, \bar{\BQ}_{p}}\rightarrow \mathrm{T}_{\bar{\BQ}_{p}}$ obtained from the local CM-type $\Phi_0^{+}$. Namely, $\mu_{0}$ is defined so that the composite 
\[\bar{\BQ}^{\times}_{p}\overset{\mu_{0}}\longrightarrow \T(\bar{\BQ}_{p})\subset \prod\nolimits_{\varphi\in\Phi_0^{+}}(\bar{\BQ}^{\times}_{p})_{\varphi}
\] is given by the diagonal embedding. Let $E_{0}=E_{\Phi_0^+}$ be the reflex field of $\Phi_0^+$. Let $b_{0}\in \T(\breve{\BQ}_{p})$ represent the unique element $[b_{0}]$ of ${B}(\T, \mu_{0}^{-1})$. Let $\mathbf{K}^{\circ}_{\T}$ be the maximal compact open subgroup of $\T(\BQ_{p})$.  By  \cite{PRloc} we can associate  to the local Shimura datum $(\T, b_{0}, \mu_{0}, \mathbf{K}^{\circ}_{\T})$  the integral local Shimura variety $\mathcal{M}^{\mathrm{int}}_{\mu_{0}, b_{0}, \mathbf{K}^{\circ}_{\T}}$. This is a formal scheme over $\Spf O_{\breve{E}_{{0}}}$ with natural descent datum down to $\Spf O_{E_{{0}}}$. 

Let $\wt{G}=\U\times \T$. We obtain  the cocharacter 
\[\wt{\mu}=  \mu\times \mu_{0} \colon \BG_{m, , \bar{\BQ}_{p}}\longrightarrow \wt{G}_{\bar{\BQ}_{p}}.
\] Also, let $\mathbf{K}^{\circ}_{\wt{G}}= \bfK^{\circ}_{\U}\times \bfK^{\circ}_{\T}$. Then $\wt{b}=(b, b_{0})\in \wt{G}(\breve\BQ_p)= \U(\breve\BQ_p)\times \T(\breve\BQ_p)$ is a representative of  the unique basic element  in ${B}(\wt{G}, \wt{\mu}^{-1})$.  By  \cite{PRloc}, we can associate  to the local Shimura datum $(\wt{G}, \wt{b}, \wt{\mu}, \mathbf{K}^{\circ}_{\wt{G}})$ the integral local Shimura variety $\mathcal{M}^{\mathrm{int}}_{\wt{\mu}, \wt{b}, \mathbf{K}^{\circ}_{\wt{G}}}$, a formal scheme over $\Spf O_{\breve{E}}$ with natural descent datum to $\Spf O_{E}$. Here  $E=E_{0}K$ is the reflex field for this local Shimura datum.

The integral local Shimura varieties  $\mathcal{M}^{\mathrm{int}}_{\mu_{0}, b_{0}, \mathbf{K}^{\circ}_{\T}}$ and $\mathcal{M}^{\mathrm{int}}_{\wt{\mu}, \wt{b}, \mathbf{K}^{\circ}_{\wt{G}}}$ are  of PEL-type and therefore they are isomorphic to the corresponding RZ spaces, cf.  \cite[lecture 25]{Sch}\footnote{Note that in loc.~cit., the Weil descent datum is ignored.}. Let us explain this for $\mathcal{M}^{\mathrm{int}}_{\wt{\mu}, \wt{b}, \mathbf{K}^{\circ}_{\wt{G}}}$. We fix $(V, \varsigma)$ as in \eqref{varsigma-loc} such that $\U'=\U(V)$. Let $r=r_{\Phi^+_0}$ be the generalized CM-type of rank $2$, special wrt ${\varphi_0}_{| F}$,  defined by 
\[
r_{\varphi}=2, \quad \varphi\in \Phi^+_0\setminus \{\varphi_0\}. 
\] 
Then $\mathcal{M}^{\mathrm{int}}_{\wt{\mu}, \wt{b}, \mathbf{K}^{\circ}_{\wt{G}}}$ is isomorphic to $\wt{\CM}_{\bK^{\circ}_{\wt G}}$ introduced in Definition \ref{RSZ-LS-special} for the given $\Phi^+_0$ and $r$. Similarly,  
 $\mathcal{M}^{\mathrm{int}}_{\mu_{0}, b_{0}, \mathbf{K}^{\circ}_{\T}}$ is isomorphic to ${\CM}_{\bK^{\circ}_{\T}}$ as in Definition \ref{RSZ-LS-Tori}.   Note that $E=E_{0}K=E_{r}K$,   where $E_{r}$ is the reflex field of the local CM-type $r$.

\begin{proof}[Proof of Theorem \ref{local-Shi-Drin}]
The functoriality of  integral local Shimura varieties \cite[\S 2.4]{PRloc} furnishes the following product decomposition
\begin{equation}\label{int-prod1}
  {\mathcal{M}}^{\mathrm{int}}_{\wt \mu, \wt b, \mathbf{K}^{\circ}_{\wt G}}\simeq\mathcal{M}^{\mathrm{int}}_{\mu, b, \mathbf{K}^{\circ}_{\U}, O_{\breve{E}}}\times_{\Spf  O_{\breve{E}}} \mathcal{M}^{\mathrm{int}}_{\mu_{0}, b_{0}, \mathbf{K}^{\circ}_{\T}, O_{\breve{E}}},
\end{equation}  
with compatible Weil descent datum down to $O_E$ on both sides. Here, to simplify the notation,  we have indicated the base changes  by the index $O_{\breve E}$ (from $O_{\breve K}$ to $ O_{ \breve E}$ for the first factor, resp. from $O_{\breve E_{0}}$ to $ O_{\breve E}$ for the second factor). 

We have an explicit expression for the LHS of \eqref{int-prod1}. Indeed, we have ${\mathcal{M}}^{\mathrm{int}}_{\wt{\mu}, \wt{b}, \mathbf{K}^{\circ}_{\wt G}}\simeq \wt{\CM}_{\bK^{\circ}_{\wt G}} $. Hence by Proposition \ref{LS-weil-special}, we have 
\begin{equation}\label{Mint}
{\mathcal{M}}^{\mathrm{int}}_{\wt{\mu}, \wt{b}, \mathbf{K}^{\circ}_{\wt G}}\simeq (\hat{\Omega}_{F} \times_{\Spf O_{F}} \Spf O_{\breve{E}}) \times
\T(\mathbb{Q}_p)/\mathbf{K}^{\circ}_{\T}, 
\end{equation}
where we used that $\wt{G}(\mathbb{Q}_p)= \U(\BQ_{p})\times \T(\BQ_{p})$ and  $\mathbf{K}^{\circ}_{\wt G}=\U(\BQ_{p})\times \mathbf{K}^{\circ}_{\T}$.  

We also have an explicit expression for the second factor on the RHS of \eqref{int-prod1}. Indeed, $\mathcal{M}^{\mathrm{int}}_{\mu_{0}, b_{0}, \mathbf{K}^{\circ}_{\T}}\simeq {\mathcal{M}}_{\mathbf{K}^{\circ}_{\T}}$ so that, by Proposition \ref{LS-tori-weil}, we have 
\begin{equation}\label{MTint}
\mathcal{M}^{\mathrm{int}}_{\mu_{0}, b_{0}, \mathbf{K}^{\circ}_{\T}}\simeq \T(\BQ_p)/\bK^{\circ}_{\T} .
\end{equation}
Note that we have a commutative diagram of formal schemes over $\Spf O_{\breve{E}}$,
\begin{equation}\label{commutative-local}
\begin{tikzcd}
{\mathcal{M}}^{\mathrm{int}}_{\wt \mu, \wt b, \mathbf{K}^{\circ}_{\wt G}} \arrow[r, "\sim"] \arrow[d] &  \wt{\CM}_{\bK^{\circ}_{\wt G}}  \arrow[d] \\
\mathcal{M}^{\mathrm{int}}_{\mu_{0}, b_{0}, \mathbf{K}^{\circ}_{\T}, O_{\breve{E}}} \arrow[r, "\sim"] &  {\mathcal{M}}_{\mathbf{K}^{\circ}_{\T}, O_{\breve{E}}}.    
\end{tikzcd}
\end{equation}
Here the vertical map on the left is the projection morphism from \eqref{int-prod1}. The vertical map on the right comes from the moduli description of $\wt{\CM}_{\bK^{\circ}_{\wt G}}$, cf. \eqref{vartheta-special}.
Comparing the individual factors of \eqref{int-prod1} with \eqref{Mint} and using the fact that 
\begin{equation}\label{prodILSV}
 \mathcal{M}^{\mathrm{int}}_{\mu_{0}, b_{0}, \mathbf{K}^{\circ}_{\T}, O_{\breve{E}}}\simeq{\mathcal{M}}_{\mathbf{K}^{\circ}_{\T}, O_{\breve{E}}}\simeq \T(\BQ_{p})/\bK^{\circ}_{\T},
\end{equation}
it follows from considering the vertical fibers of the commutative diagram \eqref{commutative-local} that we have an isomorphism
\begin{equation}\label{MU-and-Omega}
\mathcal{M}^{\mathrm{int}}_{\mu, b, \mathbf{K}^{\circ}_{\U}, O_{\breve{E}}}\simeq \hat{\Omega}_{F} \times_{\Spf O_{F}} \Spf O_{\breve{E}}  .
\end{equation}
 By the equivariance of the vertical projections for the action of $\T(\BQ_{p})$ and the transitivity of the action on the target, this isomorphism is independent of the fiber.

Moreover, this isomorphism is compatible with Weil descent data down to $O_E$  on both sides. To see this, we use the compatibility of the Weil descent data on the two sides of the isomorphism  \eqref{int-prod1}
\begin{equation*}
\mathcal{M}^{\mathrm{int}}_{\mu, b, \mathbf{K}^{\circ}_{\U}, O_{\breve{E}}}\times_{\Spf  O_{\breve{E}}} \mathcal{M}^{\mathrm{int}}_{\mu_{0}, b_{0}, \mathbf{K}^{\circ}_{\T}, O_{\breve{E}}}\overset{\sim}{\longrightarrow} \hat{\Omega}_{F} \times_{\Spf O_{F}} \Spf O_{\breve{E}}\times \T(\BQ_p)/\bK^{\circ}_{\T} .
\end{equation*}
It follows that we have a commutative diagram
\begin{equation*}
\begin{tikzcd}
\mathcal{M}^{\mathrm{int}}_{\mu, b, \mathbf{K}^{\circ}_{\U}, O_{\breve{E}}}\times_{\Spf  O_{\breve{E}}} \mathcal{M}^{\mathrm{int}}_{\mu_{0}, b_{0}, \mathbf{K}^{\circ}_{\T}, O_{\breve{E}}} \arrow[r, "\sim"] \arrow[d, "\alpha_{1}"] & (\hat{\Omega}_{F} \times_{\Spf O_{F}} \Spf O_{\breve{E}}) \times
\T(\mathbb{Q}_p)/\mathbf{K}^{\circ}_{\T} \arrow[d, "\alpha_{2}"] \\
\big(\mathcal{M}^{\mathrm{int}}_{\mu, b, \mathbf{K}^{\circ}_{\U}, O_{\breve{E}}}\times_{\Spf  O_{\breve{E}}} \mathcal{M}^{\mathrm{int}}_{\mu_{0}, b_{0}, \mathbf{K}^{\circ}_{\T}, O_{\breve{E}}} \arrow[r, "\sim"]\big)^{(\tau_E)} &   \big((\hat{\Omega}_{F} \times_{\Spf O_{F}} \Spf O_{\breve{E}}) \times
\T(\mathbb{Q}_p)/\mathbf{K}^{\circ}_{\T}\big)^{(\tau_E)},         
\end{tikzcd}
\end{equation*}
where $\alpha_{1}$ is given by $(\xi_{1}, \xi_{2})\mapsto (\omega_{\U, E}(\xi_{1}), \omega_{\T, E}(\xi_{2}))$, with $\omega_{\U, E}$ and $\omega_{\T, E}$  the natural Weil descent data for $\mathcal{M}^{\mathrm{int}}_{\mu, b, \mathbf{K}^{\circ}_{\U}, O_{\breve{E}}}$ and $\mathcal{M}^{\mathrm{int}}_{\mu_{0}, b_{0}, \mathbf{K}^{\circ}_{\T}, O_{\breve{E}}}$  down to $O_E$, and where $\alpha_{2}$ is given by $(\xi, t)\mapsto (\omega_{\Omega, E}(\xi), \wt{w}t)$, with $\omega_{\Omega, E}$  the natural Weil descent datum for $\hat{\Omega}_{F} \times_{\Spf O_{F}} \Spf O_{\breve{E}}$ and with $\wt w$ as in the statement of Proposition \ref{LS-weil-special}.   Now  the Weil descent datum $\omega_{\T, E}$  is given in terms of the identification \eqref{MTint} by  $t\mapsto {w_{r_0}}^{f_{E}/f_{E_0}}t$, where $w_{r_0}$ is   the special element from Proposition \ref{LS-tori-weil}. But $\wt w$ and ${w_{r_0}}^{f_{E}/f_{E_0}}$ differ by a unit (both have $K/F$-norm $p^{f_E}$, up to a unit), hence have the same residue class modulo $\T(\BZ_p)$. Since the isomorphism \eqref{MU-and-Omega} is  independent of the fiber, it follows that \eqref{MU-and-Omega}  is compatible with the Weil descent data, as claimed. 

It follows that the descent datum on $\mathcal{M}^{\mathrm{int}}_{\mu, b, \mathbf{K}^{\circ}_{\U}, O_{E}}$ down to $O_E$ is effective, defining the descended formal scheme  $\mathcal{M}^{\mathrm{int}}_{\mu, b, \mathbf{K}^{\circ}_{\U}, O_{E}}$ over $\Spf O_E$, and  that the isomorphism \eqref{MU-and-Omega} is induced by an  isomorphism of formal schemes over $\Spf O_E$
\begin{equation}\label{weil-U-E}
\mathcal{M}^{\mathrm{int}}_{\mu, b, \mathbf{K}^{\circ}_{\U}, O_{E}}\simeq \hat{\Omega}_{F} \times_{\Spf O_{F}} \Spf O_{E}.
\end{equation}

Now we let $\Phi^+_0$ vary. We claim that we can descend the  isomorphism \eqref{weil-U-E} to the smaller field $E(\varphi_{0})$. To prove this, it suffices to consider two such local CM-types, say $\Phi^+_0$ and $\Phi^{+, \prime}_0$. Then we obtain two local reflex fields $E=E_{0}K$ and $E^{\prime}=E_{0}^\prime K$. Let  $L$ be the composite of $E$ and $E^{\prime}$. To establish our claim, it suffices to show that the isomorphisms  from \eqref{weil-U-E}
\begin{equation*}
\mathcal{M}^{\mathrm{int}}_{\mu, b, \mathbf{K}^{\circ}_{\U}, O_{E}}\simeq \hat{\Omega}_{F} \times_{\Spf O_{F}} \Spf O_{E}, \quad\text{resp.}\quad \mathcal{M}^{\mathrm{int}}_{\mu, b, \mathbf{K}^{\circ}_{\U}, O_{E^{\prime}}}\simeq \hat{\Omega}_{F} \times_{\Spf O_{F}} \Spf O_{E^{\prime}},
\end{equation*} 
give rise via base-change to $O_{L}$ to the same isomorphism 
\begin{equation*}
\mathcal{M}^{\mathrm{int}}_{\mu, b, \mathbf{K}^{\circ}_{\U}, O_{L}}\simeq \hat{\Omega}_{F} \times_{\Spf O_{F}} \Spf O_{L} .
\end{equation*} 
  By our construction, this boils  down to showing that the Weil descent datum on  both sides of 
\begin{equation}\label{weil-E}
\begin{aligned}
&\mathcal{M}^{\mathrm{int}}_{\wt{\mu}, \wt{b}, \mathbf{K}^{\circ}_{\wt{G}}, O_{\breve{E}}}\simeq (\hat{\Omega}_{F} \times_{\Spf O_{F}} \Spf O_{\breve{E}})\times_{\Spf  O_{\breve{E}}}  (\T(\BQ_p)/\bK^{\circ}_{\T} )_{O_{\breve{E}}},\\ \quad\text{resp.}\\
&\mathcal{M}^{\mathrm{int}}_{\wt{\mu}, \wt{b}, \mathbf{K}^{\circ}_{\wt{G}}, O_{\breve{E}^{\prime}}}\simeq (\hat{\Omega}_{F} \times_{\Spf O_{F}} \Spf O_{\breve{E}^{\prime}})\times_{\Spf  O_{\breve{E}^{\prime}}} (\T(\BQ_p)/\bK^{\circ}_{\T} )_{O_{\breve{E}^{\prime}}}
\end{aligned}
\end{equation} 
induce  the same Weil descent datum on  both sides of 
\begin{equation}\label{weil-EL}
\mathcal{M}^{\mathrm{int}}_{\wt{\mu}, \wt{b}, \mathbf{K}^{\circ}_{\wt{G}}, O_{\breve{L}}}\simeq (\hat{\Omega}_{F} \times_{\Spf O_{F}} \Spf O_{\breve{L}})\times_{\Spf  O_{\breve{L}}} (\T(\BQ_p)/\bK^{\circ}_{\T} )_{O_{\breve{L}}}.
\end{equation} 
On the left-hand side, this is clear.  By Proposition \ref{LS-weil-special}, the Weil descent datum down to $O_E$ in the first line on the right-hand side of \eqref{weil-E} is given by 
\begin{equation*}
(\xi, g) \mapsto (\omega_{\Omega,{E}}(\xi), \wt{w}_{E}g), \quad g \in \T(\mathbb{Q}_p) .
\end{equation*} 
 Here we recall $\wt{w}_{E}={\mathfrak{r}}_{\Phi_0^{+}, E}(\varpi_{E})$ is the special element relative to $(\Phi_0^{+}, E)$ for a uniformizer $\varpi_{E}$ of $E$ as in Construction \ref{sp-elt-loc}.  A similar  formula applies to the second line of \eqref{weil-E} and makes explicit the Weil descent datum down to  $O_{{E}^{\prime}}$, where we use the special element  $\wt{w}_{E^\prime}=\mathfrak{r}_{\Phi^{+,\prime}_{0}, E^{\prime}}(\varpi_{E^{\prime}})$ relative to $(\Phi_0^{+, \prime}, E^{\prime})$ for a uniformizer $\varpi_{E^{\prime}}$ of $E$. Note that the Weil descent datum depends only on the image of $\wt{w}_{\ast}$ in $\T(\mathbb{Q}_p)/\bK^{\circ}_{\T}$ for $\ast\in\{E, E^{\prime}\}$, which means we can always modify the element $\wt{w}_{\ast}$ by a unit in $O_{K}$.
 
 Let $f_L$ be the inertia degree of $L$. Then the descent datum induced on the RHS of \eqref{weil-EL} by the first line of \eqref{weil-E} is given by $(\xi, g) \mapsto (\omega_{\Omega, {L}}(\xi), \wt{w}_{E}^{f_L/f_E}g)$, where $f_E$ denotes the inertia degree of $E$. Similarly,  the descent datum induced on the RHS of \eqref{weil-EL} by the second line of \eqref{weil-E} is given by $(\xi, g) \mapsto (\omega_{\Omega, {L}}(\xi), \wt{w}_{E^\prime}^{f_L/f_{E^\prime}}g)$, where $f_{E^\prime}$ denotes the inertia degree of $E^\prime$. It suffices to show that $\wt{w}_{E}^{f_L/f_{E}}$ and $\wt{w}_{E^\prime}^{f_L/f_{E^\prime}}$ differ by a unit. This follows from the identities 
  $\Nm_{K/F}(\wt{w}_{E})=p^{f_{E}}$ and $\Nm_{K/F}(\wt{w}_{E^\prime})=p^{f_{E^\prime}}$ (up to units) which imply 
\begin{equation*}
\Nm_{K/F}(\wt{w}^{f_{L}/f_{E}}_{E})=p^{ f_{L}}=\Nm_{K/F}(\wt{w}^{f_{L}/f_{E^{\prime}}}_{E^{\prime}})
\end{equation*}
(up to units).

 At this point, we have proved the /effectivity of the Weil descent datum on $\mathcal{M}^{\mathrm{int}}_{\mu, b, \mathbf{K}^{\circ}_{\U}}$ down to the finite extension $O_{E(\varphi_{0})}$ of $O_K$, which then induces a usual descent datum on $\mathcal{M}^{\mathrm{int}}_{\mu, b, \mathbf{K}^{\circ}_{\U}, O_{E(\varphi_{0})}}$ for the finite extension $O_{E(\varphi_{0})}$ of $O_K$. This descent datum is in fact effective. Indeed, this follows from the fact that the canonical bundle on the special fiber of $\mathcal{M}^{\mathrm{int}}_{\mu, b, \mathbf{K}^{\circ}_{\U}, O_{E(\varphi_{0})}}$ is ``ample'' (i.e., the restriction to any finite type closed subscheme is ample). This latter fact is well-known for $\hat{\Omega}_{F} $ and follows from the fact that the quotient of  $\hat{\Omega}_{F} $ by a sufficiently small discrete cocompact subgroup of $\PGL_2(F)$ is a stable curve over $O_F$ in the sense of Deligne-Mumford, comp. the argument in the proof of \cite[Theorem  7.3.3]{KRZI}. We obtain the formal scheme $\mathcal{M}^{\mathrm{int}}_{\mu, b, \mathbf{K}^{\circ}_{\U}, O_{K}}$ with the property in the statement of Theorem \ref{local-Shi-Drin}.
 \end{proof}

\section{$p$-adic uniformization of Shimura curves  for the unitary group}\label{s:unigl}

Now we consider the global case.  Let $K/F$ be a CM-field, with a fixed embedding $\varphi_0\colon K\to\bar\BQ$. We denote by $w_0$ the  archimedean place of $F$ induced from the embedding $F\to\bar\BQ$. Let $p$ be a prime number and fix an embedding $\bar\BQ\to\bar\BQ_p$.  We denote by $v_0$ the $p$-adic place induced by $F\to\bar\BQ_p$. We assume that $v_0$ does not split in $K$. We will identify $F$ and $K$ with their images via $\varphi_{0}$ in $\bar\BQ$. . 

Let $\U'$ be a $K/F$-unitary group of size $2$ and set $\U=\Res_{F/\BQ}(\U')$. We assume that $\U'_{v_0}$ is anisotropic. Consider the Shimura datum ${X}_{\U}$ given by  the conjugacy class of homomorphisms
\begin{equation}\label{shi-da}
h_{\U}\colon \Res_{\BC/\BR}(\BG_m)\longrightarrow\U_{\BR} 
\end{equation}
that we define now. 
 Fix a CM type $\Phi^+$ for $K/F$ with $\varphi_0\in\Phi^+$. We choose for each $\varphi\in \Phi^+$ a $\BC$-basis of $V\otimes_{K, \varphi}\BC$  such that the anti-hermitian form is given by $\diag(i\cdot 1_{r_\varphi}, (-i)\cdot 1_{r_{\bar\varphi}})$. Then  we can write 
\begin{equation*}\label{shi-daprod}
\U\otimes_\BQ\bar\BQ=\prod\nolimits_{\Phi^+} \GL_2/\bar\BQ . 
\end{equation*}
Correspondingly $h_{\U}=(h_\varphi)_{\varphi\in\Phi^+}$. We set
\begin{equation}\label{shi-daprod}
h_\varphi(z)=
 \begin{cases}
    \begin{aligned}
      \diag(1, z/\bar z),  &\quad \varphi=\varphi_0 \\
      1  , &\quad \varphi\neq\varphi_0.\\
    \end{aligned}
  \end{cases}
\end{equation}

Then the reflex field $E(\U, X_{\U})$ of $(\U, X_{\U})$ is given by the fixed field of
$$\Gal(\bar\BQ/E(\U, X_{\U}))=\{ \tau\in \Gal(\bar\BQ/\BQ)\mid \tau\varphi_{0} = \varphi_{0}\}.$$
Therefore $E(\U, X_{\U})=\varphi_{0}(K)=K$.

For an open compact subgroup $\bK_{\U} \subset \U(\BA_f)$, there is a Shimura variety ${\rm Sh}_{\bK_{\U}}(\U, X_{\U})$ over $\Spec K$, 
whose complex points are given by 
\begin{equation*}
{\rm Sh}_{\bK_{\U}}(\U, X_{\U})(\BC) \simeq \U(\BQ) \bs [\Omega_\BR \times \U(\BA_f)/{\bK_{\U}}].
\end{equation*}
Note this Shimura variety is of abelian type but not of Hodge type. We denote by ${\rm Sh}_{\bK_{\U}}(\U, X_{\U})_{K_{v_0}}$ the base change over $K_{v_0}$, and analogously for any extension of $K_{v_0}$.

 The goal of this section is to establish  $p$-adic uniformization for this Shimura curve  over an extension of $K_{v_0}$.
\subsection{Construction of integral models}

We take  the open compact subgroup $\bK_\U$ of $\U(\BA_{f})$ of the form ${\bK}_\U = {\bK}_{\U, p} \cdot {\bK}^{p}_{\U}$, where ${\bK}^{p}_{\U}\subset \U(\BA_f^p)$ is sufficiently small. Let us write $\U_p=\U\otimes_\BQ\BQ_p$ and $\U_p(\BQ_{p})=\prod_{v\mid p}\U_{v}(\BQ_{p})$, where we denote by $\U_v={\rm Res}_{F_v/\BQ_p}(\U'_v)$ the corresponding unitary group over $\BQ_{p}$. Then we further impose that 
 \begin{equation}\label{condoutv0}
{\bK}_{\U,p} = {\bK}_{\U, v_0} \cdot {\bK}^{v_0}_{\U, p},
\end{equation}
where ${\bK}_{\U, v_0} = \U_{v_{0}}(\BQ_{p})$, and where ${\bK}^{v_0}_{\U,p}\subset \prod_{v\neq v_0}\U_v(\BQ_p)$ is arbitrary.  Note that this class of open compact subgroups is stable under conjugation by $\U(\BQ_p)$. 
Note that ${\bK}_{\U, v_0}$ is a quasi-parahoric subgroup of $\U_{v_0}(\BQ_p)$ (recall that $\U_{v_0}(\BQ_p)$ is compact by assumption). 

We first consider the case where  
\begin{equation}\label{parb}
{\bK}^{\circ}_{\U, p}={\bK}^{}_{\U, v_0} \cdot \prod_{v\neq v_0}{\bK}^{\circ}_{\U, v} .
\end{equation}
Here for $v\neq v_{0}$ we impose that ${\bK}^{\circ}_{\U, v}$ is a maximal parahoric subgroup of $\U_{v}(\BQ_{p})$ which is contained in the stabilizer of an almost selfdual lattice $\Lambda_v$ in $V_v$, cf. the Notation section in the Introduction. Then ${\bK}^{\circ}_{\U, p}$ is a quasi-parahoric subgroup of $\U(\BQ_p)$. It is even a parahoric subgroup if $v_0$ is unramified in $K$ since then ${\bK}^{}_{\U, v_0}$ is a parahoric. 

We will assume the existence of the \emph{canonical integral model} $\CA_{\bK_\U^{\circ}}$  of ${\rm Sh}_{\bK^{\circ}_{\U}}(\U, X_{\U})$ over $O_{K_{v_0}}$. This is a normal scheme proper and flat over $\Spec O_{K_{v_0}}$ which admits a unique characterization, cf.  \cite[Conjecture 4.2.2]{PR}. When $\bK^{\circ}_{\U, p}$ is a parahoric and $p\neq 2$, the canonical integral model exists, cf.  \cite{DY}. When  $\bK^{\circ}_{\U, p}$ is a quasi-parahoric or $p=2$, the existence of the canonical integral model seems still unknown.  Compare also with Remark \ref{cani} below.
\begin{proposition}
There exists a unique extension $\CA_{\bK_{\U}}$ of ${\rm Sh}_{\bK_{\U}}(\U, X_{\U})_{K_{v_0}}$ as a normal flat $O_{K_{v_0}}$-scheme, for every $\bK_{\U}$ of the form \eqref{condoutv0}, compatible with the transition morphisms for varying $\bK_{\U}$,  such that the following two conditions are satisfied.
\begin{enumerate}
\item For $\bK_{\U}=\bK^{\circ}_{\U}$, as in \eqref{parb}, the model coincides with the canonical integral model.
\item The transition morphisms $\CA_{\bK'_{\U}}\to \CA_{\bK_{\U}}$ for $\bK'_{\U}\subset\bK_{\U}$ are finite \'etale. 
\end{enumerate}
\end{proposition}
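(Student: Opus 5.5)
The plan is to reduce everything to the parahoric level $\bK^{\circ}_{\U}$ by passing to a common normal open subgroup, and to construct the deeper levels as normalizations in the generic fiber. Given $\bK_{\U}$ of the form \eqref{condoutv0}, the first step is to find a subgroup $\bK^{\star}_{\U}\subset \bK_{\U}$, again of the form \eqref{condoutv0}, which is normal in some $\bK^{\circ}_{\U}$ of the form \eqref{parb}. For this one shrinks $\bK^{v_0}_{\U,p}$ to a normal subgroup of some $\prod_{v\neq v_0}\bK^{\circ}_{\U,v}$ (possible since these are profinite), keeping $\bK_{\U,v_0}=\U_{v_0}(\BQ_p)$ and the same prime-to-$p$ part. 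One then defines $\CA_{\bK_{\U}}$ as the normalization of $\CA_{\bK^{\circ}_{\U}}$ in ${\rm Sh}_{\bK_{\U}}(\U,X_{\U})_{K_{v_0}}$, whenever $\bK_{\U}\subset\bK^{\circ}_{\U}$. For general $\bK_{\U}$, one defines $\CA_{\bK_{\U}}$ as the quotient $\CA_{\bK^{\star}_{\U}}/(\bK_{\U}/\bK^{\star}_{\U})$.

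The second and key step is to show that the normalization $\CA_{\bK_{\U}}\to\CA_{\bK^{\circ}_{\U}}$ is finite \'etale. This holds because the level is only deepened away from $v_0$. The plan is to prove this by reduction to the PEL situation of \S\ref{s:globRSZ}, via the local-model/\'etale-local comparison built into canonical integral models: by the characterization in \cite{PR}, the formal neighbourhoods of $\CA_{\bK^{\circ}_{\U}}$ are described by the integral local Shimura variety for $(\U_p,\mu,\bK^{\circ}_{\U,p})$. This variety factors as a product over $v\mid p$, and its factors at banal $v$ are \'etale (the cocharacter is trivial there). Over the factors at $v\neq v_0$, the tower for varying $\bK^{v_0}_{\U,p}$ is therefore an \'etale tower of $\U^{v_0}(\BQ_p)$-torsors, coming from the \'etale local system attached to the shtuka at the banal places (compare the CL-level structures and Proposition \ref{LS-weil-banal}). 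Concretely, I would construct a $\prod_{v\neq v_0}\bK^{\circ}_{\U,v}$-torsor $\CP$ in pro-\'etale $\BZ_p$-local systems on $\CA_{\bK^{\circ}_{\U}}$ extending the generic-fiber one. Then $\CP/\bK^{v_0}_{\U,p}$ is finite \'etale over $\CA_{\bK^{\circ}_{\U}}$ with normal total space, so by uniqueness of normalization it equals $\CA_{\bK_{\U}}$. Extending this local system across the special fiber is the main obstacle. I would first try to deduce it from the existence of the shtuka over $\CA_{\bK^{\circ}_{\U}}$ given in \cite{PR} and its product decomposition over $v$. As a fallback, after base change one can compare with the RSZ model, using Proposition \ref{iso_OE}-type arguments and the \'etaleness of the Hecke covers of $\wt\CA$ at banal places, and then descend by Galois descent.

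The remaining steps are formal. Property (1) holds by construction. For property (2), given $\bK'_{\U}\subset\bK_{\U}$, both are normalizations over a common $\CA_{\bK^{\circ}_{\U}}$ after passing to a common $\bK^{\star}$; a map between finite \'etale covers of $\CA_{\bK^{\circ}_{\U}}$ is finite \'etale, and finite \'etale maps descend along \'etale quotients by finite groups acting freely. Independence of the choice of $\bK^{\circ}_{\U}$ and of $\bK^{\star}$, as well as compatibility with transition morphisms, follow from the universal property of normalization: two parahoric choices differ by conjugation by $\U(\BQ_p)$, which acts on the tower of canonical models. Uniqueness holds because any normal flat model satisfying (1) and (2) is finite over $\CA_{\bK^{\circ}_{\U}}$ with the given generic fiber, hence equals the normalization. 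Flatness holds because normalizations of a flat $O_{K_{v_0}}$-scheme in its generic fiber are torsion-free.
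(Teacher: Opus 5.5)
Your construction of $\CA_{\bK_\U}$ (normalization of $\CA_{\bK^\circ_\U}$ in the generic fiber, then quotients by finite groups), your uniqueness and flatness arguments, and your formal deduction of (2) from finite \'etaleness over $\CA_{\bK^\circ_\U}$ all agree with the paper. One small correction: to form $\CA_{\bK^\star_\U}/(\bK_\U/\bK^\star_\U)$ you need $\bK^\star_\U$ normal in $\bK_\U$. Normality in $\bK^\circ_\U$ is irrelevant, since normalization over $\CA_{\bK^\circ_\U}$ works for any $\bK^\star_\U\subset\bK^\circ_\U$.

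The genuine gap is the one step with real content: that $\CA_{\bK_\U}\to\CA_{\bK^\circ_\U}$ is finite \'etale. You leave this open. Your primary route does not yet prove it. The banal-place part of the Pappas--Rapoport shtuka does have no legs, so it yields a $\prod_{v\neq v_0}\bK^\circ_{\U,v}$-torsor. But that torsor lives only on the v-sheaf of the $p$-adic completion of $\CA_{\bK^\circ_\U}$. You would still need to show that its finite-level quotients come from finite \'etale covers of the scheme $\CA_{\bK^\circ_\U}$ which restrict to the given Hecke covers of the generic fiber. That requires a specialization/purity statement for finite \'etale covers of such v-sheaves, plus algebraization. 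Nothing in the axioms of canonical models as used here, nor in your sketch, provides this.

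Your fallback points in the paper's direction but cites the wrong comparison. Proposition \ref{iso_OE} relates the RSZ model to the model $\CA^\star_{\bK^\star_G}$ for the similitude group $G$, not to $\CA_{\bK_\U}$.

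The missing idea is the product decomposition at the integral level:
\begin{enumerate}
\item By \cite{DvHKM}, the PEL model $\wt\CA_{\bK^\circ_{\wt G}}$ is the canonical model for $\wt G=\U\times Z^\BQ$. By the characterization of canonical models it therefore splits as $\CA_{\bK^\circ_\U,O_{E_\nu}}\times_{\Spec O_{E_\nu}}\CA^{[\Lambda_0]}_{0,O_{E_\nu}}$.
\item Since $\CA^{[\Lambda_0]}_0$ is finite \'etale over the base, normalization commutes with this product. So the splitting persists at every level $\bK_\U\times\bK^\circ_{Z^\BQ}$, and the square \eqref{Cart-diagmo} is cartesian.
\item The left side of the square is finite \'etale by the CL-level description in \cite[\S 7.4]{KRZI}.
\item The left side is the base change of the right side along the bottom projection. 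That projection is faithfully flat because $\CA^{[\Lambda_0]}_{0,O_{E_\nu}}\to\Spec O_{E_\nu}$ is.
\item fpqc descent, followed by descent along $O_{K_{v_0}}\to O_{E_\nu}$, then gives finite \'etaleness of the right side.
\end{enumerate}
This is faithfully flat descent, not the Galois descent you mention, since $E_\nu/K_{v_0}$ need not be Galois.
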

\begin{proof} The uniqueness is easy: for $\bK_{\U}\subset\bK^{\circ}_{\U}$, we see that $\CA_{\bK_{\U}}$ is the normalization of $\CA_{\bK^{\circ}_{\U}}$ in ${\rm Sh}_{\bK_{\U}}(\U, X_{\U})_{K_{v_0}}$. This defines $\CA_{\bK_{\U}}$ for all sufficiently small ${\bK_{\U}}$. If $\bK'_{\U}\subset\bK_{\U}$ is a normal subgroup, then  the action of $\bK_\U/\bK'_\U$ on ${\rm Sh}_{\bK'_{\U}}(\U, X_{\U})_{K_{v_0}}$ extends to $\CA_{\bK'_{\U}}$ and $\CA_{\bK_{\U}}=\CA_{\bK'_{\U}}/ (\bK_{\U}/\bK'_{\U})$. This extends the definition of $\CA_{\bK_{\U}}$ to all ${\bK_{\U}}$. The same argument also handles the existence. It remains to show that the transition morphisms are finite \'etale. For this, we introduce two more Shimura varieties which are of PEL type. 

Recall the    CM-type  $\Phi^+$ we used to define the Shimura datum $(\U, X_\U)$.  In analogy to the local case (where we constructed the pair $(\T, \mu_0)$), we obtain a Shimura datum $(Z^\BQ, X_{Z^\BQ})$. Let $\wt G=\U\times Z^\BQ$. We obtain the Shimura datum $(\wt G, X_{\wt G})$, where $h_{\wt{G}}= h_{\U}\times h_{Z^{\BQ}}$. The reflex field for $(\wt G, X_{\wt G})$ is equal to $E=E_{\Phi^+}K$. For the corresponding Shimura varieties we have the following isomorphism over $\Spec E$, 
\begin{equation}\label{decompgl}
{\rm Sh}_{\bK_{\wt G}}(\wt G, X_{\wt G})\simeq  {\rm Sh}_{\bK_{\U}}(\U, X_{\U})_{E} \times_{\Spec E}  {\rm Sh}_{\bK^{\circ}_{Z^\BQ}}(Z^\BQ, X_{Z^\BQ})_{E},
\end{equation}
where we define 
\begin{equation}\label{decprodK}
\bK_{\wt G}= \bK_\U\times \bK^{\circ}_{Z^\BQ}.
\end{equation} 
 We can identify the Shimura variety ${\rm Sh}_{\bK_{\wt G}}(\wt G, X_{\wt G})$ with the PEL type Shimura variety of Definition \ref{def:overE}. To this end, let $(V, \varsigma)$ as in \eqref{varsigma} such that $\U'$ is equal the isometry group $\U(V, \varsigma)$.  Let $r$ be the CM-type of rank $2$, special with respect to $\varphi_{0}$, such that its canonical CM-type is given by $\Phi^{+}$. Recall that this means $r_{\varphi_{0}}=r_{\bar{\varphi}_{0}}=1$ and  $r_\varphi=2$ for $\varphi\in\Phiplus\setminus \{\varphi_0\}$. Then  $E=E_{r}K$. We obtain an isomorphism between ${\rm Sh}_{\bK_{\wt G}}(\wt G, X_{\wt G})$
with the Shimura variety $\wt\CA_{\bK_{\wt G}, E}$ of  Definition \ref{def:overE}.

The decomposition \eqref{decompgl} 
extends to canonical integral models over $O_{E_\nu}$,
\begin{equation}\label{int-decomp-0mo}
\wt\CA_{\bK^{\circ}_{\wt G}, O_{E_\nu}}= \CA_{\bK^{\circ}_{\U}, O_{E_\nu}}\times_{\Spec O_{E_\nu}} \CA^{[\Lambda_{0}]}_{0, O_{E_\nu}},
\end{equation}
where $\bK^\circ_{\wt G}= \bK^\circ_\U\times \bK^{\circ}_{Z^\BQ}$. Here we denoted by an index the base changes  $O_{K}\to O_{E_\nu}$, resp. $O_{E_{\Phi^+}}\to O_{E_\nu}$, on the first, resp. second, factor on the RHS. 

\begin{remark}\label{cani}
Let us clarify.  We are assuming the existence of the canonical integral model for  ${\rm Sh}_{\bK_{\U}}(\U, X_{\U})$ over $O_{K_{v_0}}$. Since we are assuming that for $v\neq v_0$, the open compact $\bK_{\U,v}^\circ$ is a parahoric stabilizer of $\Lambda_v$, it follows that $\bK_{\U, p}^\circ$ is a parahoric iff $\bK_{\U,v_0}^\circ$ is a parahoric, which is true iff $v_0$ is unramified in $K$. The existence of the canonical integral model for  ${\rm Sh}_{\bK_{\U}}(\U, X_{\U})$ over $O_{K_{v_0}}$ is known when $\bK_{\U, p}^\circ$ is a parahoric and $p\neq 2$, cf. \cite{DY}.  Thus our result is conditional iff $v_0$ is ramified in $K$ or $p=2$.

On the other hand,  the existence of the canonical integral models of the PEL-type Shimura varieties ${\rm Sh}_{\bK_{\wt G}}(\wt G, X_{\wt G})$ over $O_{E_\nu}$ and of  ${\rm Sh}_{\bK^{\circ}_{Z^\BQ}}(Z^\BQ, X_{Z^\BQ})$ over $O_{E_{\Phi^+, \nu}}$ follows from   \cite[Theorem  4.2.3]{DvHKM} (there is no assumption on the quasi-parahoric $\bK_p$ in loc. cit., nor is it assumed that $p\neq 2$).  Furthermore, the canonical integral model of ${\rm Sh}_{\bK_{\wt G}}(\wt G, X_{\wt G})$  is equal to the $p$-integral model $\wt\CA_{\bK^{\circ}_{\wt G, O_{E_\nu}}}$ of Definition \ref{def:overOE}, and, similarly, the canonical integral model of ${\rm Sh}_{\bK^{\circ}_{Z^\BQ}}(Z^\BQ, X_{Z^\BQ})$  is equal to the $p$-integral model $\CA^{[\Lambda_{0}]}_{0, O_{E_\nu}}$ of \S \ref{ss:modoverE}. Indeed, this follows from  the construction in \cite[Theorem  4.2.3]{DvHKM}. 
\end{remark}

Now let $\bK_\U$ be arbitrary, and let $\bK_{\wt G}$ as in \eqref{decprodK}. We define integral models $\wt\CA_{\bK_{\wt G}}$ of ${\rm Sh}_{\bK_{\wt G}}(\wt G, X_{\wt G})$, together with their transition maps through normalization, just like for $\CA_{\bK_{ \U}}$, starting with $\wt\CA_{\bK^\circ_{\wt G}}$. Then these $p$-integral models coincide with those of Definition \ref{def:overOE} (use the normality of those models). By the regularity of $\CA^{[\Lambda_{0}]}_{0, O_{E}}$, we obtain a product decomposition 
\begin{equation}\label{int-decomp-0mo!}
\wt\CA_{\bK_{\wt G},O_{E_\nu}}= \CA_{\bK_{\U}, O_{E_\nu}}\times_{\Spec O_{E_\nu}}   \CA^{[\Lambda_{0}]}_{0, O_{E_\nu}},
\end{equation}
for  any $\bK_{\wt G}= \bK_\U\times \bK^{\circ}_{Z^\BQ}$. It follows from \cite[\S 7.4]{KRZI} that the transition maps for $\wt\CA_{\bK_{\wt G}}$ are finite \'etale.

Let $\bK'_\U\subset \bK_\U$. By the universal property of the normalization, we obtain a commutative diagram of $O_{E_\nu}$-schemes, in which the vertical maps are the transition morphisms, and the horizontal maps are the projection morphisms from the decomposition \eqref{int-decomp-0mo!}, 
\begin{equation}\label{Cart-diagmo}
\begin{tikzcd}
\wt\CA_{\bK'_{\wt G},O_{E_{\nu}}} \arrow[r] \arrow[d] &  \CA_{\bK'_{\U},O_{E_{\nu}}}  \arrow[d] \\
\wt\CA_{\bK_{\wt G},O_{E_{\nu}}} \arrow[r]           &  \CA_{\bK_{\U}, O_{E_{\nu}}}        
\end{tikzcd}
\end{equation}
 By the above, this diagram is cartesian. Since the left vertical map is finite \'etale, so is the right vertical map.
 \end{proof}

\subsection{ $p$-adic uniformization of canonical integral models} 

In this section, we prove a uniformization theorem, in the following form. 
Analogously to the local case, we introduce the  finite extension of $K$ contained in $\bar\BQ$,
\begin{equation}
E(\varphi_0)=\bigcap\nolimits_{\varphi_0\in \Phi^{\prime, +}}E_{\Phi^{\prime,+}}K .
\end{equation}

\begin{theorem}\label{unifor-para}
Let $(\CA_{\bK^{}_{\U}})^\wedge$ be the formal completion of $\CA_{\bK^{}_{\U}}$ along its special fiber, which is a formal scheme over $\Spf O_{K_{v_{0}}}$. Then there exists an isomorphism of formal schemes over $\Spf O_{E(\varphi_{0})_\nu}$,
\begin{equation*}
(\CA_{\bK^{}_{\U}})^\wedge\times_{\Spf O_{K_{v_{0}}}}{\Spf O_{E(\varphi_{0})_\nu}} \simeq   {J_\U(\BQ)}\bs\big[\big(\widehat{\Omega}_{F_{v_{0}}} \times_{\Spf O_{F_{v_{0}}}} \Spf O_{E(\varphi_{0})_\nu}\big) \times
 \U(\BA_{f})/{\bK}^{}_{\U}\big] .
\end{equation*}
Here $J_\U$ is an inner form of $\U$ which is compact at all  archimedean places and is quasi-split at $v_0$ and is locally isomorphic to $\U$ at all other places of $F$ (note that by the Hasse principle for adjoint groups, $J_\U$ is uniquely determined by these conditions). The action on the RHS is through an action on $\widehat{\Omega}_{F_{v_{0}}}$ via an identification of $J_{\U, v_0, \ad}(\BQ_p)$ with $\PGL_2(F_{v_0})$ and an action on $ \U(\BA_{ f})/{\bK}^{}_{\U}$.
\end{theorem}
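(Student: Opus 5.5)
Fix a CM-type $\Phi^+_0$ of $K$ with $\varphi_0\in\Phi^+_0$, and put $E=E_{\Phi^+_0}K$. I first prove the isomorphism over $O_{E_\nu}$, transporting the RSZ uniformization (Theorem \ref{main}, in the form of Corollary \ref{main-cor}) through the product decomposition \eqref{int-decomp-0mo!}:
\[
\wt\CA_{\bK_{\wt G},O_{E_\nu}}=\CA_{\bK_\U,O_{E_\nu}}\times_{\Spec O_{E_\nu}}\CA^{[\Lambda_0]}_{0,O_{E_\nu}} .
\]
Here $\wt G=\U\times Z^\BQ$ and $r$ is the special rank-$2$ CM-type whose canonical CM-type is $\Phi^+_0$.

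\emph{Step 1 (uniformization over $O_{\breve E_\nu}$).} Corollary \ref{main-cor} identifies $(\wt\CA_{\bK_{\wt G}})^\wedge_{O_{\breve E_\nu}}$ with
\[
J_\U(\BQ)\bs\big[(\widehat\Omega_{F_{v_0}}\times\Spf O_{\breve E_\nu})\times\U(\BA_f)/\bK_\U\big]\times Z^\BQ(\BQ)\bs Z^\BQ(\BA_f)/\bK^\circ_{Z^\BQ}.
\]
The second factor is a discrete set. On the left, $\CA^{[\Lambda_0]}_0$ is finite étale over $O_{E_{\Phi^+}}$, so over $O_{\breve E_\nu}$ it is a disjoint union of copies of the base. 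Taking the fiber over a point of this finite set on both sides gives an isomorphism
\[
(\CA_{\bK_\U})^\wedge_{O_{\breve E_\nu}}\simeq J_\U(\BQ)\bs\big[(\widehat\Omega_{F_{v_0}}\times\Spf O_{\breve E_\nu})\times\U(\BA_f)/\bK_\U\big].
\]
This mirrors the local argument with the diagram \eqref{commutative-local}. The projection of $\wt\CA_{\bK_{\wt G}}$ to the $\CA_0$-factor corresponds to the projection to the $Z^\BQ$-factor. One needs this compatibility of the two product decompositions: on the moduli side it is the map $(A_0,\dots)\mapsto A_0$, and on the uniformization side it is the projection $\wt J\to J_{Z^\BQ}$, $\wt G\to Z^\BQ$, which visibly match in the construction of $\Theta$.

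\emph{Step 2 (independence of the fiber).} I must check that the isomorphism does not depend on the chosen fiber. This follows from $Z^\BQ(\BA_f)$-equivariance, since the Hecke action of $Z^\BQ(\BA_f)$ is transitive on the discrete factor and trivial on the $\U$-factor.

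\emph{Step 3 (descent to $O_{E_\nu}$).} The descent datum on the $\wt G$ side is $(\xi,h,z)\mapsto(\omega_\Omega(\xi),h,\wt wz)$, and on the $\CA_0$-factor it is multiplication by $\wt w$ (Proposition \ref{LS-tori-weil} globalized). Since the isomorphism is independent of the fiber, it therefore commutes with the Weil descent data down to $O_{E_\nu}$, where the data are $\omega_\Omega$ on the $\widehat\Omega$-factor and trivial on $\U(\BA_f)/\bK_\U$. Both sides are formal completions of proper schemes (on the right, $\widehat\Omega$ quotiented by a cocompact group), so the descent is effective. This gives the isomorphism over $O_{E_\nu}$.

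\emph{Step 4 (descent to $E(\varphi_0)_\nu$).} For two choices $\Phi^+_0,\Phi^{+\prime}_0$ with composite $L$ of $E$ and $E'$, I compare the two base-changed isomorphisms over $O_{\breve L_\nu}$. They differ by an automorphism of the uniformization that commutes with Hecke operators. To see that this automorphism is the identity, I pass through the product with both torus factors at once, or reduce to the local statement Theorem \ref{local-Shi-Drin}, whose isomorphism was shown there to be independent of $\Phi^+_0$: both global isomorphisms are built from the same local isomorphism for $\U_{v_0}$ via the uniformization morphism $\Theta$.

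I expect this step to be the main obstacle: making precise that the global isomorphism is determined by the local one \eqref{MU-and-Omega} together with prime-to-$v_0$ data independent of $\Phi^+_0$. With that in hand, the isomorphisms for all $\Phi^{+\prime}$ agree over composites, hence Galois descent yields the isomorphism over $O_{E(\varphi_0)_\nu}$. Effectivity again uses properness, or ampleness of the canonical bundle as at the end of the proof of Theorem \ref{local-Shi-Drin}.
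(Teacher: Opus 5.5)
Your Steps 1--3 coincide with the paper's proof. You compare the two product decompositions over $\CA^{[\Lambda_0]}_{0,O_{\breve E_\nu}}\simeq Z^\BQ(\BQ)\backslash Z^\BQ(\BA_f)/\bK^\circ_{Z^\BQ}$ (cf. \eqref{isCM}, \eqref{prodfrom6}), take fibers, and get independence of the fiber from the transitive $Z^\BQ(\BA_f)$-action. You then descend to $O_{E_\nu}$ because $\wt w$ only moves the torus factor.

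The gap is Step~4, which you leave open, and neither route you sketch works as stated.
\begin{itemize}
\item The global isomorphisms are not ``built from the same local isomorphism for $\U_{v_0}$ via $\Theta$''. The morphism $\Theta$ exists only for the PEL model $\wt\CA_{\bK_{\wt G}}$. Its source is the semi-local RSZ space $\wt\CM_{\bK_{\wt G,p}}$ for the pair $(r,\Phi^+_0)$, and the type $r$, the framing objects $\BX,\BX_0$ and the torus factor all change with $\Phi^+_0$. Nothing in the paper uniformizes the abelian-type model $\CA_{\bK_{\U}}$ by $\mathcal{M}^{\mathrm{int}}_{\mu,b,\bK^\circ_{\U}}$, so Theorem~\ref{local-Shi-Drin} cannot simply be inserted. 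Moreover, its own independence of $\Phi^+_0$ comes from the special-element comparison described below, not from a canonical construction.
\item The rigidity framing gives nothing by itself. Let $Z_{\U}$ be the center of $\U$, and take $z$ central in $\U(\BA_f^{v_0})$ whose class in $Z_{\U}(\BQ)\backslash Z_{\U}(\BA_f)/(Z_{\U}(\BA_f)\cap\bK_{\U})$ is nontrivial. Right translation by $z$ is an automorphism of the uniformized side. It commutes with all prime-to-$v_0$ Hecke correspondences and with the descent datum, yet it is not the identity. So knowing that the two isomorphisms differ by a Hecke-compatible automorphism does not show they agree.
\item Using ``both torus factors at once'' would need a new PEL datum and its uniformization, which you do not supply.
\end{itemize}

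The paper handles Step~4 by declaring it identical to the end of \S\ref{proofILSV}. There one takes two CM-types with $E=E_0K$, $E'=E'_0K$ and composite $L$. Agreement of the two isomorphisms after base change to $O_L$ is reduced to agreement of the Weil descent data down to $O_L$ that the two constructions induce over $O_{\breve L}$. On the torus factor these data are multiplication by $\wt w_E^{f_L/f_E}$ and by $\wt w_{E'}^{f_L/f_{E'}}$. They are compared through their $K/F$-norms, both equal to $p^{f_L}$ up to a unit. Pairwise agreement over composites then gives the descent to $O_{E(\varphi_0)_\nu}$. Your proposal is missing exactly this reduction to an explicit comparison of special elements, which replaces any rigidity or local-to-global statement.
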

\begin{proof} We proceed in analogy to the proof of Theorem \ref{local-Shi-Drin}. In the last subsection, we used the fixed CM-type $\Phi^+$ which was used to define the Shimura datum for $\U$. Now we vary this CM-type. Let $\Phi_0^+$ be any   CM-type  with $\varphi_0\in\Phi_0^+$.  As before, we define the torus $Z^\BQ$ with 
$$Z^\BQ(\BQ)=\{t\in K^{\times}: \Nm_{K/F}(t)\in\BQ^{\times}\}$$ 
and use $\Phi_0^+$ to define a Shimura datum $(Z^\BQ, X_Z)$.
Let $E_{0}=E_{\Phi_0^+}$ be the reflex field of $\Phi_0^+$.  Let $\mathbf{K}^{\circ}_{Z^{\BQ}}$ be the maximal compact open subgroup of $Z^\BQ(\BQ)$. We obtain the Shimura variety ${\rm Sh}_{\bK^\circ_{Z^{\BQ}}}(Z^\BQ, X_{Z^\BQ})$, with integral model $\CA^{[\Lambda_0]}_0$ over $O_{E_0}$. 

Let $\wt{G}= \U\times Z^\BQ$, with its Shimura datum $X_{\wt G}= X_\U\times X_Z$. The corresponding reflex field is $E=E_0K$. As in the last subsection, we can identify the Shimura variety  ${\rm Sh}_{\bK_{\wt G}}(\wt G, X_{\wt G})$ with $\wt\CA_{\bK^{\circ}_{\wt G}, E}$.

The identification \eqref{int-decomp-0mo!} (now for $\Phi_0^+$ instead of $\Phi^+$) yields  isomorphisms compatible with changes in $\bK_\U$
\begin{equation}\label{int-decomp-0mo!?}
\wt\CA_{\bK_{\wt G},O_{E_\nu}}=\CA_{\bK_{\U}, O_{E_\nu}} \times_{\Spec O_{E_\nu}}  \CA^{[\Lambda_{0}]}_{0, O_{E_\nu}}  .
\end{equation}
We have an explicit expression for the completion of the LHS along its special fiber. Indeed, by Theorem \ref{main}, we have
\begin{equation*}
(\wt\CA_{\bK_{\wt G}})^\wedge\times_{\Spf O_{E_{\nu}}} \Spf O_{\breve {E}_\nu}  \simeq   \wt{J}(\BQ)\bs\big[\big(\widehat{\Omega}_{F_{v_{0}}} \times_{\Spf O_{F_{v_{0}}}} \Spf O_{\breve {E}_\nu}\big) \times \wt G(\BA_f)/{\bK}_{\wt G}\big].
\end{equation*}
By Corollary \ref{main-cor}, we obtain an isomorphism of formal schemes over $O_{\breve E_\nu}$, with Weil descent data down to $O_{E_\nu}$,
\begin{equation}\label{prodfrom6}
\scalebox{0.93}{$
(\wt\CA_{\bK_{\wt G}})^\wedge\times_{\Spf O_{E_{\nu}}} \Spf O_{\breve {E}_\nu}\simeq {J_\U}(\BQ)\bs\big[\big(\widehat{\Omega}_{F_{v_{0}}} \times_{\Spf O_{F_{v_{0}}}} \Spf O_{\breve {E}_\nu}\big) \times
 \U(\BA_{f})/{\bK}_{\U}\big] \times Z^\BQ(\BQ)\bs Z^\BQ(\BA_f)/\bK^{\circ}_{Z^\BQ}$.}
\end{equation}
We now want to compare the product decompositions of \eqref{int-decomp-0mo!?} and \eqref{prodfrom6}. We have an isomorphism 
\begin{equation}\label{isCM}
\CA^{[\Lambda_{0}]}_{0, O_{\breve E_\nu}}\simeq Z^\BQ(\BQ)\bs Z^\BQ(\BA_f)/\bK^{\circ}_{Z^\BQ},
\end{equation}
and the second projections in \eqref{int-decomp-0mo!?} and \eqref{prodfrom6} are compatible with this identification. It
 follows from considering the fibers that we have an isomorphism 
\begin{equation}\label{AU-unifor}
(\CA_{\bK_{\U}, O_{E_\nu}})^\wedge  \times_{\Spf O_{E_\nu}} \Spf O_{\breve {E}_\nu}\simeq   {J_\U}(\BQ)\bs\big[\big(\widehat{\Omega}_{F_{v_{0}}} \times_{\Spf O_{F_{v_{0}}}} \Spf O_{\breve {E}_\nu}\big) \times
 \U(\BA_{f})/{\bK}_{\U}\big],
 \end{equation}
which, by the transitivity of the action of $Z^\BQ(\BA_f)$ on both sides of \eqref{isCM}, is independent of the fiber. It remains to compare the Weil descent data down to $O_{E_\nu}$ on both sides. The argument is identical to the corresponding point in \S \ref{proofILSV}: the Weil descent datum on the right hand side of \eqref{prodfrom6} is given by multiplication by the special element $\wt{w}$ on the second factor as in Corollary \ref{main-cor}; then one uses the transitive action of $Z^\BQ(\BA_f)$ and the fact that the isomorphism \eqref{AU-unifor}  is independent of the fibers. 

Finally, we vary the CM-types.  We have to  compare the descent data for two CM-types $\Phi_0^+$ and $\Phi_0^{+'}$ containing $\varphi_0$. Again, the argument is identical to that in \S \ref{proofILSV}, so we omit it. 
\end{proof}

\addtocontents{toc}{\protect\addvspace{15pt}}

 \end{document}